\documentclass{amsart}

\usepackage{amsmath, amssymb}
\usepackage{thmtools} 
\usepackage[table,dvipsnames]{xcolor}
\usepackage{tikz, tikz-cd}
\usepackage[hyphens]{url}
\usepackage{hyperref}
\hypersetup{
	colorlinks = true,
	linkcolor = {blue},
	urlcolor = {red},
	citecolor = {blue}
}
\usepackage{tabularx}
\usepackage[textsize=footnotesize]{todonotes} 
\usepackage{mathabx} 
\usepackage{enumitem}
\setlist{topsep=0.2em, itemsep=0.2em} 
\usepackage[nameinlink,capitalise]{cleveref} 
\crefalias{axiom}{enumi}
\crefname{axiom}{Axiom}{Axioms}
\Crefname{axiom}{Axiom}{Axioms}
\crefname{assumption}{Assumption}{Assumptions}
\Crefname{assumption}{Assumption}{Assumptions}
\crefname{figure}{Figure}{Figures}
\Crefname{figure}{Figure}{Figures}

\crefalias{Iaxiom}{enumi}
\crefname{Iaxiom}{Index Set Axiom}{Index Set Axioms}
\Crefname{Iaxiom}{Index Set Axiom}{Index Set Axioms}

\crefalias{Paxiom}{enumi}
\crefname{Paxiom}{Pre-HHS Axiom}{Pre-HHS Axioms}
\Crefname{Paxiom}{Pre-HHS Axiom}{Pre-HHS Axioms}

\crefalias{Aaxiom}{enumi}
\crefname{Aaxiom}{AIS Axiom}{AIS Axioms}
\Crefname{Aaxiom}{AIS Axiom}{AIS Axioms}

\crefalias{Haxiom}{enumi}
\crefname{Haxiom}{{HHS} Axiom}{{HHS} Axioms}
\Crefname{Haxiom}{{HHS} Axiom}{{HHS} Axioms}

\crefalias{HaxiomS}{enumi}
\crefname{HaxiomS}{{HHS} Axiom}{{HHS} Axioms}
\Crefname{HaxiomS}{{HHS} Axiom}{{HHS} Axioms}

\usepackage{makecell} 

\newtheorem{theorem}{Theorem}[section]
\newtheorem{proposition}[theorem]{Proposition}
\newtheorem{lemma}[theorem]{Lemma}
\newtheorem{corollary}[theorem]{Corollary}

\theoremstyle{definition}
\newtheorem{definition}[theorem]{Definition}
\newtheorem{example}[theorem]{Example}

\theoremstyle{remark}
\newtheorem{remark}[theorem]{Remark}
\newtheorem{warning}[theorem]{Warning}
\newtheorem{question}[theorem]{Question}
\newtheorem{conjecture}[theorem]{Conjecture}

\numberwithin{equation}{section}

\newcommand\bC{\mathbb{C}}

\newcommand\bH{\mathbb{H}}

\newcommand\bP{\mathbb{P}}
\newcommand\bQ{\mathbb{Q}}
\newcommand\bR{\mathbb{R}}

\newcommand\bZ{\mathbb{Z}}
\newcommand\cA{\mathcal{A}}
\newcommand\cB{\mathcal{B}}
\newcommand\cC{\mathcal{C}}
\newcommand\cD{\mathcal{D}}
\newcommand\cE{\mathcal{E}}

\newcommand\cH{\mathcal{H}}

\newcommand\cM{\mathcal{M}}

\newcommand\cO{\mathcal{O}}
\newcommand\cP{\mathcal{P}}

\newcommand\cT{\mathcal{T}}
\newcommand\cU{\mathcal{U}}

\newcommand\cY{\mathcal{Y}}
\newcommand\cX{\mathcal{X}}

\newcommand\mbU{\mathbf{U}}
\newcommand\mbV{\mathbf{V}}

\def\Thatsw{{\That^{\,\sw}}}
\def\Chat{\widehat{\C}}
\def\Chatcirc{\Chat^\circ}
\def\Ccirc{\C^\circ}

\def\Esw{E^\sw}

\def\yhat{\hat{y}}

\def\set#1#2{\{#1\mathrel{:}#2\}}

\def\Shatzap{\Shat^{\,!}}
\def\Tcirc{T^\circ}
\def\Dcirc{D^\circ}
\def\Vcirc{V^\circ}
\def\Xhat{\widehat{X}}
\def\Vhatcirc{\Vhat^\circ}
\def\Dhatcirc{\Dhat^\circ}
\def\length{\ell}
\def\gammahat{\hat\gamma}
\def\deltahat{\hat\delta}

\def\iso{\cong}

\def\H{\mathcal{H}}
\def\Ghat{\hat{G}}

\def\chat{{\hat{c}}}
\def\Gammahat{\widehat{\Gamma}}

\def\HHAT{\widehat{\H}}
\def\Hhat{\widehat{H}}
\def\pHhat{p_{\!\hat{H}}}
\def\PHhat{p_{\!N^1(\hat{H})}}
\def\That{\widehat{T}}
\def\Uhat{\widehat{U}}

\def\G{\Gamma}
\def\sset{\subseteq}
\def\PU{\mathrm{PU}}
\def\Bcirc{B^\circ}
\def\Bthick{B^\theta}
\def\Bhat{\widehat{B}}
\def\Bhatcirc{\Bhat^\circ}
\def\Thatcirc{\That^\circ}
\def\Bhatthick{\Bhat^\theta}
\def\Shat{\widehat{S}}
\def\Shatcirc{\Shat^\circ{}}
\def\Scirc{S^\circ}
\def\piorb{\pi_1}
\def\sw{{\mathrm{sw}}}
\def\back{\backslash}
\def\Z{\mathbb{Z}}
\def\R{\mathbb{R}}
\def\C{\mathbb{C}}
\def\xhat{\hat{x}}
\def\Vhat{\widehat{V}}
\def\Dhat{\widehat{D}}
\def\IndexBig{\mfS}
\def\IndexSmall{\mfS^\flat}

\newcommand{\mfS}{\mathfrak{S}}

\renewcommand{\Re}{\mathrm{Re}}
\renewcommand{\diamond}{{{\diamondsuit}}}
\renewcommand{\int}{\mathrm{int}}

\DeclareMathOperator{\Aut}{Aut}
\DeclareMathOperator{\Stab}{Stab}
\DeclareMathOperator{\diam}{diam}
\DeclareMathOperator{\Isom}{Isom}
\DeclareMathOperator{\Homeo}{Homeo}
\DeclareMathOperator{\Symp}{Symp}

\newcommand{\spc}[3]{  \makecell{  \color{teal} #1 \color{black}\vspace{0.15cm} \\  \color{violet}#2 \color{black} \vspace{0.15cm}  \\ \color{olive}#3 \color{black} }}
\newcommand{\spcbounded}[3]{  \makecell{ #2  \vspace{0.15cm}  \\ #3  }}

\newcommand{\ol}[1]{\makebox[0pt]{$\phantom{#1}\overline{\phantom{#1}}$}#1}
 
\renewcommand{\bold}[1]{\medskip \noindent {\bf #1 }\nopagebreak}

\begin{document}

\title[Hierarchical hyperbolicity, ball quotients, and cubic surfaces]{Hierarchical hyperbolicity, ball quotients, and the moduli space of smooth cubic surfaces}

\author{Daniel Allcock}

\author{Alex Wright}

\begin{abstract}
We prove that the orbifold fundamental group of the moduli space of smooth complex cubic surfaces is a hierarchically hyperbolic group.
This places it within a powerful geometric framework whose prototypical examples are mapping class groups. 
The proof uses a known incomplete complex hyperbolic metric on the moduli space, 
and applies in a broader setting which includes a number of other moduli spaces.  
\end{abstract}

\maketitle

\thispagestyle{empty}

\vfill
\setcounter{tocdepth}{1}
\tableofcontents
\vfill\vfill\vfill

\newpage
\section{Introduction}

\subsection{Main results.} 

The moduli space $\cM_{g,k}$ of Riemann surfaces with genus~$g$
and~$k$ punctures dates from the 19th century.  
The corresponding mapping class group is its  
orbifold 
fundamental
group, and has been a focus of geometric group theory for decades.
Previously our understanding of these groups 
relied on elaborate
and subtle
constructions specific to them, but starting about ten years ago they and several other major classes of groups can be understood
from the single perspective of hierarchically hyperbolic groups.
In this paper we show that this theory also applies to many 
groups from algebraic geometry, including
the fundamental group of another 19th century moduli space:

\begin{theorem}\label{T:Cubic}
The orbifold fundamental group $\piorb(\cM_{cs})$, of the moduli space $\cM_{cs}$ of smooth
complex cubic surfaces, is a hierarchically hyperbolic group (HHG).
\end{theorem}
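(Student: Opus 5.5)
We would deduce \cref{T:Cubic} from a general criterion for ball quotients with deleted hyperplane arrangements, using the Allcock--Carlson--Toledo description of $\cM_{cs}$. By ACT, $\cM_{cs}$ is isomorphic, as an orbifold, to $\Gamma\back(\bC H^4 - \cH)$. Here $\Gamma = \PU(4,1;\mathcal{E})$ is the lattice over the Eisenstein integers $\mathcal{E}=\bZ[\omega]$. The set $\cH$ is the $\Gamma$-invariant, locally finite union of the mirrors of the complex reflections of order $6$, the orthogonal complements of norm $1$ vectors; these correspond to nodal cubics. Hence $\piorb(\cM_{cs}) \iso \piorb(\Gamma\back(\bC H^4-\cH))$, an extension of $\Gamma$ by the fundamental group of the arrangement complement. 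The key geometric input is that the mirrors of $\cH$ meet orthogonally whenever they meet, and distinct mirrors are uniformly separated when they do not meet. This follows from the arithmetic: two norm $1$ vectors $r,s$ have $|\langle r,s\rangle|\le 1$ for meeting mirrors, and in the Eisenstein lattice this forces $\langle r,s\rangle=0$ or the reflections generating a finite group that is handled by rescaling.

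The plan is to build an HHS structure on the universal cover $X$ of $\bC H^4-\cH$, on which $\piorb$ acts geometrically. We would first replace $\bC H^4-\cH$ by a truncated piece: remove $\Gamma$-equivariantly a small tubular neighbourhood of $\cH$, and horoball neighbourhoods of the cusps, which are finite in number modulo $\Gamma$. Orthogonality of the mirrors means that near the intersection of $k$ mirrors the removed region looks locally like a product. The link there is a $k$-torus, and the universal cover of the complement locally looks like $(\text{tree-like or } \bR)^k \times \bC H^{4-k}$. The universal cover $X$ of the compact truncated orbifold is then quasi-isometric to $\piorb$.

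The HHS structure is organised as follows.
\begin{enumerate}
\item The index set consists of the whole space $X$, the lifts of strata of $\cH$ (intersections of mirrors), and, for each stratum, the product factors normal to it.
\item Orthogonality corresponds to transverse (orthogonal) mirrors, and nesting to inclusion of strata.
\item The hyperbolic spaces are obtained by coning off the lifts of the boundary tubes, namely the $\bR\times\text{(mirror)}$-type flats, in the top level space. Tubes around a mirror give the rank-one pieces $\bR$ or horocyclic lines for the normal $\bC^*$-directions, where the universal cover of the punctured disc is a horoball in $\bH^2$.
\end{enumerate}
Cusps are handled by the known relative hyperbolicity of $\Gamma$ acting on $\bC H^4$ with its horoballs. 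We would verify the HHS axioms by an inductive, combination-type argument: a space built from hyperbolic spaces glued along product regions, with orthogonal intersection pattern, satisfies the axioms of a factor system in the sense of Behrstock--Hagen--Sisto. This is analogous to the treatment of right-angled Artin groups or of graph manifolds. We would then upgrade to an HHG via the equivariance of everything under $\piorb$ and the finiteness of orbits of strata.

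The main obstacle is hyperbolicity of the top-level coned-off space and the bounded geodesic image and large link axioms. These require showing that the incomplete complex hyperbolic metric, pulled back to $X$, behaves like a negatively curved space relative to the mirror tubes. We would try to prove that the completion of $X$ with respect to the incomplete metric is a CAT($-1$)-like branched cover of $\bC H^4$, with infinite-order branching along $\cH$. Orthogonality of the mirrors makes this locally a product of cones of angle $\infty$, so it is locally CAT($0$) and CAT($-1$) away from the mirrors. Coning off the branch loci should then give hyperbolicity by a Gromov link-condition argument. Controlling the cusps simultaneously, where mirrors run into the cusps, is where we expect the most care to be needed.
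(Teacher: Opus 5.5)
There is a genuine gap, at the cusps. Here $\Gamma$ is non-uniform, and its cusp stabilizers are virtually lattices in the Heisenberg group $H^{7}$, which are finitely generated, nilpotent and not virtually abelian. By the Tits alternative for HHGs, this is exactly why non-uniform lattices in $\PU(n,1)$, $n>1$, are never HHGs (\cref{R:NotHHG}). So ``handling the cusps by the relative hyperbolicity of $\Gamma$'' hands you the obstruction rather than removing it.

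Your plan uses only the ball-quotient description, local finiteness, invariance and orthogonality. It would therefore apply equally to the Eisenstein norm-one arrangement in $B^7$, which is orthogonal but has a cusp missed by every mirror. There $\Gammahat$ contains a Heisenberg lattice and is not an HHG.

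The missing ingredient is \cref{CuspLimit}: the cusp $c=(1,1,0,0,0)$ is a limit of the $1$-dimensional stratum cut out by the norm-one vectors $(0,0,1,0,0)$, $(0,0,0,1,0)$, $(0,0,0,0,1)$. Together with orthogonality, this sorts the mirrors through $c$ into three parallelism classes. The horosphere minus $\cH$, modulo the center, is then $\prod_{i=1}^{3}(\bC-\Sigma_i)$ with each $\Sigma_i$ lattice-periodic. A lifted horosphere in $\Bhat$ is an $\bR$-bundle over a product of three quasi-trees $\Chat_i$, and the cusp stabilizer in $\Gammahat$ becomes virtually $\bZ\times F_1\times F_2\times F_3$ with $F_i$ free (\cref{L:Stabc}).

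Exploiting this needs domains your index set lacks: a center quasi-line and three tree domains for each lifted cusp. The center domain is genuinely delicate. One must choose swaddling paths in the $\Chat_i$ and prove that the monodromy around their triangles is uniformly bounded (\cref{CenterQuasiLine}), even though geodesic triangles in $\Chat_i$ can enclose unbounded area. Relatedly, your claim that non-meeting mirrors are uniformly separated is false. With $e=(0,0,1,0,0)$ and $0\neq\lambda\in\mathcal{E}$, the mirrors $e^\perp$ and $(e+\lambda c)^\perp$ are disjoint norm-one mirrors asymptotic at $c$.

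Several other steps would also not go through as written.
\begin{enumerate}
\item The boundary of a tube around a lifted mirror is not an ``$\bR\times(\text{mirror})$ flat''. It is an $\bR$-bundle over a branched cover of the mirror whose connection has curvature a nonzero multiple of the K\"ahler form, so even the existence of a coarse angle coordinate needs proof. The paper swaddles the unit normal bundle and shows the swaddling data has $L$-bounded curvature. It then shows this survives pullback along the piecewise-triangular map $\Hhat\to H'$ (\cref{C:AngleQuasiLine}). Even then it must modify the nearest-point angle projection near horoballs to make it coarsely Lipschitz.
\item There is no combination theorem to cite. Factor systems are a cube-complex notion, and the strata do not form a tree of spaces, since their closures meet in higher codimension. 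The axioms must be checked directly, as the paper does via \cref{P:Criterion}, with active regions given by neighborhoods of strata, mirrors and horoballs.
\item The top-level hyperbolicity you flag as the main obstacle is in fact immediate from Allcock's CAT($-1$) theorem and the convexity of strata and horoballs (\cref{P:ElectHyp}).
\item Orthogonality is simpler than you state. The mirrors $r^\perp$ and $s^\perp$ meet in the ball iff $|h(r,s)|^2<h(r,r)h(s,s)=1$. Since $|h(r,s)|^2$ is a non-negative integer, this forces $h(r,s)=0$, with no finite-group rescaling involved.
\end{enumerate}
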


Briefly, cubic surfaces are the subvarieties of $\C P^3$
defined by homogeneous cubic polynomials, and $\cM_{cs}$ is the space (orbifold)
that parameterizes the smooth ones up to isomorphism (or equivalently
projective equivalence).  
Our hierarchical hyperbolic structure was inspired by
a structural similarity between $\cM_{cs}$ and $\cM_{g,k}$.

Our proof does not use any properties of cubic surfaces.  
It relies
on the fact that $\cM_{cs}$
can be described as the complex $4$-ball (also called 
complex hyperbolic $4$-space), minus an arrangement of hyperplanes
with certain properties, modulo a lattice in $PU(4,1)$ \cite{AllcockCarlsonToledoSurfaces}.  
Specifically, 
\cref{T:Cubic} follows from our next result.  
See \cref{SS:VerificationForMcs} for this implication
and \cref{A:OtherEx} for many more examples.

\begin{theorem}[Main theorem]
\label{T:main}
Suppose $\H$ is a union of 
hyperplanes in the complex
$n$-ball  $B:=B^n$ 
satisfying the following.
\begin{enumerate}[label=(\Alph*)]
\item\label[assumption]{LocFin} Each point of $B$ has a neighborhood meeting only
finitely many hyperplanes.  
\item\label[assumption]{Orth}  Any two hyperplanes that meet do so orthogonally.
\item\label[assumption]{Inv}  $\H$ is invariant under a lattice $\G\sset\PU(n,1)$.
\item\label[assumption]{CuspLimit} Each cusp of $\G$ is a limit point of a $1$-dimensional
stratum. 
\end{enumerate}
Then the orbifold fundamental group $\Gammahat$ of
the complex analytic orbifold $\Gamma\back (B-\H)$ is hierarchically hyperbolic. 
\end{theorem}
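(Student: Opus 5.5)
We build a geometric model for $\Gammahat$ and verify the HHS axioms on it. Let $\Bcirc := B-\H$ and let $\Bhatcirc$ be its universal cover, or a suitable orbifold cover; $\Gammahat$ acts on $\Bhatcirc$ properly and cocompactly after we remove $\Gamma$-invariant horoball neighborhoods of the cusps and thin tubular neighborhoods of the hyperplanes. By \cref{LocFin} and \cref{Orth}, near a stratum of codimension $k$ (an intersection of $k$ mutually orthogonal hyperplanes) the space $B-\H$ locally looks like $(\C^*)^k\times \C^{n-k}$. So the lifted tubular neighborhoods in $\Bhat$ are quasi-isometric to products of the lifted stratum with a $k$-fold product of trees, or of horocyclic lines, coming from the $\Z^k$ of meridian loops. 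Let $\Bhatthick$ be the complement of the thin neighborhoods, equipped with its path metric. Then $\Gammahat$ is quasi-isometric to $\Bhatthick$ by Švarc–Milnor.

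The index set $\IndexBig$ will consist of the following:
\begin{itemize}[leftmargin=*]
\item a maximal element $S$;
\item for each lift $\Hhat$ of a hyperplane, or more generally of a stratum $\Shat$ of codimension $k\ge1$, the stratum itself, whose associated hyperbolic space is an electrified, cusp-coned copy of the stratum;
\item the $k$ "meridian" directions of each stratum, each with associated space a horocycle/quasi-line, or a combinatorial horoball when the meridian loops accumulate.
\end{itemize}
Nesting is reverse inclusion of strata. Orthogonality holds between meridians of different hyperplanes through a common stratum, and between a stratum and its own meridians. Transversality covers everything else. The top-level space $\cC S$ is $\Bhatthick$ with every lifted tube coned off and every lifted horoball coned off. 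Since $B$ is hyperbolic and the coned subsets are uniformly quasiconvex (totally geodesic hyperplanes plus horoballs), $\cC S$ should be hyperbolic by standard relative hyperbolicity and electrification arguments. For smaller strata we iterate, since a stratum of dimension $d$ is itself a complex $d$-ball with an orthogonal arrangement (its intersections with the other hyperplanes), invariant under a lattice in its stabilizer. This suggests an induction on $n$, with the base case at $1$-dimensional strata: these are punctured hyperbolic planes, so $\Gammahat$ restricted to them is relatively hyperbolic, hence HHS. Projections $\pi_U$ are given by closest-point projection to a stratum, followed by the map to the coned space. Meridian projections are obtained by recording the winding in the $\C^*$ factor.

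The key steps, in order, are:
\begin{enumerate}
\item Set up the local product structure near strata, using \cref{LocFin} and \cref{Orth}, and prove the quasi-isometry $\Gammahat\simeq\Bhatthick$.
\item Define $\IndexBig$, the spaces, and the projections, and verify hyperbolicity, nesting, orthogonality, and finite complexity. Complexity is bounded by $n$, since chains of strata have length at most $n$.
\item Verify bounded geodesic image and the Behrstock inequality. Both follow from the geometry of orthogonal totally geodesic subspaces in $B$: projections of disjoint or transverse hyperplanes onto one another have uniformly bounded diameter in $B$. This is also where the lifting to $\Bhat$ has to be controlled.
\item Verify the uniqueness axiom, large links, and partial realization. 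This uses an explicit description of quasigeodesics in $\Bhatthick$ as concatenations of pieces running along strata and winding around them.
\item Finally, check the equivariance and cofiniteness needed for an HHG.
\end{enumerate}

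The main obstacle is the cusps. Near a cusp the hyperplanes through it accumulate, and it is not clear a priori that coning off the horoballs together with the tubes yields uniformly controlled projections. This is exactly where \cref{CuspLimit} enters. Because each cusp is a limit point of a $1$-dimensional stratum, the cusp region is governed by the parabolic subgroup acting on the horosphere (a Heisenberg-type group) together with the lines of $\H$ through the cusp. My first attempt would be to show that the preimage in $\Bhatthick$ of a cusp neighborhood is quasi-isometric to a product built from the lifts of these $1$-dimensional strata through the cusp. In this picture the cusp's contribution to the index set is absorbed into the strata and meridian domains, rather than into a separate nonhyperbolic Heisenberg piece, which would not be HHS. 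Uniqueness and large links near cusps would then follow from the corresponding statements for these products.
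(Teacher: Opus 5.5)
Your thick model $\Bhatthick$, electrified stratum domains, meridian domains and orthogonality relations broadly match the paper's. There is a genuine gap in your cusp plan, and it is the central issue.

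After lifting to $\Bhat$, the horosphere $\partial\That_{\hat c}$ is an $\R$-bundle over $\prod_{i=1}^{n-1}\Chat_i$. Here $\Chat_i$ is the completion of the universal cover of $\C-\Sigma_i$, for a discrete lattice-invariant set $\Sigma_i\subset\C$. Assumptions \ref{Orth} and \ref{CuspLimit} are exactly what sort the hyperplanes through the cusp into $n-1$ mutually orthogonal parallelism classes, and this gives the product. Each $\Chat_i$ is an unbounded hyperbolic space (a tree electrified along lifted cell boundaries), and the fiber is the central direction.

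None of your domains sees these coordinates:
\begin{itemize}
\item The top domain and all stratum domains cone off the horoball.
\item A $1$-dimensional stratum through $\hat c$ meets $\partial\That_{\hat c}$ in a single central fiber, so it cannot produce the $\Chat_i$ directions.
\item Meridian coordinates are coarsely constant on $\partial\That_{\hat c}$. The one exception is a hyperplane through $\hat c$, whose coordinate changes by roughly the angle, at its cone point in $\Chat_i$, between the directions to the two points.
\end{itemize}
So take two points on one central fiber, or two points whose images in $\Chat_i$ are joined by a long zig-zag path in the spine tree that winds only a bounded angle around each puncture. Their projections to all your domains stay bounded, but their distance in $\Bhatthick$ is unbounded, so the uniqueness axiom fails. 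Not coning the horoballs in the $1$-dimensional stratum domains does not help: those infinitely many pairwise transverse domains then have unbounded projections onto one another.

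Each lifted cusp must contribute new domains: $n-1$ pairwise orthogonal tree domains with $\cC(U)=\Chat_i$, in which the meridian domains of the $i$th parallelism class are nested, plus one center domain orthogonal to all of them. Showing the center domain is a quasi-line is a real step in its own right. The Heisenberg connection has nonzero curvature (monodromy is signed area), and geodesic triangles in $\Chat_i$ can enclose arbitrarily large area. The paper therefore identifies fibers along special paths $\tau_{xy}$ that follow cell boundaries, and proves the monodromy is uniformly bounded (\cref{CenterQuasiLine}).

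Separately, your hyperbolicity and projection arguments are set in the wrong space. Hyperbolicity of $B$ does not pass to an infinite branched cover. Closest-point projection in $\Bhatthick$ is also not usable, since the strata are not even in it. What makes the electrified stratum closures hyperbolic, and makes closest-point projections to stratum closures and horoballs well defined, $1$-Lipschitz and contracting, is Allcock's theorem that the completion $\Bhat$ is CAT($-1$) (\cref{T:BhatCatMinusOne}). That is the main global consequence of orthogonality.

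Likewise, winding in the $\C^*$ factor is only a local coordinate. The unit normal bundle of a hyperplane carries a connection with nonzero curvature, so a global angle quasi-line needs two further ingredients. One is a coarse trivialization: swaddling with bounded curvature, pulled back along $\Bhat\to B$. The other is a modified projection near horoballs.
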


\noindent

We restrict the term ``hyperplane in~$B$''  to the
hyperplanes comprising~$\H$, because we will refer to them often and never
to other hyperplanes in~$B$.
The stratification in \ref{CuspLimit} is the one 
induced by any hyperplane arrangement satisfying \ref{LocFin}--\ref{Orth}:
the codimension $k$ strata are the connected components of the set of points contained 
in exactly $k$ hyperplanes. Whenever we use fundamental groups and universal covers it will be in the orbifold sense.  
For example, $B-\H$ is a covering space of $\Gamma\back(B-\H)$, so
$\Gammahat$ contains $\pi_1(B-\H)$ as a normal subgroup, with quotient~$\Gamma$.

\medskip
We will expand on what an HHG is, but first mention
some consequences:
 solvability of the word and conjugacy problems (\cite[Corollary 7.5]{BehrstockHagenSistoHHSII} and \cite[Corollary H]{HaettelHodaPetyt}),  analogues of the Nielsen-Thurston classification  and the Tits alternative (\cite[Proposition 6.18, Theorem 7.1]{DurhamHagenSisto} and \cite[Theorem 4.1]{DurhamHagenSistoCorrection}),
 and more (see below).
All of these are new for 
$\piorb(\cM_{cs})$.  



The  lattice $\Gamma$ is allowed to be non-uniform,  in which case the theorem 
stands in sharp contrast with the fact that non-uniform lattices $\Gamma$ in $PU(n>1,1)$ are never HHGs.
This is the (known)
$\H=\emptyset$ case of \cref{R:NotHHG}:
if \ref{LocFin}, \ref{Orth} and \ref{Inv} hold  but \ref{CuspLimit} does not, 
then  $\Gammahat$ cannot be an HHG.  
But it is a 
relatively hierarchically hyperbolic group (RHHG) by
\cref{T:mainrel}.

\bold{Other examples.} A remarkable number of spaces have the form $\Gamma\back (B-\H)$; 
see for example
\cite{LooijengaSurvey} and \cite{DolgachevKondoModuli}.
\cref{A:OtherEx} lists other examples to which \cref{T:main} or \cref{T:mainrel} applies, including:
\begin{itemize}
\item the moduli space of stable cubic threefolds; 
\item certain moduli spaces of K3 surfaces with non-symplectic automorphisms; 
\item certain moduli spaces of abelian varieties with extra structure, which are open subsets of certain unitary Shimura varieties of PEL type; and 
\item the moduli spaces $\cM_{0,5}/S_5$ and $\cM_{0,6}/S_6$ of 5 resp.\ 6 points in $\bC\bP^1$.
\end{itemize}

\subsection{Hierarchical hyperbolicity.} 

We pause now to introduce hierarchical hyperbolicity for those not familiar with it. 

\bold{What is a hierarchically hyperbolic group?} A good part of the study of mapping class groups has been built on deep breakthrough results of Masur-Minsky \cite{MMI, MMII} relating to the geometry of curve complexes. These complexes encode how the strata of the Deligne-Mumford compactification $\ol{\cM}_{g,k}$ fit together, after appropriately lifting to the  universal cover of $\cM_{g,k}$.

After much work by many people  developing and applying the resulting theory, Behrstock-Hagen-Sisto found strong parallels elsewhere in  geometric group theory, and introduced the definition of a hierarchically hyperbolic group \cite{BehrstockHagenSistoHHSI,BehrstockHagenSistoHHSII}. By design, the restrictive axioms of hierarchical hyperbolicity guarantee  that many results on mapping class groups apply to all hierarchically hyperbolic groups. Fresh excitement and having the right category to work in have facilitated many further results, and many new examples have been found within geometric group theory and low dimensional topology.

This historical discussion gives us our first answer to the question: 
\begin{enumerate}
\item A hierarchically hyperbolic group is one that is geometrically similar enough
to a mapping class group that many results about mapping class groups carry over. 
\end{enumerate} 

\noindent
There are other equally high-level answers: 
\begin{enumerate} 
\item[(2)] A hierarchically hyperbolic group is one that admits a system of coordinates with highly structured redundancy, such that the coordinates are valued in (Gromov) hyperbolic spaces, and the different coordinates give information on the geometry of different parts of the group and how they fit together. 
\item[(3)] A hierarchically hyperbolic group is one that looks hyperbolic, except for  product regions whose structure and overlaps are similarly constrained. 
\end{enumerate}

\noindent
The precise definition of hierarchical hyperbolicity is intricate, but it is not truly required for our work. We state a sufficient
criterion for hierarchical hyperbolicity in \cref{S:Criteria}, deferring the definition of hierarchical hyperbolicity and the proof of the criterion to \cref{S:CritProof}. 

See \cite{WrightWhatIs} for a brief introduction to hierarchical hyperbolicity, \cite{SistoWhatIs} for a deeper introduction, and \cite{SistoNewTools} for a more advanced and up to date survey. 

\bold{Other consequences of hierarchical hyperbolicity.} 
We think of \cref{T:main} not as supplying a few isolated consequences but rather a coherent geometric theory.

To illustrate the far reaching nature of this theory, we briefly discuss the Farrell-Jones Conjecture. This conjecture, originating in \cite{FarrellJones}, concerns the algebraic K-theory and L-theory of group rings and has strong implications for the Borel and Novikov Conjectures; see for example the ICM proceedings \cite{Luck, Bartels}. The Farrell-Jones Conjecture was proven for $\pi_1(\cM_{g,k})$ in \cite{BartelsBestvina}, and for large classes of HHGs in \cite{DurhamMinskySistoCAT0}. In \cref{SS:FJ} we use \cite[Theorem C]{DurhamMinskySistoCAT0} to conclude the following.

\begin{theorem}\label{T:FJ}
The Farrell-Jones Conjecture holds for all $\Gammahat$ as in \cref{T:main}.
\end{theorem}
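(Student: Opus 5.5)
The plan is to deduce \cref{T:FJ} from \cite[Theorem C]{DurhamMinskySistoCAT0}. That result says the Farrell--Jones Conjecture holds for an HHG whose hierarchical structure satisfies extra hypotheses. Roughly, these are: (i) the structure is \emph{colorable}, or at least admits a finite-index subgroup acting without nesting-incomparable translates in the same orbit; and (ii) the subgroups stabilizing the hyperbolic spaces, or the relevant product regions, themselves satisfy Farrell--Jones. I would first recall the exact form of these hypotheses. Then I would check them for the specific structure produced in the proof of \cref{T:main} from the criterion in \cref{S:Criteria}, rather than for an abstract HHG structure.

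First, colorability. The index set in our structure should consist of the whole space $B-\H$ (lifted to $\Gammahat$) together with the lifts of strata closures: hyperplanes, their intersections, and so on. Each such element records a codimension $k$. I would pass to a torsion-free finite-index subgroup $\Gamma'\subseteq\Gamma$ that is neat, so that its action on the strata is as tame as possible. I would also use \ref{Orth}: two distinct hyperplanes in one $\Gamma'$-orbit that meet would force a self-intersection in the quotient. By a standard separability argument, one can choose $\Gamma'$ so that no hyperplane meets a $\Gamma'$-translate of itself. The preimage $\Gammahat'$ of $\Gamma'$ has finite index in $\Gammahat$. Its orbits on the index set then consist of pairwise transverse or disjoint elements, which is exactly the coloring condition. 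Farrell--Jones passes to finite-index overgroups only with care; it is inherited by overgroups of finite index in the version with finite wreath products, which \cite{DurhamMinskySistoCAT0} handles. So I would state the result for $\Gammahat'$ and then use the wreath-product version to pass back up to $\Gammahat$.

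Second, the stabilizer and peripheral hypotheses. Stabilizers of strata are built from two pieces. One is the fundamental group of a lower-dimensional ball quotient complement of the same type, namely the stratum minus the hyperplanes meeting it. The other is a free abelian group $\Z^k$ generated by meridian loops. Here \ref{Orth} gives a local product structure, so these are central extensions by $\Z^k$, or products up to finite index. I would argue by induction on $n$, using that each stratum with its induced arrangement again satisfies \ref{LocFin}--\ref{CuspLimit}. Farrell--Jones is closed under the extensions and products that arise, since $\Z^k$ and lattices in Lie groups satisfy it. Cusp groups appear only through \ref{CuspLimit}, inside stratum stabilizers; they are virtually nilpotent, so they satisfy Farrell--Jones as well.

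I expect the main obstacle to be matching our structure exactly to the hypotheses of \cite[Theorem C]{DurhamMinskySistoCAT0}. In particular, it must be verified that the criterion of \cref{S:Criteria} yields a structure of the required kind, such as one with clean containers or with the product regions described above. If the stabilizer induction becomes messy, I would instead try to invoke the theorem in its form where only colorability plus the Farrell--Jones Conjecture for the maximal-product-region stabilizers is needed, and reduce everything to the $1$-dimensional strata.
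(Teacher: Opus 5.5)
Your proof has a genuine gap in its treatment of the cusps. The HHG structure from the proof of \cref{T:main} is not indexed by strata alone. Besides stratum domains, it has an angle domain for every hyperplane $\Hhat$ of $\Bhat$. For every horoball $\That_{\hat{c}}$ of $\Bhat$ it also has a center domain and $n-1$ tree domains. Moreover, the actual index set $\IndexBig$ consists of orthogonal tuples, whose stabilizers are, up to finite index, intersections of the stabilizers of their entries. So \cite[Theorem C]{DurhamMinskySistoCAT0} needs the Farrell--Jones Conjecture for $\Stab_{\Gammahat}(\That_{\hat{c}})$ and for $\Stab_{\Gammahat}(\Shat)\cap\Stab_{\Gammahat}(\That_{\hat{c}})$, not only for stratum stabilizers.

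Your assertion that cusp groups enter only inside stratum stabilizers and are virtually nilpotent is false for these groups. What is virtually nilpotent is $\Stab_\Gamma(T_c)$. But $\Stab_{\Gammahat}(\That_{\hat{c}})$ is an extension of it by $\Stab_{\pi_1(B-\H)}(\That_{\hat{c}})\cong\pi_1(\partial\Tcirc_c)\cong\prod_i\pi_1(\C-\Sigma_i)$. For $n\ge 2$ this is a product of infinitely generated free groups, nontrivial by \ref{CuspLimit}. So this stabilizer is not virtually nilpotent, it is not a stratum stabilizer, and your induction on $n$ never reaches it.

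What is missing is the horosphere analysis of \cref{S:CuspDomains}:
\begin{itemize}
\item $\partial\That_{\hat{c}}$ is an $\R$-bundle over $\prod_i\Chat_i$, and it deformation retracts onto a bundle over the product of trees $\prod_i\Ghat_i$ (\cref{R:tauMotivation}).
\item A finite-index subgroup of $\Stab_{\Gammahat}(\That_{\hat{c}})$ acts on $\prod_i\Ghat_i$ with central kernel $\Z$. Its image acts as a product of cocompact actions of finitely generated free groups.
\item Hence this subgroup is a central $\Z$-extension of a product of finitely generated free groups. The paper shows it is in fact $\Z\times F_1\times\cdots\times F_{n-1}$ (\cref{L:Stabc}).
\item Such a group satisfies Farrell--Jones because hyperbolic groups do, and the class is closed under finite products, central extensions with finitely generated kernel, and finite-index overgroups.
\item Closure under subgroups then handles the intersections with stratum stabilizers.
\end{itemize}

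Two smaller points. First, your colorability argument via $\Gammahat'$-orbits only discusses strata, but it must also cover angle, center and tree domains. It does extend:
\begin{itemize}
\item distinct strata of equal dimension are automatically asynchronous;
\item angle domains in one orbit have pairwise disjoint hyperplanes;
\item no element of $\Gammahat'$ can swap two parallelism classes at a horoball, because non-parallel hyperplanes there intersect.
\end{itemize}
You should state this check explicitly. Second, your induction needs the base case $n=1$, where $\Gammahat$ is virtually free or a surface group.
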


Other  results on hierarchically hyperbolic groups, too numerous to list here, are proven for example in
\cite{BehrstockHagenSistoHHSI,
BehrstockHagenSistoHHSII,
BehrstockHagenSistoAsDim,
DurhamHagenSisto,
AbbottBehrstockDurham,
BehrstockHagenSistoQuasiflats,
PetytSpriano,
RussellSprianoTran,
HaettelHodaPetyt,
DurhamMinskySisto,
Petyt,
DurhamMinskySistoCAT0,AzuelosHagen}. 
%
%
Some results on hierarchically hyperbolic groups require additional  assumptions, and we show our $\Gammahat$  satisfy the most common such assumptions in \cref{A:IndexSet} (note also \cref{R:ForABD,R:CPR}).

\subsection{The proof}

We now wish to discuss the proof of \cref{T:main} and the difficulties it must overcome.

\bold{The motivating example.} But let us step back for a moment and first discuss the motivating example  $\piorb(\cM_{cs})$. 
A naive comparison of  $\piorb(\cM_{cs})$ to $\piorb(\cM_{g,k})$ gives rise to the hope that the topological monodromy
$$\piorb(\cM_{cs}) \to \pi_0 (\Homeo(\Sigma_{cs}))$$ 
is an isomorphism. Here $\Sigma_{cs}$ is a cubic surface, and $\pi_0 (\Homeo(\Sigma_{cs}))$ is its topological mapping class group. However, $\piorb(\cM_{cs})$ in fact has \emph{finite} image in the topological mapping class group $\pi_0 (\Homeo(\Sigma_{cs}))$; see for example \cite[Theorem 2.2]{Shimada} or \cite[Section III.3]{Harris} together with   \cite[Theorem 1.1]{Quinn} and \cite{GabaiGayHartmanKrushkalPowell}. 
%
%
This means that any attempt to study $\piorb(\cM_{cs})$ via direct topological analogues of objects like curves on surfaces will  find little traction.

\bold{The key tool.}  We will make an analogy between $\cM_{g,k}$ and $\Gamma\back(B-\H)$, by thinking of
$\cM_{g,k}$ as the quotient of its  universal cover (Teichm\"uller space~$\cT_{g,k}$) by the mapping class group
$\pi_1(\cM_{g,k})$.
The Weil-Petersson metric on $\cM_{g,k}$ becomes analogous to the complex hyperbolic metric on $\Gamma\back (B-\cH)$.

The metric completion $\ol{\cT}_{g,k}$ of $\cT_{g,k}$ with respect to the lifted Weil-Petersson metric is known as augmented Teichm\"uller space or the Deligne-Mumford bordification. Its quotient by $\piorb(\cM_{g,k})$  is the Deligne-Mumford compactification $\ol{\cM}_{g,k}$ \cite{MasurDM}. The boundary $\ol{\cT}_{g,k}- \cT_{g,k}$ is a union of
(complex) codimension 1 components that are in bijection with (essential, non-peripheral) simple closed curves on a topological reference surface $\Sigma_{g,k}$. The curve complex of $\Sigma_{g,k}$ can be coarsely identified with a kind of quotient of $\ol{\cT}_{g,k}$ called an electrification \cite[Lemma 7.1]{MMI}.
%
%
Electrification collapses each of the boundary components to have finite diameter.

In the complex ball situation, our analogy suggests thinking of
the universal  cover of $\Gamma\back(B-\H)$ as an analogue of~$\cT_{g,k}$.  
This is the same as the universal cover 
of $B-\H$, except
with deck group~$\Gammahat$ not just its subgroup $\pi_1(B-\H)$.
In the motivating example, this
is the  universal cover of the moduli space of smooth complex cubic surfaces.  The metric completion $\Bhat$ of the universal cover, with respect to the lifted complex hyperbolic metric,
will be our key tool.  It is a CAT($-1$) space \cite{AllcockAsphericity}, got by adjoining to
the universal cover
an infinite family of (complex) codimension~1 components which meet each other ``the same way'' as the
boundary components of~$\ol{\cT}_{g,k}$.   (This uses the orthogonality hypothesis of
Theorem~\ref{T:main}.)

Although there is no known way to use the Weil-Petersson metric to prove that
$\piorb({\cM_{g,n}})$ is an HHG, the analogy between
$\Bhat$ and $\ol{\cT}_{g,k}$ provides motivation for our result that
$\Gammahat$
is an HHG. This analogy is strongest when  $\Gamma$ is
cocompact, but when $\Gamma$ has cusps the  
geometry of~$\Bhat$ near its cusps (suitably defined) has new features.

%
%
%
%

\bold{The hyperbolic spaces.} Since $\Bhat$  is CAT($-1$) it is of course (Gromov) hyperbolic, so it may surprise non-experts that $\Bhat$ cannot typically be used as one of the (Gromov) hyperbolic spaces required by the definition of hierarchical hyperbolicity. In fact we use four kinds of (Gromov) hyperbolic spaces: 

\begin{enumerate}
\item Electrifications of stratum closures in $\Bhat$, indexed by  ``stratum domains''. This includes an electrification of $\Bhat$ itself, at the top of our hierarchy. 
\item Spaces quasi-isometric to $\bR$, which record a sort of angular coordinate analogous to a Fenchel-Nielsen twist parameter. We index these by  ``angle domains'', and there is one for each codimension 1 stratum of $\Bhat$. 
\item More spaces quasi-isometric to $\bR$, which record the ``vertical'' direction as measured from each cusp, and are associated to the center of the Heisenberg group. We index these by ``center domains''. 
\item Spaces quasi-isometric to trees, which record the ``horizontal'' directions as measured from each cusp. We index these by  ``tree domains'', and in combination with center domains they witness how \ref{CuspLimit} transforms the cusp geometry of $\Gamma$ from an obstruction to hierarchical hyperbolicity into an entirely different and more compatible geometry. 
\end{enumerate}

\bold{Swaddling.}
Suppose $\hat{S}$ 
is the closure of a (complex) codimension~$1$ boundary stratum in $\Bhat$.  One can make sense of
its unit normal bundle, which is a principal $\bR$-bundle over a singular space. 
It has something like a connection, 
pulled back from the unit normal bundle of the hyperplane of~$B$ over which~$\hat{S}$ lies.  If this were flat,
then it would be easy to define an $\R$-valued function describing angular position around~$\hat{S}$.
But it is not.  So we introduce a construction called ``swaddling'', that starts with
a principal~$\R$-bundle and ``squeezes it horizontally''.
This produces either a quasiline or a finite-diameter space, and it 
is here that the ``angle function'' around~$\hat{S}$ takes values.  One might 
view this as a geometric alternative to quasi-morphism based ideas introduced in the HHG context in \cite{HagenRussellSistoSpriano} and used in 
\cite{FournierFacioMangioniSisto,HagenMartinSisto,DowdallDurhamLeiningerSisto}. 

We find another use for swaddling when examining the cusps of~$\Bhat$.  
Consider a horosphere~$\partial T_c$ centered at a cusp of~$\Gamma$; it is the boundary of a horoball $T_c$.
It is a copy of the Heisenberg group, and has the structure of a 
principal $\R$-bundle over~$\C^{n-1}$ with connection.  A lift $\partial \hat{T}_{\hat{c}}$ of  $\partial T_c$ to $\Bhat$ is a singular space but still a principal $\R$-bundle, which we
can swaddle after pulling back
the connection.  Although swaddling~$\partial T_c$ yields a finite-diameter space,
swaddling~$\partial \hat{T}_{\hat{c}}$ yields a quasiline and leads to our center domains.
In fact, $\partial \hat{T}_{\hat{c}}$ is very much like the product of one copy of~$\R$
and $n-1$ trees. Seeing the transition to this structure,
starting from Heisenberg geometry, was
a lovely moment.

To show that these swaddlings yield quasilines
rather than finite-diameter spaces, 
we  develop quantitative tools that allow us to show that a given set of swaddling
data is ``almost flat''.  This is similar to checking that a 2-cocycle is bounded.  In one case
we prove a local-to-global result that allows us to lift an easy boundedness result 
from $B$ to a more subtle one on $\Bhat$.  This result should be applicable
to many other swaddlings.



\bold{Architecture.}
We begin with background in
\cref{S:Background} and our (skippable until later) criterion
for hierarchical hyperbolicity in \cref{S:Criteria}.
\Cref{SecModelSpace} 
through \cref{S:AngleDomains} develop the
geometry of the four kinds of domains, including swaddling and
much more.    We work extensively
with  
geodesics in~$\Bhat$, because we lack tools like the curves on Riemann surfaces
used for the mapping class groups.  This
work is complicated by the fact that $\Bhat$ is not locally compact,
and the fact that geodesics can leave the ``thick part''
$\Bhatthick$, which is our  geometric model for the group $\Gammahat$.
We establish various properties of nearest-point projection maps,
the hyperbolicity of the stratum domains, and the
manner in which a horosphere in~$\Bhat$ is ``like'' a
product of~$\R$ and some trees. Unlike in some constructions of hierarchically hyperbolic groups, we cannot use tree or quasi-tree behavior in our analysis, since our stratum domains do not give quasi-trees.
In \cref{S:NTO} through \cref{S:Final} we must fit together the four very different types of domains
to build the HHG structure on~$\Gammahat$.  


\subsection{Previous work} 

Loosely speaking, $\Gamma\back \Bcirc$ might be considered a version of $\Gamma\back B$ with $\Gamma \back \cH$ drilled out, and thus our \cref{T:main} might be viewed as related  to work like \cite{GHMOSW} on drilling hyperbolic groups and manifolds. 

\cite[Theorem 1.1(ii)]{Belegradek} shows that if distinct hyperplanes of $\cH$ are disjoint, and if $\Gamma \back \cH$ is compact, then $\Gammahat$ is relatively hyperbolic. The proof uses a delicate warped product construction to build a complete metric on $\Gamma \back \Bcirc$, and the $\Gammahat$ studied are of intrinsic interest in relation to rigidity phenomena and geometric finiteness. This line of research was continued in \cite{BelegradekHruska}, and \cite[Theorem 1.4]{BelegradekHruska} gives in particular that $\Gammahat$ is relatively hyperbolic under a ``sparsity'' condition. See also \cite{BelegradekRealHyp,Minemyer} for related results. We note in \cref{R:RelHyp} that very strong assumptions are required for one of our $\Gammahat$ to be relatively hyperbolic, and in particular $\piorb(\cM_{cs})$ is not relatively hyperbolic; so the sparsity condition of \cite{BelegradekHruska} cannot hold for our key example. 

When $n>1$ and $\cH\neq \emptyset$, \cite[Theorem 1.5]{BelegradekHruska} shows that our $\Gammahat$ cannot be isomorphic to a discrete isometry group of a Hadamard manifold of pinched negative curvature, and \cite[Theorem 1.2]{AllcockCarlsonToledoPresentation} shows that it cannot be isomorphic to a lattice in a Lie group with finitely many connected components. \cite[Theorem 1.1]{AllcockCarlsonToledoPresentation} shows that the infinitely generated kernel of $\Gammahat\to \Gamma$
has a presentation with a certain form, and \cite{Looijenga} and 
\cite{AllcockBasakLooijenga} give simple finite presentations for $\pi_1(\cM_{cs})$. A recent result shows that the divisor subgroup of $\pi_1(\cM_{cs})$ is characteristic, and that $\cM_{cs}$ has no non-trivial holomorphic automorphisms \cite{BaldiFarbJavanpeykarStover}.

At a technical level our work can be compared to \cite{HagenMartinSisto}, which proves that Artin groups of large and hyperbolic type are HHGs, like us starting with CAT($-1$) geometry. 
Unlike ours, their CAT($-1$) spaces are metrized 2-dimensional simplicial complexes. 

\subsection{Questions and conjectures.} 

Our work might lead in many directions. 

\bold{Beyond hierarchical hyperbolicity.} For all that \cref{T:Cubic} goes a long way to moving the state of knowledge on $\piorb(\cM_{cs})$ towards catching up with $\piorb(\cM_{g,k})$, this only applies for questions of a certain flavor. An important and natural question that our work says nothing about is the following. 

\begin{question}
Is $\piorb(\cM_{cs})$ residually finite? 
\end{question}

%
%

There are also questions for which \cref{T:Cubic} may help a great deal, but which require additional work, such as automaticity and quasi-isometric rigidity. 

%
%
%
%
%

\bold{Beyond orthogonal arrangements.} \cite[Conjecture 1.4]{AllcockBranchedCovers} and \cite[Conjecture 1.1]{PanovPetrunin}, if true, would allow much of our analysis to proceed for naturally occurring examples where our orthogonality assumption \ref{Orth} does not hold. For example, the moduli space of smooth complex cubic threefolds
has the form $\Gamma \back (B-\cH)$ with $\cH$ non-orthogonal \cite{AllcockThreefolds,LooijengaSwierstra}. 

\begin{conjecture}
The fundamental group of the moduli space of smooth complex cubic threefolds is an HHG. 
\end{conjecture} 

We similarly conjecture that the $\Gammahat$ that appear in relation to Allcock's monstrous proposal \cite{AllcockMonstrous} and in relation to moduli of rational elliptic surfaces \cite{HeckmanLooijenga} are (R)HHGs.

%
%
%
%
%
%

\bold{Beyond ball quotients.} We next move on to the setting of type IV domains, where the complex ball is replaced by the  $SO(2,n)$ hermitian symmetric space. Remarkably, some moduli spaces, such as $\cM_{0,5}$ and $\cM_{cs}$, fit in this setup as well as the ball quotient setup (see \cref{R:M05both} and \cite{MatsumotoSasakiYoshida}), and such common examples are among the reasons to hope our techniques can be useful in the type IV setting.  
%
%
%

Here it is likely that, although some examples are (R)HHGs, many others are not. As is the case for outer automorphism groups of free groups, it may be that even examples that are not hierarchically hyperbolic can be understood using ideas from hierarchical hyperbolicity. 

Key examples of interest include the following; all are of the form $\Gamma \back (X-\cH)$ for $X$ a type IV domain and $\cH$ a (typically not orthogonal) hyperplane arrangement.

\begin{enumerate}
\item The moduli space of smooth complex Enriques surfaces, where $\cH$ is orthogonal; see \cite{HorikawaI, HorikawaII, Namikawa,AllcockPeriodLattice,DolgachevIntroToEnriques,AllcockAsphericity}. 
%
%
%
%
\item The moduli space of cubic fourfolds; see \cite{Voisin,LooijengaFourfolds, Laza}.
%
%
%
\item Moduli spaces of amply polarized K3 surfaces of degree $2d$, where  we expect some of the behavior to depend on $d$;  see \cite{PiatetskiShapiroShafarevich,Huybrechts,AlexeevEngel,GritsenkoHulekSankaran}.  And similarly in the more general
setting of lattice-polarized K3 surfaces \cite{DolgachevLatticePolarizedK3s}.
%
%
%
%
%
\item Certain spaces that arise in singularity theory, especially related to unimodal singularities; see  \cite[Section 8]{AllcockBranchedCovers} and \cite{LooijengaSimplyEllipticII, LooijengaTriangleII, LooijengaRational, LazaSingularities,Zhao}. 
\end{enumerate}

%
%
%

A great deal of work has been devoted to compactifications of many of the spaces we have discussed, for example 
\cite{LooijengaCompactifications,LooijengaCompactificationsII,HackingKeelTevelev,CasalainaMartinGrushevskyHulekLaza,Naruki,LooijengaSurvey,AlexeevEngelHan,LazaOGrady,LazaOGradyII,GallardoKerrSchaffler,FangSchafflerWu,AllcockFreitag,OdakaSpottiSun}. Just as the Deligne-Mumford compactification of $\cM_{g,k}$ is intimately related to the HHG structure on $\piorb(\cM_{g,k})$, some of these compactifications may be relevant to the geometric group theory of the relevant moduli spaces.

\bold{A symplectic mapping class group?}  
Besides its interest in algebraic geometry, our motivating group $\pi_1(\cM_{cs})$ 
arises at what might be a fertile threshold  in the theory of symplectic mapping class groups.
Using a Fubini-Study symplectic form $\omega_{FS}$ on a cubic surface $\Sigma_{cs}$, one gets a symplectic monodromy map
$$\piorb(\cM_{cs}) \to \pi_0 (\Symp(\Sigma_{cs},\omega_{FS})),$$ 
with the right side
called the symplectic mapping class group (of cubic surfaces). 
When cubic surfaces are replaced by more-complicated varieties, one does not expect 
the corresponding maps to be isomorphisms.  But there is a folklore conjecture: 

\begin{conjecture}\label{C:Folklore}
In the cubic surface case, the symplectic monodromy map 
$$\piorb(\cM_{cs}) \to \pi_0 (\Symp(\Sigma_{cs},\omega_{FS}))$$ is an isomorphism.
\end{conjecture} 
%
%

See
\cite{Seidel,EvansSpheres,LiLiWu,
BormanLiWu,Wu,EvansSMCG,Smith,Smirnov,SmirnovK3}
for related results and context. 
%
%
If true, \cref{C:Folklore}  would allow our results to apply to a symplectic mapping class group, thus providing an avenue to a Thurstonian theory of a symplectic mapping class group and the ``longstanding challenge'' of finding analogues of the Nielsen-Thurston classification in this context \cite[Section 3.2.2]{Smith}. 

%
%
%

%
%
%
%

\bold{Acknowledgments:} We warmly thank the following people for helpful conversations:  
 Jason Behrstock, Eliot Bongiovanni, Mark Hagen, Giorgio Mangioni, Jacob Russell, and Alessandro Sisto (hierarchical hyperbolicity); 
 Jonny Evans, Jun Li, and Paul Seidel (symplectic mapping class groups); 
 Michela Artebani, Igor Dolgachev, Shigeyuki Kond\=o, and Alessandra Sarti (moduli spaces);
  Matthew Stover (ball quotients); Carlos Serv\'an (mapping class groups of 4-manifolds); 
  Aaron Kim (singularity theory); and Mohith Nagaraju  (various topics). 
  AW gratefully acknowledges the support of NSF grant DMS-2142712.  Both authors gratefully
  acknowledge the support of the Simons Foundation. 

\bold{Tool use:} 
This paper contains no AI-generated text.  We used AI only late in the project,
after writing the initial draft and lecturing on our results:  
we used \hbox{OpenAI}'s ChatGPT 5.5 and 5.6 for proofreading, literature search and
expository suggestions.  It also provided the argument
in \cref{R:NotMCG}.

\section{Background on geometry }\label{S:Background}

Here we recall standard results in the  minimal level of generality we require. In addition to the many standard books, some of which we cite, there are also accessible course notes available online, such as \cite{SistoNotes, Parker, WrightNotes}, that may be helpful to anyone learning some of this material for the first time.


\subsection{Coarse geometry}

 Recall the following definitions. 

\begin{enumerate}
\item A geodesic space is a metric space with at least one geodesic joining every pair of points. A geodesic is an isometric embedding of a connected interval into the metric space. 
\item A geodesic triangle consists of three vertices and a geodesic joining each pair of vertices.
\item The triangle is called $\delta$-thin if every point on an edge has distance at most $\delta$ to a point on one of the other two edges.
\item A geodesic space is called $\delta$-hyperbolic if all its geodesic triangles are $\delta$-thin.
\item A geodesic space  is called Gromov hyperbolic if it is $\delta$-hyperbolic for some $\delta$.
\item A map $f : (X,d_X) \to (Y, d_Y)$ is called $K$-coarsely Lipschitz if, for all $x_1, x_2\in X$,
$$d_Y(f(x_1), f(x_2)) \leq K d_X(x_1, x_2)+K.$$
\item It is called a $K$-quasi-isometric embedding if additionally 
$$d_Y(f(x_1), f(x_2)) \geq d_X(x_1, x_2)/K-K.$$
\item A subset of a metric space is called $K$-dense if every point in the metric space has distance at most $K$ to a point in the subspace. 
\item A map is called a $K$-quasi-isometry if it is a $K$-quasi-isometric embedding and the image is $K$-dense. 
\item A map is coarsely Lipschitz or a quasi-isometric embedding or a quasi-isometry if there exists $K$ such that the appropriate condition holds.
\item A quasi-geodesic in a metric space is a quasi-isometric embedding of a closed connected subinterval of $\bR$.
\item A path $p:I\to X$ (where $I$ is a closed connected interval) in a metric space is called a $D$-unparametrized quasi-geodesic if there is a $D$-quasigeodesic  $q:J\to X$ and a monotone $D$-coarsely Lipschitz map $r:I\to J$ such that $p=q\circ r$. 
\end{enumerate}

The term ``unparametrized quasi-geodesic'', although quite standard, may seem somewhat misleading, as a parameterization  is required for the definition.

For any constants $K, \delta$, there is a constant $D$ such that for any two $K$-quasi-geodesics with the same endpoints in a $\delta$-hyperbolic space, every point of one  has distance at most $D$ to a point of the other \cite[page 401]{BridsonHaefliger}. Given two quasi-isometric geodesic metric spaces, one is Gromov hyperbolic if and only if the other is \cite[page 402]{BridsonHaefliger}. 

\begin{remark}\label{R:NotGeodesic}
Because of \cref{R:EnotG} below, we will not be able to entirely stay in the realm of geodesic metric spaces. This is an unimportant technical detail that can be safely ignored, but for completeness we now present one of many ways it can be dealt with. (For alternatives, see, for example, \cite{Loh,Vaisala}.) Given a metric space $(X,d)$ and a number $b> 0$, one can consider the graph $\Gamma_b(X)$ with one vertex $v_x$ for each point $x$ of $X$, and an edge between two vertices $v_x, v_y$ if $d(x,y)\leq b$. 
Under mild assumptions the map $x\mapsto v_x$ is a quasi-isometry from $X$ to $\Gamma_b(X)$ \cite[page 152]{BridsonHaefliger}. If a metric space  $(X,d)$ is not geodesic, we will say that it is Gromov hyperbolic if there is some $b$ such that $x\mapsto v_x$ is a quasi-isometry and  $\Gamma_b(X)$ is Gromov hyperbolic. 
\end{remark}

\subsection{Electrification}

We define the electrification $(X^!,d^!)$
of a metric space $(X,d)$, along a family of subsets 
$\mathcal{Y}$, as follows.
The notation ``!'' is 
to suggest something that shocks, and $X^!$ may be pronounced ``$X$-zap''.

As a set, $X^!$ is obtained from  $X$ by the following process: first, add one point ``$[Y]$'' for each $Y\in\mathcal{Y}$, and second, add one copy of the unit interval for each pair $(y,Y)$ with $y\in Y\in \cY$, with one endpoint identified with $y$ and the other with $[Y]$. We call $[Y]$ the generic point of $Y$. 

Define $d^!$ to be the largest function $X^!\times X^!\to\R$ that satisfies
the following conditions:
\begin{enumerate}
\item $d^!$ is symmetric and satisfies the triangle inequality;

\item  the restriction of $d^!$ to $X\times X$  is bounded above by 
 $d$; and

\item  the restriction of $d^!$ to each unit interval is bounded above by the usual metric on the unit interval. 
\end{enumerate}
One can check that $d^!$ is a metric,
making $(X^!,d^!)$ into a metric space.
Furthermore,  $d^!(y,[Y])=1$
whenever $y\in Y\in\mathcal{Y}$, and  $d^!(x,y)=d(x,y)$ whenever $x,y\in X$
and the right
side is at most $2$.
%
%


\begin{remark}\label{R:EnotG}
Even under favorable assumptions, it may be that $X^!$ is not a geodesic space.
%
%
However it is always true that, for any $X^!$ as above of a geodesic metric space $X$, for any $b>0$, the map $x\mapsto v_x$ of \cref{R:NotGeodesic}  is a quasi-isometry, and for our arguments there is little harm in pretending $X^!$ is geodesic. 
\end{remark}

The most important fact about electrifications we will use is the following.  For a proof, see \cite[Proposition 2.6]{KapovichRafi} and see the discussion before \cite[Proposition 2.6]{KapovichRafi} for history and context including \cite{Farb, Bowditch, MjReeves, DahmaniGuirardel, MaherSchleimer}. 

\begin{proposition}\label{P:ElectHyp}
For all $\delta\geq 0$ there exists a $\delta'\geq0$ and a $D\geq 0$ such that the electrification of a $\delta$-hyperbolic space along a collection of convex subsets is $\delta'$-hyperbolic. 

Moreover, any geodesic in the original space is a $D$-unparametrized quasi-geodesic in the electrification. 
\end{proposition}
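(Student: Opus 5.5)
The plan is to reduce to the standard graph-theoretic setting and then run the classical ``bounded coset penetration''-free argument for electrifying along convex (hence uniformly quasiconvex) subsets. Replace $X$ by the graph $\Gamma_b(X)$ of \cref{R:NotGeodesic}, which is quasi-isometric to $X$ and hence hyperbolic with a constant depending only on $\delta$. Likewise, by \cref{R:EnotG}, replace $X^!$ by the coned-off graph: add a cone vertex $[Y]$ joined to every point of $Y$. The identity map is a quasi-isometry between $X^!$ and this coned-off graph, with constants independent of $X$ and $\cY$. Hyperbolicity is a quasi-isometry invariant, and so is being an unparametrized quasi-geodesic, up to changing constants. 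So it suffices to prove both statements for the coned-off graph.

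For hyperbolicity, I would use a guessing-geodesics criterion of Bowditch/Masur--Schleimer type. Suppose that to each pair $x,y$ we assign a path $\eta(x,y)$ in the coned-off space such that:
\begin{enumerate}
\item $\eta(x,y)$ has diameter at most $M$ whenever $d^!(x,y)\le 1$;
\item for every triple, $\eta(x,y)$ lies in the $M$-neighborhood of $\eta(x,z)\cup\eta(z,y)$.
\end{enumerate}
Then the space is hyperbolic, with a constant depending only on $M$.

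Take $\eta(x,y)$ to be the image of an $X$-geodesic from $x$ to $y$.

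Condition (2) is immediate from $\delta$-thinness of triangles in $X$, because the identity $X\to X^!$ is $1$-Lipschitz.

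Condition (1) is the main obstacle. The key case is $d^!(x,y)\le 2$ with $x,y\in Y$ for one convex $Y$. By convexity the whole geodesic $[x,y]$ lies in $Y$, so its electrified diameter is at most $2$. Pairs joined through several cone points are reduced to this case by concatenation together with condition (2). This is the only place convexity is used, and it is what makes the constant uniform. With merely quasiconvex sets one would instead use the $\delta$-fellow-travelling of geodesics with the set, and the constants would depend on the quasiconvexity constant as well.

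For the ``moreover'' statement, let $\gamma$ be an $X$-geodesic from $x$ to $y$, and compare it with a $d^!$-geodesic $\sigma$ between the same endpoints. Replace each cone excursion $u\to[Y]\to v$ of $\sigma$ by the $X$-geodesic $[u,v]$, which lies in $Y$ by convexity. This gives a concatenation of $X$-geodesics. Applying condition (2) inductively in a divide-and-conquer fashion, as in the proof of the guessing-geodesics lemma, shows that $\gamma$ and $\sigma$ are at Hausdorff distance $O(\log$ length$)$ in $d^!$, and then uniformly bounded in $d^!$ by hyperbolicity.

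A path that is uniformly Hausdorff-close to a geodesic is an unparametrized quasi-geodesic provided it makes no large backtracking. Monotonicity along $\gamma$ can be checked as follows: if a subsegment of $\gamma$ has large $d^!$-diameter but small endpoint distance, condition (2) applied to that subsegment contradicts the Hausdorff bound. Reparametrizing $\gamma$ by the closest-point map to $\sigma$ then produces the required monotone, coarsely Lipschitz map $r$, with $D$ depending only on $\delta$.

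Alternatively, one may simply cite \cite[Proposition 2.6]{KapovichRafi} as the paper indicates; the sketch above is the argument underlying it.
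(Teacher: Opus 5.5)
Your proposal is correct and is essentially the argument behind the reference the paper cites instead of giving a proof, \cite[Proposition 2.6]{KapovichRafi}: a Bowditch-type guessing-geodesics criterion with $X$-geodesics as the guessed paths, where (quasi)convexity is used exactly to bound the electrified diameter of $[x,y]$ when $x,y$ are close in the electrification. One small correction: the no-backtracking step comes from applying the uniform Hausdorff bound to the subsegment itself (which is again an $X$-geodesic between its endpoints), not from condition (2).
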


A convex set is a set containing every geodesic segment with endpoints in the set; but note that the citations show convexity is  more than is truly required here.

\subsection{Comparison geometry}

Consider $\kappa\in \{0,-1\}$. Let $M_0$ be the Euclidean plane, and $M_{-1}$ be the hyperbolic plane. 

If $(X,d)$ is a geodesic metric space, we say that $(X,d)$ is CAT($\kappa$) if every geodesic triangle in $X$ is ``at least as thin'' as a triangle in $M_\kappa$ with the same edge lengths \cite[page 158]{BridsonHaefliger}. 

\begin{example}
A complete, simply connected Riemannian manifold of sectional curvature at most $\kappa$ is CAT($\kappa$) \cite[page 173, 193]{BridsonHaefliger}.
\end{example}

\begin{remark}
We will make frequent use of the following basic facts. 
\begin{enumerate}
\item Every CAT($-1$) space is also CAT(0) \cite[page 165]{BridsonHaefliger}. 
\item There is a unique geodesic joining every pair of points in a CAT(0) space \cite[page 160]{BridsonHaefliger}. 
\item If $(X,d)$ is CAT(0) and $C\subset X$ is convex and complete in the induced metric, then the closest point projection $$p_C: X\to C$$ is well defined and distance non-increasing \cite[page 176]{BridsonHaefliger}.
\item There exists a $\delta\geq 0$ such that every CAT($-1$) space is $\delta$-hyperbolic \cite[page 399]{BridsonHaefliger}.
\end{enumerate}
\end{remark}

We will also make use of the following, which generalizes the fact that if 3 lines in $\bR^n$ meet pairwise orthogonally then they have a point in common.

\begin{lemma}\label{L:TripleIntersection}
Suppose $A, B, C$ are non-empty complete  convex subsets of a CAT(0) space and the closest point projection of any one to any other is their intersection. Then $A\cap B\cap C\neq \emptyset$. 
\end{lemma}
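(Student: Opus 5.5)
The plan is to read the hypothesis as a statement about images: for any two of the sets, say $X,Y\in\{A,B,C\}$, the closest point projection $p_Y$ maps $X$ onto $X\cap Y$, so in particular $p_Y(X)\subseteq X\cap Y$. The maps $p_A,p_B,p_C$ are well defined because each set is non-empty, complete and convex in a CAT(0) space, by the basic facts recalled above. The argument then only needs one point of a pairwise intersection and one further projection.

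First, $A\cap B\neq\emptyset$. Since $B$ is non-empty, $p_A(B)$ is non-empty. By hypothesis $p_A(B)=A\cap B$, so $A\cap B$ is non-empty. Pick a point $x\in A\cap B$.

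Next, set $c:=p_C(x)$. Because $x\in A$, we have $c\in p_C(A)=A\cap C$. Because $x\in B$, we also have $c\in p_C(B)=B\cap C$. Hence $c\in A\cap B\cap C$, which gives the conclusion of \cref{L:TripleIntersection}.

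There is no real obstacle here. The only point needing care is interpreting ``the closest point projection of one to the other is their intersection'' as the set-theoretic image $p_Y(X)$, and the argument uses only the inclusion $p_Y(X)\subseteq X\cap Y$ together with non-emptiness of the image. If the intended hypothesis were weaker, for instance only that $p_Y(X)$ lies within bounded distance of $X\cap Y$, one would instead iterate the projections $p_A,p_B,p_C$ and use that they are $1$-Lipschitz, together with convexity, to find a limit point. The hypothesis as stated makes this unnecessary.
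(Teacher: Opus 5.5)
Your proof is correct and is essentially the paper's argument: the paper observes that $p_C(A\cap B)\subset p_C(A)\cap p_C(B)=A\cap B\cap C$, which is exactly your step of projecting a point $x\in A\cap B$ to $C$. You also check explicitly that $A\cap B=p_A(B)\neq\emptyset$, which the paper leaves implicit.
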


\begin{proof}
$A\cap B\cap C$ is the projection of $A\cap B$ to $C$, as can be seen using $p_C(A\cap B) \subset p_C(A)\cap p_C(B)$. 
\end{proof}

We also have the following strong contraction result for projections. 
\begin{lemma}\label{L:CATBGI}
For all $\epsilon>0$ there exists a $\delta>0$ such that if
\begin{enumerate}
\item $C$ is a closed convex subset of a CAT($-1$) space $X$, and 
\item $x,y$ are points of $X$ such that the geodesic $[x,y]$ joining them is disjoint from the $\epsilon$-neighborhood of $C$, 
\end{enumerate}
then there is a path of length at most $\delta$ on the boundary of $C$ joining $p_C(x)$ and $p_C(y)$. In particular, $p_C([x,y])$ has diameter at most $\delta$. 
\end{lemma}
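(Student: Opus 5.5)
We argue by comparison with the hyperbolic plane $M_{-1}$, where the statement is a quantitative form of the contraction of nearest-point projections.

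\emph{Step 1: a single short path from $p_C(x)$ to $p_C(y)$.} Let $x'=p_C(x)$ and $y'=p_C(y)$. We may first subdivide $[x,y]$ so that it suffices to treat short subsegments; the real content is when $[x,y]$ is long, so we work directly. Consider the geodesic quadrilateral $x,y,y',x'$. Since $[x',y']\sset C$ by convexity and $p_C$ is the nearest-point map, the angles at $x'$ and $y'$ (Alexandrov angles) are at least $\pi/2$. This is the standard first-variation fact in CAT(0) spaces, which follows from the projection being distance non-increasing together with uniqueness of geodesics.

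\emph{Step 2: comparison.} Subdivide the quadrilateral into two geodesic triangles, $x,x',y'$ and $x,y',y$, and build a comparison quadrilateral in $M_{-1}$ by gluing the two comparison triangles along the diagonal. By Alexandrov's lemma (angle comparison in CAT($-1$)), the comparison quadrilateral is convex, with angles at least $\pi/2$ at $\bar x',\bar y'$, and the side $[\bar x,\bar y]$ has distances to $[\bar x',\bar y']$ no smaller than those of the corresponding points in $X$. By the hypothesis and the CAT($-1$) inequality, every point of $[\bar x,\bar y]$ is at distance at least $\epsilon$ from the line through $\bar x',\bar y'$. In $M_{-1}$, a quadrilateral with two right (or obtuse) angles at the base $[\bar x',\bar y']$ and top side staying $\epsilon$-far from the base has base length at most $\delta_0(\epsilon)$. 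This is the hyperbolic trigonometry estimate for Lambert/Saccheri quadrilaterals: if the base has length $L$, the top side must come within distance $\sim \log\bigl(1/\tanh(L/2)\bigr)$ of the base, which tends to $0$ as $L\to\infty$. Hence $d(x',y')\le \delta_0(\epsilon)$.

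\emph{Step 3: the path on $\partial C$ and the diameter claim.} The geodesic $[x',y']$ lies in $C$, but we need a path on the boundary of $C$. I would instead take the path $t\mapsto p_C(\gamma(t))$ with $\gamma=[x,y]$. It lies in $p_C(X-C)$, which is contained in the frontier of $C$: any point of the interior of $C$ is not a projection of an exterior point, because moving toward the exterior point would stay in $C$ and decrease the distance. It is continuous because $p_C$ is $1$-Lipschitz. Its length is controlled by applying Step 2 to subsegments: for a partition $t_0<\dots<t_k$, the sum $\sum d(p_C\gamma(t_i),p_C\gamma(t_{i+1}))$ should be bounded by a constant depending on $\epsilon$. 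To get this, I would use the sharper hyperbolic estimate that the projection of a segment at distance $\ge r$ from $C$ has length at most about $e^{-r}$ times the segment length, so the projected length of the portion of $\gamma$ at distance $\ge r$ decays exponentially. This is obtained by the same quadrilateral comparison applied infinitesimally, i.e.\ a Lambert quadrilateral with a short top side. The distance from $\gamma$ to $C$ is convex along $\gamma$ and grows at least linearly away from its minimum, with slope bounded below once we are beyond a bounded window. This bounded window is handled by Step 2, and the remaining terms sum to a geometric series. Setting $\delta$ to be the total gives the path, and the diameter bound follows immediately.

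The main obstacle is Step 3's length bound, specifically justifying the infinitesimal contraction factor $\le 1/\cosh r$ in a general CAT($-1$) space via comparison quadrilaterals, rather than just the endpoint bound.
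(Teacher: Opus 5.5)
Your plan is sound, but Step 2 needs a different justification. Gluing the comparison triangles of $x,x',y'$ and $x,y',y$ along $[\bar x,\bar y']$ does not give a convex figure in general, and Alexandrov's lemma does not say otherwise. The angle sum at $\bar y'$ or at $\bar x$ can exceed $\pi$; this already happens in $\mathbb{H}^2$ when $y'$ lies inside the triangle $x x' y$.

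The more serious problem is that a point of $[x,y]$ and a point of $[x',y']$ lie in different triangles of your subdivision. So the two triangle comparisons give no inequality between their distance in $X$ and the distance of their comparison points. The standard tool here is Reshetnyak's majorization theorem. The closed curve $x x' y' y$ is majorized, via a $1$-Lipschitz map, by a convex quadrilateral in $\mathbb{H}^2$ with the same side lengths. Its angles are at least the Alexandrov angles, hence at least $\pi/2$ at $\bar x'$ and $\bar y'$.

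You should then apply your Lambert estimate only to the sub-arc of $[\bar x,\bar y]$ lying between the perpendiculars to the base at $\bar x'$ and $\bar y'$. The comparison controls distance to the segment $[\bar x',\bar y']$, not to the whole line. With obtuse base angles, the ends of the top side can be close to the line outside the segment.

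The same majorized quadrilateral removes the obstacle you flag in Step 3. Let the legs have lengths $a,b$, the base $t$ and the top $d$. Base angles in $[\pi/2,\pi]$ give $\cosh d\ge \cosh a\cosh b\cosh t-\sinh a\sinh b$, hence $\sinh(t/2)\le\sinh(d/2)/\sqrt{\cosh a\cosh b}$. This is exactly the $1/\cosh r$ contraction you need. The linear growth you need also follows from Step 2 applied to subsegments: $\{s : d(\gamma(s),C)\le R\}$ is an interval of length at most $2R+\delta_0$.

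The paper takes your Step 2 (the diameter bound) as standard and never estimates the length of $p_C\circ[x,y]$. Instead, take $x_\epsilon\in[p_C(x),x]$ and $y_\epsilon\in[p_C(y),y]$ at distance $\epsilon$ from $C$. Project the detour $[p_C(x),x_\epsilon]\cup[x_\epsilon,y_\epsilon]\cup[y_\epsilon,p_C(y)]$. Here $p_C$ is constant on the two outer pieces and $1$-Lipschitz on the middle one, which has length at most $2\epsilon+\delta_0$. So a bounded-length path needs nothing beyond Step 2, and your Step 3 is the expensive part of your proof.

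What Step 3 buys is a stronger statement. The projection of the geodesic itself has length at most $\int (\cosh d(\gamma(s),C))^{-1}\,ds$, which is bounded in terms of $\epsilon$. It also lies in the frontier of $C$ automatically, because $[x,y]$ misses $C$. The projected detour does not get that last property for free: $[x_\epsilon,y_\epsilon]$ can pass through the interior of $C$ when $\epsilon$ is small, already for a horoball in $\mathbb{H}^2$. So landing on $\partial C$ that way takes an additional step.
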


\begin{proof}
 The second claim is standard. To  see the first claim, it suffices to apply the fact that $p_C$ is distance non-increasing to a path obtained from the piecewise geodesic path $[p_C(x), x] \cup [x,y] \cup [y, p_C(y)]$ by replacing the (0, 1, or 2) portions outside the $\epsilon$-neighborhood of $C$ with geodesic segments.
%
%
%
%
%
\end{proof}

\subsection{Complex hyperbolic geometry}

Consider $\bC^{n+1}$ with the Hermitian form 
$$h(x,y) = -x_0 \overline{y}_0 + \sum_{i=1}^n x_i \overline{y}_i.$$
A vector $x$ is called negative if $h(x,x)<0$. 
Complex hyperbolic space $\bC\bH^n$ can be defined as the projectivization of the set 
of negative vectors. This is endowed with a natural K\"ahler metric called the complex hyperbolic metric, whose group of holomorphic isometries is $PU(n,1)$. Here $U(n,1)$ is the group of complex linear maps preserving $h$, and the linear action of $U(n,1)$ on $\bC^{n+1}$ induces the action of $PU(n,1)$ on complex hyperbolic space. The complex hyperbolic metric is complete and has sectional curvatures in $[-4,-1]$. 

Every negative vector $x$ can be scaled to have $x_0=1$, allowing $\bC\bH^n$ to be identified with the ball 
$$B=\left\{z\in \bC^n : \sum_{i=1}^n |z_i|^2<1\right\}.$$

A hyperplane in $\bC\bH^n$ is the image under a holomorphic isometry of the locus $\{[x]\in \bC\bH^n: x_n=0\}$. All hyperplanes are convex and isometric to $\bC\bH^{n-1}$. 

More generally, say $V$ is a subspace of $\bC^{n+1}$ where the form is positive definite. Then $V^\perp$ defines a convex subspace $\bC\bH(V^\perp)$ isometric to $\bC\bH^{n-\dim_\bC V}$. All hyperplanes arise in this way with $\dim_\bC V=1$. 

If $[z]$ is a point in $\bC\bH(V^\perp)$, then the projectivization of $\bC z \oplus V$ intersects $\bC\bH^n$ in a sub-ball $\bC\bH(\bC z \oplus V)$ that intersects $\bC\bH(V^\perp)$ orthogonally at the point $[z]$. There is a map from the ball to $\bC\bH(V^\perp)$ induced by the orthogonal linear projection map  $\bC^{n+1} = V \oplus V^\perp \to V^\perp$. In the above setting it maps $\bC\bH(\bC z \oplus V)$ to $[z]$. This map is the closest point projection onto $\bC\bH(V^\perp)$. See for example \cite[Section 3.1.5, Lemma 4.3.1]{Goldman}.
%

\subsection{Branched covers}\label{SS:completions}

We call a collection $\cH$ of hyperplanes an orthogonal arrangement if it is locally finite and if any two hyperplanes in $\cH$ are either orthogonal or disjoint.  We now recall the following result from \cite[Section 3]{AllcockAsphericity}. 

\begin{theorem}\label{T:BhatCatMinusOne}
Let $\cH$ be an orthogonal hyperplane arrangement in $B$, and let $\Bhat$ be the metric completion of the universal cover of $B-\cH$. Then $\Bhat$ is CAT($-1$). 
\end{theorem}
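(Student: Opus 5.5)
The strategy is the standard route for metric completions of branched covers: show $\Bhat$ is locally CAT($-1$), show it is simply connected, and apply the Cartan--Hadamard theorem for complete geodesic spaces (Bridson--Haefliger II.4.1, in the version for complete length spaces with upper curvature bounds, due to Alexander--Bishop). Concretely, I would first describe $\Bhat$ locally. Fix a point $\hat x\in\Bhat$ lying over $x\in B$, and let $H_1,\dots,H_k$ be the hyperplanes of $\cH$ through $x$. By local finiteness (\ref{LocFin}) there is a small ball $U$ around $x$ meeting only these. By the orthogonality assumption, after an isometry and linearization at $x$, the pair $(U,U\cap\cH)$ looks like a neighborhood in $\bC\bH^n$ of the transverse intersection of $k$ mutually orthogonal coordinate hyperplanes. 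Then $\pi_1(U-\cH)\cong\Z^k$, and a component of the preimage of $U-\cH$ in the universal cover is the universal cover of $U-\cH$. Its completion is locally the metric completion of the $\Z^k$-fold ``infinite cyclic branched cover'' along each $H_i$. In Euclidean terms, this is a product of $k$ copies of the infinite-angle cone over a circle (the completion of the universal cover of $\bC-\{0\}$) times $\bC^{n-k}$.

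The second step, and the main obstacle, is to prove that this local model is CAT($-1$) near $\hat x$. In flat space this is clear: the completed universal cover of $\bC^*$ is a Euclidean cone of infinite angle, which is CAT(0). A product of CAT(0) spaces is CAT(0). The real difficulty is that $\bC\bH^n$ is not a product. I would use the following approach.

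\begin{itemize}
\item[(a)] I would use the known criterion that the completion of a branched cover of a CAT($\kappa$) space along a convex codimension-two totally geodesic locus is CAT($\kappa$) when the branching is ``cone-angle $\ge 2\pi$''. Since each hyperplane is a convex totally geodesic real codimension-two subspace, this is a Berestovskii/Gromov-type gluing statement: the space is locally a union of CAT($-1$) sectors glued along convex subsets, and Reshetnyak's gluing theorem applies.
\item[(b)] I would handle the $k$ hyperplanes inductively. The orthogonality hypothesis ensures that after branching along $H_1$, the lifts of $H_2,\dots,H_k$ remain convex and complete, and still meet orthogonally. This is exactly where orthogonality is used: closest-point projection onto $H_i$ preserves the other $H_j$, cf.\ the projection description in the complex hyperbolic background subsection.
\item[(c)] I would verify the link condition at singular points: the space of directions at $\hat x$ is a spherical join of infinite-length circles (from the branched directions) and a round sphere, which is CAT(1) since a join of CAT(1) spaces is CAT(1).
\end{itemize}

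Third, I would show $\Bhat$ is a complete geodesic space. Completeness holds by definition, and the geodesic property follows from local compactness-free arguments: a length space that is complete and locally CAT($-1$) with these local models is geodesic, or it can be obtained directly from the local models. Simple connectivity follows because the universal cover of $B-\cH$ is simply connected and the added strata have real codimension two. Loops can therefore be pushed off them using the local product structure, with contractions in the local model obtained by radial retraction. Cartan--Hadamard then gives that $\Bhat$ is CAT($-1$). The step I expect to be most delicate is (a)--(b): carrying out the local CAT($-1$) verification rigorously in the curved, non-locally-compact setting, since the infinite cyclic covers destroy local compactness.
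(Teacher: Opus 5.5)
\textbf{There is a genuine gap: the local CAT($-1$) step.} Your overall framework is fine. Local CAT($-1$), together with completeness and simple connectivity, gives CAT($-1$) by Cartan--Hadamard, and your pushing-off argument through the local models $\Dhat^k\times D^{n-k}$ does give simple connectivity. The paper does not reprove the theorem; it imports it from \cite[Section 3]{AllcockAsphericity}. But the real content of the theorem is the step you defer: that $\Bhat$ is locally CAT($-1$) near points where $k$ hyperplanes meet, in a geometry that is not locally a product. None of (a)--(c) establishes it.

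\textbf{Step (a) cannot work in $\bC\bH^n$ for $n\geq 2$.} The sector argument you have in mind is the real hyperbolic one: cut around a codimension-two totally geodesic subspace along totally geodesic half-hyperplanes, then apply Reshetnyak gluing. But $\bC\bH^n$ has no totally geodesic real hypersurfaces. Since the relative interior of a closed convex subset of a Riemannian manifold is totally geodesic, there are no convex ``pages'' bounded by a hyperplane $H$ along which to glue. In fact there are no convex wedges at all. If a closed convex set contains a relatively open piece of $H$ and a point off $H$, a Jacobi field argument shows that its tangent sectors in the normal planes of $H$ are preserved by normal parallel transport along geodesics of $H$. The normal bundle of $H$ has nonzero curvature, which is the same phenomenon the paper has to handle for angle domains. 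So these sectors are invariant under small nontrivial rotations and must be whole planes, meaning the set contains that piece of $H$ in its interior. Reshetnyak's theorem therefore has nothing to act on.

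The ``known criterion'' you cite in (a) is also not a theorem for general CAT($\kappa$) spaces. For a single hyperplane it is exactly the case $\cH=\{H\}$ of what you are proving. General statements of that type are the ramification conjectures of \cite{AllcockBranchedCovers,PanovPetrunin}, which the paper treats as open.

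\textbf{Step (b)} inherits this problem and makes it worse, because after branching along $H_1$ the base is no longer a Riemannian manifold, nor locally compact.

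\textbf{Step (c)} gives only a necessary condition. CAT($1$) links imply local CAT($\kappa$) for spaces locally isometric to $\kappa$-cones over their links, such as $M_\kappa$-polyhedral complexes, but not in general. Every Riemannian manifold, including a round sphere, has round spheres as spaces of directions. Here $\Bhat$ is only bi-Lipschitz to $\Dhat^k\times D^{n-k}$, not conical.

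What is missing is a genuine comparison argument for small triangles near the branch locus. It has to use the Riemannian geometry of $B$ and the orthogonality hypothesis directly, and this is precisely the part the paper outsources to \cite{AllcockAsphericity}.
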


There is a natural map $\Bhat\to B$, and $\Bhat$ can be thought of as the universal branched cover of $B$ branched along $\cH$. \cref{T:BhatCatMinusOne} is so fundamental to our analysis that we will frequently use it without further comment. The following warning will become evident later in the paper, but we state it now for emphasis. 

\begin{warning}
$\Bhat$ is not locally compact. The $\Bhat$ geodesic joining two points in the universal cover of $B-\cH$ does not always stay in that universal cover. 
\end{warning}

\begin{remark}
Experts in Teichm\"uller theory may wish to loosely compare $\Bhat$ to augmented Teichm\"muller space (also known as the Deligne-Mumford bordification of Teichm\"muller space).
\end{remark}


\section{Criteria for hierarchical hyperbolicity }\label{S:Criteria}

In this section we give sufficient criteria for hierarchical hyperbolicity. These criteria have large overlap with the usual definitions and are  easy to prove, but are nonetheless convenient for our purposes. The proofs of the criteria are relegated to \cref{S:CritProof}.  Non-experts can consult \cite{WrightWhatIs, SistoWhatIs} for an introduction to hierarchical hyperbolicity if desired, but we do not assume  any prior knowledge.

Speaking colloquially, one might say that the skeleton of a hierarchically hyperbolic space  is a suitably reasonable metric space equipped with a suitably reasonable collection of maps to (Gromov) hyperbolic spaces. We isolate this core part of the definition of an HHS as follows.

\begin{definition}
We will say that a metric space is quasi-geodesic if there exists  $q$ such that every pair of points can be joined by a $q$-quasigeodesic. 
\end{definition}

\begin{definition}\label{D:preHHS}
Let $(\cX,d)$ be a quasi-geodesic space, and let $\IndexSmall$ be a set. Suppose  there exists $\delta, K\geq 0$ such that for each $U\in \IndexSmall$, we have a $\delta$-hyperbolic space $(\cC(U), d_U)$ and a $(K,K)$-coarsely Lipschitz  map 
$$\pi_U : \cX \to \cC(U).$$
 Then we say $(\cX, \IndexSmall, \pi_\bullet)$ is a {pre-HHS}. Elements of $\IndexSmall$ are called domains, and $\IndexSmall$ is called the index set. 
\end{definition}

We reserve the notation $\IndexBig$ for index sets known to have all of the structure required for hierarchical hyperbolicity, and use $\IndexSmall$ for index sets otherwise. The notation $\IndexSmall$ can be pronounced as ``Sigma flat''. 

From one point of view, the idea of hierarchical hyperbolicity is that  $\pi_\bullet$ should be a type of constrained coordinate system for $\cX$. The constraints on the coordinates appear as severe restrictions on the maps $$\pi_U\times \pi_V : \cX \to \cC(U)\times \cC(V).$$ These restrictions come in one of three forms, and accordingly any pair $(U,V)$ of domains relate to each other in one of three ways. We isolate this combinatorial part of the definition of an HHS as follows. 

\begin{definition}\label{D:Valid}
A set $\IndexBig$ is called a valid index set if it is endowed with a symmetric anti-reflexive orthogonality relation $\perp$ and a nesting partial order $\sqsubseteq$ with a unique maximal element, 
and the following hold:
\begin{enumerate}
\item\label[Iaxiom]{A:ChainsBounded} \textbf{Finite height:}  The size of chains for the nesting partial order is bounded.
\item\label[Iaxiom]{A:Coherent} \textbf{Coherence:}  If $V \sqsubseteq W$ and $U\perp W$ then $U \perp V$.
\item\label[Iaxiom]{A:Container} \textbf{Containers:} If $V \sqsubseteq W$ then the set $\cP_{V,W}$ of domains orthogonal to $V$ and nested in $W$ is either empty or there is a domain $U\sqsubsetneq W$ such that every domain in $\cP_{V,W}$ is nested in $U$. 
\end{enumerate}
If $U,V\in \IndexBig$ are not orthogonal and neither is nested in the other, we say they are asynchronous
and
write $U\pitchfork V$. (We use  ``asynchronous'' instead of the standard term ``transverse'', since the latter is potentially misleading in our examples.) 
\end{definition}

The fact that $\perp$ is anti-reflexive means that no domain is orthogonal to itself, and together with \eqref{A:Coherent} implies that a nested pair is never orthogonal.

The containers axiom indicates that, when $V$ is nested in $W$ and one restricts attention to domains nested in $W$, then the set of domains orthogonal to $V$ must be in a sense localized. We will find it useful to weaken this requirement slightly in the following way. Note that we say that a set (or tuple) of domains is orthogonal if every pair of distinct domains in the set (or tuple) is orthogonal. 

\begin{definition}\label{D:NearlyValid}
Suppose \cref{D:Valid} holds except with \eqref{A:Container}  replaced by 
\begin{enumerate}
\item[(3')]\label[Iaxiom]{A:WeakContainer} \textbf{Weak containers:} If $V \sqsubseteq W$ then the set $\cP_{V,W}$ of domains orthogonal to $V$ and nested in $W$ is either empty or there is an orthogonal set $\{U_i\}$ consisting of domains properly nested in $W$ 
 such that every domain in $\cP_{V,W}$ is nested in one of the $U_i$. Furthermore the size of orthogonal sets of domains is bounded.
\end{enumerate}
Then we say $\IndexSmall$ is a nearly valid index set. 
\end{definition}

\begin{example}\label{E:MCG}
If $\Sigma_g$ is a (2 real dimensional) closed surface of genus $g$,  a subsurface of $\Sigma_g$ is a closed submanifold $U$ with boundary such that no component of $U$ or $\Sigma_g-U$ is a disc. If  $\IndexSmall$ is the set of connected subsurfaces up to isotopy, then $\IndexSmall$ is a nearly valid index set. Here $U$ is orthogonal to  $V$ if $U\neq V$ and they can be isotoped to be disjoint; and $U$ is nested in $V$ if they are not orthogonal and $U$ can be isotoped to be a subset of $V$. If $W=\Sigma_g$, then the  orthogonal set $\{U_i\}$ required by the weak containers axiom is the set of components of $\Sigma_g-V$ together with (if $V$ is not annular) an annular subsurface for each boundary component of $V$. 
\end{example}

In what follows, we call $\cA_U$ the active region for $U$; one should imagine that $\pi_U$ is difficult to change outside of $\cA_U$.

\begin{proposition}\label{P:Criterion}
Let $(\cX, \IndexSmall, \pi_\bullet)$  be a pre-HHS with a nearly valid index set. Suppose we fix $B\geq 0$ and a subset $\cA_U\subset  \cX$  for each $U\in \IndexSmall$,  and  suppose the following hold. 
\begin{enumerate}
\item\label[Aaxiom]{A:U} \textbf{Enough projections:} For each $\kappa\geq 0$ there exists $\omega\geq0$ such that if $d(x,y)\geq \omega$ then  $d_{U}(\pi_U(x),\pi_U(y))\geq \kappa$ for some $U$.
\item\label[Aaxiom]{A:PR} \textbf{Density:} For all orthogonal tuples $T=(U_1, \ldots, U_k)$ of   domains,   
$$\pi_{U_1} \times \cdots \times \pi_{U_k}(\cA_{U_1}\cap \cdots \cap \cA_{U_k})$$
is $B$-dense in $\cC(U_1)\times \cdots \times \cC(U_k)$.
\item\label[Aaxiom]{A:BP} \textbf{Bounded projections:} If $U\pitchfork V$ or $U\sqsubsetneq V$, then $$\diam \pi_V(\cA_U)\leq B.$$
Furthermore, if $U\sqsubsetneq V$ then  $\cA_U \cap \cA_V\neq\emptyset$.
\item\label[Aaxiom]{A:SR} \textbf{Structured redundancy:}  If $U\pitchfork V$, for any $x$ we have at least one of
$$d_U(\pi_U(x), \pi_U(\cA_V)) \leq B \quad\quad\text{or}\quad\quad d_V(\pi_V(x), \pi_V(\cA_U)) \leq B.$$
 If $U\sqsubsetneq V$, for any $x,y$, if there is a geodesic from $\pi_V(x)$ to $ \pi_V(y)$ that has distance at least $B$ from $\pi_V(\cA_U)$, then $d_U(\pi_U(x), \pi_U(y))\leq B$.
\item\label[Aaxiom]{A:LL} \textbf{Progress localization:} For any $P\geq 0$ there exists  $N\geq 0$ and an $R\geq0$ such that for any $V$ and any $x,y$  with $$d_V(\pi_V(x),\pi_V(y))\leq P$$  there is a set of at most $N$ domains $U_i\sqsubsetneq V$ such that if $T\sqsubsetneq V$ is not nested in or orthogonal to one of the $U_i$ then $d_T(\pi_T(x),\pi_T(y))\leq R$. 
\end{enumerate}
Then $\cX$ is a hierarchically hyperbolic space. 
\end{proposition}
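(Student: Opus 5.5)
Our plan is to verify the standard Behrstock--Hagen--Sisto axioms for an HHS directly from the hypotheses. The main new work is to build projections $\rho^U_V$ and to upgrade the nearly valid index set to a genuinely valid one.

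\textbf{Step 1: upgrade the index set.} We would enlarge $\IndexSmall$ to $\IndexBig$ by adding, for each finite orthogonal set $\{U_i\}$ of pairwise orthogonal domains of cardinality at least two, a formal ``product'' domain $W_{\{U_i\}}$. Its hyperbolic space is a point (or bounded diameter), and it is nested in $W$ whenever all the $U_i$ are. We declare $V \sqsubseteq W_{\{U_i\}}$ if $V$ is nested in some $U_i$, and $V\perp W_{\{U_i\}}$ if $V$ is orthogonal to every $U_i$. This is the standard trick, as in \cite{BehrstockHagenSistoHHSII} and as used in the ``almost HHS'' literature.

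Finite height survives because orthogonal sets are bounded in size. Coherence follows directly from the definitions. The containers axiom then holds by taking the product of the $U_i$ supplied by weak containers. We also need to check that the new domains are properly nested in $W$ and are not orthogonal to $V$; the product domain of the $U_i$ nested in $W$ handles the containers requirement. For all the other axioms, bounded spaces make the new domains trivial, after setting $\cA_{W_{\{U_i\}}}=\bigcap \cA_{U_i}$ (nonempty by Density).

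\textbf{Step 2: define relative projections.} For $U\pitchfork V$ or $U\sqsubsetneq V$, set $\rho^U_V=\pi_V(\cA_U)$; this set has diameter at most $B$ by Bounded projections. For $U\sqsubsetneq V$, define the downward map $\rho^V_U\colon \cC(V)\to 2^{\cC(U)}$ by $\rho^V_U(p)=\pi_U(\{x : \pi_V(x) \text{ near } p\})$ when $p$ is far from $\rho^U_V$, and arbitrary otherwise. Its boundedness away from $\rho^U_V$ follows from the second half of Structured redundancy applied to points $x,y$ with $\pi_V(x),\pi_V(y)$ near $p$. Here we use that the $\pi_V$ are coarsely surjective onto a large subset, which follows from Density applied to the singleton tuple.

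The HHS consistency inequalities then come as follows:
\begin{itemize}
\item Transverse consistency is the first half of Structured redundancy.
\item Nested consistency and bounded geodesic image come from the second half.
\item $\rho$-consistency for triples follows by combining Bounded projections with $\cA_U\cap\cA_V\neq\emptyset$.
\end{itemize}

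\textbf{Step 3: large links, partial realization, uniqueness, and distance formula.} The Large links axiom is essentially Progress localization, taking $P$ to be $d_V(\pi_V(x),\pi_V(y))$. We must bound the number of domains by a linear function of this quantity. We would do this by subdividing a geodesic in $\cC(V)$ into unit pieces, choosing points of $\cX$ projecting near the subdivision points (via Density), and applying Progress localization with $P=1$ to consecutive pairs. The point of $\cX$ realizing consecutive projections must also control projections to the $T$; we would handle this by concatenating, which needs care.

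Partial realization comes from Density, since points in $\bigcap\cA_{U_i}$ project near $\rho^{U_i}_V$ for the relevant $V$, using Bounded projections. Uniqueness is Enough projections. Because the space is quasi-geodesic, the remaining HHS axioms (or the criterion of \cite{BehrstockHagenSistoHHSII}) then give the conclusion.

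\textbf{Main obstacle.} We expect the main obstacle to be the large links axiom and the well-definedness of the downward maps $\rho^V_U$. The hypotheses control pairs of points of $\cX$, not points of $\cC(V)$, so linearity in the distance has to be extracted from Progress localization by the subdivision argument sketched in Step 3.
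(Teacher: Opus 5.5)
Your architecture is the paper's: enlarge to orthogonal sets with point-valued hyperbolic spaces, take $\rho^U_V$ from $\pi_V(\cA_U)$, get transversality and bounded geodesic image from structured redundancy, $\rho$-consistency from $\cA_U\cap\cA_V\neq\emptyset$, partial realization from density, and uniqueness from enough projections. Your downward maps $\rho^V_U$ are unnecessary for the form of the axioms in \cref{D:HHS}, where bounded geodesic image is stated for pairs $x,y\in\cX$ and is literally the second half of structured redundancy, but they are harmless.

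The genuine gap is in large links. Progress localization only bounds $d_T(\pi_T(x),\pi_T(y))$ for $T\sqsubsetneq V$ that are neither nested in \emph{nor orthogonal to} some $U_i$. Large links must bound it for every $T\sqsubsetneq V$ not nested in a listed domain. The domains orthogonal to some $U_i$ (possibly infinitely many) are not handled by your subdivision argument, so ``large links is essentially progress localization'' fails as stated. The missing step is to use the containers from your Step 1. For each $U_i$ with $\cP_{U_i,V}\neq\emptyset$, add a container $C_i\sqsubsetneq V$ for $\cP_{U_i,V}$. Any $T\sqsubsetneq V$ nested in none of the $U_i,C_i$ is then orthogonal to none of the $U_i$, so progress localization bounds it, and the count at most doubles. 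This is exactly how the paper proves the less uniform version of large links.

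Separately, the concatenation you flag as needing care is where the uniform constant $E$ comes from, and plain concatenation does not give it. Summing the bound $R$ over consecutive pieces only yields $d_T(\pi_T(x),\pi_T(y))\lesssim R\, d_V(\pi_V(x),\pi_V(y))$, which is not uniform. The needed input is bounded geodesic image. If the geodesic stays far from $\rho^T_V$, then $d_T\le E$ outright. Otherwise the geodesic meets a neighborhood of $\rho^T_V$ only in a window of length bounded in terms of $E$ and $\delta$. Take realized points $x_i$ and $x_{i+L}$ (from partial realization) on either side of this window. Bounded geodesic image makes $\pi_T(x_i)$ $E$-close to $\pi_T(x)$ and $\pi_T(x_{i+L})$ $E$-close to $\pi_T(y)$. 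So you need the container-augmented progress localization only for the single pair $x_i,x_{i+L}$, with the fixed value $P\approx L+2\alpha$. This is the proof of \cref{L:LLuniformity}. Finally, linearity of the number of domains in $d_V(\pi_V(x),\pi_V(y))$ is not required by \cref{D:HHS}, although this argument produces it anyway.
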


More precisely, the proof enlarges $\IndexSmall$ to a larger index set $\IndexBig$ and shows that $(\cX, \IndexBig, \pi_\bullet)$ is an HHS. The domains of $\IndexBig- \IndexSmall$ can be regarded as ``dummy domains''; their coordinate functions are constant and their addition can be viewed as a minor technical point. In this case the dummy domains are exactly the orthogonal subsets of $\IndexSmall$ of size greater than 1, and in the analogy to \cref{E:MCG} they should be compared to disconnected surfaces.

From the coordinate system point of view, speaking very roughly, \eqref{A:U} indicates there are enough coordinates to distinguish points, \eqref{A:PR} indicates orthogonal coordinates are independent, \eqref{A:BP} indicates that $\cA_U$ is collapsed to a point in $\cC(V)$,  \eqref{A:SR}   indicates  that the $\pi_V$ coordinate is largely redundant off the corresponding active region $\cA_V$, and  \eqref{A:LL} indicates a sort of local finiteness for the coordinate system.  

Even when an assumption of \cref{P:Criterion} corresponds to an axiom of hierarchical hyperbolicity, all of which have standard names, we have chosen a different name to more clearly indicate when we are referring to an assumption and when we are referring to an axiom.

We also have a variant for groups. Recall that a group action on a metric space is called geometric if the group acts by isometries and the action is properly discontinuous and cocompact. 

\begin{proposition}\label{P:CriterionGroups}
Suppose $(\cX, \IndexSmall, \pi_\bullet)$ satisfies the assumptions of \cref{P:Criterion}, $\cX$ is proper, and $G$ is a group that acts on $\cX$ geometrically. Suppose $G$ also acts on the index set $\IndexSmall$, via a map $g\mapsto g^{\diamond}\in \Aut(\IndexSmall)$,  preserving the orthogonality and nesting relations, and there are finitely many orbits of orthogonal tuples. 
Suppose that  for each $U\in \IndexSmall, g\in G$ there is an isometry $g(U): \cC(U) \to \cC(g^\diamond U)$, and
$$g \cA_U = \cA_{g^\diamond U} \quad\quad\text{and}\quad\quad  g(U)\circ \pi_U = \pi_{g^\diamond U}  \circ g   .$$ 
Finally, suppose that for each $U\in \IndexSmall$ and $g,h\in G$ we have $(hg)(U)=h(g^\diamond U)\circ g(U).$ 
Then $G$ is a hierarchically hyperbolic group. 
\end{proposition}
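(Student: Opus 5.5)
The plan is to deduce \cref{P:CriterionGroups} from \cref{P:Criterion}, following the standard definition of a hierarchically hyperbolic group. A group $G$ is an HHG if it acts geometrically on an HHS $(\cX,\IndexBig)$ and also acts cofinitely on $\IndexBig$, preserving $\perp$ and $\sqsubseteq$. In addition, there must be isometries $g(U):\cC(U)\to\cC(gU)$, compatible with composition, such that the projections and the relative projections $\rho^U_V$ are equivariant, at least up to uniformly bounded error.

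First, apply \cref{P:Criterion}. This produces the enlarged index set $\IndexBig$, which is $\IndexSmall$ together with the dummy domains, namely the orthogonal subsets of $\IndexSmall$ of size at least $2$. It also produces the relative projections, defined as in \cref{S:CritProof}. Roughly, $\rho^U_V=\pi_V(\cA_U)$ when $U\pitchfork V$ or $U\sqsubsetneq V$, and in the downward direction the map $\cC(V)\to\cC(U)$ is defined using $\cA_U$ and closest points. The key observation is that every piece of HHS structure built in that proof is defined naturally from the data $(\cC(U),\pi_U,\cA_U,\perp,\sqsubseteq)$, with no arbitrary choices. Where a choice does occur, such as picking a point in a coarse set, the resulting ambiguity is bounded by constants, and the HHG definition tolerates that.

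Second, extend the action. The group $G$ acts on dummy domains by $g^\diamond\{U_i\}=\{g^\diamond U_i\}$, which makes sense because $g^\diamond$ preserves orthogonality. Since each dummy domain has a point as its hyperbolic space, the isometries $g(\{U_i\})$ are trivial. The nesting and orthogonality relations involving dummy domains are defined combinatorially from those on $\IndexSmall$, so $G$ preserves them.

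Cofiniteness needs a short argument. The hypothesis gives finitely many orbits of orthogonal tuples, and tuples of length $1$ are single domains, so $G$ has finitely many orbits on $\IndexSmall$. It also has finitely many orbits on the dummy domains, since each dummy domain is the underlying set of an orthogonal tuple; the sizes are bounded by the weak containers axiom.

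Third, check equivariance. Using $g\cA_U=\cA_{g^\diamond U}$ and $g(U)\circ\pi_U=\pi_{g^\diamond U}\circ g$, we get
\[
g(V)\,\rho^U_V = g(V)\,\pi_V(\cA_U)=\pi_{g^\diamond V}(g\cA_U)=\rho^{g^\diamond U}_{g^\diamond V},
\]
and the downward relative projections are handled the same way. The cocycle condition $(hg)(U)=h(g^\diamond U)\circ g(U)$ is exactly the compatibility required of the isometries. Properness of $\cX$ together with the geometric action is what the definition of an HHG requires, and $G$ inherits the HHS structure through an orbit map.

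I expect the main obstacle to be the second point above: confirming that every auxiliary construction in the proof of \cref{P:Criterion} (the containers chosen for weak containers, and the relative projections to and from dummy domains) can be made $G$-equivariantly. If the container $U$ for $\cP_{V,W}$ is chosen using some non-canonical selection, I would replace it by a canonical choice, namely the dummy domain formed by the set $\{U_i\}$ itself or the appropriate maximal orthogonal set. Then $g$ carries the container for $\cP_{V,W}$ to the container for $\cP_{g^\diamond V,g^\diamond W}$, because everything is defined from $\perp$ and $\sqsubseteq$, which $g^\diamond$ preserves.
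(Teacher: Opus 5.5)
Your proposal is correct and follows the paper's argument. The paper likewise extends $g^\diamond$ elementwise to the dummy domains of $\IndexBig$, uses constant isometries on their point-spaces, and gets the coarse equivariance of the $\rho$ points from $g\cA_U=\cA_{g^\diamond U}$, $g(U)\circ\pi_U=\pi_{g^\diamond U}\circ g$ and the $2B$ bound of \cref{L:WellDefined}. The obstacle you anticipate does not arise: containers are only an existence axiom of the index set, not data that must be chosen equivariantly, and the version of the HHS definition used here has no downward relative projections.
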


\section{The model space and statement of the goal }
\label{SecModelSpace}

In this section we will define a proper metric space $\Bhatthick$ on which $\Gammahat$ acts properly and cocompactly. It will serve as a sort of geometric model for $\Gammahat$. Our eventual goal in this paper is to apply \cref{P:CriterionGroups} with $\cX=\Bhatthick$ and $G=\Gammahat$.

Let $s_\theta \gg_{n,\cH, \Gamma} 1$ be a very large constant depending on $n, \cH, \Gamma$; see \cref{R:constants} for details. Having fixed $s_\theta$, 
for each cusp $c\in\partial B$ of $\G$ 
choose a closed horoball $T_c$, centered at $c$ and small
enough that every hyperplane coming within  $s_\theta$  of $T_c$ passes through $c$ (see for example \cite[page 21]{BelegradekHruska}). 
The notation
``$T$'' is for consistency with the next paragraph, where it indicates
``tubular''.
The $T_c$ are the only horoballs of $B$ that we will consider,
so a horoball resp.\ horosphere (of $B$) will always
mean $T_c$ resp.\ $\partial T_c$ for some cusp~$c$.
We may suppose without loss that 
the family of horoballs obtained this way is $\G$-invariant,
and every pair of them have distance at least $s_\theta$ from each other.

\begin{remark}
For what comes, it is helpful to note that, since no hyperplane can be contained entirely in a horoball, and since the arrangement is locally finite,  there are only finitely many $\Gamma$-orbits of hyperplanes. 
\end{remark}

We may
choose a very small constant $0<r_\theta\ll_{n,\cH, \Gamma, s_\theta} 1$ depending on $n,\cH, \Gamma, s_\theta$. We define
$T_H$ to be the closed $r_\theta$-neighborhood of any hyperplane $H$ of~$B$. (Recall that we have specialized ``hyperplane of~$B$'' to
mean a member of $\H$.)

\begin{remark}\label{R:constants}
We require the following of our choices of $s_\theta$ and $r_\theta$. 
\begin{enumerate}
\item It is convenient to assume, especially for \cref{L:GettingToStratum}, that any hyperplane that comes within some large distance of one of the horoballs $T_c$ actually enters the $T_c$. A weaker assumption is used in \cref{L:UhatcProjection}. We also assume that the projection of any horoball to any other horoball has diameter at most 1, for \cref{L:GettingToStratum}, and that the projection of a horoball to any stratum closure not entering the horoball has diameter at most 1, again for  \cref{L:GettingToStratum}.
\item\label{ConstantsA1} \cref{S:StratumDomains} will show that each point of $B$ not in one of the $T_c$ has a small neighborhood in which $\cH$ with certain properties. The size of this neighborhood, call it $\xi$, depends on $n, \cH, s_\theta$. We require, especially for \cref{SS:Uniformity}, that $r_\theta$ times a constant depending on $n$ is  smaller than $\xi$. A weaker assumption is used in \cref{L:EasyCaseOfAngleLipschitz,L:NaiveProjectionReasonableNearHhat}. 
\item It would be nice to choose the $T_H$ thin enough to miss each
other except near orthogonal intersections.  This is
not possible, because two hyperplanes may 
approach each other at a cusp. However we will assume $r_\theta$ is small enough that if $x$ and $y$ are on hyperplanes that don't intersect, and neither $x$ nor $y$ is on a horoball, then the distance from $x$ to $y$ is at least $1000 n r_\theta$. 
\end{enumerate}
In an attempt to minimize the number of different constants used in the paper, we sometimes pick particular new constants as a function of $n$ and $r_\theta$, even when such a precise choice is not required. 
\end{remark}

 Note that we say two hyperplanes are parallel if they are disjoint but share a point at infinity. Our assumptions imply in particular the following. 
\begin{enumerate}
\item If $H,H'$ are parallel, then 
$T_H\cap T_{H'}\sset T_c$, where the cusp $c\in\partial B^n$
is the common limit
point of $H$ and $H'$ in~$\partial B^n$.
\item  If $H,H'$ are neither
parallel nor orthogonal, then
$T_H\cap T_{H'}=\emptyset$.
\end{enumerate}

%
%

Define $\Bthick$ (``$B$thick'')
as the closure in $B$ of the
complement of the union of
all $T_c$ and~$T_H$.

Define $\Bhatthick$ as its universal cover, 
which is also the universal cover of $\G\back\Bthick$. Equip $\Bhatthick$ with its lifted intrinsic path metric, and note that since $\G\back\Bthick$ is compact we have that $\Bhatthick$ is a proper metric space. 

%
We will show that $\Bhatthick$ is a Hierarchically Hyperbolic Space (HHS) using our criteria for hierarchical hyperbolicity with $\cX=\Bhatthick$. Recall that these criteria require a set $\IndexSmall$ of domains, and for each domain~$U$ 
a Gromov hyperbolic space $\cC(U)$ and a coarsely Lipschitz map 
$\pi_U : \Bhatthick \to \cC(U)$.

We will use four types of domains:
``stratum domains'', ``angle domains'', ``center domains''
and ``tree domains''.
These domains and their associated
spaces and projections are defined below.

\begin{remark}\label{R:extension}
Each of our $\pi_U: \Bhatthick\to \cC(U)$ will be defined on a larger subset $\Bhat^U$, with $$\Bhatthick \subset \Bhat^U \subset \Bhat,$$
giving a map $$\pi_U : \Bhat^U \to \cC(U)$$ whose restriction to $\Bhatthick$ is $\pi_U: \Bhatthick\to \cC(U)$. The metric on $\Bhat^U$ will always be the intrinsic path metric.  Details will be provided in \cref{R:extensionstrata,R:extensiontree,R:extensioncenter,R:extensionangle}. 
In every result about one of the $\pi_U$ we will indicate which version we mean, but the distinction is largely a technical detail. 
\end{remark}

\section{Stratum domains and the branched cover }\label{S:StratumDomains}

In this section we describe the ``stratum domains'' required
for our HHS structure on $\Bhatthick$.
The main  ingredients in the definition are the universal branched
cover $\Bhat$ of $B$ over~$\H$ and various of its subspaces.  
These are also essential for
other parts of the HHS structure.

We will consistently use hats to indicate objects
associated to the universal cover of $\Bcirc=B-\H$.
For example, we define $\Bhatcirc$ as this universal
cover,
$\Bhat$ as its metric completion, and $\HHAT\sset\Bhat$ as the preimage of~$\H$.   
The deck group of $\Bhatcirc\to\Gamma\backslash\Bcirc$
has already been named  $\Gammahat$.
It is easy to see that $\Bcirc$ deformation retracts to
$\Bthick$, so we may regard $\Bhatthick$ as the preimage
of $\Bthick$ in $\Bhatcirc\subseteq\Bhat$.  The three spaces
$\Bhatthick\sset\Bhatcirc\sset\Bhat$ lie over the
three spaces $\Bthick\sset \Bcirc\sset B.$ We think of the natural map 
$b:\Bhat\to B$ as  a branched
cover.    

\subsection{Local models}\label{SS:Local}
To give a detailed local description,
we begin by constructing a special chart  around any
fixed $x\in B$.
Suppose 
exactly $k$ hyperplanes of~$B$ pass through~$x$.
Their intersection is a $B^{n-k}$. The intersection of
any $k-1$ of them is a $B^{n-k+1}$, and we consider the
$B^1$ therein that passes through~$x$ and is orthogonal to
$B^{n-k}$.  We complete these $B^1$'s to a ``set of axes''
by choosing $n-k$ additional $B^1$'s inside $B^{n-k}$,
that are mutually orthogonal and pass through~$x$.  
We now have $n$ many $B^1$'s. The closest point projection to each $B^1$, being the projectivization of a linear map, is holomorphic. 
Nearest-point projection to the product of the $B^1$'s identifies $B$
with an open subset of $(B^1)^n$. (If one takes the $B^1$'s to be the standard coordinate directions through the origin of the ball, then this is the standard inclusion of the ball in $\bC^n$ into the product of the $n$ balls in $\bC^1$.) 
We order the factors so that the first $k$ of them
are the first~$k$ that we specified.  
We index the hyperplanes through~$x$ by $\set{i}{1\leq i\leq k}$,
such that the $i$th one is
the preimage of 
$(B^1)^{i-1}\times\{x\}\times(B^1)^{n-i}$.

In each $B^1$ we choose a closed disk centered at~$x$, 
and define~$V\sset B$ as the preimage of their product.
By choosing the disks
small enough, we may suppose that $V$ projects surjectively 
(hence biholomorphically)
to their product.  We may also
suppose that $V$ is disjoint from $g V$ when $g\in \Gamma - \Stab_\Gamma(x)$,
and no hyperplanes intersect $V$
except those containing~$x$.   Next we identify each disk
biholomorphically with the closed unit disk $D\sset\C$,
such that $x$ corresponds to the origin.  Equipping
$D$ with its standard Euclidean metric, 
and $D^n$ with the product metric, we then have
\begin{enumerate}
    \item
        $V$ is a closed neighborhood of~$x$, identified
        with $D^n$ via a map which is biholomorphic 
        and bi-Lipschitz.
    \item
        The point $x$ corresponds to $(0,\dots,0)$.
    \item
        Whenever $1\leq i\leq k$, the intersection
        of $V$ with the $i$th hyperplane through~$x$ 
        corresponds to 
        $\{(y_1,\dots,y_{i-1},0,y_{i+1},\dots,y_n)\}\sset D^n$.
    \item
        $V\cap\H$ corresponds to the union of these
        $k$ many copies of $D^{n-1}$ in~$D^n$.
\end{enumerate}

\begin{remark}\label{R:localgeodesics}
These coordinates have many properties beyond those guaranteed by their bi-Lipschitz and biholomorphic properties. For example, their definition via nearest-point projection implies that the segments in $V$ which are constant in all but one coordinate and are a radial segment in the remaining coordinate are geodesics. These geodesics give the shortest paths to the relevant hyperplane. 
\end{remark}

%
%

Our next step is to 
define $\Vcirc$ as $V-\H$ and work out its preimage in~$\Bhatcirc$.
Writing 
$\Dcirc$ for $D-\{0\}$, $\Vcirc$ corresponds to 
$$(\Dcirc)^k\times D^{n-k}\sset D^n.$$
So $\pi_1(\Vcirc)\iso\Z^k$.
It is easy
to see that $\pi_1(\Vcirc)\to\pi_1(\Bcirc)$ is injective, by using winding number.
Therefore the preimage of $\Vcirc$ in $\Bhatcirc$ is a
disjoint union of copies of the universal cover~$\Vhatcirc$ of~$\Vcirc$.
The $\pi_1(\Bcirc)$-stabilizers of these copies are the conjugates
of  $\pi_1(\Vcirc)$.  Because $\Bhat$ was defined
as the metric completion of $\Bhatcirc$, the inclusion $\Vhatcirc\to\Bhatcirc$
extends to a natural map from the metric 
completion~$\Vhat$ of $\Vhatcirc$ to~$\Bhat$.  
It is easy to see
that this is injective, and that the closures of distinct components
of $b^{-1}(\Vcirc)$
are disjoint.  Therefore $b^{-1}(V)$ is a disjoint union of
copies of $\Vhat$.
(We work with closed neighborhoods because using
open neighborhoods would complicate 
this statement.)

This reduces the study of the branching above~$x$ to understanding~$\Vhat$ and
its projection to~$V$.  For this we use the bi-Lipschitz property of our
identification between $V$ and~$D^n$.
Because the metric on $\Vhatcirc$ is defined by taking infima of
lengths of paths, the bi-Lipschitz identification $\Vcirc\iso(\Dcirc)^k\times D^{n-k}$
lifts to a bi-Lipschitz identification of $\Vhatcirc$ with
$(\Dhatcirc)^k\times D^{n-k}$.  Here 
$\Dhatcirc\sset\C$ is the universal cover of~$\Dcirc$, namely the closed
left half-plane, with $\Dhatcirc\to\Dcirc$ being the exponential
map. 
Here
$\Dhatcirc$ does not have the Euclidean metric, but rather its
pullback under this map.
The deck group $\Z^k$ consists of the translations
 by integral multiples of~$2\pi i$ in the $\Dhatcirc$ factors.
The bi-Lipschitz identification $\Vhatcirc\iso(\Dhatcirc)^k\times D^{n-k}$
extends to an identification of their metric completions.  
Therefore
$\Vhat$ is bi-Lipschitz to $\Dhat^k\times D^{n-k}$, where $\Dhat$ is the metric completion
of~$\Dhatcirc$.  
It is easy to see that
$\Dhat=\Dhat^\circ\cup\{-\infty\}$, where 
the distance between $-\infty$ and any given $z\in\Dhatcirc$
is $\exp(\Re(z))$.
The map $\Dhat\to D$
is still the exponential map, with the convention $\exp(-\infty)=0$.

This completes our description of $b^{-1}(V)\to V$, namely:
$V$ is bi-Lipschitz to~$D^n$, with $x$ identified with~$(0,\dots,0)$,
such that each component of
$b^{-1}(V)$ is identified with $\Dhat^k\times D^{n-k}$.  
The projection to~$D^n$ applies the exponential map to all
$\Dhat$ factors.  Each component of $b^{-1}(V)$
contains exactly one preimage
of~$x$, corresponding to  
$(-\infty,\dots,-\infty,0,\dots,0)$ with $k$ many~$-\infty$'s.
The $\pi_1(\Bcirc)$-stabilizers
of the preimages of~$x$ are the conjugates of
$\pi_1(\Vcirc)\iso\Z^k$.

\subsection{Uniformity}\label{SS:Uniformity}
The neighborhoods $V$ cannot be chosen of uniformly large size even if $x$ is assumed to not lie in a horoball, because $x$ can lie arbitrarily close to a hyperplane not containing it. 

For each $\Vhat$ centered at a point $x\in \Bhat$, let $\Vhat_0\subset \Vhat$ be a small ball centered at $x$, so that the ball centered at $x$ with radius 100 times bigger is contained in $\Vhat$. Assume additionally that the $\Vhat$ chosen in this way are small enough that the implicit bi-Lipschitz constant in the statement ``$\Vhat$ is bi-Lipschitz to $\Dhat^k\times D^{n-k}$'' is at most 2. This can be achieved by noting that as the size of the neighborhood $V$ decreases, the implicit bi-Lipschitz constant in the statement ``$V$ is bi-Lipschitz to $D^n$'' goes to 1. 

A compactness argument shows that there is a finite collection of $\Gamma$ orbits of $V$ as above such that
each $y$ not in a horoball is contained in a $\Vhat_0$ of some $\Vhat$ covering an element of one of those finitely many $\Gamma$ orbits of $V$. Let $\xi/100$ be the min of the radii of the $\Vhat_0$ in this construction. This gives the constant $\xi$ in \cref{R:constants}. 

\begin{remark}\label{R:NudgeToThick}
Suppose $x\in \Bhat$ is not in any horoball. Keeping in mind that \cref{R:constants} assumes $r_\theta \ll \xi$, we can see that there is a $x'\in \Bhatthick$ with $d(x,x')\leq 8 \sqrt{n} r_\theta$. To see this, first find one of the chosen $\Vhat_0$ containing $x$. The associated $\Vhat$ is identified with $\Dhat^k\times D^{n-k}$, and proceed as follows.

First, if $x$ is very close to a horoball, push it radially outwards away from the horoball to get a point $x''$ with $d(x,x'') \leq 4 \sqrt{n} r_\theta$ and so that $x''$ has distance at least $4 \sqrt{n} r_\theta$ from every horoball. (Compare to the proof of \cref{L:UhatcProjection} for details.)

Then we find $x'$ by modifying the first $k$ coordinates of $x''$, if required, so that each has distance in $\Dhat$ more than $2 r_\theta$ from the center of $\Dhat$. Using that the bi-Lipschitz constant is at most 2, this can produce a point $x'$ with $d(x', x'') \leq 4 \sqrt{n} r_\theta$ such that $x'$ is not within $r_\theta$ of any hyperplane intersecting $\Vhat$. The triangle inequality gives that $x'$ is not in the interior of any horoball, so $x'\in \Bhatthick$.
\end{remark}

\subsection{Stratification}\label{SS:Stratification}

The intersections of hyperplanes in~$B$ define a stratification
of~$B$.  Namely, the codimension-$k$ strata are the connected
components of the set of points in~$B$ that lie in exactly
$k$ hyperplanes. 
This
induces a  stratification of $\Bhat$: the strata are
the
connected components of the preimages of strata of $B$.  
In keeping with the use of~$^\circ$ to suggest ``with hyperplanes removed'',
we will usually use notation like $\Scirc$ resp.\ $\Shatcirc$ for strata in~$B$ 
resp.\ $\Bhat$, and $S$ resp.\ $\Shat$ for their closures.
Our local model makes clear that the restriction of $b$ to
each stratum of~$\Bhat$ is a covering map to a stratum of~$B$.
It also shows that the closure~$\Shat$
of a codimension-$k$ stratum is a component of 
the fixed point set of a subgroup
$\Z^k\sset\Gammahat$.  Since 
$\Bhat$ is CAT($-1$), fixed-point sets are convex, so $\Shat$
is the full fixed-point set of this~$\Z^k$.
Therefore it is convex and CAT($-1$).

Just as we reserved the word hyperplane (of~$B$) to mean a component
of~$\H$, when we speak of hyperplanes (of~$\Bhat$) we mean 
the closures of codimension 1 strata of $\Bhat$.  These
lie over the hyperplanes of~$B$.
 
\subsection{Horoballs}\label{SS:horoballs}

We also need the notion of a horoball of~$\Bhat$.
Recalling that we reserved the word horoball (of~$B$) to
mean $T_c$ for some cusp $c$ of~$\Gamma$, 
a horoball of~$\Bhat$ will mean a 
component of the preimage
of a horoball of~$B$.  
A horosphere of $\Bhat$ will mean the boundary
of such a horoball.
Above each $T_c$ lie infinitely many horoballs of $\Bhat$;
when we have some particular one in mind, 
we will usually
denote it $\That_{\hat{c}}$, thinking of $\hat{c}$ as a lift of the cusp $c$.
See \cref{S:CuspDomains} for a detailed analysis
of these horoballs; for now we only need their convexity, which we will prove presently in \cref{L:ConvexityOfThatc}.

\begin{lemma}[Nearest-point projection near a horoball of~$\Bhat$]\label{L:UhatcProjection}
    Suppose $\That_{\hat{c}}$ is a horoball of $\Bhat$
    and $\Uhat_{\hat{c}}$ is the set of points of $\Bhat$ at distance${}<1$
    from it.  Then  the nearest-point projection map
    $p_{\That_{\hat{c}}}:\Uhat_{\hat{c}}\to\That_{\hat{c}}$ is well-defined and enjoys the 
            following property.  For each $r>0$ there is a constant
            $k<1$, such that if $\deltahat$ is a path in $\Uhat_{\hat{c}}$
            that stays at least $r$ away from $\That_{\hat{c}}$, 
            then
            $\length(p_{\That_{\hat{c}}}\circ\deltahat)\leq k\length(\deltahat)$.
\end{lemma}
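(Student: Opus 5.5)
The plan is to build the nearest-point projection in $B$ explicitly, lift it to $\Bhat$, and then transfer both minimality and the contraction estimate through the branched cover $b$. I will not assume convexity of $\That_{\hat{c}}$, since \cref{L:ConvexityOfThatc} may itself rely on this lemma.

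\emph{Downstairs.} Work in $B$ with $U_c$, the open $1$-neighborhood of $T_c$. Nearest-point projection $p_{T_c}:U_c\to T_c$ sends a point $y$ to the point where the geodesic from $y$ to the cusp $c$ crosses $\partial T_c$. Equivalently, it is the time-$d(y,T_c)$ map of the flow $\phi_t$ along geodesics towards $c$.

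The key geometric input is the choice of $T_c$ in \cref{SecModelSpace}: every hyperplane meeting $U_c$ passes through $c$. A complex hyperplane through $c$ contains the whole geodesic from any of its points to $c$. Hence $\phi_t$ preserves each such hyperplane, and so preserves $\H\cap U_c$ together with its stratification.

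In horospherical (Heisenberg) coordinates adapted to $c$, the flow $\phi_t$ contracts horizontal directions by $e^{-t}$ and the central direction by $e^{-2t}$. The projection also kills the radial direction. This gives the local estimate $|dp_{T_c}(v)|\leq e^{-d(y,T_c)}|v|$. Therefore $k=e^{-r}$ works for paths in $U_c$ staying at least $r$ away from $T_c$.

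\emph{Lifting.} The flow restricts to a homotopy of $U_c-\H$ into $T_c-\H$ that maps strata into strata. So it lifts to a flow $\hat\phi_t$ on $\Uhat_{\hat{c}}\cap\Bhatcirc$ that commutes with $b$.

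The flow is $1$-Lipschitz in the normal directions to each hyperplane through $c$. Using the local models of \cref{SS:Local}, in which a neighborhood is $\Dhat^k\times D^{n-k}$ and the flow acts on the $\Dhat$ factors by radial rescaling compatible with the exponential map, I would check that $\hat\phi_t$ extends continuously to the metric completion. This gives a map $\hat p:\Uhat_{\hat{c}}\to\That_{\hat{c}}$ with $b\circ\hat p=p_{T_c}\circ b$.

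\emph{Minimality and uniqueness.} Take $\hat y\in\Uhat_{\hat{c}}$ and set $y=b(\hat y)$. The lifted flow line from $\hat y$ has length $d(y,T_c)$. Conversely, $b$ does not increase path lengths, since the metric on $\Bhatcirc$ is pulled back and lengths pass to the completion. So any path from $\hat y$ to $\That_{\hat{c}}$ has length at least $d(y,T_c)$, and the flow line realizes the distance.

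For uniqueness, let $\hat\sigma$ be any shortest path. Then $b\circ\hat\sigma$ has length equal to $d(y,T_c)$, so it is the unique geodesic from $y$ to $T_c$. It remains to see that this geodesic has a unique lift starting at $\hat y$.

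I expect this to be the main obstacle, because the geodesic may run inside a stratum of $\H$, where lifts of paths are not automatically unique. I would treat it with the local model. In a chart $\Dhat^k\times D^{n-k}$ the downstairs path has constant argument in each $D$ factor and monotone radius. A length-minimizing lift must keep the $\Dhat$-imaginary parts constant, since changing them costs length unless the path passes through $-\infty$. If the path does pass through $-\infty$, it lies in the stratum throughout, because the flow preserves strata, and so again it lifts uniquely. Hence $\hat\sigma$ is the flow line, and $p_{\That_{\hat{c}}}=\hat p$ is well defined.

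\emph{Contraction.} For a path $\deltahat$ in $\Uhat_{\hat{c}}$ staying at least $r$ from $\That_{\hat{c}}$, its image $b\circ\deltahat$ stays at least $r$ from $T_c$, since $d(\hat y,\That_{\hat{c}})=d(b(\hat y),T_c)$ by the above. The map $b$ is a local isometry on $\Bhatcirc$, and the set of times the path spends in $\HHAT$ contributes nothing extra, by approximating with paths in $\Bhatcirc$ in the local models. Hence
$$\length(p_{\That_{\hat{c}}}\circ\deltahat)=\length(p_{T_c}\circ b\circ\deltahat)\leq e^{-r}\length(b\circ\deltahat)=e^{-r}\length(\deltahat).$$
So $k=e^{-r}$ works. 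The equality $\length(b\circ\deltahat)=\length(\deltahat)$ is the one technical point to verify carefully for paths touching $\HHAT$, again via the bi-Lipschitz-$2$ charts, where the length identity holds exactly on each factor.
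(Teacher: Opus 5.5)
Your proof is correct. The well-definedness part is essentially the paper's argument. Since every hyperplane meeting $U_c$ passes through $c$, the geodesic $\gamma$ from $x=b(\hat x)$ to $T_c$ stays in the stratum $S^\circ$ of $x$. The paper then gets existence and uniqueness of the shortest path in one step, because $b$ maps the stratum $\hat{S}^\circ$ of $\hat x$ onto $S^\circ$ as a covering map: near each point, $b^{-1}(S^\circ)$ is the single sheet $\{-\infty\}^k\times D^{n-k}$ of the local model.

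So lifting the whole flow and extending it to the completion is unnecessary. Your justification for that extension via ``radial rescaling'' in one chart is also inaccurate, since the flow moves points between charts; the extension really follows from $\phi_t$ being $1$-Lipschitz. Two claims in your uniqueness step should be dropped:
\begin{itemize}
\item A geodesic toward $c$ need not have constant argument in chart coordinates, because these coordinates are projections to axes through the chart's center.
\item ``Changing imaginary parts costs length'' is beside the point, since a lift of a fixed path that stays in one stratum is determined topologically.
\end{itemize}

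The contraction step is where you genuinely differ. The paper proves the estimate for paths in balls missing $\HHAT$, concatenates, and passes to limits, using that every path is a limit of paths off $\HHAT$ with converging lengths. You instead use $b\circ\hat p=p_{T_c}\circ b$ together with exact length preservation by $b$. This gives the explicit constant $k=e^{-r}$ in one line, where the paper only asserts that some $k<1$ exists.

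The cost is that you must prove $\length(b\circ\hat\alpha)=\length(\hat\alpha)$ for paths meeting $\HHAT$. The bi-Lipschitz-$2$ charts cannot give this, since they only control lengths up to a factor $2$. Here is a clean substitute:
\begin{itemize}
\item Let $\hat y$ be near $\hat x$ in the chart centered at $\hat x$. Take points $\hat y_m\in\Bhatcirc$ in that chart with $\hat y_m\to\hat y$.
\item The geodesics from $y_m=b(\hat y_m)$ to $x$ meet $\mathcal H$ only at $x$. So their lifts starting at $\hat y_m$ end at $\hat x$, the unique point of the chart over $x$.
\item This gives $d(\hat y,\hat x)=d(y,x)$. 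Together with $b$ being a local isometry on $\Bhatcirc$, the metric derivatives of $\hat\alpha$ and $b\circ\hat\alpha$ agree wherever they exist, so the lengths agree.
\end{itemize}
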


\begin{proof}
    Write $T_c$ for the horoball of $B$ over which $\That_{\hat{c}}$
    lies, $U_c$ for the set of points of $B$ at distance${}<1$
    from it, and $p_{T_c}:U_c\to T_c$ for the nearest-point
    projection map.  Suppose $\xhat\in\Uhat_{\hat{c}}$ and set $x=b(\xhat)\in U_c$. 
    For our first claim, it is enough to show
    that there is a unique path from $\xhat$ into~$\That_{\hat{c}}$
    with length${}\leq d(x,T_c)$.  Existence will be a byproduct
    of the uniqueness argument.  For any such path $\gammahat$,
    $b\circ\gammahat$ must be a path from $x$ to~$T_c$
    with length${}\leq d(x,T_c)$.  There is only one:
    the geodesic 
    $\gamma=\overline{x\,p_{T_c}(x)}$.  
    By the definition of $T_c$ and \cref{R:constants}, every hyperplane of $B$ that meets $U_c$
    passes through~$c$.  Therefore $\gamma$ lies in the 
    same stratum~$\Scirc$ of~$B$ as~$x$.  So $\gammahat$ must lie in the
    same stratum~$\Shatcirc$ of~$\Bhat$ as~$\xhat$, which necessarily
    lies over~$\Scirc$.  Since $\Shatcirc\to\Scirc$ is a covering 
    space, the lift of
    $\gamma$ starting at~$\xhat$ exists and is unique.  This is the
    unique possibility for~$\gammahat$, proving the well-definedness of
    $p_{\That_{\hat{c}}}:\Uhat_{\hat{c}}\to\That_{\hat{c}}$.

    For our second claim, we start with the corresponding ``unhatted''
    statement, which holds in complex hyperbolic geometry just
    as in real hyperbolic geometry.
    Namely, for every $r>0$ there is a constant $k<1$, such that
    if $\delta$ is a path in $U_c$ that stays at least $r$ away
    from~$T_c$, then $\length(p_{T_c}\circ\delta)\leq k\length(\delta)$.
    From this follows the claim in the special case that
    $\deltahat$ lies in a metric ball that misses~$\HHAT$.  
    Concatenating such paths treats the case
    that $\deltahat$ misses $\HHAT$.  Then taking limits 
    treats the full claim, since every path $\deltahat$ in $\Uhat_{\hat{c}}$ is
    a limit of paths in $\Uhat_{\hat{c}}-\HHAT$ 
    whose lengths approach $\length(\deltahat)$.
\end{proof}

\begin{lemma}[Convexity of horoballs of~$\Bhat$]\label{L:ConvexityOfThatc}
Every horoball of $\Bhat$ is convex.
\end{lemma}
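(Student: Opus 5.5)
The plan is to combine the strict contraction property of \cref{L:UhatcProjection} with a continuity argument along a path in $\That_{\hat{c}}$. This reduces the global statement to the case where the geodesic in question stays within distance $<1$ of $\That_{\hat{c}}$.

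\emph{Step 1 (geodesics that stay near the horoball).} Suppose $a,b\in\That_{\hat{c}}$ and the $\Bhat$-geodesic $\gamma=[a,b]$ lies entirely in $\Uhat_{\hat{c}}$. I claim $\gamma\subset\That_{\hat{c}}$. First, $p_{\That_{\hat{c}}}$ does not increase the length of any path in $\Uhat_{\hat{c}}$. To see this, split the path into the part in $\That_{\hat{c}}$, where $p$ is the identity, and the part outside, and apply \cref{L:UhatcProjection} with $r\to 0$ on the complementary pieces, using $k<1$. Now suppose $\gamma$ leaves $\That_{\hat{c}}$, reaching some point at distance $r_0>0$. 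Then a subarc of positive length stays at distance at least $r_0/2$, and on that subarc the lemma gives a strict contraction factor $k<1$. So $p_{\That_{\hat{c}}}\circ\gamma$ is a path from $a$ to $b$ strictly shorter than $d(a,b)$, which is absurd.

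\emph{Step 2 (continuity argument).} Fix $a,b\in\That_{\hat{c}}$. Since $\That_{\hat{c}}$ is a connected component of the preimage of the horoball $T_c$, and $T_c$ is a smooth manifold with boundary, the components of the preimage are path connected. So there is a path $\sigma:[0,1]\to\That_{\hat{c}}$ from $a$ to $b$. Let $J\subset[0,1]$ be the set of $t$ such that $[a,\sigma(t)]\subset\That_{\hat{c}}$; then $0\in J$. The key input is the CAT(0) estimate for geodesics with a common starting point $a$: if $\gamma_t,\gamma_{t'}$ are the geodesics to $\sigma(t),\sigma(t')$, parametrized proportionally on $[0,1]$, then $d(\gamma_t(s),\gamma_{t'}(s))\le d(\sigma(t),\sigma(t'))$. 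With this:
\begin{itemize}
\item $J$ is closed, because $\That_{\hat{c}}$ is closed and the geodesics converge uniformly.
\item $J$ is open. If $t\in J$ and $d(\sigma(t),\sigma(t'))<1$, then every point of $\gamma_{t'}$ lies within distance $<1$ of a point of $\gamma_t\subset\That_{\hat{c}}$. Hence $\gamma_{t'}\subset\Uhat_{\hat{c}}$, and Step 1 gives $\gamma_{t'}\subset\That_{\hat{c}}$.
\end{itemize}
Therefore $J=[0,1]$, and $[a,b]\subset\That_{\hat{c}}$.

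\emph{Main obstacle.} The delicate point is Step 1, specifically that $p_{\That_{\hat{c}}}$ is length non-increasing on arbitrary paths in $\Uhat_{\hat{c}}$ that touch $\That_{\hat{c}}$. Since $\Bhat$ is not locally compact, I would handle this by approximating the path from the outside, as in the limiting argument at the end of the proof of \cref{L:UhatcProjection}. Concretely, the set of parameters where the path lies outside $\That_{\hat{c}}$ is open, hence a countable union of intervals. On each interval the length ratio is at most $1$ (letting $r\to0$), and the remaining part of the path is fixed by $p_{\That_{\hat{c}}}$. Lower semicontinuity of length then gives the bound. The other ingredients are standard CAT(0) facts and the path connectedness of $\That_{\hat{c}}$, which I expect to be routine.
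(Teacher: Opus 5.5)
Your proposal is correct and follows essentially the same route as the paper. The paper also moves one endpoint through $\That_{\hat{c}}$, uses continuity of geodesics to find a geodesic with endpoints in $\That_{\hat{c}}$ that leaves $\That_{\hat{c}}$ yet stays in $\Uhat_{\hat{c}}$, and contradicts minimality via the strict contraction of \cref{L:UhatcProjection}. Your open--closed set $J$ is a rigorous form of the paper's ``some intermediate geodesic exits and returns while staying within distance $1$.'' Your Step~1 supplies the check, left implicit in the paper, that $p_{\That_{\hat{c}}}$ does not lengthen the parts of the path that touch $\That_{\hat{c}}$.
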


\begin{proof}
    Suppose some horoball $\That_{\hat{c}}$ were nonconvex, so
some geodesic $\gammahat$ in $\Bhat$ has endpoints in 
$\That_{\hat{c}}$ but does not lie entirely within $\That_{\hat{c}}$.
Recall that geodesics in a  CAT($-1$) space depend
continuously on their endpoints \cite[page 160]{BridsonHaefliger}.  
Moving one endpoint toward the other, inside $\That_{\hat{c}}$, homotopes
    $\gammahat$ to a constant geodesic.  So some intermediate geodesic
    exits  $\That_{\hat{c}}$ and then returns, but also remains at distance less than 1 from it.
    Using it in place of $\gammahat$, we may suppose without
    loss that $\gammahat$ has these properties.

    Now we apply \cref{L:UhatcProjection}: $\gammahat$ lies in $\Uhat_{\hat{c}}$,
    and the fact that it leaves $\That_{\hat{c}}$ implies 
    $\length(p_{\That_{\hat{c}}}\circ\gammahat)<\length(\gammahat)$.
    Since $p_{\That_{\hat{c}}}\circ\gammahat$ has the same endpoints as $\gammahat$,
    the latter cannot be a geodesic, contrary to hypothesis.
\end{proof}

\begin{remark}
An alternative but slightly less elementary way to prove \cref{L:ConvexityOfThatc} is to prove $\That_{\hat{c}}$ really is a horoball in the usual sense of \cite[page 267]{BridsonHaefliger}, by lifting the flow towards the cusp, and then to use that horoballs in CAT(0) spaces are convex \cite[page 271]{BridsonHaefliger}. 
\end{remark}
%
%
%

\subsection{Stratum domains}\label{SS:stratumdomains}

Now we can define the stratum domains.  They are indexed by
the strata  $\Shatcirc$ in~$\Bhat$ for all $\Shatcirc$ of positive dimension; we choose not to define stratum domains for 0-dimensional strata.  If $\Shat$ is the closure
of a $d$-dimensional
stratum $\Shatcirc$ , then the Gromov hyperbolic
space $\cC(\Shatcirc)$ associated to $\Shatcirc$ is defined as the electrification $\Shatzap$ of $\Shat$ along
\begin{enumerate}
\item the closures of the $(d-1)$-dimensional strata that lie in~$\Shat$, and
\item the nonempty intersections of $\Shat$ with horoballs of $\Bhat$.
\end{enumerate}
\cref{L:StratDomainsHyp} below
shows that $\Shatzap$ is Gromov hyperbolic. 

An HHS structure also
requires a coarsely Lipschitz
projection map $\pi_{\Shat}:\Bhatthick\to\cC\Shat$, which we take to be the composition
$$
\pi_{\Shat}:\Bhatthick\sset\Bhat\to\Shat\sset\Shatzap.
$$ 
The 
map $\Bhat\to\Shat$ is
closest-point projection, which
makes
sense because $\Bhat$ is CAT($-1$) and $\Shat$ is convex.  
This map and the two inclusions are distance non-increasing,
so
$\pi_{\Shat}$ is certainly
coarsely Lipschitz.

\begin{remark}\label{R:extensionstrata}
Continuing \cref{R:extension}, we note that the true domain of this map is $\Bhat^{\Shat} = \Bhat$.
\end{remark}

\begin{lemma}[Gromov hyperbolicity of stratum domains]\label{L:StratDomainsHyp}
If $\Shat$ is any stratum closure in~$\Bhat$,
    then $\Shatzap$ 
    is
Gromov hyperbolic.
\end{lemma}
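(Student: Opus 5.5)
The plan is to apply \cref{P:ElectHyp} directly. First, $\Shat$ is itself CAT($-1$): it is the fixed-point set of a subgroup $\Z^k\sset\Gammahat$ acting on the CAT($-1$) space $\Bhat$ (see \cref{SS:Stratification}), hence convex, and with the induced metric it is a geodesic CAT($-1$) space. It is also complete, being closed in the complete space $\Bhat$. Consequently $\Shat$ is $\delta$-hyperbolic for a universal $\delta$, and it suffices to check that every member of the family along which we electrify is a convex subset of $\Shat$.

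Next, we verify the two types of subsets. For the closures of $(d-1)$-dimensional strata lying in $\Shat$, each such closure is again the full fixed-point set of some $\Z^{k+1}\sset\Gammahat$, so it is convex in $\Bhat$ and hence convex in $\Shat$, since geodesics of $\Shat$ are geodesics of $\Bhat$ by convexity of $\Shat$. For the nonempty intersections $\Shat\cap\That_{\hat{c}}$, we use \cref{L:ConvexityOfThatc}: an intersection of two convex subsets of $\Bhat$ is convex in $\Bhat$, and therefore convex in $\Shat$. With both families handled, \cref{P:ElectHyp} yields that $\Shatzap$ is $\delta'$-hyperbolic, with $\delta'$ independent of $\Shat$. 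Here we interpret hyperbolicity in the non-geodesic sense of \cref{R:NotGeodesic,R:EnotG}, since the electrification need not be geodesic.

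The only point requiring some care, and hence the main obstacle, is the identification of stratum closures with full fixed-point sets. This was asserted in \cref{SS:Stratification} via the local model: a priori, a stratum closure is only a component of a fixed-point set, and convexity of the fixed-point set is what upgrades "component" to "whole set". If one prefers to avoid this, a fallback is to argue local convexity from the local models $\Dhat^k\times D^{n-k}$, in which a stratum closure appears as $\{-\infty\}^k\times D^{n-k}$, and then use that in a CAT(0) space a closed connected locally convex subset is convex.

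Finally, note that the uniformity of $\delta'$ is automatic, since all the spaces involved are CAT($-1$). Uniformity is needed for \cref{D:preHHS}, and the finitely many $\Gammahat$-orbits of strata would give it in any case.
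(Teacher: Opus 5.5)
Your proposal is correct and follows essentially the same route as the paper. The paper also notes that $\Shat$ is CAT($-1$), checks that stratum closures in $\Shat$ and the intersections of $\Shat$ with horoballs (via \cref{L:ConvexityOfThatc}) are convex, and then applies \cref{P:ElectHyp}. Your remarks on completeness, uniformity of $\delta'$, and the local-convexity fallback are sound extra detail that the paper does not spell out.
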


\begin{proof}
We observed above that
    $\Shat$ is CAT($-1$), hence Gromov hyperbolic.
    By \cref{P:ElectHyp}, it is enough to show that each of the
    subsets of~$\Shat$ along which we electrify to get~$\Shatzap$
    is convex.  
    First, all stratum closures are convex in~$\Bhat$,
    so those that lie in  $\Shat$ are convex there.
    Second, by \cref{L:ConvexityOfThatc}, all  horoballs of~$\Bhat$
    are convex in~$\Bhat$. So the horoballs that meet $\Shat$ do so
    in convex subsets.
\end{proof}

\begin{remark}\label{R:CombinatorialModel}
If the union of the hyperplanes in $\Bhat$ is connected and either $n\geq 3$ or $\Gamma$ is cocompact, one can show that the ``top-level'' hyperbolic space $\cC(\Bhatcirc)$ is quasi-isometric to the intersection graph of hyperplanes of $\Bhat$. (If $n=2$ and $\Gamma$ is non-uniform, one should add vertices corresponding to the cusps.) 
%
%
%

 As recalled and reinterpreted in \cite[Section 7]{AllcockCarlsonToledoSurfaces}, Libgober \cite{Libgober} showed that in the case of the moduli space of cubic surfaces $\Gammahat$ is generated by a finite set of elements that each stabilize a hyperplane in $\Bhat$, such that the union of the hyperplanes thus obtained is connected. Keeping in mind there is only one orbit of hyperplanes \cite[Theorem 7.21]{AllcockCarlsonToledoSurfaces}, this shows that the union of the hyperplanes of $\Bhat$ is connected in this case. 

Thus, a more combinatorial and geometric top-level hyperbolic space is available in the cubic surfaces case, which is more closely analogous to classical curve graphs (or curve complexes). 
\end{remark}

\begin{remark}\label{R:HoroballCapStratProj}
If $\Shat$ enters a horoball $\That_{\hat{c}}$, then the closest point projection of $\Shat$ to $\That_{\hat{c}}$ is $\Shat \cap \That_{\hat{c}}$, since $\Shat$ is the intersection of fixed point sets of isometries preserving $\That_{\hat{c}}$.

Additionally, the projection of $ \That_{\hat{c}}$ to $\Shat$ is $\Shat \cap \That_{\hat{c}}$, because, as just established, projection to $\That_{\hat{c}}$ preserves $\Shat$ and because this projection is distance non-increasing. 
\end{remark}

\begin{remark}\label{R:OrthIntersection}
Additionally using that commuting elements preserve each others fixed point sets, one can see that if $\Shat$ and $\Shat'$ are intersecting stratum closures, then $p_{\Shat}(\Shat')=\Shat\cap \Shat'$.
\end{remark}

\section{Swaddling and curvature }\label{S:Swaddling}

In this section, which is largely self-contained, we develop a sort of theory of $\bR$-bundles, first over arbitrary spaces and then in a more geometric setting. 

\subsection{Swaddling} 

We start by introducing swaddling, which coarsely identifies all the different fibers, and give a simple condition under which the result is a quasi-line (a space quasi-isometric to $\bR$). 

\begin{definition}\label{D:SwaddlingData}
Consider a map $E\to X$ of sets, and assume we specify an action of $\bR$ on $E$ which preserves fibers and acts simply transitively on each fiber. (So, this is a principal $\bR$-bundle but without topology.) Assume that for each pair $(x,y)\in X^2$ we have an $\bR$-equivariant  map  $$m_{x,y}: E_x\to E_y$$ from the fiber $E_x$ over $x$ to the fiber $E_y$ over $y$. We assume $m_{y,x}=m_{x,y}^{-1}$ for all $x,y\in X$, which implies that $m_{x,x}$ is the identity for all $x\in X$. We call this data $(E\to X, m_{\bullet, \bullet})$ swaddling data. 
\end{definition}

\begin{definition}\label{D:SwaddledSpace} 
Given a choice of swaddling data, we define the swaddled space $E^{\sw}$ as follows. For each 
   pair $x,y \in X$ of distinct points, consider a strip $S_{x,y}=[0,1]\times \bR$, equipped with the usual action of $\bR$ on the second coordinate by translation. We fix a $\bR$-equivariant identification between $\{0\} \times \bR$ and $E_x$, and also between $\{1\} \times \bR$ and $E_y$, such that via these identifications $m_{x,y}((0,t)) = (1,t)$. We define $E^{\sw}$ to be the disjoint union of all the fibers $E_x$ and all the strips $S_{x,y}$ with the specified identifications between fibers and edges of strips. 

We equip $\Esw$ with the obvious path metric, which can be defined  formally as follows: Each point of $\Esw$ has a neighborhood which is either an open interval cross $\bR$ or the join of half-open intervals cross $\bR$. We equip these neighborhoods with the usual product metric, and note that this gives an open cover of $\Esw$ with a choice of metric on each open set in the cover such that these choices agree on overlaps. We equip $\Esw$ with the associated path metric, as in \cite[page 32]{BridsonHaefliger}. (The details of this metric will not be important for us.)
\end{definition}

There is a natural inclusion map $E\to E^\sw$. If $x,y$ are distinct points of~$X$, the distance between their fibers in $E^\sw$ is 1.
This is the sense in which we
think of swaddling as ``weakly'' identifying fibers
with each other.
This compression of~$E$, 
in all directions except the fiber direction (``head-to-toe''), is 
the source of the word \emph{swaddled}.

\begin{definition}\label{D:AlmostTrivial}
For $M \geq 0$, say the swaddling data $(E\to X, m_{\bullet, \bullet})$ is $M$-almost trivial if  for all $x,y,z\in X$, the $\bR$-equivariant map
$$ m_{z,x} \circ m_{y,z}\circ m_{x,y}: E_x \to E_x,$$
which must be translation by some real number, is a translation of size at most $M$.
\end{definition} 

\begin{lemma}\label{L:Swaddled}
Suppose swaddling data $(E\to X, m_{\bullet, \bullet})$ is $M$-almost trivial for some $M\geq 0$. Then the inclusion of any fiber 
into the swaddled space $E^\sw$ is a quasi-isometry.   
\end{lemma}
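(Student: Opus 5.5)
The plan is to build a coarse inverse: a ``retraction'' $\rho:E^\sw\to E_{x_0}$ onto a fixed fiber, with $\rho\circ\iota=\mathrm{id}$ and $\rho$ coarsely Lipschitz. Here $\iota:E_{x_0}\to E^\sw$ is the inclusion. Since $E_{x_0}$ (isometric to $\bR$) has distance $1$ from every other fiber, and each strip point is within $1/2$ of a fiber, $\iota$ has $2$-dense image. It is also $1$-Lipschitz, because the fiber itself is a path of the right length. So everything reduces to the lower bound $d_{E^\sw}(\iota s,\iota t)\geq |s-t|/K-K$, which follows from a coarsely Lipschitz $\rho$ with $\rho\circ\iota=\mathrm{id}$.

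The retraction is defined fiberwise by $\rho|_{E_x}=m_{x,x_0}$. On a strip $S_{x,y}$ I send $(u,t)$ to $m_{x,x_0}(0,t)$, i.e.\ I use the $x$-side identification. This is well defined on the fiber $E_x$, and identifies $E_{x_0}$ with itself via $m_{x_0,x_0}=\mathrm{id}$. The key estimate is the following. For $(0,t)\in E_x$ and $(1,t)=m_{x,y}(0,t)\in E_y$, the two candidate values $m_{x,x_0}(0,t)$ and $m_{y,x_0}m_{x,y}(0,t)$ differ by the translation
\[
m_{x_0,x}\circ m_{y,x_0}\circ m_{x,y},
\]
which by $M$-almost triviality (applied to the triple $x,y,x_0$, using $m_{x_0,x}=m_{x,x_0}^{-1}$) has size at most $M$. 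Also each $m_{x,x_0}$ is an $\bR$-equivariant bijection $\bR\to\bR$, hence an isometry of fibers. Therefore $\rho$ restricted to any single strip, with either boundary convention, is $1$-Lipschitz in the $\bR$-direction, constant in the $[0,1]$-direction, and has a jump of at most $M$ across the $y$-edge where the convention switches.

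To conclude, take a path in $E^\sw$ between two points of length $L$. Its length is computed as a path metric from local product charts. I subdivide it into pieces each lying in a single chart (a strip, or a star of half-strips around a fiber) and each of length at most $1/2$. On each piece, $\rho$ moves by at most its length plus $M$, accounting for at most one convention switch per chart. A path of length $L$ needs about $2L+1$ pieces, but I must check that only pieces which actually cross a fiber incur the $M$ cost. Crossing between distinct fibers costs length $1$, so the number of fiber crossings is at most $L+1$, up to back-and-forth near one fiber. Near one fiber the convention is fixed consistently by using $\rho|_{E_x}$ on all half-strips adjacent to $E_x$ within distance $1/2$, so that no cost is paid there. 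This gives $|\rho(p)-\rho(q)|\leq (1+2M)d(p,q)+2M$, which yields the lower bound with $K$ depending only on $M$.

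The main obstacle is this bookkeeping: making the fiberwise definition of $\rho$ consistent near each fiber. I will define $\rho$ on the half of $S_{x,y}$ nearer $E_x$ via $m_{x,x_0}$ and on the half nearer $E_y$ via $m_{y,x_0}\circ m_{x,y}$. Then the only discontinuities occur along the midlines $u=1/2$, each of size at most $M$. Any path crossing two midlines must traverse at least length $1$ in between, since it must reach a fiber and leave again, or cross a half-strip fully. This controls the number of jumps by $L+1$.
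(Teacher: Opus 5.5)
Your proof follows the paper's. The paper uses the same retraction $r(e)=m_{y,x}(e)$, but defines it only on the $\tfrac12$-dense union of the fibers. It then observes that two fiber points related by some $m_{y,z}$ (hence at distance at most $1$) have images within $M$ of each other, by almost triviality. It leaves implicit how an arbitrary path reduces to such moves plus motion along fibers; your extension of $\rho$ over the strips and your path bookkeeping make that reduction explicit.

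One claim in that bookkeeping is false as stated. A path cannot cross two \emph{distinct} midlines without passing through a fiber in between. But it can cross the \emph{same} midline $u=\tfrac12$ of $S_{x,y}$ back and forth with arbitrarily small length, so the number of discontinuities crossed is not bounded by $L+1$. The repair is that, by $\bR$-equivariance, the jump across that midline is translation by the monodromy of the triple $(x,y,x_0)$, independent of $t$. Hence crossings in opposite directions cancel.

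A cleaner route is to cut the path at its visits to fibers into excursions through strip interiors. An excursion that returns to the fiber it left changes $\rho$ by at most its length. An excursion joining two distinct fibers has length at least $1$ and changes $\rho$ by at most its length plus $M$. Allowing an extra $M$ at each end for partial excursions gives $|\rho(p)-\rho(q)|\le (1+M)\,d(p,q)+2M$, and the rest of your argument goes through unchanged.
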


Note that each fiber, being canonically identified with $\bR$ up to translations, has the usual metric on $\bR$. In particular, we get that $E^\sw$ is a quasi-line (a space quasi-isometric to $\bR$).

\begin{proof}
Let us consider the fiber of $x\in X$. The inclusion of the fiber of $x$ is distance non-increasing,
and every point of $E^\sw$ lies within distance $\frac32$ of this
fiber. So it suffices to build a quasi-inverse  $r$ to this inclusion (see for example \cite[Proposition 5.1.10]{Loh}).

Consider the disjoint union of the fibers of $E\to X$, which is a $\frac12$-dense subset of $E^\sw$. We define a map $r$ from this subset to the fiber of $x$ as follows: For any point $e$ of $E$, lying over a point $y\in X$, we define $r(e) = m_{y,x}(e)$. 

To see that $r$ is coarsely Lipschitz, it suffices to consider two points of $E$ that differ by a map $m_{y,z}$. These have distance at most 1 in $E^\sw$, and we must show their images have uniformly bounded distance in the fiber of $x$. However the assumption exactly gives that their images have distance at most $M$. 
\end{proof} 

Above $E$ and $X$ are just sets, but in many cases they will come with metrics, and we note the following metric compatibility result, whose proof is left as an exercise. 

\begin{lemma}\label{L:MetricCompatibilityOfSwaddling} 
Suppose $E$ is equipped with a path metric, and there exists $\epsilon>0$ and $C>0$ such that for all $e,f\in E$ over points $x,y\in X$ we have that $d(e,f)\leq \epsilon$ implies that the unique  $t$ such that 
$$m_{x,y}(e) = t \cdot f$$
has $|t|\leq C$. Then the inclusion $E\to E^\sw$ is coarsely Lipschitz. 
\end{lemma}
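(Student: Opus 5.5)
The plan is to reduce to pairs of points at distance at most $\epsilon$. Since $E$ carries a path metric, any $e,f\in E$ with $d(e,f)=L$ can be joined, for any $\eta>0$, by a path of length at most $L+\eta$. Subdivide this path into consecutive points $e=e_0,e_1,\dots,e_N=f$ with $d(e_{j},e_{j+1})\leq\epsilon$ and $N\leq (L+\eta)/\epsilon+1$. By the triangle inequality in $E^\sw$, it then suffices to bound $d_{E^\sw}(e_j,e_{j+1})$ by a uniform constant $C'$. That gives
$$d_{E^\sw}(e,f)\leq C'\bigl(d(e,f)/\epsilon+2\bigr),$$
which is the desired coarse Lipschitz bound after letting $\eta\to0$.

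For the local step, take $e,f$ over $x,y$ with $d(e,f)\leq\epsilon$. By hypothesis $m_{x,y}(e)=t\cdot f$ with $|t|\leq C$.

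If $x\neq y$, we use the strip $S_{x,y}$. The point $e\in E_x$ corresponds to some $(0,s)$, and $m_{x,y}(e)$ corresponds to $(1,s)$. So $d_{E^\sw}(e,m_{x,y}(e))\leq 1$ via the horizontal segment in the strip.

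Next, $t\cdot f$ and $f$ lie in the same fiber $E_y$ and differ by a translation of size $|t|$. This translation is realized by the vertical segment along the edge $\{1\}\times\bR$ of the strip, whose length in the product metric is $|t|\leq C$.

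If $x=y$, then $m_{x,x}$ is the identity, so $e=t\cdot f$ directly. The fiber $E_x$ is the edge of some strip, provided $X$ has another point; if $X$ is a single point, $E^\sw$ is just the fiber. In either case the vertical distance is at most $C$.

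Hence $C'=1+C$ works. The only point needing care is that vertical segments along a strip edge really have length $|t|$ in the path metric of $E^\sw$, and that horizontal segments have length $1$. Both follow directly from the local product metrics in \cref{D:SwaddledSpace}, so I expect no genuine obstacle. The argument is essentially the same bookkeeping as in the proof of \cref{L:Swaddled}.
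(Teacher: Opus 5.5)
Your proof is correct. Chaining along a near-geodesic path in $E$ with steps of length at most $\epsilon$, and bounding each step in $E^\sw$ by one horizontal strip segment of length $1$ plus one vertical fiber segment of length $|t|\le C$, gives $d_{E^\sw}(e,f)\le (1+C)\bigl(d(e,f)/\epsilon+2\bigr)$. The paper leaves this lemma as an exercise, and your argument is the natural local-to-global bookkeeping: it mirrors the proof of \cref{L:Swaddled} and matches how the lemma is used in the proof of \cref{L:Thatswlip}.
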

%
%

\subsection{Other points of view (not required)}\label{SS:OtherPOV}

Given a central extension 
$$1 \to Z \to G \to Q \to 1$$
any choice of section $s:Q\to G$ gives rise to a $Z$-valued 2-cocycle $c$ on $Q$ defined by 
$$c(q_1, q_2) = s(q_1 q_2)^{-1} s(q_1) s(q_2).$$
The class $[c]\in H^2(Q, Z)$ is the extension class of the central extension, and gives rise to the well-known bijective correspondence between central $Z$-extensions up to equivalence and $H^2(Q, Z)$. Especially when $Z=\bZ$, the extension class is also known as the Euler class. 

 Assume $Z$ is equipped with a suitable metric, for example the word metric on a finitely generated group or the usual metric on $\bR$. A 2-cocycle $c$ is called bounded if $c(Q,Q)\subset Z$ is bounded, and an element of $H^2(Q, Z)$ is called bounded if it can be represented by a bounded 2-cocycle. A central extension is called bounded if its extension class is bounded. 

If $Z=\bR$, then we can take $E=G$ and $X=Q$ in the discussion of swaddling. Since $Z=\bR$, we can arrange for  $s(q^{-1}) = s(q)^{-1}$. Under these assumptions,  the section $s$ determines swaddling data as follows: For every $x,y\in X^2$, there is a unique $q\in Q$ such that $y=qx$, and we set $m_{x,y}$ to be left multiplication by  $s(q)$ restricted to ${E_x}$.

The map 
$$m_{q_2 q_1\cdot x_0, x_0} \circ m_{q_1\cdot x_0, q_2 q_1\cdot x_0} \circ m_{x_0, q_1\cdot x_0} : E_{x_0} \to E_{x_0}$$
is  translation by  $c(q_2, q_1)\in Z$, and  $c$ is bounded if and only if the  swaddling data is almost trivial.

Under appropriate assumptions one can produce central extensions from the swaddling setup. Even though these assumptions do not always hold,  \cref{L:Swaddled} should be seen as closely related to Gersten's result \cite[Theorem 3.1]{GerstenBounded} that bounded central $\bZ$-extensions are quasi-isometric to direct products $\bZ\times Q$. See also \cite{FrigerioSisto,FournierFacioMangioniSisto} for more results related to bounded central extensions. 

Quasi-lines can also be built from quasi-morphisms \cite[Lemma 4.15]{AbbottBalasubramanyaOsin}, and, following \cite{HagenRussellSistoSpriano}, this has become a valuable tool in the theory of hierarchical hyperbolicity \cite{HagenMartinSisto,DowdallDurhamLeiningerSisto,FournierFacioMangioniSisto,Tao}. Conversely quasi-morphisms can be defined using appropriate actions on quasi-lines \cite[Section 4.1]{Manning}. 

\cite[Proposition 2.9]{FournierFacioMangioniSisto} characterizes boundedness of central extensions in terms of quasi-homomorphisms. We can write every element of $G$ uniquely as $z\cdot s(q)$ for some $z\in Z$ and $q\in Q$. Consider the map 
$$\phi: G\to Z, \quad\quad \phi(z\cdot s(q))=z,$$
%
%
%
and compute, for $g_i=z_i s(q_i)$,
$$\phi(g_1g_2)- \phi(g_1)- \phi(g_2)  
= c(q_1,q_2).
$$
Thus $c$ is bounded if and only if $\phi$ is a quasi-homomorphism.

\begin{remark}\label{R:NotJustQuasiLines}
All quasi-lines are by definition quasi-isometric to each other. In our case we think of results like \cref{L:Swaddled} as inseparable from the definition of the space that turns out to be a quasi-line, and keep in mind that this definition provides tools to build and study maps to the quasi-lines that are produced. In particular, in our applications the space $E$ is often quite large -- sometimes not even admitting a natural cobounded group action -- and the fact that there is a natural map from the large space $E$ to the quasi-line $E^\sw$ is helpful for our analysis.
%
%
%
%
\end{remark}

\subsection{$L$-bounded curvature}

We now study swaddling data which can be thought of as coming from parallel transport along geodesics in a negatively curved base. 

\begin{definition}
We say swaddling data $(E\to X, m_{\bullet, \bullet})$ is geometric if $X$ is equipped with a CAT($-1$) metric, and, whenever $x,y, z\in X$ with $y$ on the geodesic from $x$ to $z$, we have $m_{x,z} = m_{y,z}\circ m_{x,y}.$
\end{definition}

\begin{definition}
Given a triangle with vertices $x,y,z$ in a CAT($-1$) space $X$, and swaddling data $(E\to X, m_{\bullet, \bullet})$, we say the monodromy of the triangle with vertices $x,y,z$ is the amount by which 
$$ m_{z,x} \circ m_{y,z}\circ m_{x,y}: E_x \to E_x$$
translates. This is a real number that, up to sign, depends only on $\{x,y,z\}$. 
\end{definition}

\begin{definition}\label{D:Bounded}
Given swaddling data $(E\to X, m_{\bullet, \bullet})$ with $X$ a CAT($-1$) space, and a non-negative number $L$, we say that a triangle in~$X$ 
    is {\it $L$-bounded} if the monodromy around it
has magnitude bounded above by $L$ times the area of
the comparison triangle in~$\bH^2$.

The swaddling data itself is called $L$-bounded  if the same is true for all triangles. 
\end{definition}

\begin{definition}\label{D:trianglesubdivision}
Given a triangle $T$ in a CAT($-1$) space with vertices $x, y, z$ and a point $m$ on the edge of $T$ opposite $y$, the splitting of $T$ determined by $m$ is the two triangles with vertices $y, m, x$ and $y, m, z$ respectively. 

A subdivision of $T$ is a collection of triangles obtained starting from $T$ by iteratively replacing a triangle $T'$ in the collection with the two triangles obtained by splitting $T'$. 
\end{definition}

In the next  subsection we will prove the following. 

\begin{proposition}\label{P:Subdividing}
Suppose that $(E\to X, m_{\bullet, \bullet})$ is geometric swaddling data, and fix $L\geq 0$. If every point in $X$ has a neighborhood such that every triangle in this neighborhood admits a subdivision into $L$-bounded triangles, then  $(E\to X, m_{\bullet, \bullet})$ is $L$-bounded. 
\end{proposition}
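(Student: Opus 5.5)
The plan has three parts: show that monodromy and area are both additive under splitting, so $L$-boundedness passes from the pieces of a subdivision to the whole triangle; then pass from local to global by a Lebesgue-number argument on a fixed triangle.

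\textbf{Additivity under splitting.} Let $T$ have vertices $x,y,z$ and let $m$ lie on $[x,z]$. Geometricity gives $m_{x,z}=m_{m,z}\circ m_{x,m}$. Write $\mu(T)$ for the signed monodromy, measured with a consistent orientation. Since all the maps $m_{\bullet,\bullet}$ are $\bR$-equivariant, they conjugate translations to translations of the same amount. Moving basepoints therefore does not change translation amounts, and inserting $m_{m,y}\circ m_{y,m}=\mathrm{id}$ gives
$$\mu(x,y,z)=\mu(x,y,m)+\mu(m,y,z).$$
By induction, the monodromy of $T$ is the sum of the signed monodromies of the triangles in any subdivision of $T$. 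Hence $|\mu(T)|\le L\sum_i A(T_i)$, where $A$ denotes the area of the comparison triangle in $\bH^2$.

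\textbf{Superadditivity of comparison area.} I next need $A(T_1)+A(T_2)\le A(T)$ whenever $T_1,T_2$ come from splitting $T$ at $m$, and then $\sum_i A(T_i)\le A(T)$ by induction. Glue the comparison triangles $\bar T_1=(\bar y,\bar m,\bar x)$ and $\bar T_2=(\bar y,\bar m,\bar z)$ along their common edge $[\bar y,\bar m]$ to get a quadrilateral in $\bH^2$. The CAT($-1$) condition applied at the point $m$ of $[x,z]$ shows that the angles at $\bar m$ sum to at least $\pi$. This is Alexandrov's lemma (Bridson–Haefliger I.2.16). The quadrilateral then "unfolds": one straightens the angle at $\bar m$ while keeping $|\bar x\bar y|$, $|\bar y\bar z|$ and $|\bar x\bar m|+|\bar m\bar z|=|xz|$ fixed, which produces the comparison triangle $\bar T$. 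Along the way the angles of $\bar T$ at $\bar x,\bar y,\bar z$ are at most the corresponding angles of the glued figure. This is again Alexandrov's lemma, in the curvature $-1$ version. By Gauss–Bonnet, area equals $\pi$ minus the angle sum, so
$$A(T)=\pi-(\text{angles of }\bar T)\ \ge\ 2\pi-(\text{angles of }\bar T_1)-(\text{angles of }\bar T_2)=A(T_1)+A(T_2),$$
because the two angles at $\bar m$ sum to at least $\pi$. This is the step I expect to be the main obstacle, since the direction of the inequality must be checked carefully.

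\textbf{Local to global.} Fix an arbitrary triangle $T=(x,y,z)$. The set $K=\bigcup$ of geodesics from $y$ to points of $[x,z]$ is compact, because geodesics depend continuously on their endpoints. Take a Lebesgue number $\lambda$ for the cover of $K$ by the given good neighborhoods. Split $T$ from $y$ at finely spaced points of $[x,z]$. This gives thin triangles, each with two long sides of nearly equal direction and a short base, so they are not yet small in diameter. To make them small, split each thin triangle $(y,m_i,m_{i+1})$ repeatedly, alternating which vertex serves as apex. Equivalently, use repeated midpoint-style splits, splitting opposite the vertex whose opposite side is longest. By uniform continuity of the geodesic map on the compact parameter square, every triangle eventually has diameter less than $\lambda$. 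I expect some fuss here with degenerate triangles and compactness, but each resulting triangle lies in a good neighborhood.

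\textbf{Conclusion.} By hypothesis each small triangle $T_j$ admits a subdivision into $L$-bounded triangles. Refining, $T$ has a subdivision into $L$-bounded triangles, and the first two steps give $|\mu(T)|\le L\,A(T)$.
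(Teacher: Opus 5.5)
Your outline is the paper's own route: additivity of monodromy under splitting, plus super-additivity of comparison areas, shows that a triangle subdivided into $L$-bounded triangles is $L$-bounded. An Alexandrov patchwork subdivision then finishes. The monodromy computation is correct. The local-to-global step is also fine in its staircase form. There you alternately split each thin triangle $(y,m_i,m_{i+1})$ at points of $[y,m_i]$ and $[y,m_{i+1}]$, so every vertex stays on the fan $K$, and you choose $\lambda$ so that every $\lambda$-ball centered on $K$ lies in a good neighborhood. The ``longest side'' variant is not obviously equivalent, since its new vertices leave $K$.

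The gap is in the super-additivity step, exactly where you were worried. Alexandrov's lemma (Bridson--Haefliger I.2.16) gives the \emph{opposite} inequality to the one you use. When the angles of the glued figure at $\bar m$ sum to at least $\pi$, straightening to $\bar T$ \emph{increases} the angles at $\bar x$, $\bar y$, $\bar z$, strictly unless the sum at $\bar m$ is exactly $\pi$. For instance, take the Euclidean dart with $\bar y=(0,2)$, $\bar x=(-1,0)$, $\bar m=(0,1)$, $\bar z=(1,0)$. Straightening takes the angle at $\bar y$ from about $53^\circ$ to about $78^\circ$, and the angles at $\bar x,\bar z$ from about $18^\circ$ to about $51^\circ$.

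With the correct direction, Gauss--Bonnet only tells you that $A(T)$ and $A(T_1)+A(T_2)$ are both at most $\pi$ minus the sum of the outer angles of the glued figure. That does not compare them. You would need the total increase of the outer angles to be at most the excess of the angle sum at $\bar m$ over $\pi$, and nothing in your argument provides that.

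One correct route goes through majorization:
\begin{itemize}
\item With its intrinsic length metric, the glued figure $D=\bar T_1\cup\bar T_2$ is CAT($-1$). Its only non-convex corner is $\bar m$, where the angle is at least $\pi$.
\item The boundary of $D$ is a geodesic triangle with side lengths $|xy|$, $|yz|$ and $|xm|+|mz|=|xz|$, so its comparison triangle is $\bar T$.
\item Majorization of this triangle by its comparison triangle gives a $1$-Lipschitz map from $\bar T$ onto $D$, so $A(T_1)+A(T_2)=\operatorname{area}(D)\le A(T)$.
\end{itemize}
The paper does not prove this step itself; it quotes the super-additivity from Chowdhury--Hu--Romney--Tsou, Lemma 4.5.
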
 

\subsection{Subdivisions of triangles in CAT($-1$) spaces.}

The purpose of this subsection is to prove \cref{P:Subdividing}. 

The advantage of the  definition of subdivision we use is the following. 

\begin{lemma}[Super-additivity of areas]\label{L:SuperAdd}
Given a triangle $T$ in a CAT($-1$) space and a subdivision of $T$ into triangles $T_1, \ldots, T_k$,  the area of the comparison triangle for $T$ is at least the sum of the areas of the comparison triangles for the $T_i$. 
\end{lemma}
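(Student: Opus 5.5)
By induction on the number of splittings, it suffices to treat a single splitting. If $T$ has vertices $x,y,z$ and $m$ lies on $[x,z]$, we want
$$\mathrm{Area}(\bar T) \geq \mathrm{Area}(\bar T_1)+\mathrm{Area}(\bar T_2),$$
where $T_1=(y,m,x)$, $T_2=(y,m,z)$, and bars denote comparison triangles in $\bH^2$. Indeed, a subdivision is a sequence of splittings, each replacing one triangle by two. Applying the single-splitting inequality at each step, the total comparison area is non-increasing along the sequence. Chaining these inequalities gives the claim.

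For a single splitting, I glue $\bar T_1$ and $\bar T_2$ in $\bH^2$ along a common edge of length $d(y,m)$, placing them on opposite sides of the geodesic $[\bar y,\bar m]$. This gives a geodesic quadrilateral $Q$ with vertices $\bar x,\bar m,\bar z,\bar y$. Its side lengths are $d(x,y)$, $d(y,z)$, $d(x,m)$ and $d(m,z)$, with $d(x,m)+d(m,z)=d(x,z)$. The area of $Q$ is $\mathrm{Area}(\bar T_1)+\mathrm{Area}(\bar T_2)$. The key input is the CAT($-1$) condition applied to $T$. In $\bar T$, let $\bar m'$ be the comparison point for $m$ on $[\bar x,\bar z]$. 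The CAT($-1$) inequality gives $d(y,m)\leq d(\bar y,\bar m')$. Equivalently, by the hyperbolic law of cosines (monotonicity of the opposite side in the angle), the two angles of $Q$ at $\bar m$ sum to at least $\pi$. This is the standard Alexandrov lemma setup.

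Now I compare with $\bar T$. The angles of $\bar T_1,\bar T_2$ at $\bar m$ sum to at least $\pi$, so $Q$ is "non-convex or flat" at $\bar m$. Straightening it, by opening the hinge at $\bar m$ to angle exactly $\pi$ while keeping the outer four side lengths, produces $\bar T$. By Alexandrov's lemma in $\bH^2$, the angles of $\bar T$ at $\bar x,\bar y,\bar z$ are at least the corresponding angles of $Q$ (at $\bar y$, compare with the sum of the two angles there). Gauss–Bonnet expresses area as angle defect:
$$\mathrm{Area}(\bar T)=\pi-(\alpha+\beta+\gamma), \qquad \mathrm{Area}(\bar T_1)+\mathrm{Area}(\bar T_2)=2\pi-(\text{sum of the six angles}).$$
The six angles consist of those at $\bar x$ and $\bar z$, the two at $\bar y$, and the two at $\bar m$, the latter summing to at least $\pi$.

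This inequality direction is the main obstacle. Alexandrov's lemma says the angles of the straightened triangle at $\bar x,\bar z$ are at least those of $Q$, and at $\bar y$ at least the sum of the two. That yields $\mathrm{Area}(\bar T)\leq \mathrm{Area}(\bar T_1)+\mathrm{Area}(\bar T_2)$, the wrong direction. So I would re-examine the argument, since in CAT($-1$) the comparison triangle for $T$ is "fatter". The correct route is probably to compare $\bar T$ with a quadrilateral whose hinge angle at $\bar m$ is at most $\pi$. Concretely, I would place $\bar m'$ on $[\bar x,\bar z]$ in $\bar T$ and cut $\bar T$ along $[\bar y,\bar m']$. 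This gives two triangles with sides $d(x,y),d(x,m),d(\bar y,\bar m')$ and $d(y,z),d(m,z),d(\bar y,\bar m')$. Their areas sum to $\mathrm{Area}(\bar T)$. Each differs from $\bar T_i$ only in that one side, $d(\bar y,\bar m')\geq d(y,m)$, is longer. Thus the area inequality reduces to monotonicity: in $\bH^2$, with two sides fixed, the area of a triangle increases with the third side as long as the triangle is non-degenerate. I would verify this via the formula $\tan(\mathrm{Area}/2)=\tanh(a/2)\tanh(b/2)\sin C/(1+\tanh(a/2)\tanh(b/2)\cos C)$, which is increasing in the included angle $C$ on $[0,\pi]$, and the included angle increases with the opposite side. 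This gives $\mathrm{Area}(\bar T_i)\leq$ the area of the corresponding piece, and summing gives the lemma.
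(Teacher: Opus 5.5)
\textbf{The final step fails.} Your reduction to a single splitting and the CAT($-1$) input $d(y,m)\le d(\bar y,\bar m')$ are fine. What breaks is the claim that, with two sides fixed, the area of a triangle in $\bH^2$ increases with the third side, or equivalently with the included angle on $[0,\pi]$. The formula you quote vanishes at $C=\pi$ as well as at $C=0$. Its derivative in $C$ has the sign of $\cos C+\tanh(a/2)\tanh(b/2)$, so the area decreases once the included angle is obtuse enough; the Euclidean analogue is $\tfrac12 ab\sin C$.

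The piece-by-piece comparison also cannot be rescued, because it is false. Take the Euclidean picture $\bar x=(0,0)$, $\bar z=(10,0)$, $\bar y=(-1,1)$, with $m$ the midpoint of $[x,z]$ and $d(y,m)=6.05<\sqrt{37}=d(\bar y,\bar m')$. Glue the Euclidean triangles with sides $(\sqrt2,5,6.05)$ and $(5,6.05,\sqrt{122})$ along their sides of length $6.05$. Their angles at $m$ are roughly $9.9^\circ$ and $176.7^\circ$. So the union is CAT($0$) and $[x,m]\cup[m,z]$ is the geodesic from $x$ to $z$. Scaling everything down and replacing the triangles by hyperbolic ones gives a CAT($-1$) example with the same features. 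Yet $\mathrm{Area}(\bar T_1)\approx 2.60$, while the piece $\bar x\bar m'\bar y$ of $\bar T$ has area $2.5$. Only the inequality for the sum survives: $2.60+0.88\le 5$.

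\textbf{What is missing.} The comparison has to be made for the union of the two comparison triangles at once. That union is exactly the glued quadrilateral $Q$ you abandoned. Your Gauss--Bonnet bookkeeping there does not actually give the reverse inequality; it is inconclusive. Alexandrov's angle inequalities bound $\mathrm{Area}(\bar T)$ by $\pi$ minus the angles of $Q$ at $\bar x,\bar y,\bar z$. Since the hinge angles at $\bar m$ sum to at least $\pi$, the same quantity also bounds $\mathrm{Area}(\bar T_1)+\mathrm{Area}(\bar T_2)$, so no comparison between the two follows.

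The paper uses precisely this glued setup. It gets the hinge condition from the triangle inequality for Alexandrov angles and the fact that comparison angles dominate Alexandrov angles in a CAT($-1$) space; your distance comparison would serve as well. It then applies the area form of Alexandrov's lemma \cite[Lemma 4.5]{ChowdhuryHuRomneyTsou}. This says that two triangles in $\bH^2$ glued along a common edge, with angle sum at least $\pi$ at one endpoint of that edge, have total area at most that of the triangle with side lengths $d(x,y)$, $d(x,m)+d(m,z)$, $d(y,z)$, i.e.\ of $\bar T$. This statement about the glued pair is the ingredient you are missing, and your example-defeated monotonicity for each half separately cannot replace it.
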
 

Here is some intuition for this statement, speaking in very imprecise terms: The area of a triangle gets smaller when curvature is more negative. The CAT($-1$) space can have more negative curvature than the hyperbolic plane. The subdivision can see more of the negative curvature beyond what is guaranteed by CAT($-1$), and so allows for a smaller area. 

\begin{proof}
    Consider two triangles in $\bH^2$, with a common edge
    and disjoint interiors.  Write $a,w$ for the vertices
    at the end of their common edge, and suppose the
    sum of the angles at~$w$ is at least $\pi$.  Write
    $v,x$ for the remaining vertices. (See \cref{F:SuperAdd}.)  Then the sum
    of the areas of these triangles is bounded above by the
    area of a triangle in $\bH^2$ whose edge lengths are
    $d(a,v)$, $d(v,w)+d(w,x)$ and $d(x,a)$. This is an immediate corollary of Alexandrov's Lemma \cite[page 25]{BridsonHaefliger}, as was noted in \cite[Lemma 4.5]{ChowdhuryHuRomneyTsou}. 

    \begin{figure}[h]
\tikzset{every picture/.style={line width=0.75pt}} 
\begin{tikzpicture}[x=0.55pt,y=0.55pt,yscale=-1,xscale=1]
\draw [color={rgb, 255:red, 208; green, 2; blue, 27 }  ,draw opacity=1 ]   (43.5,41) -- (148.5,110) ;
\draw [color={rgb, 255:red, 74; green, 144; blue, 226 }  ,draw opacity=1 ]   (148.5,110) -- (208.5,62) ;
\draw [color={rgb, 255:red, 74; green, 144; blue, 226 }  ,draw opacity=1 ]   (290.5,66.25) -- (417.5,66.25) -- (493.5,66.25) ;
\draw [color={rgb, 255:red, 208; green, 2; blue, 27 }  ,draw opacity=1 ]   (290.5,66.25) -- (417.5,66.25) ;
\draw [color={rgb, 255:red, 245; green, 166; blue, 35 }  ,draw opacity=1 ]   (43.5,41) -- (149,202) ;
\draw [color={rgb, 255:red, 65; green, 117; blue, 5 }  ,draw opacity=1 ]   (149,202) -- (208.5,62) ;
\draw [color={rgb, 255:red, 245; green, 166; blue, 35 }  ,draw opacity=1 ]   (290.5,66.25) -- (425,201) ;
\draw [color={rgb, 255:red, 65; green, 117; blue, 5 }  ,draw opacity=1 ]   (493.5,66.25) -- (425,201) ;
\draw [color={rgb, 255:red, 155; green, 155; blue, 155 }  ,draw opacity=1 ]   (148.5,110) -- (149,202) ;
\draw  [draw opacity=0][fill={rgb, 255:red, 155; green, 155; blue, 155 }  ,fill opacity=0.22 ] (405.13,112.38) -- (423.95,198.68) -- (290.5,66.25) -- cycle ;
\draw  [draw opacity=0][fill={rgb, 255:red, 155; green, 155; blue, 155 }  ,fill opacity=0.21 ] (431.93,110.65) -- (492.99,67.26) -- (424.81,201.36) -- cycle ;
\draw (141,91.4) node [anchor=north west][inner sep=0.75pt]    {$w$};
\draw (144,203.4) node [anchor=north west][inner sep=0.75pt]    {$a$};
\draw (31,34.4) node [anchor=north west][inner sep=0.75pt]    {$v$};
\draw (210,53.4) node [anchor=north west][inner sep=0.75pt]    {$x$};
\draw (273,59.4) node [anchor=north west][inner sep=0.75pt]    {$v'$};
\draw (495,58.4) node [anchor=north west][inner sep=0.75pt]    {$x'$};
\draw (418,199.4) node [anchor=north west][inner sep=0.75pt]    {$a'$};
\end{tikzpicture}
\caption{A schematic of the proof of \cref{L:SuperAdd}.}
\label{F:SuperAdd}
\end{figure}

%
%


Keeping in mind the triangle inequality for Alexandrov angles \cite[page 10]{BridsonHaefliger} and \cite[page 161,  Proposition 1.7(4)]{BridsonHaefliger} this shows the result when $k=2$, and the result follows by induction on $k$. 
\end{proof}

The definition of subdivision is flexible enough to allow the following.

\begin{lemma}[Alexandrov patchwork]\label{L:patchwork}
    Let $X$ be a CAT$(-1)$ metric space, and let $\cU$ be an open cover of $X$. Then every triangle $T$ in $X$ has a subdivision into triangles each of which is contained in an element of $\cU$. 
\end{lemma}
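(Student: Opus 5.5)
We prove the lemma by a Lebesgue number argument on the compact triangle $T$, followed by an explicit fan-type subdivision using splittings as in \cref{D:trianglesubdivision}.

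\textbf{Step 1: a Lebesgue number.} Let $x,y,z$ be the vertices of $T$. Let $K\subset X$ be the union of all geodesics from $y$ to points of $[x,z]$. Geodesics in a CAT(0) space depend continuously on their endpoints \cite[page 160]{BridsonHaefliger}. Hence $K$ is the continuous image of $[x,z]\times[0,1]$, and so it is compact. By the Lebesgue number lemma there is $\lambda>0$ such that every subset of $K$ of diameter less than $\lambda$ lies in some element of $\cU$. All triangles produced below will have their vertices, and hence by convexity of metric balls their edges, inside $K$. So it suffices to produce a subdivision into triangles of diameter less than $\lambda$. Note that the diameter of a geodesic triangle is at most its perimeter.

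\textbf{Step 2: fan from $y$.} Choose points $x=m_0,m_1,\dots,m_N=z$ along $[x,z]$, with consecutive points closer than $\epsilon$. Split $T$ at $m_1$, then split the triangle $(y,m_1,z)$ at $m_2$, and so on. This gives a subdivision of $T$ into thin triangles $T_j=(y,m_{j-1},m_j)$. By CAT(0) comparison, the geodesics $[y,m_{j-1}]$ and $[y,m_j]$ stay within $\epsilon$ of each other at corresponding parameters. However, these triangles are long, so we must cut each of them transversally.

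\textbf{Step 3: ladder subdivision of each thin triangle.} Each $T_j$ is subdivided by alternating splits. Choose points $p_0=m_{j-1},p_1,\dots$ on $[y,m_{j-1}]$ and points $q_0=m_j,q_1,\dots$ on $[y,m_j]$, each sequence spaced at most $\epsilon$ apart in corresponding proportion.

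First split $(y,m_{j-1},m_j)$ at $p_1\in[y,m_{j-1}]$; here the splitting vertex is $m_j$, opposite to the edge $[y,m_{j-1}]$. This yields the small triangle $(m_j,p_1,m_{j-1})$ and the remainder $(m_j,p_1,y)$. Then split the remainder at $q_1\in[y,m_j]$, with opposite vertex $p_1$. This yields the small triangle $(p_1,q_1,m_j)$ and the remainder $(p_1,q_1,y)$. Continue alternating in this way.

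Each remaining triangle has the form $(y,p_i,q_i)$ or a near variant. By CAT(0) convexity of the distance function, $d(p_i,q_i)\le d(m_{j-1},m_j)<\epsilon$. So every small triangle produced has all sides bounded by about $3\epsilon$. The final triangle near $y$ is also small. Choosing $\epsilon<\lambda/10$ finishes the proof.

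\textbf{Main obstacle.} The one point that needs care is Step 3. There I must check that each split is legitimate, meaning the chosen point lies on the edge opposite the designated vertex. I must also check that the side lengths of the small triangles are controlled purely by CAT(0) convexity of $t\mapsto d(\gamma(t),\gamma'(t))$ for geodesics issuing from the common point $y$. Beyond that, the argument is bookkeeping.
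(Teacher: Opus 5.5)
Your fan-then-ladder subdivision is the Alexandrov patchwork construction that the paper cites from Bridson--Haefliger (pp.~199--200), and your bookkeeping is right. Each split is taken at a point on the edge opposite the designated vertex. The remainders alternate between $(y,p_i,q_i)$ and $(y,q_i,p_{i+1})$. For points at the same proportion along $[y,m_{j-1}]$ and $[y,m_j]$, CAT(0) convexity gives $d(p_i,q_i)\le d(m_{j-1},m_j)<\epsilon$, which bounds every side of every small triangle by $2\epsilon$.

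The one real error is in Step~1. The cone $K=\bigcup_{w\in[x,z]}[y,w]$ is not convex in a general CAT($-1$) space, so edges of your small triangles such as $[p_1,m_j]$ need not lie in $K$. Convexity of balls does not put them there. The Lebesgue number of the compact metric space $K$ only controls subsets of $K$, so it does not apply to these triangles. You also cannot fix this by passing to a compact neighborhood of $K$, because $X$ need not be locally compact, and $\Bhat$ is not.

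The repair is the form of the Lebesgue lemma the paper uses. For each $s\in K$ choose $\delta_s>0$ with $B(s,2\delta_s)$ contained in a member of $\cU$. Cover $K$ by finitely many balls $B(s_k,\delta_{s_k})$ and set $\lambda=\min_k\delta_{s_k}$. Then $B(s,\lambda)$ lies in a member of $\cU$ for every $s\in K$.

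Each of your triangles has a vertex $v\in K$ and perimeter less than $\lambda$. It therefore lies in $B(v,\lambda)$: every point of a geodesic triangle is within half its perimeter of each vertex, or you can use convexity of balls. With this change the rest of your argument goes through unchanged.
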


 \begin{proof}[Sketch of proof of \cref{L:patchwork}]
Suppose
    $\Delta$ is a geodesic triangle with vertices $a$, $v$ and~$x$.
    Consider the surface~$S$ swept out by the $\overline{aw}$ as $w$ moves
    along~$\overline{vx}$.  
    
    For each point $s$ of~$S$ pick a $\delta_s>0$ such that the ball of radius $2\delta_s$ about $s$ is contained in an element of $\cU$. 
    By compactness, the covering of~$S$ by the open balls of radius $\delta_s$ about all the $s\in S$ has a finite subcover. It follows that there is some $\lambda>0$ such that every point $s$ of $S$ has that the ball of radius $\lambda$ about $s$ is contained in an element of $\cU$. 
    
    So every geodesic triangle in~$X$ whose three vertices lie
    within $\lambda$ of some point of~$S$ is contained in a ball which is contained in an element of $\cU$. (Here we use that balls are convex.)
    The Alexandrov patchwork argument, used for example in \cite[page 199-200]{BridsonHaefliger} and \cite[pages 102-104]{AlexanderKapovitchPetrunin},
    subdivides~$\Delta$ into such triangles.
\end{proof}

\begin{proposition}\label{P:CombiningBoundedTriangles}
Fix geometric swaddling data. If a triangle $T$ can be subdivided into $L$-bounded triangles, then $T$ is $L$-bounded. 
\end{proposition}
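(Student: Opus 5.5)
We argue by induction on the number of splittings used to produce the subdivision, using \cref{L:SuperAdd} to control areas and the geometric property of the swaddling data to control monodromy. Each step of a subdivision replaces one triangle by the two triangles of a splitting, so it is enough to prove the single-splitting case. Suppose $T'$ has vertices $x,y,z$, and $m$ lies on the edge from $x$ to $z$. Let $T_1$ and $T_2$ be the triangles with vertices $y,m,x$ and $y,m,z$. If $T_1$ and $T_2$ are $L$-bounded, we want $T'$ to be $L$-bounded. Given this, induction along the sequence of splittings shows that if every triangle of the final subdivision is $L$-bounded, so is every intermediate triangle, and in particular $T$.

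\textbf{Additivity of monodromy.} Write $\mu(a,b,c)$ for the signed translation of $m_{c,a}\circ m_{b,c}\circ m_{a,b}$ on $E_a$. Since all maps are $\bR$-equivariant between principal $\bR$-fibers, conjugating a translation by equivariant maps gives the same translation. So the signed monodromy of a loop is independent of the base fiber and reverses sign under reversing orientation. Because $m$ lies on the geodesic from $x$ to $z$, the geometric condition gives $m_{x,z}=m_{m,z}\circ m_{x,m}$, and by the inverse property $m_{z,x}=m_{m,x}\circ m_{z,m}$. Then
$$m_{z,x}\circ m_{y,z}\circ m_{x,y}=m_{m,x}\circ\bigl(m_{z,m}\circ m_{y,z}\circ m_{m,y}\bigr)\circ\bigl(m_{y,m}\circ m_{x,y}\bigr),$$
after inserting $m_{m,y}\circ m_{y,m}=\mathrm{id}$. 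The middle factor is the monodromy loop of $T_2$ based at $m$. The remaining factors $m_{m,x}\circ(\cdot)\circ m_{y,m}\circ m_{x,y}$ assemble, after moving the translation past equivariant maps, into the loop $x\to y\to m\to x$, which is the monodromy of $T_1$. Hence $\mu(x,y,z)=\mu(x,y,m)+\mu(m,y,z)$, with consistent orientations.

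\textbf{Conclusion.} By the triangle inequality, $|\mu(T')|\le|\mu(T_1)|+|\mu(T_2)|\le L(A_1+A_2)$, where $A_i$ is the area of the comparison triangle of $T_i$. By \cref{L:SuperAdd} in its $k=2$ case, $A_1+A_2\le A(T')$, so $T'$ is $L$-bounded. Alternatively, we can skip the intermediate triangles: iterating the additivity identity gives $\mu(T)=\sum\pm\mu(T_i)$ over the final pieces, and \cref{L:SuperAdd} for the full subdivision gives $\sum A_i\le A(T)$.

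The main obstacle is the bookkeeping in the additivity identity. One has to check carefully that the orientations of $T_1$ and $T_2$ induced from $T'$ are consistent, so the shared edge $ym$ cancels. One also has to justify that translation amounts can be compared across different base fibers. Both follow from $\bR$-equivariance and the rule $m_{b,a}=m_{a,b}^{-1}$. The only place the geometric hypothesis enters is the factorization of $m_{x,z}$ through $m$; the area estimate is supplied entirely by \cref{L:SuperAdd}.
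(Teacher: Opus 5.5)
Your proof is correct and follows the paper's argument: the monodromy of $T$ is additive over the subdivision, which you verify explicitly from the factorization $m_{x,z}=m_{m,z}\circ m_{x,m}$ given by the geometric hypothesis, and the comparison areas are super-additive by \cref{L:SuperAdd}. The only difference is cosmetic: you induct one splitting at a time using the $k=2$ case of \cref{L:SuperAdd}, whereas the paper applies both facts to the whole subdivision at once, exactly as in your ``alternatively'' remark.
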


\begin{proof}
We call the triangles in the subdivision ``small triangles''. 
\cref{L:SuperAdd} gives that the area of the comparison triangle for $T$ is at least the sum of the areas of the comparison triangles for small triangles. The monodromy of $T$ is the sum of the monodromies of the small triangles (after making appropriate orientation conventions).   The proposition follows immediately.
\end{proof}

\begin{proof}[Proof of \cref{P:Subdividing}]
This follows immediately from \cref{L:patchwork} and \cref{P:CombiningBoundedTriangles}. 
\end{proof}

\subsection{Pullbacks} 

Our goal is now to show that frequently $L$-bounded curvature is stable under pullbacks. 

\begin{definition}\label{D:piecewisegeodesic}
Suppose $\hat{X}$ and $X$ are CAT($-1$) spaces. Then we will say a map $b:\hat{X} \to X$ is piecewise geodesic if every geodesic segment in $\hat{X}$ can be broken into finitely many sub-geodesic segments that each map isometrically to their image in $X$. 
\end{definition} 

Note that any piecewise geodesic map is 1-Lipschitz. 

\begin{lemma}\label{L:DefinePullback}
Let $b:\hat{X} \to X$ be a piecewise geodesic map of CAT($-1$) spaces. Suppose $(E\to X, m_{\bullet, \bullet})$ is geometric swaddling data. Then there is a unique family of maps $\hat{m}_{\bullet, \bullet}$ such that $(b^*(E)\to \Xhat, \hat{m}_{\bullet, \bullet})$ is geometric swaddling data for the pullback, and such that if $x,y\in \hat{X}$ is such that the geodesic from $x$ to $y$ maps isometrically to its image in $X$, then $\hat{m}_{x,y}=m_{b(x), b(y)}$. 
\end{lemma}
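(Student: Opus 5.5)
The plan is to define $\hat m_{x,y}$ by breaking the geodesic $[x,y]$ into pieces and composing the transport maps $m$ of the images of those pieces, and then to check that the result is independent of the breaking. The pullback $b^*(E)$ has fiber over $x\in\Xhat$ canonically equal to $E_{b(x)}$, with the induced $\bR$-action.

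Fix $x,y\in\Xhat$. By \cref{D:piecewisegeodesic}, choose a partition $x=x_0,x_1,\dots,x_N=y$ of $[x,y]$ along the geodesic such that each sub-segment $[x_{i-1},x_i]$ maps isometrically onto its image. Since $X$ is CAT($-1$), that image is the geodesic $[b(x_{i-1}),b(x_i)]$. Define
$$\hat m_{x,y}=m_{b(x_{N-1}),b(x_N)}\circ\cdots\circ m_{b(x_0),b(x_1)}.$$
This map is $\bR$-equivariant, being a composition of equivariant maps.

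\begin{enumerate}
\item \textbf{Independence of the partition.} Any two admissible partitions have a common refinement, and a refinement of an admissible partition is still admissible, because sub-segments of isometrically mapped segments are mapped isometrically. So it suffices to check that inserting one point $z$ into $[x_{i-1},x_i]$ leaves the composition unchanged. The point $b(z)$ lies on the geodesic from $b(x_{i-1})$ to $b(x_i)$, so the geometricity of $m$ gives $m_{b(x_{i-1}),b(x_i)}=m_{b(z),b(x_i)}\circ m_{b(x_{i-1}),b(z)}$.
\item \textbf{Symmetry.} Reversing the geodesic reverses the partition, so $\hat m_{y,x}=\hat m_{x,y}^{-1}$ follows from $m_{v,u}=m_{u,v}^{-1}$.
\item \textbf{Geometricity.} Let $y\in[x,z]$. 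Admissible partitions of $[x,y]$ and $[y,z]$ concatenate, with $y$ added as a partition point, to an admissible partition of $[x,z]$. Hence $\hat m_{x,z}=\hat m_{y,z}\circ\hat m_{x,y}$.
\item \textbf{Normalization.} If $[x,y]$ maps isometrically, the trivial partition $\{x,y\}$ is admissible, which gives $\hat m_{x,y}=m_{b(x),b(y)}$.
\item \textbf{Uniqueness.} Let $\hat m'$ be any geometric family satisfying the normalization. Geometricity applied repeatedly along an admissible partition expresses $\hat m'_{x,y}$ as the same composition that defines $\hat m_{x,y}$, so $\hat m'=\hat m$.
\end{enumerate}

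The only delicate point is the first step: one must confirm that an isometrically mapped segment has a geodesic image, and that refinements preserve admissibility. Both follow from uniqueness of geodesics in CAT($-1$) spaces, so I expect no real obstacle.
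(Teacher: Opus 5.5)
Your proposal is correct and is essentially the paper's proof: you define $\hat m_{x,y}$ by composing the maps $m$ along an admissible partition of $[x,y]$, and you get well-definedness from geometricity of $m$ under insertion of a point together with common refinements. You also check symmetry, geometricity of $\hat m$, the normalization, and uniqueness explicitly, which the paper leaves implicit (``Uniqueness is clear'').
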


By definition, the fiber of the pullback bundle $b^*(E)$ over a point $x\in \Xhat$ is naturally identified with the fiber of $E$ over $b(x)$, and it is via such identifications that the equality $\hat{m}_{x,y}=m_{b(x), b(y)}$ should be interpreted. 

\begin{proof}
For any $x, y\in \Xhat$, there exists a sequence $x=x_0, x_1, \ldots, x_k=y$ of points on the geodesic $[x,y]$ such that each geodesic $[x_i, x_{i+1}]$ maps isometrically to a geodesic in $X$. We define $$\hat{m}_{x,y} = m_{b(x_{k-1}), b(x_k)} \circ \cdots \circ m_{b(x_0), b(x_1)}.$$
Because the original swaddling data  $(E\to X, m_{\bullet, \bullet})$ is geometric, this definition is stable under adding extra points to the sequence. Taking common subdivisions shows that $\hat{m}_{x,y}$ is well-defined. Uniqueness is clear. 
\end{proof}

\begin{definition}
We define the swaddling data $(b^*(E)\to \Xhat, \hat{m}_{\bullet, \bullet})$ produced by \cref{L:DefinePullback} to be the pullback swaddling data. 
\end{definition} 

\begin{definition}\label{D:piecewisetriangular}
Suppose $\hat{X}$ and $X$ are CAT($-1$) spaces. Then we will say a map $b:\hat{X} \to X$ is piecewise triangular if every triangle in $\hat{X}$ can be subdivided, in the sense of \cref{D:trianglesubdivision}, into (smaller) triangles such that the restriction of $b$ to each edge of one of these (smaller)  triangles is an isometric embedding. 
\end{definition} 

Note that piecewise triangular implies piecewise geodesic, since any geodesic segment is the edge of a triangle. We now have our main goal for this section. 

\begin{corollary}\label{C:PullBackQuasiLine} 
Suppose that $(E\to X, m_{\bullet, \bullet})$ is geometric swaddling data that is $L$-bounded, and suppose that $b:\Xhat\to X$ is piecewise triangular. Then the pullback swaddling data $(b^*(E)\to \Xhat, \hat{m}_{\bullet, \bullet})$ is $L$-bounded. In particular, it is almost trivial, and hence the swaddled space $b^*(E)^\sw$ is a quasi-line. 
\end{corollary}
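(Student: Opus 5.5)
Take an arbitrary triangle $\hat T$ in $\Xhat$ with vertices $\hat x,\hat y,\hat z$. Since $b$ is piecewise triangular, we may subdivide $\hat T$, in the sense of \cref{D:trianglesubdivision}, into small triangles $\hat T_1,\dots,\hat T_k$ such that $b$ restricted to each edge of each $\hat T_i$ is an isometric embedding. By \cref{P:CombiningBoundedTriangles}, applied to the pullback swaddling data (which is geometric by \cref{L:DefinePullback}), it then suffices to show that each $\hat T_i$ is $L$-bounded.

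Fix one small triangle $\hat T_i$ with vertices $\hat a,\hat v,\hat w$. Each of its edges maps isometrically onto a geodesic of $X$. Therefore \cref{L:DefinePullback} gives $\hat m_{\hat a,\hat v}=m_{b(\hat a),b(\hat v)}$, and similarly for the other two edges. So the monodromy of $\hat T_i$ for the pullback data equals the monodromy of the image triangle $b(\hat T_i)$, whose vertices are $b(\hat a),b(\hat v),b(\hat w)$ and whose edges are the images of the edges of $\hat T_i$. The side lengths also agree, because each edge maps isometrically. Hence the comparison triangles in $\bH^2$ for $\hat T_i$ and for $b(\hat T_i)$ are congruent and have the same area. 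Since the original data is $L$-bounded, $|\text{monodromy}(b(\hat T_i))|\le L\cdot\text{area}$, so $\hat T_i$ is $L$-bounded. The only point to check here is that the monodromy of the image is computed along the geodesic edges $b([\hat a,\hat v])$ and so on; this is automatic because $X$ is CAT($-1$) and so geodesics are unique. Combining over $i$ shows that $\hat T$ is $L$-bounded.

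For the ``in particular'' statement, note that every comparison triangle in $\bH^2$ has area less than $\pi$. So an $L$-bounded triangle has monodromy of magnitude at most $L\pi$, which means the pullback data is $L\pi$-almost trivial in the sense of \cref{D:AlmostTrivial}. \cref{L:Swaddled} then shows that each fiber includes quasi-isometrically into $b^*(E)^\sw$, so $b^*(E)^\sw$ is a quasi-line.

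I expect no real obstacle. The only step needing care is the monodromy identification in the second paragraph, namely that the pulled-back transport maps along the small edges agree exactly with the maps $m$ downstairs under the fiber identifications. This is exactly what \cref{L:DefinePullback} provides.
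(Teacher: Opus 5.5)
Your proof is correct and takes the same approach as the paper. The paper just cites \cref{P:CombiningBoundedTriangles} and ``the definitions''; you have supplied the omitted details: via \cref{L:DefinePullback}, each small triangle has the same monodromy and a congruent comparison triangle as its image in $X$, and since comparison areas are less than $\pi$ the data is $L\pi$-almost trivial before \cref{L:Swaddled} is applied.
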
 

\begin{proof}
It follows from \cref{P:CombiningBoundedTriangles} and the definitions that $(b^*(E)\to \Xhat, \hat{m}_{\bullet, \bullet})$ is $L$-bounded, and it is immediate from the definitions that $L$-bounded implies almost trivial.
The final claim, that the swaddled space is a quasi-line, is provided by \cref{L:Swaddled}. 
\end{proof} 

\section{Cusp domains }\label{S:CuspDomains}

Our goal in this section is to continue to fulfill our promise
made in \cref{SecModelSpace}, by defining more of the domains
in the HHS on $\Bhatthick$, and the projections from $\Bhatthick$ to them.
The domains defined in this section, called  ``center domains'' and ``tree domains'', 
arise from the horoballs of $\Bhat$. Each horoball will give rise to $n-1$ tree domains and 1 center domain. 

In \cref{SecModelSpace} we introduced the horoballs
$T_c\sset B$ centered at the cusps $c$
of~$\Gamma$.  As usual, we add a
superscript $^\circ$ to indicate ``minus the hyperplanes'',
i.e. $\Tcirc_c= T_c\cap\Bcirc$.  
By $\partial\Tcirc_c$ we mean $(\partial T_c)^\circ=\partial T_c\cap\Bcirc$,
not $\partial(\Tcirc_c)$.

We continue to write $\That_{\hat{c}}$ for a horoball
of~$\Bhat$ lying over~$T_c$, where $\chat$ indicates
one of the many cusps of~$\Bhat$ that lie over~$c$.
We attach a superscript~$^\circ$ as before, i.e.
$\Thatcirc_\chat$ and $\partial\Thatcirc_\chat$ are
the intersections of $\That_\chat$
and  $\partial\That_\chat$ with~$\Bhatcirc$.

We will show that $\partial\That_\chat$ is a type of $\bR$-bundle over a product of $n-1$ Gromov hyperbolic spaces $\Chat_i$. The $\Chat_i$ are quasi-isometric to electrifications of trees, hence the term ``tree domain''. A swaddling of $\partial\That_\chat$ is related to the center of the Heisenberg group, hence the term ``center domain''. 
Formal definitions require some background about
the geometry of  a complex hyperbolic horosphere.

\subsection{The Heisenberg group and horoballs}\label{SS:HeisenbergGroup}

Here we follow the discussion in \cite[Section 3]{AllcockIsoperimetric}, but see also \cite[Chapter 4]{Goldman} and \cite[Section 4]{Parker} for details. 

Let $H^{2n-1}$ denote the $(2n-1)$-dimensional Heisenberg group, and let $Z=Z(H^{2n-1})$ denote its center. This center is isomorphic to $\bR$, and the quotient $H^{2n-1}/Z$ is isomorphic to $\bR^{2n-2}$ as a Lie group.

Start by supposing
$c$ lies in~$\partial B^n$ and $T_c$ is a horoball
with center~$c$. Consider the bounding horosphere~$\partial T_c$.
The $\PU(n,1)$-stabilizer of $\partial T_c$ is the semidirect 
product of the $(2n-1)$-dimensional Heisenberg group~$H^{2n-1}$ by
the unitary group $U(n-1)$.  The factor $H^{2n-1}$ acts simply
transitively on~$\partial T_c$, with the $U(n-1)$ factor being
any chosen point stabilizer.  
Under the action of the center of $H^{2n-1}$,  $$Z=Z(H^{2n-1})\iso\R,$$ $\partial T_c$ becomes
a principal $\R$-bundle.  The base of this bundle is the set of
$Z$-orbits, which can be identified with the set of complex
lines in $\bP(\C^{n,1})$ that pass through~$c$ and meet~$B^n$.
Another way to think of the base is as a torsor for
$H^{2n-1}/Z\iso\C^{n-1}$.  
We write $\C^{n-1}$ and not $\R^{2n-2}$,
because the induced action of~$U(n-1)$ identifies the quotient group
with complex Euclidean space~$\C^{n-1}$ (up to a scale factor).
The group structure of the central extension
$$
1\to Z\to H^{2n-1}\to\C^{n-1}\to 1
$$
is
determined by the 
commutator map $H^{2n-1}\times H^{2n-1}\to Z$.  This 
carries the same information as 
the antisymmetric bilinear form $\C^{n-1}\times\C^{n-1}\to Z$
that it induces.
By $U(n-1)$ symmetry, this bilinear form must be the
imaginary part of the Hermitian inner product on~$\C^{n-1}$, up
to a constant factor.  

Having described  $H^{2n-1}$ as a group, next we examine
it as a Riemannian manifold.  We already know
that each fiber is isometric to~$\R$,
and the base to~$\C^{n-1}$. 
Additional information
is carried by the field of hyperplanes orthogonal to the $Z$-orbits, 
in the tangent spaces of points
of $\partial T_c$.  
Usually one imagines the fiber direction as vertical, so
these hyperplanes and the vectors they contain
 are called \emph{horizontal}.  
 
A smooth path in $H^{2n-1}$ is called horizontal if
its tangent vectors are.
The horizontal hyperplanes form an Ehresmann connection.  In particular,
given
any
smooth path~$\gamma$ in the base, and any preimage in $H^{2n-1}$ of its
starting point, there is a unique horizontal lift~$\tilde\gamma$ of~$\gamma$ starting there.
If $\gamma$ is a loop, then the endpoints of $\tilde\gamma$ lie in
a single fiber.  So there is a unique
element of~$Z$ that sends the starting point
to the endpoint, called the \emph{monodromy} around~$\gamma$.

The monodromy 
can be expressed in terms of the curvature
of the connection.  This is a $2$-form on the base,
taking values in the Lie algebra of the structure group,
and  describes
``the monodromy around infinitesimal rectangles''.  
In our setting,
the structure group is~$\R$, so the curvature is an ordinary
$2$-form on (the real manifold underlying)~$\C^{n-1}$.

To find the
monodromy around a curve~$\gamma$,  integrate the curvature form over  
any disk with boundary~$\gamma$.
By $U(n-1)$ symmetry, the only possibility for the curvature form
is the imaginary
part of the Hermitian form, up to a scale factor.
This makes the monodromy
easy to understand: choose any orthonormal
basis for~$\C^{n-1}$ and write $\gamma=(\gamma_1,\dots,\gamma_{n-1})$ with respect to it.
Then, up to a global factor,
the monodromy around~$\gamma$
is the sum of the signed areas enclosed by the $\gamma_i$.

For later use we record the following. In this lemma $T_c$ may be any horoball of $B$ (there is only one up to $PU(n,1)$).

\begin{lemma}\label{L:ConstInCn1IsAlmostParallel}
There is a constant $C$ depending only on $n$ such that the following holds. 
Let $\alpha:[0,1]\to \partial T_c$ be a $C^1$ path. Let $v_0$ be a unit horizontal tangent vector at $\alpha(0)$, and let $\ol{v}_0$ be its image in $\bC^{n-1}$. Let $v_1$ be the parallel translate of $v_0$ to $\alpha(1)$ along $\alpha$ and let $v_1'$ be the horizontal lift to $\alpha(1)$ of $\ol{v}_0$. Then the distance between $v_1$ and $v_1'$ is at most $C$ times the length of $\alpha$. 
\end{lemma}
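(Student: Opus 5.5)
The plan is to compare two ways of moving a horizontal vector along $\alpha$. The first is Levi-Civita parallel transport in the ambient complex hyperbolic metric, or in the induced metric on $\partial T_c$; the argument is the same for either. The second is ``left-invariant'' transport, which sends $v_0$ to the horizontal lift of the same vector $\ol{v}_0\in\bC^{n-1}$. We then show that their difference grows at most linearly in the length of $\alpha$.

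First, identify $\partial T_c$ with $H^{2n-1}$ via the simply transitive action. Choose a left-invariant (in fact $H^{2n-1}$-invariant) orthonormal frame $e_1,\dots,e_{2n-2},e_Z$. Here the $e_i$ are the horizontal lifts of a fixed orthonormal basis of $\bC^{n-1}$, and $e_Z$ generates the center. Because $H^{2n-1}$ acts by isometries, the horizontal lift of a fixed $\ol v\in\bC^{n-1}$ at any point $p$ is the image of a fixed vector under the isometry taking a base point to $p$. Hence $v_1'$ has constant coefficients in this frame.

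Next, write the parallel transport equation in this frame. Set $v(t)=\sum_j a_j(t)E_j$, where the $E_j$ run over the whole invariant frame. The ambient tangent space has one extra direction, the unit normal to the horosphere. Pulling the frame back by the isometric $H^{2n-1}$-action, this normal is also invariant, so we include it in the frame. Parallel transport then reads
$$a'(t)=-\Omega(\alpha'(t))\,a(t),$$
where $\Omega$ is the connection $1$-form matrix expressed in the invariant frame. By invariance, $\Omega(\alpha'(t))$ is a linear function of the components of $\alpha'(t)$ in the invariant frame, with constant coefficients (the Christoffel-type constants $\langle\nabla_{E_i}E_j,E_k\rangle$). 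These constants are fixed numbers depending only on $n$, since the horosphere is unique up to $PU(n,1)$ and the structure is homogeneous. Therefore
$$\|\Omega(\alpha'(t))\|\le C\,|\alpha'(t)|.$$

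Finally, parallel transport preserves norms, so $|a(t)|=1$ throughout. This gives
$$|a'(t)|\le C|\alpha'(t)|.$$
Integrating yields
$$|a(1)-a(0)|\le C\,\length(\alpha).$$
The vector $v_1'$ has coordinate vector exactly $a(0)$: its horizontal components are those of $\ol{v}_0$, and its other components are zero because $v_0$ is horizontal. The distance between $v_1$ and $v_1'$ is measured in the orthonormal frame at $\alpha(1)$, so it equals $|a(1)-a(0)|\le C\,\length(\alpha)$.

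The main obstacle is bookkeeping rather than analysis. One has to confirm that the relevant frame, including the normal direction in the ambient case, is genuinely invariant under the $H^{2n-1}$ action. It is, because the action is by isometries of $B$ preserving $\partial T_c$, and hence preserves its unit normal field. One must also check that ``horizontal lift of $\ol v_0$'' means the invariant vector field. Once this is in place, the estimate is just Gronwall-free linear ODE bookkeeping with a uniform constant.
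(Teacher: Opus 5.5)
Your proposal is correct and is essentially the paper's argument: the horizontal lift of $\ol{v}_0$ is an $H^{2n-1}$-invariant vector field, so by homogeneity its covariant derivative (equivalently, your constant connection coefficients in an invariant frame) is uniformly bounded, and integrating along $\alpha$ gives the linear bound. The paper phrases this more compactly, using the single invariant field $V_0$ and the constant operator norm of $\nabla V_0$ instead of a full invariant frame, but the content is the same.
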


Here, when $\ol{v}_0$ is viewed as in $\bC^{n-1}$, we are identifying the $\bC^{n-1}$ as being simultaneously the tangent space to every point of the $\bC^{n-1}$ that is $\partial T_c$ mod center. 

\begin{proof}
There is a vector field $V_0$ on $\partial T_c$ whose value at a point is the horizontal lift of $\ol{v}_0$. 

The action of $H^{2n-1}$ on $\partial T_c$ induces an action by translations on $\partial T_c$ mod center. Since the $H^{2n-1}$ action is transitive, this shows that $V_0$ is invariant under $H^{2n-1}$. Since the Levi-Civita connection $\nabla$ is invariant under isometries, we get that $\nabla V_0$ is invariant under $H^{2n-1}$. In particular, the operator norm of $\nabla V_0$ is constant on $\partial T_c$.

 This gives the result because the difference between $v_1$ and $v_1'$ can be computed using the integral of $\nabla V_0$ applied to $\alpha'(t)$. 
\end{proof}

\subsection{The hyperplane arrangement}\label{SS:HyperplaneArrangement}

We will also need to understand the hyperplanes of~$B$ that contain~$c$.
Each is the orthogonal complement of a vector~$r\in\C^{n,1}$ that
satisfies $r^2>0$ and $r\perp c$, and we note the following, where ``hyperplane'' of $\bC^{n-1}$ means an affine complex codimension 1 subspace of $\bC^{n-1}$, and where we temporarily suspend our standing convention that a hyperplane of $B$ always means a hyperplane in $\cH$.  

\begin{lemma}\label{L:HinTcBasic}
The map $H\mapsto (H \cap \partial T_c)/Z$ induces a bijection between hyperplanes of $B$ containing $c$ and hyperplanes of $\partial T_c/Z \simeq \bC^{n-1}$. Furthermore, 
\begin{enumerate} 
\item two such hyperplanes $H_1$ and $H_2$ are orthogonal in $B$ if and only if the associated hyperplanes  in $\bC^{n-1}$ are orthogonal, and 
\item if $v$ is a normal vector to the hyperplane associated to $H$, then any horizontal lift of $v$ is orthogonal to $H$. 
\end{enumerate}
\end{lemma}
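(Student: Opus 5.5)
I will prove this in the Siegel domain model, where all the relevant objects become explicit. Normalize so that $c$ is the isotropic vector $(1,0,\dots,0,-1)$ up to a change of coordinates. More convenient is a basis of $\C^{n,1}$ in which the form is
$$h(x,y)=x_0\overline{y}_{n}+x_{n}\overline{y}_0+\sum_{i=1}^{n-1}x_i\overline{y}_i,$$
with $c=(1,0,\dots,0)$. In these coordinates $B$ is the Siegel domain $\{2\Re(w)+|z|^2<0\}$ for affine coordinates $(w,z)\in\C\times\C^{n-1}$ (obtained by setting $x_n=1$). Horospheres centered at $c$ are the level sets $2\Re(w)+|z|^2=\text{const}$. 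The Heisenberg group acts by
$$(w,z)\mapsto \bigl(w-\langle z,a\rangle-\tfrac12|a|^2+it,\ z+a\bigr),$$
with center $t\in\R$. The quotient $\partial T_c/Z$ is therefore identified with $\C^{n-1}$ via $(w,z)\mapsto z$, and this identification is equivariant for the translation action.

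\textbf{Bijection.} A hyperplane containing $c$ is $r^\perp$ with $r^2>0$ and $r\perp c$. The condition $h(r,c)=0$ forces the $x_n$-coordinate of $r$ to vanish, so $r=(r_0,u,0)$ with $u\in\C^{n-1}$. Then $r^2=|u|^2>0$ forces $u\neq0$. The equation $h(x,r)=0$ in affine coordinates reads $\langle z,u\rangle+\overline{r_0}=0$. This involves only $z$, so $H$ is a union of $Z$-orbits, and $(H\cap\partial T_c)/Z$ is the affine complex hyperplane $\{\langle z,u\rangle=-\overline{r_0}\}$ of $\C^{n-1}$. Conversely, every affine hyperplane has this form for $u\ne0$, and $u$, $r_0$ are determined up to a common scalar. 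This gives the bijection, once I check that $H$ actually meets $\partial T_c$. That check is immediate: $w$ is free, so one can choose $\Re(w)$ to land on the horosphere.

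\textbf{Orthogonality (1).} Two hyperplanes $r_1^\perp$, $r_2^\perp$ in $B$ meet orthogonally exactly when $h(r_1,r_2)=0$ and they meet in $B$; this is the standard fact that orthogonal hyperplanes correspond to orthogonal positive vectors. Here $h(r_1,r_2)=\langle u_1,u_2\rangle$, since the $x_n$-coordinates vanish. This vanishes iff the normal vectors of the affine hyperplanes in $\C^{n-1}$ are orthogonal. Orthogonal affine hyperplanes in $\C^{n-1}$ always intersect, so the intersection condition comes for free. The one subtle point is the converse direction in $B$: the paper's notion of orthogonal requires meeting. Non-parallel affine hyperplanes in $\C^{n-1}$ meet, and a common point $z$ lifts to points of $B$ by choosing $w$.

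\textbf{Horizontal lifts (2).} This is the step needing care, since ``horizontal'' refers to the Riemannian orthogonal complement of the $Z$-orbits. I would use symmetry rather than computing the metric. The stabilizer of $c$ and of $\partial T_c$ contains the complex reflection (order 2 suffices) in $H$, namely $x\mapsto x-2\frac{h(x,r)}{h(r,r)}r$. This is an isometry preserving $\partial T_c$ and the center direction, so it commutes with $Z$ up to sign and preserves the horizontal distribution. It fixes $H$ pointwise and acts on $\partial T_c/Z$ as the reflection $z\mapsto z-2\frac{\langle z,u\rangle+\overline{r_0}}{|u|^2}u$ across the associated hyperplane, negating $v$. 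Take a point $p\in H\cap\partial T_c$ and a horizontal lift $\tilde v$ of $v$ at $p$. The reflection's differential sends $\tilde v$ to the horizontal lift of $-v$, which is $-\tilde v$. So $\tilde v$ lies in the $-1$ eigenspace of the differential of an isometry whose fixed set is $H$; that eigenspace is the normal space to $H$. Hence $\tilde v\perp H$.

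I expect the main obstacle to be checking that the differential preserves horizontality and fixes the vertical direction at $p$ rather than negating it. This should follow from $Z$-equivariance of the reflection (it commutes with the central translations $w\mapsto w+it$, being linear in $z$ and fixing $c$) together with the fact that it is an isometry preserving $\partial T_c$.
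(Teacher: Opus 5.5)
Your proof is correct, but it reaches the statement by a different route from the paper's sketch.

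\textbf{How the paper argues.} The paper gets the bijection from symmetry: inside the stabilizer $\PU(n-1,1)$ of $r$, the stabilizer of $c$ is $H^{2n-3}\rtimes U(n-2)$, and the center of $H^{2n-3}$ is the center $Z$ of $H^{2n-1}$. It leaves (1) to the reader. It proves (2) by splitting the tangent space at $p\in H\cap\partial T_c$ into three pieces: the direction of the geodesic to $c$, the center direction, and the horizontal space. The first two lie in $T_pH$, and the horizontal space is a scaled copy of $\C^{n-1}$.

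\textbf{What your route buys.} Your Siegel-domain computation replaces the stabilizer argument by one observation: $h(x,r)=\langle z,u\rangle+\overline{r_0}$ does not involve $w$. That gives $Z$-invariance, injectivity, surjectivity and nonemptiness of $H\cap\partial T_c$ at once. It also reduces (1) to the identity $h(r_1,r_2)=\langle u_1,u_2\rangle$.

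Your reflection argument for (2) is more economical than the paper's. It needs neither a description of $T_pH$ nor the fact that horizontal vectors project homothetically to $\C^{n-1}$. It only uses that $R$ is an isometry fixing $H$ pointwise, preserving $\partial T_c$ and the $Z$-orbits, and covering the affine reflection.

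\textbf{Your closing worry is unnecessary.} Any isometry of $\partial T_c$ that permutes $Z$-orbits preserves their orthogonal complement, whatever it does to the vertical line. You also do not need the $-1$ eigenspace to be exactly the normal space. Since $dR_p$ is orthogonal and fixes $T_pH$, you get $\langle\tilde v,w\rangle=\langle -\tilde v,w\rangle$ for every $w\in T_pH$.

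\textbf{One point to make explicit.} The paper fixes the Euclidean structure on $\partial T_c/Z$ only up to scale, via $U(n-1)$. You should say why it is the standard one in your $z$-coordinate. The Heisenberg translations and the maps $(w,z)\mapsto(w,Az)$ with $A\in U(n-1)$ are isometries preserving $\partial T_c$ and $Z$. Hence the induced metric is translation- and $U(n-1)$-invariant, so it is a constant multiple of the standard one.
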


Since this is standard, we offer only a partial sketch. Say $r\in \bC^{n,1}$ is a positive vector. Note that $\partial T_c\cap r^\perp$
is a horosphere in $r^\perp$.  
The $\PU(n,1)$-stabilizer of $r$ 
is a copy of $\PU(n-1,1)$, and $c$'s stabilizer therein is
a copy of $H^{2n-3}\rtimes U(n-2)$.  Because the center of $H^{2n-3}$ is also the center of $H^{2n-1}$ here, it follows that each
hyperplane through~$c$ meets $\partial T_c$ in the union of fibers over
a hyperplane in~$\C^{n-1}$. We leave it to the reader to check orthogonality either using symmetry or by working in coordinates. The final claim follows because the whole horosphere is orthogonal to any geodesic to $c$, and the geodesic direction and the center direction are the only directions not seen in $\bC^{n-1}$. 

Now we specialize to our situation.  Namely, $c$ is a cusp of~$\Gamma$,
and $T_c$ is the horoball, centered there, that we defined in \cref{SecModelSpace}.
Consider the components of $\H$ that meet~$c$.  By the previous
paragraph, they correspond to a union of  hyperplanes in~$\C^{n-1}$.
Since $\H$ is an orthogonal arrangement, so is this  arrangement in~$\C^{n-1}$.

By \cref{CuspLimit}, $c$ is a limit of some $1$-dimensional stratum
of~$B$.  This stratum is the intersection of $n-1$ hyperplanes of~$B$,
all containing~$c$.  By $U(n-1)$ symmetry, we may suppose that the
corresponding  hyperplanes are parallel to the coordinate hyperplanes
in~$\C^{n-1}$.

Now consider another hyperplane of~$B$ that passes
through~$c$.  The corresponding  hyperplane must be orthogonal
to every one of these $(n-1)$  hyperplanes that it meets.  Therefore
it is parallel to one of them and orthogonal to the rest.  So the 
hyperplanes fall into $n-1$ parallelism classes.  Each class can be
parameterized by a subset $\Sigma_i$ of~$\C$.  

At a formal level, we
regard~$i$ as varying over the set of
parallelism classes of hyperplanes incident to~$c$, 
rather than $\{1,\dots,n-1\}$.
Therefore $\Gamma$ acts naturally on the set of pairs $(c,i)$. However,
we will continue to write $i=1,\dots,n-1$ in our concrete
descriptions.
We have proven most of:

\begin{lemma}
    \label{LemAffineArrangement}
    For $i=1,\dots,n-1$ there  are lattices 
    $\Lambda_i\subset\C$ and $\Lambda_i$-invariant
    discrete subsets~$\Sigma_i$ of~$\C$, 
     such that the following holds:
    $\H\cap\partial T_c$ is the union of the fibers over the  arrangement
    $$
    \bigcup_{i=1}^{n-1}
    \bigl(\C^{i-1}\times\Sigma_i\times\C^{n-1-i}\bigr)
    \subseteq\C^{n-1}
    .$$ 
\end{lemma}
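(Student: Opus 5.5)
The discussion preceding the statement already gives the combinatorial description: every hyperplane of $B$ through $c$ meets $\partial T_c$ in the union of the $Z$-fibers over an affine complex hyperplane of $\partial T_c/Z\iso\C^{n-1}$ (\cref{L:HinTcBasic}). After applying $U(n-1)$ we may take these hyperplanes to be parallel to the coordinate hyperplanes, so each is of the form $\C^{i-1}\times\{s\}\times\C^{n-1-i}$. We then define $\Sigma_i\subset\C$ as the set of $s$ that occur for the $i$th parallelism class. Two things remain to prove: that $\H\cap\partial T_c$ is exactly the union of the fibers over this arrangement, and that the lattices $\Lambda_i$ exist and make $\Sigma_i$ a discrete $\Lambda_i$-invariant set.

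For the first point, we use the choice of $T_c$ in \cref{SecModelSpace}: every hyperplane of $\H$ that meets $T_c$ (indeed, comes within $s_\theta$ of it) passes through $c$. So $\H\cap\partial T_c$ is the union of $H\cap\partial T_c$ over hyperplanes $H\ni c$, which is the union of fibers over the affine arrangement.

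For discreteness, we use local finiteness \ref{LocFin}. Suppose $s_k\to s$ in $\Sigma_i$ with the $s_k$ distinct. Then the corresponding hyperplanes of $B$ all meet a compact region of $\partial T_c$, namely the fibers over a compact set of $\C^{n-1}$ intersected with a bounded piece of the $Z$-direction. This region lies in a compact subset of $B$, which contradicts local finiteness of $\H$ in $B$.

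For the lattices, we use that $c$ is a cusp of the lattice $\Gamma$. Then $\Stab_\Gamma(c)$ contains a finite-index subgroup $N\subset H^{2n-1}$ acting cocompactly on $\partial T_c$, and its image $\bar N$ in $\C^{n-1}$ is a lattice of translations preserving the arrangement. A translation by $v=(v_1,\dots,v_{n-1})$ maps the arrangement to itself and preserves each parallelism class, so it sends $\Sigma_i$ to $\Sigma_i+v_i$. Hence $\Sigma_i$ is invariant under the projection $p_i(\bar N)\subset\C$. The main obstacle is that $p_i(\bar N)$ need not be discrete, so we cannot simply take $\Lambda_i=p_i(\bar N)$. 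We argue as follows. First, $\Sigma_i\neq\emptyset$, since the $1$-dimensional stratum from \ref{CuspLimit} supplies a hyperplane in each class. Fix $s\in\Sigma_i$. Then $s+p_i(\bar N)\subset\Sigma_i$, and $\Sigma_i$ is discrete, so $p_i(\bar N)$ is a discrete subgroup of $\C$. It is also cocompact, because $\bar N$ is cocompact in $\C^{n-1}$ and $p_i$ is a surjective linear map. Therefore $\Lambda_i:=p_i(\bar N)$ is a lattice in $\C$, and $\Sigma_i$ is $\Lambda_i$-invariant.
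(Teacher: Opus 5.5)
Your proof is correct and follows essentially the same route as the paper. The preceding discussion gives the affine arrangement, local finiteness gives discreteness of $\Sigma_i$, and the finite-index Heisenberg lattice in $\Stab_\Gamma(T_c)$ (which the paper obtains from Auslander's Bieberbach-type theorem) projects to the $\Lambda_i$. You are more explicit than the paper that discreteness of $\Lambda_i$ comes from $\Sigma_i$ being nonempty (via \ref{CuspLimit}) and discrete. Since you only use cocompactness of $\bar N$, your unjustified remark that $\bar N$ is a lattice is not actually needed; the paper proves that remark by finding a nontrivial commutator in $Z$.
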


\begin{proof}
The lemma is vacuous if $n=1$, so we can assume $n\geq 2$. 
    We have already proven everything except the existence of the~$\Lambda_i$
    and the discreteness of the~$\Sigma_i$.  The latter follows from the
    local finiteness of~$\H$.  Next,
    since $\Gamma$ is a lattice in $\PU(n,1)$, and $c$ is a cusp of~$\Gamma$,
    the $\Gamma$-stabilizer of~$T_c$ 
    is a lattice in the $\PU(n,1)$-stabilizer of~$T_c$, i.e. in 
    $H^{2n-1}\rtimes U(n-1)$ (for example because the image of the horosphere must be compact in the quotient). 
    By a variation on Bieberbach's Theorem, 
    this has a finite index
    normal subgroup which 
    is a lattice in $H^{2n-1}$ \cite{Auslander}. 
    
    Some pair of elements of this lattice in $H^{2n-1}$ has non-zero commutator, showing that the lattice contains a nonzero element of $Z$ and hence a lattice in $Z$. 
    The quotient of the lattice in $H^{2n-1}$ by its intersection with $Z$ is a lattice $\Lambda$ in
    $H^{2n-1}/Z=\C^{n-1}$.  Translation by $\Lambda$ preserves each parallelism class, hence
    acts on each set $\Sigma_i$.  The action on any given~$\Sigma_i$ is
    by a discrete subgroup of~$\C$, which we write~$\Lambda_i$.  
    Each $\Lambda_i$ is a lattice in~$\C$, or else $\Lambda$ could not be
    a lattice in~$\C^{n-1}$.
\end{proof}

It is easy to write down the complement of this arrangement,
namely
   $$\bC^{n-1} - \bigcup_{i=1}^{n-1}\bigl(\C^{i-1}\times\Sigma_i\times\C^{n-1-i}\bigr) = \prod_{i=1}^{n-1} \C_i^\circ,$$
where $\C_i^\circ=\C-\Sigma_i$.

\subsection{Pulling back to the universal cover}\label{SS:PullBackToHoroball}

Next, we pull all these constructions back to $\That_{\hat{c}}$.
First we observe that $\Thatcirc_{\hat{c}}$ is the universal cover
of~$\Tcirc_c$.  This uses
the fact that $\Tcirc_c\to\Bcirc$
induces an injection on~$\pi_1$, as can be seen by noting that closest point projection provides a one-sided inverse. It follows that $\partial \Thatcirc_{\hat{c}}$ is the universal cover
of~$\partial\Tcirc_c$.  Next,
the action of $Z\iso\R$ on~$\partial\Tcirc_c$
lifts to one on $\partial\Thatcirc_{\hat{c}}$. (Pull back the vector field
describing the former action, and then integrate it.)
This makes
$\partial \Thatcirc_{\hat{c}}$ into a principal $\R$-bundle.  Since its
total space is simply connected, its base is too (use the long exact sequence of the fiber bundle).

Writing $\Chatcirc_i$ for the universal cover of $\Ccirc_i$,
it follows that $\partial\Thatcirc_{\hat{c}}$ is the pullback of
$\partial \Tcirc_c\to\prod\C_i^\circ$ to the universal cover
$\prod\Chatcirc_i$ of its base.  

\subsection{Tree domains}\label{SS:TreeDomains}

We can now define the tree domains.  For each $i$, write
$\Chat_i$ for the metric completion of $\Chatcirc_i$. Note that $\Chat_i$ is CAT($0$). 

Each $i$, viewed as a parallelism class, is a tree domain; thus we get $n-1$ tree domains from each cusp $\hat{c}$. The associated metric space will be $\cC(i) = \Chat_i$.

The projection $\partial\Thatcirc_{\hat{c}}\to\prod\Chatcirc_i$ extends
to a map $\partial\That_{\hat{c}}\to\prod\Chat_i$.  The action of $\bR$  extends to $\partial\That_{\hat{c}}$, and the quotient of this action is $\prod\Chat_i$.
%
%

The HHS projection $\pi_{\chat,i}:\Bhatthick\to\cC(i)$
is the inclusion $\Bhatthick\to\Bhat$, followed by
closest point projection to $\That_{\chat}$, followed
by the projection map $\partial\That_{\chat}\to\prod\Chat_i$, followed by projection
to the $i$th factor.  

\begin{remark}\label{R:extensiontree}
Continuing \cref{R:extension}, we note that the true domain of this map is $\Bhat^{\chat,i}=\Bhat- \mathrm{int}(\That_{\hat{c}})$.
\end{remark}

The next lemma shows that $\cC(i)$ 
is Gromov hyperbolic;  its projection $\pi_{\chat,i}$
is 
coarsely  Lipschitz because it is a composition of coarsely Lipschitz maps.  

\begin{lemma}
    \label{LemTreeDomainsGromovHyperbolic}
    Suppose $\chat$ is a cusp of~$\Bhat$, and suppose $i$
    is a parallelism class of hyperplanes of~$\Bhat$ that are incident to~$\chat$.
    Then the tree domain $\cC(i)$ is Gromov hyperbolic.
\end{lemma}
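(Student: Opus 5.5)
The plan is to realize $\Chat_i$ as a space quasi-isometric to a tree. Recall $\Ccirc_i=\C-\Sigma_i$, where $\Sigma_i$ is a discrete $\Lambda_i$-invariant subset of $\C$ and $\Lambda_i$ is a lattice. Hence $\Sigma_i$ is a finite union of $\Lambda_i$-orbits, uniformly discrete, and $\Lambda_i$-cobounded.

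\textbf{Step 1: local structure and CAT(0).} The metric completion $\Chat_i$ of the universal cover of the flat surface $\C-\Sigma_i$ is the universal branched cover of $\C$ branched over $\Sigma_i$ with infinite-angle cone points. Near each completion point it looks like the completion $\Dhat$ of the universal cover of a punctured disk. So $\Chat_i$ is a locally CAT(0), simply connected complete geodesic space. As noted in the text, it is CAT(0) (e.g.\ by the same argument as \cref{T:BhatCatMinusOne}, or Cartan--Hadamard for metric spaces).

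\textbf{Step 2: the tree-like structure.} Choose $\epsilon>0$ smaller than half the minimal distance between points of $\Sigma_i$. Choose a $\Lambda_i$-invariant graph $G\subset\C$ whose vertices include a point near each element of $\Sigma_i$, together with closed disks of radius $\epsilon$ around points of $\Sigma_i$, such that $\C$ retracts with bounded fibers onto the union of $G$ and the disks. Cobounded periodicity makes these fibers uniformly bounded. The universal cover of the complement of the cone points then deformation retracts, with uniformly bounded displacement, onto a lift of this structure. I would prefer a cleaner formulation. Consider the $\Lambda_i$-invariant Voronoi cells of $\Sigma_i$ (compact, of uniformly bounded diameter) and their boundaries, forming a periodic graph $\Theta\subset\Ccirc_i$. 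Each cell minus its center deformation retracts onto its boundary circle, so $\Ccirc_i$ retracts onto $\Theta$ with displacement bounded by the cell diameter $R$. Lifting, $\Chatcirc_i$ retracts onto the preimage $\widehat\Theta$, a connected locally finite graph with uniformly bounded edge lengths. Each lifted cell is a half-infinite strip covering a punctured cell, whose completion adds one cone point at distance at most $R$ from its boundary line. Thus $\Chat_i$ is quasi-isometric to $\widehat\Theta$ with one extra vertex coned onto each lifted boundary line (a copy of $\R$). Coning off the lines gives an electrification. It is easier to argue hyperbolicity directly, however: $\widehat\Theta$ is quasi-isometric to the universal cover of the periodic planar graph with punctures, and $\pi_1(\Ccirc_i)$ is free, so the only cycles in the completion come from coned lines.

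\textbf{Step 3: hyperbolicity, and the main obstacle.} The cleanest route, and the one I would try first, avoids combinatorics. $\Chat_i$ is CAT(0), and it suffices to rule out large flats, or better to show triangles are uniformly thin. Take a geodesic triangle in $\Chat_i$. Geodesics in $\Chat_i$ are concatenations of straight flat segments meeting at cone points with angle $\geq\pi$ on both sides. A triangle whose interior avoids cone points develops isometrically to a Euclidean triangle in $\C$ avoiding $\Sigma_i$. Since $\Sigma_i$ is $R$-dense, such a triangle has inradius at most $R$, hence is $R'$-thin. A general triangle bounds a disk subdivided by geodesics from a vertex to the cone points inside; Gauss--Bonnet (the infinite negative curvature at cone points) forces the disk to be a union of such thin flat pieces glued along cone points, giving a tree of thin pieces. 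Making this decomposition rigorous is the main obstacle. I expect to handle it by an Alexandrov-style argument: a CAT(0) space is $\delta$-hyperbolic if it contains no isometrically embedded Euclidean disk of radius greater than some $r_0$ and it is cocompact up to quasi-isometry. $\Chat_i$ is cocompact, modulo the non-proper cone points, under the deck group extended by lifts of $\Lambda_i$. Flat disks must avoid cone points (a cone point has infinite angle, so no flat disk contains one in its interior), so they develop to disks in $\C-\Sigma_i$ and have radius at most $R$. If the flat-plane criterion is unavailable because $\Chat_i$ is not locally compact, I would fall back on the quasi-tree description from Step 2 and apply Manning's bottleneck criterion. Any two points of distinct lifted cells are separated by the cone point or bounded-size regions, and a cone point is a cut point of the completion locally joined through the flat region.
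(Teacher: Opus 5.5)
There is a genuine gap. You set up the right structure in Step 2 and then abandon it, and none of the three routes in Step 3 is carried through, so hyperbolicity is never actually established.

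\begin{itemize}
\item The flat-disk criterion cannot be cited as you state it. The Flat Plane Theorem is for proper cocompact CAT(0) spaces, and its proof uses local compactness to extract a limiting flat plane from ever fatter triangles. $\Chat_i$ is not locally compact at its cone points, and the lifted deck group has infinite point stabilizers there. A version for this non-proper setting would have to be proved, or the hypotheses of a known variant checked, and you do neither.
\item The Gauss--Bonnet decomposition into a ``tree of thin flat pieces'' is, by your own account, left open.
\item The bottleneck fallback rests on a false claim. A cone point is not a cut point of $\Chat_i$, because $\Chatcirc_i$ is connected and dense in the complement of any cone point. What does separate $\Chat_i$ is the union of the two closed lifted cells containing a given edge of $\widehat\Theta$, a set of diameter at most $4R$. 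A bottleneck argument could be built on that, but you have not built it.
\end{itemize}

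The missing step is short, was already within reach at the end of Step 2, and is exactly how the paper argues.
\begin{enumerate}
\item Since $\Ccirc_i$ deformation retracts onto $\Theta$, the lifted retraction shows that the preimage $\widehat\Theta$ of $\Theta$ in the simply connected space $\Chatcirc_i$ is connected and simply connected. So it is the universal cover of the graph $\Theta$, hence a tree. This, rather than freeness of $\pi_1(\Ccirc_i)$, is the relevant point.
\item The boundary of each lifted cell is an embedded bi-infinite path in this tree, hence a geodesic line, hence convex.
\item \cref{P:ElectHyp} then says that the electrification of $\widehat\Theta$ along these lines is Gromov hyperbolic.
\item The quasi-isometry you already described transfers hyperbolicity to $\Chat_i$: it is the identity on $\widehat\Theta$, and sends the cone point of each lifted cell to the generic point of its boundary line. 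It is a quasi-isometry because each closed lifted cell has diameter at most $2R$ in $\Chat_i$.
\end{enumerate}
With this ending, Steps 1 and 3 are unnecessary.
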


We remark that the parallelism classes here have the obvious meaning:
two hyperplanes of~$\Bhat$, that are incident at~$\chat$, are
parallel just if their images in~$B$ are parallel as hyperplanes
containing~$c$.  In particular, parallel hyperplanes are
disjoint, and non-parallel hyperplanes meet.

Before we prove the lemma, we set up some of the notation that will also be used later. Let $G_i$ be the graph in $\C_i^\circ$ which is the union
of the boundaries of the Voronoi cells around the
points of $\Sigma_i$.  The size of the cells
is uniformly bounded, by the $\Lambda_i$-invariance
of $\Sigma_i$.  

We write $\Ghat_i$ for
the preimage of $G_i\subseteq\Ccirc_i$ in $\Chatcirc_i$.
Because $G_i\to\Ccirc_i$ is a homotopy equivalence, 
$\Ghat_i$ is the universal cover of~$G_i$.  In particular,
it is a tree.  

\begin{proof}
    As a tree, $\Ghat_i$ is Gromov hyperbolic.
    By a cell of $\Chat_i$ we mean a component of the preimage
    of a Voronoi cell in~$\C_i$.  Its boundary 
    is a copy of~$\R$ in the
    tree~$\Ghat_i$. So \cref{P:ElectHyp} establishes that the electrification $\Ghat_i^!$ of $\Ghat_i$ along these copies of $\R$ is Gromov hyperbolic. Thus it suffices to show that $\Chat_i$ is quasi-isometric to $\Ghat_i^!$.   

    There is a natural map $\Chat_i \to \Ghat_i^!$ that is the identity on $\Ghat_i$ and maps the interior of each cell to the corresponding generic point of $\Ghat_i^!$. There is also a natural map $\Ghat_i^! \to \Chat_i $ that again is the identity on $\Ghat_i$ and maps each generic point to the corresponding singular point of $\Chat_i$. These maps are coarsely inverse to each other and coarsely Lipschitz, proving that they are quasi-isometries. 
\end{proof}

The proof of \cref{LemTreeDomainsGromovHyperbolic} explains the terminology ``tree domains'', and gives an alternative way to think of the associated hyperbolic spaces as electrifications of trees. 

\subsection{Center domains}\label{SS:CenterDomains}

We view $\partial\That_\chat$ as a type of $\bR$-bundle over $\prod \Chat_i$. The center domain associated to $\That_\chat$ will be defined as a swaddling of $\partial\That_\chat$. We will use \cref{L:Swaddled} to see that this swaddling is a quasi-line, and it will serve to measure distance in the center direction in the horoball.

The projection map to this quasi-line is defined as the composition 
of the inclusion $\Bhatthick\to\Bhat$, followed by nearest-point
projection to $\partial\That_\chat$, followed by the natural
map $\partial\That_\chat\to\partial\Thatsw_\chat$.  This will be 
coarsely Lipschitz because it is a composition of coarsely Lipschitz maps, given that \cref{L:Thatswlip} below shows that $\partial\That_\chat\to\partial\Thatsw_\chat$ is coarsely Lipschitz. 

\begin{remark}\label{R:extensioncenter}
Continuing \cref{R:extension}, we note that the true domain of this map is $\Bhat^{\That_\chat}=\Bhat-\mathrm{int}(\That_{\hat{c}})$.
\end{remark}

\begin{remark}[Optional motivation]\label{R:tauMotivation}
Before we continue, we want to discuss the intuition and some ideas that encounter technical complications, to better motivate how we proceed. 

The base $\prod \Chat_i$ has a sort of spine, defined as $\prod \hat{G}_i$. This product of trees can be viewed as a type of Lagrangian cube complex. Because it is Lagrangian, one can show that the restriction of $\partial\That_\chat\to \prod \Chat_i$ becomes, in a certain metric sense, a trivial bundle, so the preimage of $\prod \hat{G}_i$ can be viewed as $\bR\times \prod \hat{G}_i$. This stands in sharp contrast to the extremely non-trivial bundle $\partial T_c\to \bC^{n-1}$. 

Although it is tempting to use a deformation retract $\prod \Chat_i^\circ$ to $\prod \hat{G}_i$ to make use of the triviality over the spine, one quickly encounters technical difficulties related to the non-Lipschitz nature of this retract and the fact that bounded diameter loops in $\prod \Chat_i^\circ$ can enclose arbitrarily large area. For this reason we abandon the spine and work with all of $\partial\That_\chat$. 

In the most naive approach to working with all of $\partial\That_\chat$, one might consider paths in the base $\prod \Chat_i$ that are geodesics in each factor. However, despite the fact that each $\Chat_i$ is both CAT($0$) and Gromov hyperbolic, geodesic triangles in $\Chat_i$ can enclose arbitrarily large area. 

In conclusion, pushing entirely onto the spine causes problems, and not pushing at all onto the spine causes problems, and this motivates the hybrid approach we follow now. 
\end{remark}

Swaddling $\partial\That_\chat$ requires choosing a family of $\R$-torsor
isomorphisms between pairs of fibers.  We do this by defining the
\emph{swaddling path} $\tau_{xy}$ 
for any pair of points
$$x=(x_1,\dots,x_{n-1}), y=(y_1,\dots,y_{n-1})\in\prod\Chat_i.$$
It is the path $(\tau^{(1)},\dots,\tau^{(n-1)})$, where $\tau^{(i)}$
is the path in~$\Chat_i$ defined as follows (see \cref{F:gammai}).
Begin with 
the geodesic $\overline{x_i y_i}$.  If it enters and leaves
a cell~$C$
of~$\Chat_i$, at points of $\partial C$ whose angular separation
is less than~$2\pi$ (from the perspective of the center of~$C$), then
we replace that portion of $\overline{x_i y_i}$ by the portion of 
$\partial C$  between them.

 \begin{figure}[h]
    \centering
    \includegraphics[width=0.85\linewidth]{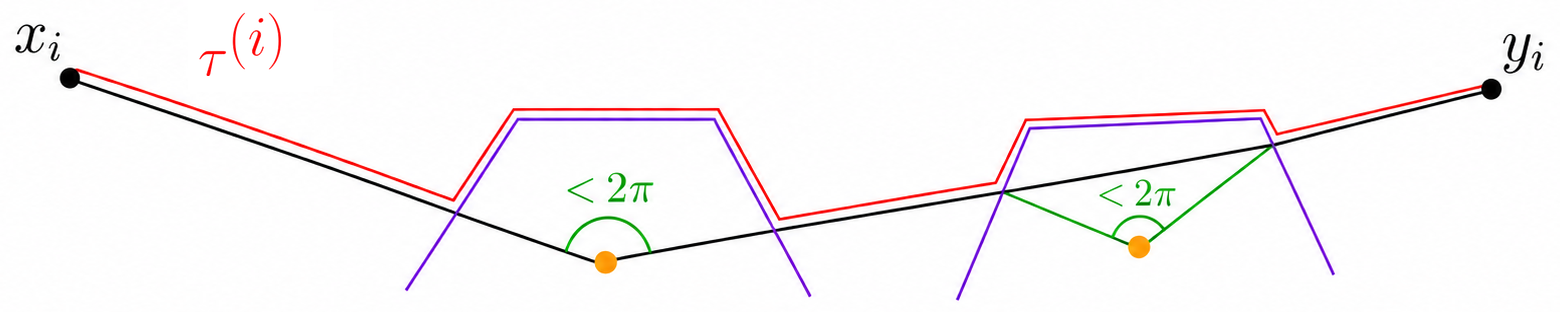}
    \caption{The definition of $\tau^{(i)}$. }
    \label{F:gammai}
\end{figure}

Doing this simultaneously, for every 
cell~$C$, yields a piecewise geodesic path
in~$\Chat_i$.  We define $\tau^{(i)}$ as the
parameterization of this path  by $[0,1]$, proportionally to
arclength.

One can almost think of it as a path in the tree $\hat{G}_i$, except that it will
enter the cells containing its endpoints, 
and will 
cut corners,
through the centers of cells, whenever this provides
a ``significant'' shortcut.

Let $q$ denote the natural map $\prod \Chat_i \to \prod \bC_i$. Each swaddling path projects to a concatenation of line segments 
in~$\prod\C_i$.  Consider each of these segments~$\sigma$ in turn.  
The fibers of $\partial\That_\chat$ over its endpoints are
identified with the fibers of $\partial T_c$ over the
endpoints of $q(\sigma)\subseteq\prod \bC_i$,
since $\partial\That_\chat\to\prod\Chat_i$ is the pullback of
$\partial T_c\to\prod\C_i$.  Furthermore,  the Ehresmann connection
on
$\partial T_c\to\prod\C_i$ trivializes over $q(\sigma)$.
This provides an identification of the fibers of $\partial T_c$
over the endpoints of~$q(\sigma)$, and therefore
an identification of the fibers of $\partial\That_\chat$
over the endpoints of~$\sigma$.  Concatenating these identifications,
as $\sigma$ varies over the segments comprising the swaddling path,
yields an $\R$-torsor isomorphism from
the fiber of~$\partial\That_\chat$ over $x$ to the fiber over~$y$.

We apply the swaddling construction to the family of $\R$-torsor
isomorphisms induced by all swaddling paths, obtaining the
swaddled space $\partial\Thatsw_\chat$. The horoball gives a center domain, which we also call $\That_{\hat{c}}$, and we define its associated metric space $\cC(\That_{\hat{c}})$ to be $\partial\Thatsw_\chat$.  

\begin{lemma}\label{L:Thatswlip}
The inclusion  $\partial\That_\chat\to\partial\Thatsw_\chat$ is coarsely Lipschitz. 
\end{lemma}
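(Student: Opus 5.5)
The plan is to apply \cref{L:MetricCompatibilityOfSwaddling}. Here $E=\partial\That_\chat$ carries its intrinsic path metric, and $X=\prod\Chat_i$. So it suffices to find $\epsilon>0$ and $C>0$ with the following property: if $e,f\in\partial\That_\chat$ lie over $x,y\in\prod\Chat_i$ and $d(e,f)\leq\epsilon$, then the swaddling map $m_{x,y}$ along the swaddling path $\tau_{xy}$ sends $e$ to $t\cdot f$ with $|t|\leq C$.

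\textbf{Step 1: the swaddling path is short and stays near $x$.} Project $e,f$ to $x,y$. The projection $\partial\That_\chat\to\prod\Chat_i$ is $1$-Lipschitz, up to the fixed scale factor identifying $H^{2n-1}/Z$ with $\C^{n-1}$. Hence $d(x_i,y_i)\lesssim\epsilon$ in each $\Chat_i$. I then check that $\tau^{(i)}$ has length $O(\epsilon)$. It starts as the geodesic $\overline{x_iy_i}$, of length $\lesssim\epsilon$. The only modification replaces a sub-arc crossing a cell $C$ by a boundary arc of $\partial C$ of angular extent less than $2\pi$. By choosing $\epsilon$ smaller than a fixed fraction of the minimal cell inradius (uniform by $\Lambda_i$-invariance), a geodesic of length $\le\epsilon$ either avoids cell boundaries or crosses near one. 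Crossing near a boundary gives a boundary arc of comparable length. A geodesic passing near a cell center has its endpoints within $\epsilon$ of the center, and such a detour is bounded by a constant. Either way $\ell(\tau^{(i)})\leq C_1$, with $C_1$ uniform.

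\textbf{Step 2: compare the two lifts.} I compare two paths from $e$ into the fiber over $y$. The first is the horizontal lift of $\tau_{xy}$ (through the segments $q(\sigma)$), which ends at $m_{x,y}(e)$. The second is a path of length $\le\epsilon$ from $e$ to $f$ in $\partial\That_\chat$. Both project into $\partial T_c$, and their discrepancy is the monodromy of the Ehresmann connection around a closed loop. That loop is formed by $q(\tau_{xy})$, followed by the projection of a short path from $f$ back to $e$, followed by the vertical segment of length $|t|$. Two observations bound this. First, the loop in $\prod\C_i$ has length $O(C_1+\epsilon)$, so in each coordinate it bounds signed area $O((C_1+\epsilon)^2)$ by the isoperimetric inequality in $\C$. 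The monodromy is the sum of these signed areas up to a global constant. Second, moving along a path of length $\le\epsilon$ changes the vertical coordinate, relative to horizontal transport, by $O(\epsilon)$, because the vertical component of its tangent is bounded by the speed. Combining these gives $|t|\leq C$.

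\textbf{Main obstacle.} The delicate point is Step 1, specifically the lengths of the paths $\tau^{(i)}$. The path $\tau^{(i)}$ lives in the singular space $\Chat_i$, and the path from $e$ to $f$ may pass through $\HHAT$. For $\HHAT$, the cleanest fix is to approximate by paths in $\partial\Thatcirc_\chat$, or to note that the monodromy computation only uses the projection to $\prod\C_i$, where singularities disappear. For the cells, I would handle the cell corner-cutting carefully when $x_i$ or $y_i$ is near a cone point. There I will use that the boundary arc is replaced only when its angle is under $2\pi$. I also use that everything happens within a ball of bounded radius, so the area enclosed in $\C_i$ is uniformly bounded.
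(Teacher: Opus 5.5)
Your proposal is correct and follows essentially the same route as the paper: reduce to \cref{L:MetricCompatibilityOfSwaddling}, absorb the short path using the bounded connection $1$-form, and bound the remaining monodromy by signed area, using that a short geodesic in $\Chat_i$ meets only boundedly many cells. The one small variation is that you bound the length of $\tau^{(i)}$ and apply the planar isoperimetric inequality to the projected loop in $\C_i$. The paper instead inserts the $\Chat_i$ geodesic and bounds the two resulting areas separately, using a filling estimate in $\Chat_i$ together with the cell count, so your version is marginally more elementary.
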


\begin{proof}
We provide a sketch only. 

Consider two points of $\partial\Thatcirc_\chat$ joined by a path $\alpha$ in $\partial\Thatcirc_\chat$ of length at most 1. Assume $\alpha$ is parametrized by $[0,1]$. By an approximation argument 
%
%
and \cref{L:MetricCompatibilityOfSwaddling}, it suffices to show that $\alpha(1)$ is a uniformly bounded central translation of  $\hat{m}_{x, y}(\alpha(0))$, where $x$ and $y$ are the images of $\alpha(0)$ and $\alpha(1)$ in $\prod_i \Chat_i$. 

The connection 1-form of $T_c\to \bC^{n-1}$ is bounded because it is invariant under the stabilizer of $T_{c}$. Hence the connection 1-form of $\Thatcirc_{\hat{c}} \to \prod \Chatcirc_i$ is also bounded. Let $\beta$ be the horizontal lift, starting at $\alpha(0)$, of the image of $\alpha$ in $\prod \Chat_i$. Note that $\beta(1)$ and $\alpha(1)$ are in the same fiber of the map to $\prod \Chat_i$, and hence differ by translation of some $t\in \bR$. We note that $t$ is bounded by a constant times the length of $\alpha$, because it is the integral over $\alpha$ of the connection $1$-form. 
%
%
%

It now suffices to show that $\beta(1)$ is a uniformly bounded central translation of  $\hat{m}_{x, y}(\alpha(0))$. Consider the image of $\beta$ in a factor $\Chat_i$, and consider also the geodesic in $\Chat_i$ joining its endpoints. This loop encloses bounded area in $\Chat_i$, for example by \cite[page 426]{BridsonHaefliger}. Additionally,   the $\Chat_i$ geodesic and the $\tau^{(i)}$ path enclose bounded area in $\Chat_i$, because of the definition of the $\tau$ path and the fact that a bounded length geodesic in a $\Chat_i$ can only go through boundedly many Voronoi cells. The result follows because horizontal lifts along different paths differ by a central translation proportional to the sum of the signed areas enclosed in the $\Chat_i$. 
\end{proof}

\begin{theorem}\label{CenterQuasiLine}
    For any cusp $\chat$ of~$\Bhat$, the metric space  $\cC(\That_{\hat{c}})=\partial\Thatsw_\chat$
    is a quasi-line and hence Gromov hyperbolic.
\end{theorem}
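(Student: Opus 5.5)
By \cref{L:Swaddled}, it is enough to show that the swaddling data on $\partial\That_\chat\to\prod\Chat_i$ given by the swaddling paths is $M$-almost trivial for some $M$. So fix $x,y,z\in\prod\Chat_i$. The composite $m_{z,x}\circ m_{y,z}\circ m_{x,y}$ is the monodromy of the horizontal connection around the closed loop $\tau_{xy}\ast\tau_{yz}\ast\tau_{zx}$. This loop is piecewise linear away from cell centers, so we can push it down to $\prod\Ccirc_i$ and lift back. By the computation in \cref{SS:HeisenbergGroup}, the monodromy equals a global constant times the sum over $i$ of the signed areas enclosed by the $i$th coordinate loops. Here the $i$th loop is $\tau^{(i)}_{xy}\ast\tau^{(i)}_{yz}\ast\tau^{(i)}_{zx}$ in $\Chat_i$, and signed area is measured after pulling back the Euclidean area form of $\C_i$. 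Signed area is computed by integrating $dA$ over any disk bounding the loop, and $\Chat_i$ is contractible (it is CAT(0)), so the quantity is well defined. The problem therefore reduces to one factor at a time: show that the triangle of $\tau$-paths in each $\Chat_i$ has uniformly bounded signed area.

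For a single $\Chat_i$, I would use the tree $\Ghat_i$ and the cell structure. Each $\tau^{(i)}$ path decomposes into three kinds of pieces:
\begin{enumerate}
\item arcs along cell boundaries, which lie in $\Ghat_i$;
\item chords through a cell that subtend angle at least $2\pi$ at the center;
\item initial and terminal pieces inside the cells containing the endpoints.
\end{enumerate}
The key point is that a geodesic in $\Chat_i$ passes through a sequence of cells along the nerve of the tree structure. Since $\Ghat_i^!$ is quasi-isometric to $\Chat_i$, by the proof of \cref{LemTreeDomainsGromovHyperbolic}, the three $\tau$-paths must traverse essentially the same cells in tripod fashion. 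Concretely, I would argue as follows.
\begin{itemize}
\item A pair of $\tau$-paths that pass through a common cell $C$ enter and exit it at boundary points that agree, except near the branch point of the tripod.
\item Because $\Ghat_i$ is a tree, the boundary arcs they use coincide, or differ by a bounded amount in the two adjacent cells.
\item The region swept between the two paths inside one cell, for example between a chord and a boundary arc, has area that does not vanish. However, the contributions from the two paths traversing it in opposite directions cancel exactly when they use the same route.
\end{itemize}
It then remains to bound the contributions near the center of the tripod and near the three endpoints. These involve boundedly many cells, each of bounded Euclidean size by $\Lambda_i$-invariance, and the winding number around each cell center is bounded. The total is therefore bounded.

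The main obstacle is making the cancellation claim precise. The danger is a cell where one path cuts through the center along a chord while another follows the boundary arc. This happens exactly when the angular separation is close to the threshold $2\pi$, and there the two choices differ by an area comparable to the cell area, multiplied by how many times the path wraps around. I would handle this by showing that the two routes through a cell differ by winding at most once around the center, so that each such discrepancy contributes at most the area of a single Voronoi cell, which is uniformly bounded. I would also show that the number of discrepancy cells is bounded, because off the tripod center the three geodesics fellow-travel in the tree and so make identical choices. Once the per-factor bound $M_i$ is established, \cref{L:Swaddled} gives that $\partial\Thatsw_\chat$ is a quasi-line, and a quasi-line is Gromov hyperbolic in the sense of \cref{R:NotGeodesic}.
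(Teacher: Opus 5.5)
Your reduction matches the paper's: by \cref{L:Swaddled} and the signed-area formula for the Heisenberg monodromy, it suffices to bound, in each factor $\Chat_i$, the area enclosed by $\tau^{(i)}_{xy}\tau^{(i)}_{yz}\tau^{(i)}_{zx}$. The gap is in how you bound that area. You use the quasi-isometry $\Chat_i\simeq\Ghat_i^!$ and fellow-travelling to conclude two things: that off a ``tripod center'' the three paths make identical chord-versus-arc choices, and that this center meets boundedly many cells.

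Coarse information cannot control these choices cell by cell. Two geodesics at bounded distance can cross entirely different cells, or the same cell either through or away from its center. Worse, the boundedness claim is false. The part of a geodesic triangle in $\Chat_i$ where the sides are distinct can be arbitrarily large. For example, take a long thin Euclidean triangle inside a strip of $\C$ that misses $\Sigma_i$ (such strips exist in lattice directions of $\Lambda_i$). It lifts to a geodesic triangle in $\Chat_i$ whose sides meet only at the vertices, that crosses unboundedly many cells, and that encloses unbounded area (compare \cref{R:tauMotivation}). So the task is not to show that discrepancy cells are few. It is to show that inside such a triangle the $\tau$-paths never cut through cells at all.

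The paper supplies this with two exact statements that your outline is missing.

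\textbf{Degenerate triangles.} The first is \cref{LemDegenerateSwaddlingTriangle}: if $b\in\overline{xz}$, then $\tau^{(i)}_{xb}\tau^{(i)}_{bz}\tau^{(i)}_{zx}$ encloses area at most $A$, because these paths agree except in the single cell containing $b$. Let $x'$ be the point of $\overline{xy}\cap\overline{xz}$ farthest from $x$, and define $y',z'$ similarly. Six applications of the lemma replace the original loop by $\tau^{(i)}_{x'y'}\tau^{(i)}_{y'z'}\tau^{(i)}_{z'x'}$ at a cost of at most $6A$. In the geodesic triangle $x'y'z'$, any two sides now meet only at their common vertex.

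\textbf{Large angles.} The second is \cref{LemLargeAnglesCase}. Suppose a side $\overline{xz}$ makes angle $\geq 2\pi$ at an interior point $a$. Then the angle bisector at $a$ separates $x$ from $z$, so it meets a side through $y$. This forces $\overline{xa}\subseteq\overline{xy}$ or $\overline{za}\subseteq\overline{zy}$.

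For the triangle $x'y'z'$ this containment is impossible. Hence every cell that a side enters and exits is replaced by its boundary arc. So outside the at most three cells whose interiors contain $x',y',z'$, the $\tau$-triangle lies in the tree $\Ghat_i$ and encloses no area there; no exact coincidence of routes is needed. Each vertex cell contributes at most $A$, by a case analysis on whether its center is a vertex, lies on a side, or lies off all sides. This gives the uniform bound $9A$.
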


We write $A$ for the largest of the areas of the Voronoi cells of~$\C_i$,
over all~$i$.

\begin{lemma}
    \label{LemDegenerateSwaddlingTriangle}
    Suppose $x,z\in\Chat_i$ and $b\in\overline{xz}$.  Then the 
    area enclosed by the triangle $\tau_{xb}^{(i)}\tau_{bz}^{(i)}\tau_{zx}^{(i)}$ is at most $A$. 
\end{lemma}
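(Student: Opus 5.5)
The plan is to reduce to a comparison of the three swaddling paths, cell by cell. Since $b$ lies on the geodesic $\overline{xz}$ in the CAT($0$) space $\Chat_i$, the segments $\overline{xb}$ and $\overline{bz}$ are exactly the two subsegments of $\overline{xz}$. Each swaddling path $\tau^{(i)}$ is obtained from the corresponding geodesic by independently replacing, in every cell $C$, the chord through $C$ by the shorter boundary arc of $\partial C$ whenever the angular separation is less than $2\pi$. So $\tau^{(i)}_{zx}$ is the reversal of the modification of $\overline{xz}$. Away from the cell containing $b$, the modifications of $\overline{xb}$ and $\overline{bz}$ agree with the modification of $\overline{xz}$ cell by cell, because these cells see the same entry and exit points.

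The first step is to make this precise. The closed cells of $\Chat_i$ are the closures of components of preimages of Voronoi cells, and each is a cone (a copy of $\Dhat$-like geometry) on a line $\partial C\subset\Ghat_i$. A geodesic of $\Chat_i$ meets each cell in a connected set, by convexity of cells. I will therefore split the concatenated loop $\tau_{xb}^{(i)}\tau_{bz}^{(i)}\tau_{zx}^{(i)}$ into pieces indexed by cells. The key observation is that, for every cell $C$ not containing $b$ in its interior, the three paths traverse identical subpaths inside $C$ (or along $\partial C$), with opposite orientations. Those portions therefore cancel as $1$-chains and contribute zero signed area. The same cancellation occurs on the tree parts $\Ghat_i$ between cells.

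The second step handles the cell $C_b$ containing $b$. If $b\in\Ghat_i$ there is nothing left and the area is $0$. Otherwise $\overline{xz}$ enters $C_b$ at a point $p$ and leaves at a point $p'$ (or has an endpoint inside $C_b$). Here $\tau_{zx}$ uses either the chord $\overline{pp'}$ or a boundary arc, while $\tau_{xb}$ and $\tau_{bz}$ use the chords $\overline{pb}$ and $\overline{bp'}$, since their endpoints lie in the cell and are not modified. The residual loop lies in $C_b$ and consists of geodesic segments together with at most one arc of $\partial C_b$ of angular extent below $2\pi$. Projecting to $\C_i$ via $q$, which is a local isometry on the cell minus its cone point and wraps the cell around a Voronoi cell with angle $<2\pi$, this loop encloses a region that maps injectively into a single Voronoi cell. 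Hence the region has area at most $A$.

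The main obstacle is the careful definition of ``area enclosed'' by a loop in the singular CAT($0$) space $\Chat_i$, so that the cell-by-cell cancellation argument is legitimate. I would take this area to be the signed area, i.e.\ the integral of the pulled-back area form $q^*\omega$ over a filling. Filling discs exist because $\Chat_i$ is contractible, and the result is independent of the chosen filling. Cancellation of identical reversed subpaths is then immediate. One also has to check the edge case where $\overline{xz}$ passes through the cone point of $C_b$. In that case the angular separation is exactly $\pi$ on each side in the ``straight'' sense, but the separation computed from the cone point is the relevant one, and I expect a direct check to show that it still yields one region inside one cell.
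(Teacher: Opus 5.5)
Your proposal is correct and follows essentially the paper's argument. The paper likewise notes that outside the cell $C$ whose interior contains $b$, the three swaddling paths cancel in pairs. This reduces the lemma to the triangle $\tau^{(i)}_{ab}\tau^{(i)}_{bc}\tau^{(i)}_{ca}$, where $a,c$ are your entry and exit points $p,p'$. That triangle either encloses nothing, when $\tau^{(i)}_{ca}$ is the chord, or bounds a sector of angular width $<2\pi$ lying over a single Voronoi cell, so its area is at most $A$.

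Two small corrections. The cone-point case you flag needs no separate treatment, since the same sector argument covers it. Also, because $\partial C$ is a line in $\Chat_i$, there is a unique boundary arc between $p$ and $p'$, not a ``shorter'' one.
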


\begin{proof}
    The essential case is when there is a cell~$C$ of~$\Chat_i$, whose
    interior contains $b$ but not $x$ or~$z$. See \cref{F:aby}.

\begin{figure}[h]
    \centering
    \includegraphics[width=0.85\linewidth]{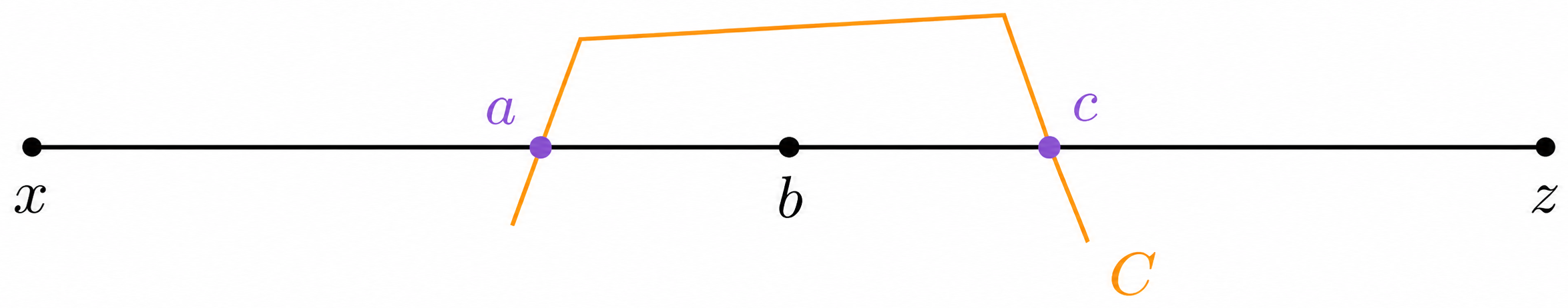}
    \caption{The proof of \cref{LemDegenerateSwaddlingTriangle}}
    \label{F:aby}
\end{figure}
    
    We define $a$ and $c$ as the
    points where $\partial C$ meets $\overline{xb}$ and $\overline{bz}$
    respectively.   Then $\tau^{(i)}_{xz}$ coincides with $\tau^{(i)}_{xb}$ between
    $x$ and~$a$, and with $\tau^{(i)}_{bz}$ between $c$ and~$z$.
    So $\tau^{(i)}_{xb}\tau^{(i)}_{bz}\tau^{(i)}_{zx}$ encloses the same area
    as $\tau^{(i)}_{ab}\tau^{(i)}_{bc}\tau^{(i)}_{ca}$.

    Now, $\tau^{(i)}_{ab}$ is $\overline{ab}$, $\tau^{(i)}_{bc}$ is 
    $\overline{bc}$, and
    $\tau^{(i)}_{ac}$ is either   $\overline{ac}$
    (if the angular separation between $a$ and~$c$ is${}\geq2\pi$)
    or the portion of $\partial C$ between $a$ and~$c$ (otherwise).
    In the former case, $\tau^{(i)}_{ac}$ coincides with $\tau^{(i)}_{ab}\tau^{(i)}_{bc}$,
    so the triangle they form encloses no area.  In the latter case,
    the triangle they form is bounded by $\partial C$ and
    $\overline{ab}\cup\overline{bc}=\overline{ac}$.
    Because the angular separation between $a$ and $c$ is${}<2\pi$,
    the area of this region is at most that
    of the Voronoi cell of~$\C_i$ over which $C$ lies.  So it
    is at most~$A$.

    The argument simplifies in the various degenerate cases.
    Namely, if $x$ resp.\ $z$ lies in the interior of~$C$, then set $a=x$
    resp.\ $c=z$, and modify the argument slightly. 
    The other possible degeneration
    is when  $b$ does not lie in the
    interior of any cell. Then it lies in~$\Ghat_i$, and 
    $\tau^{(i)}_{xz}=\tau^{(i)}_{xb}\tau^{(i)}_{bz}$, so the triangle encloses no area.
\end{proof}

\begin{lemma}
    \label{LemLargeAnglesCase}
    Suppose $x,y,z\in\Chat_i$, and $a\in\overline{xz}-\{x,z\}$, such
    that $\overline{xa}$ and $\overline{za}$ make angle${}\geq2\pi$ at~$a$.
    Then either $\overline{xa}\subseteq\overline{xy}$ or
    $\overline{za}\subseteq\overline{zy}$.
\end{lemma}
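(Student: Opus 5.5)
First we pin down where such an $a$ can sit. $\Chat_i$ is the metric completion of the universal cover of $\C$ minus a discrete set, so it is a smooth flat surface away from the singular points (the preimages of points of $\Sigma_i$). At a smooth point the space of directions is a circle of length $2\pi$, so two geodesic directions there make angle at most $\pi$. Hence angle${}\geq 2\pi$ at $a$ forces $a$ to be a singular point, i.e.\ the center of a cell. There the space of directions is a line of infinite length, since the cone angle is infinite. Removing $a$ locally disconnects $\Chat_i$ into "angular sectors", and the Alexandrov angle is the minimum of $\pi$ and the angular separation. So we will work with the angular separation, measured along the space of directions at $a$, and use that it is $\geq 2\pi$.

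The key step is a separation statement. Let $\theta_x,\theta_z$ be the angular coordinates (in the universal cover of the circle of directions at $a$) of the initial directions of $\overline{ax}$ and $\overline{az}$, with $|\theta_x-\theta_z|\geq 2\pi$. Consider any geodesic $\gamma$ not passing through $a$. Its angular coordinate around $a$ varies continuously, and we claim the total angular variation along $\gamma$ is less than $\pi$. Indeed, if it were ${}\geq\pi$, two points $p,q$ of $\gamma$ would see $a$ at angular separation ${}\geq \pi$. Then $\overline{pa}\cup\overline{aq}$ would be a geodesic in the CAT(0) space $\Chat_i$ (the standard criterion: a path through a vertex with Alexandrov angle $\pi$ there is geodesic), and by uniqueness of geodesics $\overline{pq}$ would pass through $a$, a contradiction. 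Making this rigorous is where I expect the main work. One clean route is to use the developing map of the flat structure near $a$: a geodesic avoiding $a$ develops to a straight line in the plane, which subtends an angle less than $\pi$ as seen from the origin. Lifting this to the infinite cyclic cover of a punctured neighborhood, and then using local-to-global arguments for geodesics far from $a$, should do it; alternatively, one can argue purely via the CAT(0) angle criterion as above.

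With this in hand, suppose $\overline{xa}\not\subseteq\overline{xy}$ and $\overline{za}\not\subseteq\overline{zy}$. In a CAT(0) space, if the geodesic $\overline{xy}$ passes through $a$, then its initial segment is $\overline{xa}$ by uniqueness, so $\overline{xa}\subseteq\overline{xy}$. Hence, under our assumption, neither $\overline{xy}$ nor $\overline{zy}$ passes through $a$. By the claim, the angular coordinate around $a$ changes by less than $\pi$ along each of them. Near $x$ and $z$ the relevant angular coordinates are (close to) $\theta_x$ and $\theta_z$. This holds after we note that $\overline{xz}$ passes through $a$, so the angular position of points along $\overline{xz}$ jumps only at $a$, and points of $\overline{ax}$ all have angle exactly $\theta_x$. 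Following $\overline{xy}$ and then $\overline{yz}$ therefore moves the angular coordinate from $\theta_x$ to $\theta_z$ by less than $2\pi$ in total, contradicting $|\theta_x-\theta_z|\geq 2\pi$.

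One subtlety is that the angular coordinate is defined only on the complement of $a$, in the universal cover sense. Since $\Chat_i-\{a\}$ is not simply connected globally (other singular points are removed), we should not rely on a global angle function. Instead I would use the geodesic retraction toward $a$: map each $p\neq a$ to its initial direction at $a$. In CAT(0) this map is continuous on $\Chat_i-\{a\}$ into the space of directions, which here is an $\R$-line. The path $\overline{xy}\cup\overline{yz}$ from $x$ to $z$ avoids $a$, so it gives a continuous path in $\R$ from $\theta_x$ to $\theta_z$ of total variation less than $2\pi$, which is the contradiction.
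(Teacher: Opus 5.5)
Your argument is correct, but it takes a different route from the paper's. The paper uses the angle bisector $R$ of $\overline{ax}$ and $\overline{az}$ at $a$, which is a ray or a segment ending at another branch point. It argues that $R$ separates $\Chat_i$ with $x$ and $z$ on opposite sides, so $R$ meets $\overline{yx}$ or $\overline{yz}$. From this it reads off an angle $\geq\pi$ at $a$ between $\overline{ay}$ and $\overline{az}$ (or $\overline{ax}$), so the concatenation is a geodesic.

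You instead use the direction map $p\mapsto\theta(p)\in\R$ together with the CAT(0) comparison $\angle_a\leq\bar\angle_a$. If $|\theta(x)-\theta(y)|\geq\pi$, the Alexandrov angle is $\pi$, so $d(x,y)=d(x,a)+d(a,y)$ and $\overline{xa}\subseteq\overline{xy}$; the same holds with $z$ in place of $x$. If both fail, the triangle inequality in $\R$ gives $|\theta_x-\theta_z|<2\pi$, a contradiction.

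Your version is purely local at $a$ plus a standard comparison fact. It avoids the global separation property of $R$, which rests on the tree-like structure of $\Chat_i$. It also avoids the implicit check of which side of $R$ the direction of $\overline{ay}$ lies on. The paper's version is more pictorial and matches its figures.

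Much of your write-up is unnecessary. You only need the inequality for the endpoint pairs $(x,y)$ and $(y,z)$. So you can drop the continuity of $\theta$ along paths, the developing-map route, the ``total variation'' language, and the closeness remarks near $x$ and $z$ (there $\theta$ equals $\theta_x$ and $\theta_z$ exactly).

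Two side remarks are inaccurate, though unused. First, removing $a$ does not disconnect a neighborhood of $a$, since a punctured neighborhood is the universal cover of a punctured disk. Second, $\Chat_i-\{a\}$ is in fact simply connected, by van Kampen with that contractible punctured neighborhood. In any case the direction map is well defined by uniqueness of geodesics, which is all you need.
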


\begin{proof}
    Because $\Chatcirc_i$ is locally Euclidean, two geodesics whose 
    union is a geodesic
    make angle~$\pi$.  Therefore $a\notin\Chatcirc_i$, so
    $a$ is a ramification point of~$\Chat_i$.  
    The rays emanating from~$a$ form a copy of~$\R$,
    and
    $\overline{ax}$ and $\overline{az}$ represent two of these rays.
    
    Let $R$ be their angle bisector, i.e. the geodesic emanating from~$a$,
    whose angles with $\overline{ax}$ and $\overline{az}$ are equal.
    It might happen that $R$ eventually hits another branch point of~$\Chat_i$
    (in which case we stop it there, so $R$ is a segment), 
    or not (in which case $R$ is a ray).

    In either case, $R$ cuts $\Chat_i$ into two components, one containing~$x$
    and the other~$z$.  Therefore $R$ meets one of the other edges
    $\overline{yx},\overline{yz}$ of the triangle $xyz$.  Without loss of generality,
    suppose it meets $\overline{yz}$. See \cref{F:R}. 
    \begin{figure}[h]
    \centering
    \includegraphics[width=0.35\linewidth]{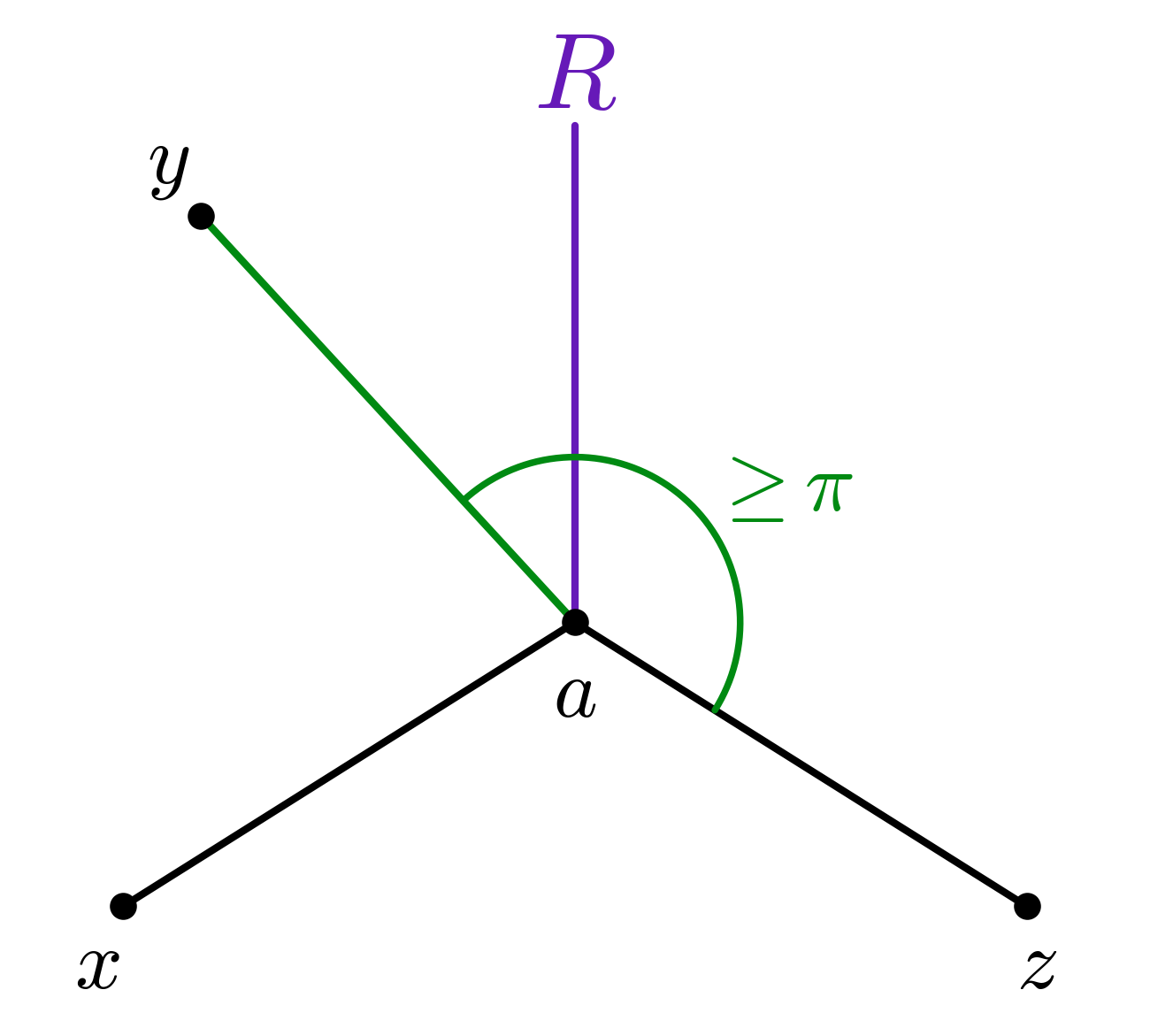}
    \caption{The proof of \cref{LemLargeAnglesCase}}
    \label{F:R}
\end{figure}
Then the angle at~$a$, between $\overline{ay}$ and $\overline{az}$,
    is at least as large as the angle between $R$ and $\overline{az}$, which 
    is at least~$\pi$.  Therefore their concatenation is a geodesic,
    in fact the geodesic $\overline{yz}$, which therefore contains $\overline{az}$.
\end{proof}

\begin{proof}[Proof of \cref{CenterQuasiLine}]
    We must show that the monodromy, around any triangle of swaddling paths
    in $\prod\Chat_i$,
    is uniformly bounded.   Because of the product structure, it is enough
    to show this for each $\Chat_i$ separately.  We claim that for any
    $x,y,z\in\Chat_i$, the area enclosed by $\tau^{(i)}_{xy}\tau^{(i)}_{yz}\tau^{(i)}_{zx}$
    is at most $9A$.  We define $x'$ as the point of $\overline{xy}\cap\overline{xz}$
    furthest from~$x$, and similarly for $y'$ and~$z'$. See \cref{F:primes}.  By construction,
    any pair of edges, of the geodesic triangle $x'y'z'$, meet only at their
    common vertex.  Also, the points $x,x',y',y$ occur in this order 
    along $\overline{xy}$ (possibly with coincidences), and similarly for
    $\overline{yz}$ and $\overline{zx}$.

    \begin{figure}[h]
    \centering
    \includegraphics[width=0.5\linewidth]{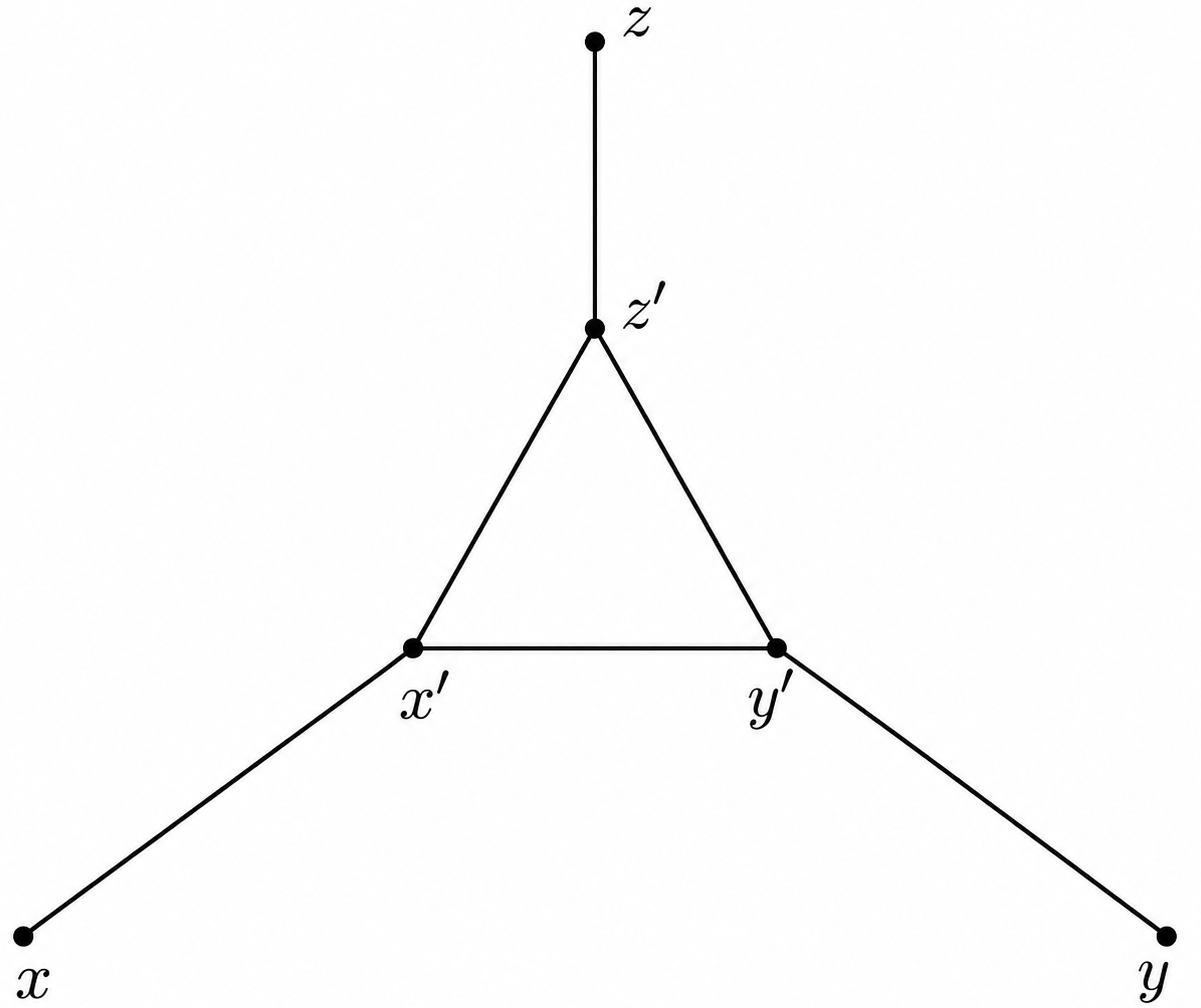}
    \caption{The definition of $x', y', z'$}
    \label{F:primes}
\end{figure}

    By six applications of \cref{LemDegenerateSwaddlingTriangle},
    the signed area enclosed by $\tau^{(i)}_{xy}\tau^{(i)}_{yz}\tau^{(i)}_{zx}$
    differs by${}\leq 6A$ from the signed area enclosed by the nonagon
    $$
    \tau^{(i)}_{xx'}\tau^{(i)}_{x'y'}\tau^{(i)}_{y'y}\cdot\tau^{(i)}_{yy'}\tau^{(i)}_{y'z'}\tau^{(i)}_{z'z}
    \cdot\tau^{(i)}_{zz'}\tau^{(i)}_{z'x'}\tau^{(i)}_{x'x}.
    $$
    This nonagon is the triangle $\tau^{(i)}_{x'y'}\tau^{(i)}_{y'z'}\tau^{(i)}_{z'x'}$
    with bigons $\tau^{(i)}_{x'x}\tau^{(i)}_{xx'}$, $\tau^{(i)}_{y'y}\tau^{(i)}_{yy'}$ and
    $\tau^{(i)}_{z'z}\tau^{(i)}_{zz'}$ attached at its corners.  Each bigon
    encloses zero area, because the swaddling paths comprising it 
    are each other's reversals.  
    So it
    is enough to show that $\tau^{(i)}_{x'y'}\tau^{(i)}_{y'z'}\tau^{(i)}_{z'x'}$
    encloses  area${}\leq3A$.   

    Having no further need for $x,y,z$, we will suppress the primes
    from $x',y',z'$.  Repeating a known fact in this notation:
        ($\star$)~any pair of edges, of the geodesic triangle $xyz$, 
        meet only at their common
        vertex.
    If two of $x,y,z$ coincide, then ($\star$) forces all three to coincide.  In this case,
    $\tau^{(i)}_{xy}\tau^{(i)}_{yz}\tau^{(i)}_{zx}$ degenerates to a point, so it
    encloses no area.  So we may suppose the three vertices are distinct.

    We claim that for every  $a\in\overline{xz}-\{x,z\}$,
    $\overline{ax}$ and $\overline{az}$ make angle${}<2\pi$.  Otherwise,
    \cref{LemLargeAnglesCase} would lead to violation of~($\star$).  From this follows
    a key fact: if $\overline{xz}$ enters and exits a cell~$C$ at points
    of~$\partial C$, then their angle (viewed from the center of~$C$) is${}<2\pi$.
    Therefore, in the definition of $\tau^{(i)}_{xz}$, the portion of $\overline{xz}$
    that passes through~$C$ is replaced by a portion of~$\partial C$.
    Applying the same reasoning to the other cells meeting~$\overline{xz}$,
    and similarly for the other two edges,
    we conclude:
    $\tau^{(i)}_{xy}\tau^{(i)}_{yz}\tau^{(i)}_{zx}$ encloses no portion of any cell,
    except perhaps for cells whose interiors contain at least one of
    $x$, $y$ or~$z$.  So it suffices to show that the enclosed 
    portion of such
    a cell has area${}\leq A$.

    We examine such a cell~$C$, writing $c$ for its center. If one of
    the vertices is~$c$, say $x=c$, then consider the geodesics
    from~$x$ to~$\partial C$, in the directions of $\overline{xy}$ and
    $\overline{xz}$. For this and the next cases, see \cref{F:ThreeCorners}, starting on the left for this case.
        \begin{figure}[h]
    \centering
    \includegraphics[width=0.75\linewidth]{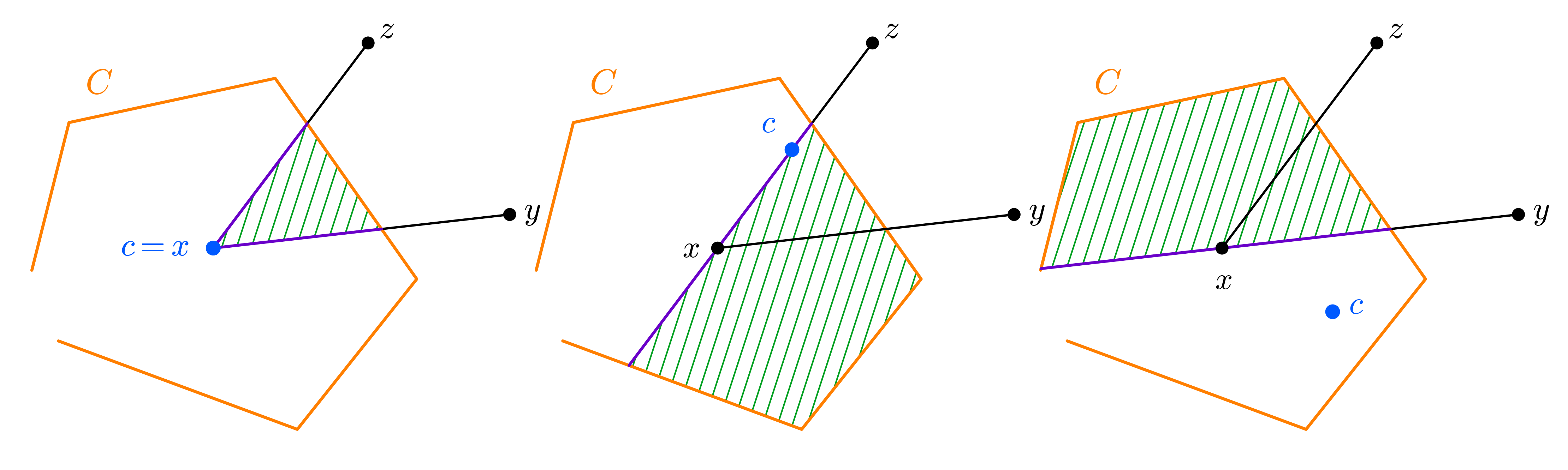}
    \caption{Three cases for the conclusion of the proof of \cref{CenterQuasiLine}. In all cases the shaded region has area at most $A$.}
    \label{F:ThreeCorners}
\end{figure}
    These make angle less than~$\pi$; otherwise,
    $\overline{xy}\cup\overline{xz}$ would be a geodesic, namely
    $\overline{yz}$, which would contradict~($\star$).  The portion of~$C$
    enclosed by $\tau^{(i)}_{xy}\tau^{(i)}_{yz}\tau^{(i)}_{zx}$ lies in the region
    bounded by these two geodesics and $\partial C$.  Since their angle
    is less than~$\pi$, this region's area is${}\leq A$.

    Now suppose $c$ is distinct from  $x,y,z$, but lies in one of the
    edges of the geodesic triangle, say~$\overline{xz}$.  
    The angle between $\overline{cx}$ and $\overline{cz}$ is less than~$2\pi$,
    or otherwise
    \cref{LemLargeAnglesCase} would yield a contradiction of~($\star$).  Consider
    the geodesics from~$c$ to~$\partial C$ in these two directions, and
    argue as in the previous paragraph.

    Finally suppose no edge of the triangle contains~$c$.  Of the edges of the triangle, choose one which comes
    closest to~$c$, say~$\overline{xy}$.  If it is ``radial'',
    meaning that it extends to a geodesic that hits~$c$, then its end near~$c$
    lies in another edge of the triangle, which is 
 non-radial and comes as close to~$c$ as $\overline{xy}$ does.  
    Using it instead,
    we may suppose
    without loss that $\overline{xy}$ is not radial.  Being non-radial,
    $\overline{xy}\cap C$ extends to a geodesic between two 
    points of~$\partial C$,
    that misses~$c$.  The portion of $C$ enclosed by
    $\tau^{(i)}_{xy}\tau^{(i)}_{yz}\tau^{(i)}_{zx}$ lies in the region bounded
    by this geodesic and~$\partial C$, which has area${}\leq A$.
\end{proof}

\subsection{Enough projections}\label{SS:HoroballEP}

We end this section with a statement that will be used later in the proof of the enough projections assumption of \cref{P:Criterion}.

\begin{lemma}\label{L:HoroballProductMetric}
For each $\kappa\geq 0$ there exists $\omega\geq0$ such that if $x, y\in \partial \That_{\hat{c}}$ and $d(x,y)\geq \omega$ then  $d_{U}(\pi_U(x),\pi_U(y))\geq \kappa$ for some cusp domain $U$ associated to $\That_{\hat{c}}$.
\end{lemma}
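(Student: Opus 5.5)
We argue by contraposition: assume that for every cusp domain $U$ attached to $\That_{\hat{c}}$ (the $n-1$ tree domains $i$ and the center domain) we have $d_U(\pi_U(x),\pi_U(y))<\kappa$, and bound $d(x,y)$ in terms of $\kappa$. For $x,y\in\partial\That_{\hat c}$ the nearest-point projection to $\That_{\hat c}$ is the identity, so $\pi_{\hat c,i}(x)=x_i$ is simply the $i$th coordinate of the image of $x$ in $\prod\Chat_i$. Likewise $\pi_{\That_{\hat c}}(x)$ is the image of $x$ in $\partial\Thatsw_\chat$. The goal is thus a coarse ``product-metric'' estimate: the intrinsic path metric on $\partial\That_\chat$ is bounded above by a function of the distances in the factors $\Chat_i$ and in the swaddled quasi-line.

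\textbf{Step 1.} Build a path from $x$ to $y$ in $\partial\That_\chat$ of controlled length. Write $\bar x,\bar y\in\prod\Chat_i$ for the base points. The swaddling path $\tau_{\bar x\bar y}=(\tau^{(1)},\dots,\tau^{(n-1)})$ has $i$th component of length at most a constant times $d_{\Chat_i}(x_i,y_i)+1$. This holds because replacing a chord through a cell by an arc of $\partial C$ of angular separation $<2\pi$ multiplies length by at most a bounded factor plus a constant depending on the maximal cell diameter. Hence $\tau_{\bar x\bar y}$ has length $\lesssim\kappa+1$. Lift it horizontally, segment by segment, starting at $x$. The lift has the same length as $\tau_{\bar x\bar y}$, because horizontal lifts are isometric on base lengths. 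By definition it ends at $\hat m_{\bar x\bar y}(x)$, which lies in the fiber over $\bar y$.

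\textbf{Step 2.} Bound the remaining fiber displacement. Write $y=t\cdot\hat m_{\bar x\bar y}(x)$; moving along the fiber costs at most $|t|$. By \cref{CenterQuasiLine}, via \cref{L:Swaddled} and the uniform bound $9A$ on triangle monodromy, retraction to a fiber is a quasi-isometry. Composing the retraction with $m_{\bar y,\bar x}$ shows that $|t|$ differs by a uniformly bounded amount from the distance in the fiber over $\bar x$ between $x$ and $r(y)$. That quantity is bounded by a linear function of $d_{\partial\Thatsw_\chat}(\pi(x),\pi(y))<\kappa$. Therefore $|t|\le C\kappa+C$, and $d(x,y)\le C'(\kappa+1)$. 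Choosing $\omega$ larger than this bound proves the lemma.

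\textbf{Main obstacle.} The delicate point is making Step 2 uniform. We need the quasi-isometry constants of \cref{L:Swaddled} to depend only on $A$ and $n$, not on $\hat c$ or the chosen fiber; this follows from the explicit proof, since $r$ is $(9A(n-1))$-coarse. We also need path lengths in the singular space $\partial\That_\chat$ to be handled correctly near ramification points. For the latter, I would approximate by paths in $\partial\Thatcirc_\chat$, as in \cref{L:Thatswlip}, using that the vertical direction is unit speed and that horizontal lifts preserve length.
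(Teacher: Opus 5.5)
Your proposal is correct and follows essentially the same route as the paper's proof. The tree-domain bounds control the length of the swaddling path; its horizontal lift carries $x$ at bounded cost to $\hat m_{\bar x\bar y}(x)$ in the fiber of $y$; and the remaining fiber displacement is bounded by the quasi-isometry of \cref{L:Swaddled} together with the center-domain bound. As minor remarks, $m_{\bar y\bar x}=m_{\bar x\bar y}^{-1}$ gives $r(y)=t\cdot x$ exactly, so Step~2 needs no ``uniformly bounded'' error term, and your coarseness constant $9A(n-1)$ for $r$ is correct only up to the fixed factor converting signed area into monodromy.
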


Here we use that the $\pi_U$ are in fact defined on all of $\partial \That_{\hat{c}}$, because $\Bhat^U$ is the complement of the interior of $\That_{\hat{c}}$. Note that it ultimately does not matter if $d(x,y)$ denotes the $\Bhat$ metric or the intrinsic path metric on $\partial \That_{\hat{c}}$ here because of \cref{L:BoundaryHorosphere}, but for concreteness let us say it denotes the later for now.

\begin{proof}
We make use of the map $\partial \That_{\hat{c}} \to \prod \Chat_i$.
Assume $x, y\in \partial \That_{\hat{c}}$ have that $d_{U}(\pi_U(x),\pi_U(y))\leq \kappa$ for all cusp domains $U$ associated to $\That_{\hat{c}}$. We must give some upper bound $\omega$ on $d(x,y)$.

Since the images of $x$ and $y$ in each factor $\Chat_i$ are close, the length of the swaddling path from $x$ to $y$ is bounded. We can use the swaddling data above to ``parallel translate'' $x$ to a point $x'$ in the same fiber of $y$. The swaddling data ultimately comes from horizontal lifts, so the distance from $x$ to $x'$ is bounded. Thus it suffices to bound the distance from $x'$ to $y$. 

But $x'$ and $y$ are in the same fiber, and their distance in that fiber is bounded by \cref{L:Swaddled} and the assumption. 
\end{proof}

\section{Angle domains }\label{S:AngleDomains}

The angle domains will be in bijection with the hyperplanes $\Hhat$ in~$\Bhat$, but to avoid confusion with the associated strata we define them to be indexed by the unit normal bundles $N^1(\Hhat)$ to the hyperplanes. 

The idea of our angle domains is simple, but takes 
work  to set up.
There is a function on $\R^3-\R$
which indicates angular position around~$\R$.  Usually one
thinks of it as taking values in a circle.  This function
lifts to one on the universal cover of $\R^3-\R$, taking
values in a copy of~$\R$.  Our goal is to construct a
similar function on $\Bhat-\Hhat$, for each hyperplane 
$\Hhat\sset\Bhat$.  Our ``angle'' function will not take
values in a copy of~$\R$, but rather in a quasi-line, i.e.
a space quasi-isometric to~$\R$.
The hyperbolic spaces associated to angle domains in our HHS structure 
will be the quasi-lines associated
to hyperplanes of~$\Bhat$, 
and the projections to them will be their ``angle'' functions.
The quasi-lines are defined intrinsically. In particular,
the $\Gammahat$-stabilizer
of~$\Hhat$ acts on
$\Hhat$'s quasi-line.

\begin{remark}\label{R:MinProjNotCan} 
These domains are analogous to annular domains for mapping class groups, and they can also be compared, roughly speaking, to a Fenchel-Nielsen twist parameter for Teichm\"uller space. In the context of Teichm\"uller space one observes that attempts to measure twisting or annular subsurface projections are subtle and sometimes difficult, and there are non-equivalent definitions with different advantages (see \cite[Appendix A]{AranaHerreraWright} for some discussion). Experts on hierarchical hyperbolicity will also recall \cite{Mangioni} as a compelling explanation that projections to annular curve graphs are not at all as canonical as one might have initially expected. Thus, even though the hyperbolic spaces for our angle domains seem very canonical, it is not a surprise that there is more than one way to define the projections to them; and in fact we exploit this flexibility to choose a definition which avoids some technical issues. 
\end{remark}

\subsection{Unit normal bundles.} 
Fix a hyperplane $\Hhat$ of $\Bhat$ and write
$\pHhat$ for nearest-point projection to it,
which makes sense because  $\Bhat$ is CAT($-1$) and
$\Hhat$ is convex.
Suppose $\ell>0$ and  $\gammahat:[0,\ell]\to \Bhat$ is a geodesic with
$\gammahat(0)\in\Hhat$.  We say $\gammahat$ is orthogonal to  $\Hhat$ if
$\pHhat\circ\gammahat(t)$ is constant. 
We define the unit normal bundle 
$N^1(\Hhat)$ to be the set of germs of such geodesics, meaning that
two geodesics
are considered equivalent if they coincide on
some interval $[0,\varepsilon]$ with~$\varepsilon>0$.
There is a surjective projection map 
$$\PHhat:\Bhat-\Hhat\to N^1(\Hhat),$$  sending each $\yhat$ to 
the germ of the 
geodesic $\overline{\pHhat(\yhat)\,\yhat}$.

The pointwise stabilizer of $\Hhat$ in $\pi_1(\Bcirc)\subset \Gammahat$ is a subgroup isomorphic to $\bZ$. By covering space theory (applied after removing hyperplanes and before taking metric completions), this corresponds to a branched cover $B'\to B$ with deck group $\bZ$. Let $H'$ be the image of $\Hhat$ in $B'$, and as usual let $H$ be the image of $\Hhat$ in $B$. Note that $B'-H'$ is the universal cover of $B-H$. 

Exactly the same considerations as above apply to 
the hyperplane $H'$ of~$B'$, yielding
the unit normal bundle $N^1(H')$ of $H'$ in~$B'$. The pointwise stabilizer of $H$ in $\Isom^+(B)$ is a copy of $S^1$, which rotates around $H$. This $S^1$ action on $B$ lifts via covering space theory to a $\bR$ action on $B'$ which fixes $H'$ pointwise. The induced action on $N^1(H')$ shows that $N^1(H')$ is a principal $\bR$-bundle. Our goal in this subsection is to show the following. 

\begin{lemma} 
    \label{LemNormalBundleIsPullback}
$N^1(\Hhat)$ is a $\bR$-bundle, and is the pullback of $N^1(H')$ via the map $\Hhat \to H'$. 
\end{lemma}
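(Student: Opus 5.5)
We will construct a map $\Phi: N^1(\Hhat)\to N^1(H')$ covering the natural map $\Hhat\to H'$, show it is a bijection on each fiber, and transport the $\R$-action along it. The factorization $\Bhat \to \Bhat'\to B'$ is the starting point. Here $\Bhat'$ denotes the metric completion of $B'-H'$, so $B'$ plays the role of an intermediate branched cover. Since $\Hhat$ is the fixed set of its pointwise stabilizer $\Z\subset\pi_1(\Bcirc)$, and $B'$ is the quotient of $\Bhat$ by the subgroup $\pi_1(B-H)$-relative structure, we get a natural map $\beta:\Bhat\to B'$. It restricts to a map $\Hhat\to H'$, and on $\Bhat-\HHAT$ it is a local isometry away from the other hyperplanes.

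\textbf{Step 1: defining $\Phi$.} Given a germ of a geodesic $\gammahat$ orthogonal to $\Hhat$ at $\xhat\in\Hhat$, we first use the local model of \cref{SS:Local} at $x=b(\xhat)$. In $\Vhat\cong \Dhat^k\times D^{n-k}$, with $\Hhat$ the locus where a distinguished $\Dhat$ coordinate is $-\infty$, closest-point projection to $\Hhat$ is coordinatewise. The geodesics orthogonal to $\Hhat$ are therefore, for small time, exactly those that vary only in the distinguished $\Dhat$ factor along a radial ray, as in \cref{R:localgeodesics}, with all other coordinates fixed at those of $\xhat$. Such a ray is determined by its angle $\theta\in\R$, that is, by $\Im z$ in the half-plane model. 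Composing with $\beta$, it maps isometrically onto the corresponding radial geodesic in $B'$ orthogonal to $H'$. This gives $\Phi$, which is well-defined on germs.

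\textbf{Step 2: fiberwise bijectivity and equivariance.} We show that for fixed $\xhat$, both fibers are parametrized by the same angle $\theta$. On the $\Bhat$ side, the other coordinates of $\xhat$ may themselves be $-\infty$ if $\xhat$ lies on orthogonal hyperplanes. Even so, the normal direction to $\Hhat$ lives only in the distinguished $\Dhat$ factor, so the germs at $\xhat$ form a single copy of $\R$, namely the universal cover of the circle of directions in the distinguished factor. On the $B'$ side, the branched cover $B'\to B$ unwraps exactly this circle, and the lifted $S^1$-action, which is the $\R$-action, translates $\theta$. We then define the $\R$-action on $N^1(\Hhat)$ by translating $\theta$. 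The key points to verify are that this is independent of the chart, and that $\Phi$ is equivariant; both hold because the $S^1$ rotation about $H$ is holomorphic, preserves the coordinate disks, and acts on the distinguished factor by rotation. This exhibits $N^1(\Hhat)$ as $\Hhat\times_{H'}N^1(H')$ set-theoretically, and as an $\R$-torsor bundle.

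\textbf{Step 3: global and topological consistency.} We must also check that the germ space does not depend on the choice of chart, so that the charts glue. It is enough that germs are intrinsic: two charts agree on overlaps because $\beta$ is canonical. Finally, local triviality follows from the product structure of the chart. The main obstacle is Step 1 at points $\xhat$ lying on other hyperplanes orthogonal to $\Hhat$, where $\Bhat$ is not locally compact. There one must show that every geodesic orthogonal to $\Hhat$ really stays in the $\{$other coordinates fixed$\}$ slice. I would prove this using the fact that coordinatewise projection is the nearest-point projection in the bi-Lipschitz model, together with the uniqueness of geodesics in the convex slice, which is a fixed set of the commuting $\Z^{k-1}$.
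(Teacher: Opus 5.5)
Your plan is the paper's: work in the local model at $b(\xhat)$, identify germs orthogonal to $\Hhat$ with lifts of geodesics that move only the distinguished coordinate radially, push them to $N^1(H')$, check bijectivity on fibers, and pull back the $\bR$-structure. The paper phrases fiberwise injectivity through the stabilizer $\Z^k$ of the basepoint: the $\Z^{k-1}$ belonging to the other hyperplanes fixes every orthogonal germ, while the $\Z$ belonging to $\Hhat$ changes the image in $N^1(H')$. Your explicit $\theta$-parametrization of both fibers is an equivalent and somewhat more concrete version of this. Your description of the map $\Bhat\to B'$ is garbled, but it is just obtained by lifting the inclusion $\Bcirc\hookrightarrow B-H$ to universal covers and passing to metric completions.

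The real problem is the step that carries the content: that a geodesic orthogonal to $\Hhat$ at $\xhat$ keeps all non-distinguished coordinates fixed. You assert this in Step~1 and defer its proof to Step~3, but the mechanism you propose there does not work.

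\begin{itemize}
\item A bi-Lipschitz identification with $\Dhat^k\times D^{n-k}$ says nothing about nearest-point projection in $\Bhat$.
\item The slice through $\xhat$ is not the fixed set of the $\Z^{k-1}$. That fixed set is the intersection of the other hyperplanes through the chart's center, which in coordinates is $\Dhat\times D^{n-k}$, so it is in general strictly larger than the slice. Uniqueness of geodesics inside it therefore does not pin down $\gammahat$, and showing that $\gammahat$ even stays inside it is the same problem again.
\end{itemize}

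What works, and what the paper does, is a lifting argument. Take $\yhat$ near $\xhat$ and set $y=b(\yhat)$. By \cref{R:localgeodesics}, the $B$-geodesic from $y$ to $p_H(y)$ moves only the distinguished coordinate. So it stays in the stratum of $y$ until its endpoint, and it lifts to a path from $\yhat$ into $\Hhat$ of length $d(y,H)$. Since $b$ is $1$-Lipschitz and $b(\Hhat)\subseteq H$, we have $d(\yhat,\Hhat)\geq d(y,H)$. Hence this lift is the geodesic from $\yhat$ to $p_{\Hhat}(\yhat)$.

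Now suppose $\gammahat$ is orthogonal to $\Hhat$ at $\xhat$. For small $t$, the geodesic from $\gammahat(t)$ to $\xhat=p_{\Hhat}(\gammahat(t))$ is such a lift, by uniqueness of geodesics in a CAT($-1$) space. That is exactly the slice statement you need. The argument is the same whether or not $\xhat$ lies on other hyperplanes, so the case you single out as the main obstacle needs no separate treatment, and local compactness plays no role.
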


%
%

\begin{proof}
Let $H$ be the image of $\Hhat$ in $B$. 

For each $p\in H$ there is some $\epsilon$ such that any hyperplane coming within $\epsilon$ of $p$ in fact contains $p$. If $y$ is within $\epsilon$ of $p$, then the geodesic from $y$ to $H$ lies in a single stratum (except  the endpoint in $H$). Any such geodesic segment can be lifted to $\Bhat$ and $B'$ to geodesics that end on $\Hhat$ or $H'$. 

All germs in $N^1(\Hhat)$ and $N^1(H')$ arise in this way. This analysis shows in particular that there is a surjective map from $N^1(\Hhat)$ to the pullback of $N^1(H')$. This map sends the germ of a geodesic at a point of $\Hhat$ to the pair of that point of $\Hhat$ and the pushforward germ of a geodesic based at the image of that point in $H'$. 

We can see this map is injective as follows. If not, it must be because there are two germs $w, w'$ in $N^1(\Hhat)$ with the same basepoint $x\in \Hhat$ that map to the same point of $N^1(H')$. In particular, they must differ by an element $\gamma$ of the deck group of $\Bhat\to B'$ that fixes $x$. If $x$ lies in $k$ hyperplanes, its stabilizer in the subgroup of $\pi_1(\Bcirc)\subset \Gammahat$ corresponding to $B$ is a $\bZ^k$. The different coordinates of $\bZ^k$ correspond to different hyperplanes containing $x$. However, as can be seen using $B$, every hyperplane through $x$ other than $\Hhat$ contains all germs of geodesics based at $x$ and orthogonal to $\Hhat$. So actually  $w$ must be fixed by the $\bZ^{k-1}$ corresponding to the $k-1$ hyperplanes other than $\Hhat$. Applying the $\bZ$ corresponding to $\Hhat$ produces germs with different image in $N^1(H')$, so this gives the result. 

The topology and principal $\bR$-bundle structure on $N^1(\Hhat)$ can now be obtained by pulling back. 
\end{proof}

The map from $H'$ to $H$ is in fact an isomorphism, and we can think of $H'$ as a copy of $\bC\bH^{n-1}$.

Equip $N^1(H)$ with the Levi-Civita connection. Viewed as a 1-form on $N^1(H)$ with values in $\bR$ (the Lie algebra of $S^1$), it can be pulled back to a connection on $N^1(H')$, via the natural map  $N^1(H')\to N^1(H)$.

The isometry group of $B'$ fixes $H'$ as a set and acts transitively on $H'$. The curvature form of $N^1(H')$ is invariant under this action.  

\begin{remark}
In fact, symmetry shows that the curvature 2-form is a multiple of the imaginary part of the Hermitian form on the hyperplane, and it is also possible to compute the curvature form directly using \cite[pages 77, 301]{KobayashiNomizu}. There is also a soft argument to show that the curvature cannot be zero, as a consequence of a certain central extension $P(U(1,n-1)\times S^1) \to PU(1,n-1)$ not splitting. 
\end{remark}

\subsection{Preparatory lemmas.}

We also note the following at the level of $B$, where we also have a map $p_{N^1(H)}: B-H \to N^1(H)$. 

\begin{lemma}\label{L:DerivativeEqualsProj}
Suppose $\alpha(t), t\in [0,1]$ is a path with $\alpha(0)\in H$ and $\alpha'(0)$ non-zero and orthogonal to $H$. Then 
$$\lim_{t\to 0^+} p_{N^1(H)}(\alpha(t)) = \frac{\alpha'(0)}{\|\alpha'(0)\|}.$$
\end{lemma}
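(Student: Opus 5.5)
The plan is to reduce the statement to a computation about the differential of the nearest-point projection, using the explicit description of $p_H$ recalled in the complex hyperbolic background.

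Set $x=\alpha(0)\in H$. By definition, $p_{N^1(H)}(y)$ is the unit initial vector at $p_H(y)$ of the geodesic $\overline{p_H(y)\,y}$. We can write this as
$$p_{N^1(H)}(y)=-\frac{\nabla_1 d(p_H(y),y)}{1},$$
meaning minus the gradient, in the first variable, of the distance function. Equivalently, it is the image of $y$ under the inverse of the normal exponential map $N(H)\to B$. Here $\exp^\perp\colon N(H)\to B$, $(p,v)\mapsto\exp_p(v)$, is a diffeomorphism. This holds because $H$ is a totally geodesic convex submanifold of a CAT($-1$) Hadamard manifold: every point has a unique nearest point on $H$, and the geodesic to it is orthogonal to $H$. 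It also follows concretely from the description of $p_H$ via the orthogonal projection $V\oplus V^\perp\to V^\perp$.

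So write $\alpha(t)=\exp^\perp(p(t),v(t))$, where $(p(t),v(t))=(\exp^\perp)^{-1}(\alpha(t))$ is a $C^1$ path in $N(H)$ with $p(0)=x$ and $v(0)=0$. Then $p_{N^1(H)}(\alpha(t))=(p(t),v(t)/\|v(t)\|)$ for $t>0$. The differential of $\exp^\perp$ along the zero section is the canonical identification $T_{(x,0)}N(H)=T_xH\oplus N_xH\cong T_xB$. Hence
$$\alpha'(0)=p'(0)+v'(0),$$
with $p'(0)\in T_xH$ and $v'(0)\in N_xH$. Since $\alpha'(0)$ is orthogonal to $H$, we get $p'(0)=0$ and $v'(0)=\alpha'(0)\neq0$.

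Then $v(t)=t\,\alpha'(0)+o(t)$ in the trivialization of the normal bundle near $x$, so $v(t)/\|v(t)\|\to\alpha'(0)/\|\alpha'(0)\|$. Also $p(t)\to x$. Continuity of the normalization map on $N(H)$ minus the zero section, in local trivializations, then gives the limit in $N^1(H)$.

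The main point to check is that $\exp^\perp$ is a $C^1$ diffeomorphism up to and including the zero section, so that $(p(t),v(t))$ is differentiable at $t=0$ with the stated derivative. I would verify this using the inverse function theorem at the zero section, where the differential is the identity under the splitting above. Globally, uniqueness of nearest points makes $\exp^\perp$ bijective. Alternatively, one can avoid global questions entirely, since only a neighborhood of $(x,0)$ matters and the inverse function theorem provides the local inverse there.

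A more hands-on alternative is to use the local coordinates of \cref{R:localgeodesics}. Take $H$ to be $\{z_1=0\}$ and project to the first $B^1$ factor by the holomorphic nearest-point projection. In these coordinates $p_{N^1(H)}(\alpha(t))$ is essentially the argument of the first coordinate $\alpha_1(t)$, together with the base point. Then $\alpha_1(t)=t\alpha_1'(0)+o(t)$ with $\alpha_1'(0)\neq0$ gives the result directly.
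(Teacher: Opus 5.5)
Your proposal is correct and follows essentially the paper's route: the paper gives no argument beyond pointing to the tubular-neighborhood results in Lee's \emph{Introduction to Riemannian Manifolds}, i.e.\ identifying $p_{N^1(H)}$ near $H$ with the normalized inverse of the normal exponential map, and your computation of the differential of $\exp^\perp$ along the zero section (where it is the identity $T_xH\oplus N_xH\cong T_xB$) supplies exactly that step. Your coordinate alternative via \cref{R:localgeodesics} also works: in the ball model centered at $\alpha(0)$ with $H=\{z_1=0\}$, the projection $p_H$ simply sets $z_1=0$, and the geodesics to $H$ are radial in $z_1$.
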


For a proof, compare to \cite[Theorem 5.25, Proposition 5.26, Corollary 6.39]{LeeIRM}. 

\subsection{The hyperbolic spaces.}
We start by recalling the following. 

\begin{lemma}\label{BoundedIntegralsOverTriangles}
Let $\eta$ be a closed 2-form on $\bC\bH^{n-1}$ which is invariant under the group of holomorphic isometries of $\bC\bH^{n-1}$. Then there exists $L$ such that for any geodesic triangle $T$ in $\bC\bH^{n-1}$, the absolute value of the integral of $\eta$ over any filling of $T$ is at most $L$ times the area of the comparison triangle. 
\end{lemma}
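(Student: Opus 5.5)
The plan is to reduce, via the invariance, to a statement about $\eta$ being a constant multiple of the Kähler form, and then bound the integral by the area of a holomorphically-nice filling, which is comparable to the comparison area. First, since $\eta$ is closed, the integral over any filling of $T$ depends only on $T$: two fillings differ by a 2-cycle, and $H_2(\bC\bH^{n-1})=0$ by contractibility, so by Stokes the integrals agree. It therefore suffices to exhibit one filling for each triangle with controlled integral. I would use the geodesic cone filling: fix the vertex $x$ and sweep the geodesics from $x$ to the points of the opposite edge $\overline{yz}$, exactly the surface $S$ appearing in the proof of \cref{L:patchwork}.

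Second, invariance under the holomorphic isometry group, which acts transitively on $\bC\bH^{n-1}$, shows the comass (pointwise operator norm) $\|\eta\|$ is a constant $c$. Hence $|\int_S \eta|\leq c\cdot\mathrm{Area}(S)$, where the area is the Riemannian area of the (Lipschitz) cone surface. One could further note that $U(n-1)$-invariance at a point forces $\eta$ to be a multiple of the Kähler form, but only boundedness of the comass is needed.

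Third, and this is the main step, I must show $\mathrm{Area}(S)\leq L'\cdot\mathrm{Area}(\bar T)$ where $\bar T$ is the comparison triangle in $\bH^2$. Since sectional curvatures lie in $[-4,-1]$, I would compare the cone surface with the cone in $\bH^2$ over a comparison configuration: parametrize $S$ by $(s,t)\mapsto \gamma_s(t)$, where $\gamma_s$ is the geodesic from $x$ to the point at arclength $s$ on $\overline{yz}$. The area element is $|\partial_t\gamma|\cdot|J_s(t)|$ with $J_s$ a Jacobi field along $\gamma_s$ vanishing at $t=0$, and Rauch comparison with curvature $\leq -1$ gives $|J_s(t)|\leq$ the corresponding quantity in $\bH^2$ determined by $|J_s(\ell_s)|$ and $\ell_s$; more precisely $|J_s(t)|\leq \frac{\sinh t}{\sinh \ell_s}|J_s(\ell_s)|$ (a convexity estimate for Jacobi fields in curvature $\leq -1$). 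Integrating, $\mathrm{Area}(S)\leq \int_0^{|yz|}\tanh(\ell_s/2)\,ds$ with $\ell_s=d(x,\gamma(s))$ and $|J_s(\ell_s)|\leq1$. The comparison-triangle area in $\bH^2$ is given by the same cone integral with equality, using comparison distances $\bar\ell_s\geq \ell_s$ by CAT($-1$)... which is the wrong direction, so I expect this to be the obstacle. To fix it I would instead argue coarsely: both quantities are uniformly comparable to $\min(1,\ \text{small-triangle size})$-type expressions. Namely, for triangles of bounded diameter, $\mathrm{Area}(\bar T)$ and $\mathrm{Area}(S)$ are both comparable (with constants from the curvature pinching) to the Euclidean area of the triangle in normal coordinates, and the Gauss–Bonnet/thinness argument shows the comparison area is bounded below by a function comparable to the cone area; for large triangles, $\delta$-thinness lets me subdivide into boundedly many thin pieces whose cone areas are $O(1)$ per unit length of thin part, while $\mathrm{Area}(\bar T)\geq$ a constant once $\bar T$ is not degenerate. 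Handling the degenerate thin-but-long case cleanly, where both areas are small, is where I expect the real work; there I would use that $\eta$ restricted to a nearly-geodesic strip has small integral, by comparing with the degenerate filling along the tree-like tripod and using the Jacobi bound $\tanh(\ell_s/2)\lesssim$ the width.
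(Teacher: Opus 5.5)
\textbf{There is a genuine gap: your third step, which is the entire content of the lemma, is not established.} Your first two steps are fine and match what the paper uses implicitly: closedness plus contractibility makes the integral independent of the filling, and transitivity of the invariance group makes the comass of $\eta$ constant. The paper does not obtain the third step from Jacobi field estimates. It quotes the fact that every geodesic triangle in complex hyperbolic space bounds a disc whose area is at most that of the comparison triangle in $\bH^2$ \cite[page 426]{BridsonHaefliger}, after which $L=\|\eta\|$ works.

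In your computation, the bound $|J_s(\ell_s)|\le 1$ throws away the essential factor. In fact $|J_s(\ell_s)|=\sin\theta_s$, where $\theta_s$ is the angle between $\overline{yz}$ and the ruling at the point at arclength $s$. Without this factor, $\int_0^{|yz|}\tanh(\ell_s/2)\,ds$ is of order $|yz|$ for a long thin triangle (say $x$ very close to the midpoint of a long edge $\overline{yz}$), while $\mathrm{Area}(\bar T)\to 0$. Your diagnosis of the obstacle is also off. The inequality $\ell_s\le\bar\ell_s$ goes in the helpful direction. The comparison area is not ``the same cone integral with equality'': in $\bH^2$ it is $\int\tanh(\bar\ell_s/2)\sin\bar\theta_s\,ds$, and comparing $\sin\theta_s$ with $\sin\bar\theta_s$ is exactly what you do not control. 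The coarse fallback does not close the gap. The claim ``$\mathrm{Area}(\bar T)\ge$ a constant once $\bar T$ is not degenerate'' is false, since nondegenerate triangles can have arbitrarily small area. The long thin case, which is precisely where a \emph{linear} bound $L\cdot\mathrm{Area}(\bar T)$ is delicate, is left open.

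To keep your cone filling $S$, the missing input is intrinsic to $S$. The rulings are ambient geodesics, so $\mathrm{II}(e_1,e_1)=0$ for the ruling direction $e_1$. The Gauss equation then gives $K_S=K-|\mathrm{II}(e_1,e_2)|^2\le -1$. The three edges of $T$ are geodesics of $S$ with their ambient lengths. Gauss--Bonnet shows that the corner angles of $S$, including the cone angle at $x$, sum to $\pi+\int_S K_S\,dA<\pi$. Hence the boundary is locally convex, and $S$ is a CAT($-1$) disc bounded by a triangle with the same side lengths as $T$. Reshetnyak's majorization theorem then gives a $1$-Lipschitz map from the filled comparison triangle onto $S$, so $\mathrm{Area}(S)\le\mathrm{Area}(\bar T)$.

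Gauss--Bonnet alone would not suffice. It only gives $\mathrm{Area}(S)\le\pi-(\text{angles of }S)$. Since the angles of $S$ are at most the comparison angles, this bound is \emph{at least} $\mathrm{Area}(\bar T)=\pi-(\bar\alpha+\bar\beta+\bar\gamma)$, which is the wrong direction. So a majorization-type input, or the citation the paper uses, is genuinely needed.
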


\begin{proof}
Because $\eta$ is invariant under a transitive action by isometries, it is bounded. Recall that any geodesic triangle in complex hyperbolic space can be filled by a disc whose area is at most the area of the comparison triangle in $\bH^2$ \cite[page 426]{BridsonHaefliger} (and since the exponential map is smooth, the disc can be taken to be smooth on its interior). The result follows. 
%
%
%
\end{proof}

Parallel translation along geodesics defines geometric swaddling data for $N^1(H')$. Since holonomy around a triangle can be computed as the integral of curvature on a filling disc (see for example \cite[page 41]{MarsdenMontgomeryRatiu}), keeping in mind \cref{D:Bounded} we get the following. 

\begin{corollary}\label{C:Lbounded}
The geometric swaddling data for $N^1(H')$ is $L$-bounded for some $L$. 
\end{corollary}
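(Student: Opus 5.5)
The plan is to check \cref{D:Bounded} directly: for every geodesic triangle in $H'$, compare its monodromy with the area of its comparison triangle in $\bH^2$. The proof has three steps.

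First, identify the swaddling data on $N^1(H')$. The base is $H'$, which is isometric to $H \iso \bC\bH^{n-1}$ and hence CAT($-1$). For $x,y\in H'$ the map $m_{x,y}$ is parallel transport, with respect to the pulled-back connection, along the unique geodesic $\overline{xy}$. These maps are $\bR$-equivariant because the connection is invariant under the $\bR$-action. Reversing a geodesic gives the inverse transport, so $m_{y,x}=m_{x,y}^{-1}$. If $y$ lies on $\overline{xz}$, then transport along $\overline{xz}$ is the composite of transport along $\overline{xy}$ and along $\overline{yz}$. So the data is geometric.

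Second, express the monodromy as a curvature integral. For a triangle with vertices $x,y,z$, the map $m_{z,x}\circ m_{y,z}\circ m_{x,y}$ is holonomy around the piecewise geodesic loop $\partial T$. Since $H'$ is contractible, $N^1(H')$ is a principal $\bR$-bundle with a connection whose curvature $\eta$ is an ordinary closed real $2$-form on $H'$. The structure group is abelian, so holonomy around a loop equals $\pm\int_D \eta$ for any filling disc $D$ (the standard fact cited from \cite[page 41]{MarsdenMontgomeryRatiu}). The curvature of $N^1(H')$ is the pullback of the curvature of the normal circle bundle $N^1(H)$ under the isometry $H'\to H$. As noted above, it is invariant under the isometry group of $B'$, which acts transitively on $H'$. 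More precisely, holomorphic isometries of $H$ extend to isometries of $B$ stabilizing $H$ and preserving the Levi-Civita connection on the normal bundle, so $\eta$ is invariant under all holomorphic isometries of $\bC\bH^{n-1}$.

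Third, apply \cref{BoundedIntegralsOverTriangles}. It gives $L$ such that $|\int_D\eta|$ is at most $L$ times the comparison area, where $D$ is the filling of area at most the comparison area from \cite[page 426]{BridsonHaefliger}. So every triangle is $L$-bounded and the data is $L$-bounded.

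The main obstacle is the second step, specifically the invariance of $\eta$. Invariance under holomorphic isometries must be stated so that \cref{BoundedIntegralsOverTriangles} applies, and the Lie-algebra identification of $S^1$ with $\bR$ must be compatible with the lift to $B'$. It also has to be checked that the filling disc from \cite{BridsonHaefliger}, which is smooth only in its interior, still computes the holonomy. I would handle this by approximating the disc with smooth discs whose boundaries are close to $\partial T$, using continuity of holonomy. Degenerate triangles have zero monodromy and need no separate treatment.
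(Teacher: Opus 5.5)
Your proposal is correct and follows the paper's own route: parallel transport along geodesics gives geometric swaddling data, and the monodromy of a triangle is the curvature integral over a filling disc \cite[page 41]{MarsdenMontgomeryRatiu}. Applying \cref{BoundedIntegralsOverTriangles} to the invariant curvature form then gives the bound, and your extra care about invariance under all holomorphic isometries and about smoothing the filling disc only fills in details the paper leaves implicit (the lemma's proof really only needs $\eta$ bounded, which already follows from invariance under a transitive isometry group).
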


We now wish to pull back from $H'$ to $\Hhat$. To that end, recall the definition of a piecewise triangular map from \cref{D:piecewisetriangular}, and note the following. 

\begin{lemma}\label{L:PTri}
Let $S$ be a ball, and $\Shat$ the metric completion of the universal cover of an orthogonal arrangement in $S$. 
Then the map $\Shat\to S$ is piecewise triangular. 
\end{lemma}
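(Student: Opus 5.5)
The plan is to subdivide a given geodesic triangle in $\Shat$ so that each small triangle lies in a region where the branched cover $b:\Shat\to S$ looks like the local model of \cref{SS:Local}. In that model, geodesic edges can be further cut into pieces that map isometrically.

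\textbf{Step 1: local subdivision.} By \cref{L:patchwork}, applied to the open cover of $\Shat$ by the interiors of the sets $\Vhat_0$ from \cref{SS:Uniformity} (taken inside $\Shat$), we first subdivide the triangle so that each piece lies in a single $\Vhat_0$. This reduces us to triangles inside $b^{-1}(V)\cong \Dhat^k\times D^{n-k}$ around a point $x$ lying on $k$ hyperplanes. Subdivisions of subdivisions are subdivisions, so it suffices to treat one such small triangle.

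\textbf{Step 2: geodesics in the local model.} Next we analyze geodesics in $\Vhat$. Since $\Vhat$ is only bi-Lipschitz to $\Dhat^k\times D^{n-k}$, we work directly with the CAT($-1$) structure. The key fact to establish is that the preimage $\widehat{\mathcal H}$ is a finite union of convex hyperplane pieces. A geodesic in $\Shat$ that meets one of these hyperplanes in a non-interval set must lie in it, by convexity together with the fact that $\Shat$ is CAT($-1$). Hence each geodesic segment in $\Vhat$ meets $\widehat{\mathcal H}$ in finitely many points and sub-segments: each hyperplane contributes either a single point or a subsegment, and there are only $k$ hyperplanes through $\Vhat$.

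On a complementary open piece, the geodesic lies in $\Vhat^\circ$, where $b$ is a local isometry. A local geodesic in $S$ that stays within a convex ball is a geodesic, so such a piece maps isometrically. A subsegment contained in a hyperplane stratum closure can be handled by induction on codimension: the stratum closure is itself the completed universal branched cover of an orthogonal arrangement in a lower-dimensional ball, so the lemma applies to it. The conclusion is that every edge splits into finitely many pieces mapping isometrically.

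\textbf{Step 3: cutting triangles along the breakpoints.} The difficulty is that the definition of subdivision only allows splitting a triangle by a segment from a vertex to the opposite edge. We therefore cone: for each breakpoint $m$ on an edge of the triangle, split by the geodesic from the opposite vertex to $m$. This produces new edges, namely the cone segments, which themselves need breakpoints, so we iterate. To make the iteration terminate, we try first to exploit the local product structure. In $\Vhat$ the triangle can be cut so that every piece lies in a closed sector of the form $\Dhat$-sheet $\times$ (disk), meaning a region contained in the closure of a single lift of a simply connected chart of $V^\circ$. Such a region maps isometrically onto a convex region of $V$ (a "wedge" bounded by at most $k$ hyperplanes meeting orthogonally). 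Triangles whose vertices lie in such a closed convex wedge have all edges inside it, so their edges map isometrically.

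The plan is thus to choose vertex-to-edge splittings aligned with the hyperplanes through $x$. The cuts go first along the points where the edges cross each hyperplane, so that after finitely many (at most a bound depending on $k$) rounds each small triangle lies in one wedge closure. I expect this termination and alignment argument to be the main obstacle. If a direct argument fails, the fallback is an induction on $k$: treat the $\Dhat$ factors one at a time, using orthogonality so that crossing one hyperplane does not disturb the others.
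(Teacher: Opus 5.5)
Your Steps 1 and 2 are fine, but together they only show that $\Shat\to S$ is piecewise geodesic. The content of the lemma is in Step 3, and there the proposal has a genuine gap.

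Step 1 cannot localize the difficulty away. \cref{L:patchwork} needs an open cover, and no open set containing a point of $\widehat{\mathcal{H}}$ maps injectively to $S$. Your proposed workaround also fails:
\begin{itemize}
\item A complex hyperplane has real codimension two, so it bounds no region of $V$.
\item The natural sheets in the chart are products of angular sectors in the $\Dhat$ factors. They are bounded by lifts of hypersurfaces $\{\arg y_i=\theta\}$, i.e.\ preimages of rays under nearest-point projection to the complex axes. These are pieces of bisectors, which are not totally geodesic in complex hyperbolic space, so the sectors are not convex. Convexity holds only in the flat model $\Dhat^k\times D^{n-k}$, and, as you note yourself, the bi-Lipschitz chart does not carry geodesics to geodesics.
\item A full sheet of angle $2\pi$ does not even map injectively.
\end{itemize}
So the claim that a triangle with vertices in a wedge has edges in the wedge that map isometrically is unjustified. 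You also give no argument that vertex-to-edge splits can confine every piece to a single sector. The iteration ``cone to the breakpoints, then cut the new cone segments'' has no termination mechanism, and the fallback induction on $k$ is only a slogan.

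The missing idea is to cone from the deepest stratum. Work in a small ball about $p$ meeting only hyperplanes through $p$, and let $\Delta$ be the intersection of those hyperplanes. If $a\in\Delta$, then $\overline{aw}$ maps isometrically to $S$ for \emph{every} $w$:
\begin{itemize}
\item Every hyperplane through $p$ contains $b(a)$, so the $S$-geodesic from $b(a)$ to $b(w)$ lies in the stratum of $b(w)$, except at $b(a)$.
\item Lift it from $w$ inside that stratum. The lift ends at the unique preimage of $b(a)$ in the local component, namely $a$.
\item The lift has length $d(b(a),b(w))\le d(a,w)$, so it is $\overline{aw}$.
\end{itemize}
Hence if a vertex $a$ of $T$ lies in $\Delta$, cut the opposite edge at its finitely many stratum changes (your Step 2) and cone from $a$. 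This finishes in one round, because the new edges need no further cutting.

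The remaining cases reduce to this one:
\begin{itemize}
\item If a point of an edge, or of some $\overline{aw}$ with $a$ a vertex and $w$ on the opposite edge, lies in $\Delta$, one or two splits through that point reduce to the previous case.
\item If the surface swept out by the $\overline{aw}$ misses $\Delta$, apply the patchwork argument with neighborhoods of its points. These neighborhoods meet fewer hyperplanes, so you can induct on the number of hyperplanes through the center.
\end{itemize}
This is the paper's proof.
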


It follows in particular that it is piecewise geodesic in the sense of \cref{D:piecewisegeodesic}. 

\begin{proof}
To start with, consider an open cover of $\Shat$ consisting of, for each point $p$ of $\Shat$, a ball centered at $p$ small enough to be disjoint from all hyperplanes not containing $p$. \cref{L:patchwork} gives that it suffices to consider a triangle $T$ in such a ball $U$. Let $\Delta$ denote the intersection of all the hyperplanes through $p$. We subdivide $T$ as follows, as illustrated in \cref{F:PTri1} for the first three steps.
\begin{enumerate}
    \item If a vertex~$a$  of~$T$ lies in~$\Delta$, 
    then we subdivide the opposite edge into segments, each
    of which lies in a single stratum (except perhaps for its
    endpoints).  
    We subdivide~$T$ along the geodesics from
    $a$ to these points.  Each of the resulting triangles has edges that each map to a geodesic in $S$. 
            \begin{figure}[h]
    \centering
    \includegraphics[width=0.75\linewidth]{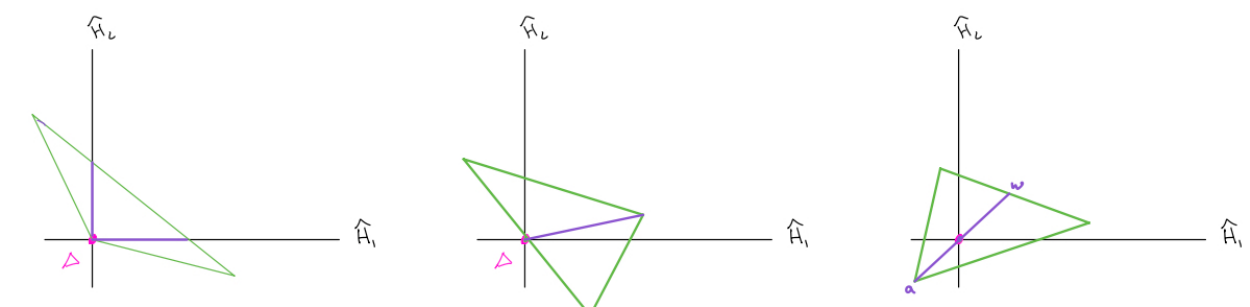}
    \caption{The proof of \cref{L:PTri}}
    \label{F:PTri1}
\end{figure}

    \item If no vertex lies in~$\Delta$, but some point $w$ of an edge
    of $T$ does, 
    then we subdivide~$T$ along the geodesic from $w$ to the opposite
    vertex.  The previous case applies to both parts of the subdivision.

    \item If $\overline{aw}$ meets $\Delta$, for some vertex $a$ of~$T$
    and $w$ in the opposite edge, then we subdivide $T$ along $\overline{aw}$.
    The previous case applies to both parts of the subdivision.

    \item If none of the previous cases apply, then the set~$W$ swept
    out by the $\overline{aw}$ misses~$\Delta$, where $a$ is a vertex 
    of~$T$ and $w$ varies over its opposite edge.  
    We apply the
    Alexandrov patchwork argument from the proof of \cref{L:patchwork}.
    The required neighborhoods, of the points of~$W$, exist by 
    an inductive argument.
    \end{enumerate}
This concludes the proof. 
\end{proof}

Keeping in mind \cref{LemNormalBundleIsPullback}, we get that \cref{L:DefinePullback} provides geometric swaddling data for $N^1(\Hhat)$. \cref{C:Lbounded} and \cref{C:PullBackQuasiLine} thus give that the swaddled space is a quasi-line: 

\begin{corollary}\label{C:AngleQuasiLine}
$N^1(\Hhat)^\sw$ is a quasi-line. 
\end{corollary}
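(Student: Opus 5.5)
The plan is to assemble the chain of results already in place: show that the swaddling data on $N^1(\Hhat)$ is $L$-bounded, hence almost trivial, and then apply \cref{L:Swaddled}.

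First, by \cref{LemNormalBundleIsPullback}, $N^1(\Hhat)$ is the principal $\bR$-bundle obtained by pulling back $N^1(H')$ along the natural map $b:\Hhat\to H'$. Both spaces are CAT($-1$). $\Hhat$ is a stratum closure, so it is convex and CAT($-1$) by \cref{SS:Stratification}. $H'\cong\bC\bH^{n-1}$ is a Riemannian manifold of curvature at most $-1$. Parallel translation along geodesics of $H'$, using the connection pulled back from the Levi-Civita connection on $N^1(H)$, gives geometric swaddling data on $N^1(H')$. This data is geometric because parallel transport along a geodesic composes along subsegments. By \cref{C:Lbounded} it is $L$-bounded for some $L$.

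Second, I need that $b:\Hhat\to H'$ is piecewise triangular. The point is that $\Hhat$ is itself the metric completion of the universal cover of $H'$ minus the induced arrangement, namely the traces on $H$ of the other hyperplanes of $\cH$ meeting it. That induced arrangement is orthogonal: hyperplanes orthogonal to $H$ meet it in hyperplanes of $H$, and orthogonality of pairs is inherited. The identification of $\Hhat$ with this completion follows from the local models of \cref{SS:Local}: near a point of $\Hhat$, with $\Vhat\cong\Dhat^k\times D^{n-k}$, the hyperplane $\Hhat$ is $\{-\infty\}\times\Dhat^{k-1}\times D^{n-k}$, which is the local branched cover of $H$ along the remaining $k-1$ hyperplanes. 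This is also the mildest subtle step. I would check it by noting that $\Hhat$ is simply connected away from lower strata, and that its intrinsic and induced metrics agree by convexity. Granting it, \cref{L:PTri} applied with $S=H'\cong H$ shows $b$ is piecewise triangular, and in particular piecewise geodesic.

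Third, \cref{L:DefinePullback} produces the pullback geometric swaddling data $(b^*N^1(H')\to\Hhat,\hat m_{\bullet,\bullet})$, which is exactly the swaddling data defining $N^1(\Hhat)^\sw$. \cref{C:PullBackQuasiLine} then shows this data is $L$-bounded.

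$L$-boundedness gives almost triviality because comparison triangles in $\bH^2$ have area at most $\pi$. So every monodromy is at most $L\pi$, and the data is $L\pi$-almost trivial. \cref{L:Swaddled} then makes the inclusion of any fiber, a copy of $\bR$, a quasi-isometry onto $N^1(\Hhat)^\sw$, so $N^1(\Hhat)^\sw$ is a quasi-line.
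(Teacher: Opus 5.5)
Your proposal is correct and is the paper's argument: \cref{LemNormalBundleIsPullback} exhibits $N^1(\Hhat)$ as a pullback of $N^1(H')$, \cref{C:Lbounded} makes the parallel-transport data on $N^1(H')$ $L$-bounded, and \cref{L:PTri} makes $\Hhat\to H'$ piecewise triangular. Then \cref{L:DefinePullback}, \cref{C:PullBackQuasiLine} and \cref{L:Swaddled} finish the proof. Your additional check that $\Hhat$ is the completed universal cover of $H$ minus the induced, still orthogonal, arrangement, and your explicit $L\pi$ bound on monodromies, only spell out steps the paper leaves implicit.
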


For the angle domain $U=N^1(\Hhat)$, define $\cC(U) = N^1(\Hhat)^\sw$.
We also note the following. 

\begin{lemma}\label{L:N1inclusion}
The inclusion of $N^1(\Hhat)$ into $N^1(\Hhat)^\sw$ is coarsely Lipschitz.
\end{lemma}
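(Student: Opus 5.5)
The plan is to apply \cref{L:MetricCompatibilityOfSwaddling}. We equip $N^1(\Hhat)$ with the path metric pulled back from $N^1(H')$ (equivalently, locally from $N^1(H)$ with its natural Riemannian metric, the Sasaki-type metric on the unit normal circle bundle). It then suffices to find $\epsilon>0$ and $C$ with the following property. Whenever germs $e,f\in N^1(\Hhat)$ with basepoints $x,y\in\Hhat$ satisfy $d(e,f)\leq\epsilon$, the translation $t$ defined by $\hat m_{x,y}(e)=t\cdot f$ satisfies $|t|\leq C$. Since $\hat m_{x,y}$ is defined (\cref{L:DefinePullback}) by concatenating parallel transports along the pieces of the $\Hhat$-geodesic $\overline{xy}$ that map isometrically to $H'$, this is a statement about comparing an arbitrary short path to the geodesic transport.

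First, let $\alpha$ be a path in $N^1(\Hhat)$ from $e$ to $f$ of length at most $\epsilon$. Project it to a path $\bar\alpha$ in $\Hhat$ from $x$ to $y$, of length at most $\epsilon$ since the bundle projection is $1$-Lipschitz. Decompose the motion along $\alpha$ into a horizontal part and a vertical part. The horizontal lift of $\bar\alpha$ starting at $e$ ends at some $t'\cdot f$. The vertical displacement $|t'|$ is bounded by a constant times $\length(\alpha)$, because the connection $1$-form on $N^1(H)$ is bounded, being invariant under the transitive isometry group of $H$. This bound pulls back to $N^1(H')$ and then to $N^1(\Hhat)$ piece by piece, by approximating $\alpha$ by paths missing the other hyperplanes, as in the proof of \cref{L:UhatcProjection}.

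Second, we compare the horizontal lift along $\bar\alpha$ with $\hat m_{x,y}$, which is transport along the geodesic $\overline{xy}$ in $\Hhat$. These differ by the monodromy around the loop $\bar\alpha\cdot\overline{yx}$. I would bound this by working in $H'$ downstairs. Pushing the loop to $H'$ via the piecewise geodesic map $\Hhat\to H'$ gives a loop of length at most $2\epsilon$. The monodromy of pullback data around the loop equals the monodromy of the image loop in $N^1(H')$, because both parallel transports are defined via the pulled-back connection, and the geodesic pieces of $\overline{xy}$ map to geodesics. The image loop bounds a disc in $\bC\bH^{n-1}$ of area $O(\epsilon^2)$, for example by coning from a point. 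The curvature form is bounded, so the monodromy is $O(\epsilon^2)$. Combining the two steps gives $|t|\leq C$ with $C$ depending only on $n$, and \cref{L:MetricCompatibilityOfSwaddling} concludes.

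The main obstacle is the second step, namely justifying that the monodromy of the pulled-back swaddling data around a loop in $\Hhat$ agrees with the holonomy of the connection along the image loop in $H'$. Pulling back is only piecewise, and $\Hhat$ is not locally compact near its own lower-dimensional strata. My first approach would be to approximate $\bar\alpha$ by a piecewise geodesic path in $\Hhat$ with vertices along $\bar\alpha$. Then \cref{P:CombiningBoundedTriangles} and the $L$-boundedness from \cref{C:PullBackQuasiLine} bound the monodromy of the resulting small geodesic polygon by $L$ times the total comparison area, which is $O(\epsilon^2)$. The error between the piecewise geodesic and $\bar\alpha$ is controlled by the first step's connection-form bound.
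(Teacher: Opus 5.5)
Your proposal is correct and follows essentially the route the paper intends. The paper leaves this lemma to the reader as an easier version of \cref{L:Thatswlip}, whose sketch does the same things you do: it reduces via \cref{L:MetricCompatibilityOfSwaddling}, bounds the vertical drift by integrating the bounded invariant connection $1$-form along the path, and bounds the gap between the horizontal lift and the swaddling map by the area enclosed by the projected path and the swaddling path.

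The obstacle you flag is not a real one. By \cref{L:DefinePullback}, $\hat m_{x,y}$ is literally parallel transport in $N^1(H')$ along the piecewise geodesic image of $\overline{xy}$, so your curvature-times-area estimate in $H'\cong\bC\bH^{n-1}$ already finishes the argument. Your $L$-boundedness fallback is also valid; it is the device the paper uses in \cref{L:rhoAngleTree}.
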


Here we equip $N^1(\Hhat)$ with the path metric obtained by pulling back the natural invariant metric on $N^1(H')$ (the lifted Sasaki metric).

\begin{proof}
This is left to the reader; it is very similar to, but easier than, \cref{L:Thatswlip}. 
\end{proof} 

\begin{lemma}\label{L:StabCobounded}
Let $\Hhat_1, \ldots, \Hhat_k$ be hyperplanes that all intersect, and let $U_1, \ldots, U_k$ be the associated angle domains. Then any orbit map for the action of $\bZ^k$ on $\cC(U_1)\times \cdots \times \cC(U_k)$  gives a quasi-isometry from $\bZ^k$ to $\cC(U_1)\times \cdots \times \cC(U_k)$, with uniform constants.
\end{lemma}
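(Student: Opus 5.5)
The plan is to reduce everything to the fibers over a single common point, where the $\bZ^k$ action is completely explicit, and then invoke \cref{L:Swaddled}.

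\textbf{Setup.} First I locate a common point. The hyperplanes pairwise intersect, so by \cref{R:OrthIntersection} each projects onto the others in their intersection, and \cref{L:TripleIntersection}, applied inductively to intersections, gives a point $x\in\Hhat_1\cap\cdots\cap\Hhat_k$. I take the relevant $\bZ^k$ to be the subgroup generated by $g_1,\dots,g_k$, where $g_j$ is the generator of the pointwise stabilizer of $\Hhat_j$ in $\pi_1(\Bcirc)$. These commute, fix $x$, and each $g_j$ preserves every $\Hhat_i$, since commuting elements preserve each other's fixed sets. The construction of the swaddling data on $N^1(\Hhat_i)$ is natural: it is pulled back from parallel transport on $N^1(H_i')$ along the $\Gammahat$-equivariant map of \cref{L:DefinePullback}. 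Hence $\Stab(\Hhat_i)$, and in particular $\bZ^k$, acts on $N^1(\Hhat_i)$ preserving the $m_{\bullet,\bullet}$, and therefore acts isometrically on $\cC(U_i)=N^1(\Hhat_i)^\sw$.

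\textbf{Computing the action on the fiber over $x$.} Fix a germ $w_i\in N^1(\Hhat_i)$ based at $x$. The stabilizer of $x$ in $\pi_1(\Bcirc)$ is exactly the $\bZ^k$ of the local model $\Dhat^k\times D^{n-k}$. The argument in the proof of \cref{LemNormalBundleIsPullback} shows that for $j\neq i$ the element $g_j$ fixes every germ at $x$ orthogonal to $\Hhat_i$, so $g_jw_i=w_i$. On the other hand, $g_i$ acts on the fiber of $N^1(\Hhat_i)$ over $x$ as the deck generator of $B'\to B$, which is translation by a fixed nonzero constant ($2\pi$ in the normalization where the lifted $S^1$-action is $\bR$). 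Therefore the orbit map
\[
(a_1,\dots,a_k)\mapsto \bigl(g_1^{a_1}\cdots g_k^{a_k}w_1,\dots,g_1^{a_1}\cdots g_k^{a_k}w_k\bigr)
\]
is, in the $i$th coordinate, the map $a\mapsto a_i\cdot 2\pi$ into the fiber over $x$. That fiber carries the standard metric on $\bR$, so this map is an isometric embedding up to scale into the fiber.

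\textbf{Passing to the swaddled spaces and uniformity.} By \cref{C:Lbounded} and \cref{C:PullBackQuasiLine}, the swaddling data on $N^1(\Hhat_i)$ is $L$-bounded, and hence $M$-almost trivial. Here $L$, and thus $M$, is uniform: the curvature of $N^1(H')$ is invariant under the transitive isometry group, so $L$ depends only on $n$. The proof of \cref{L:Swaddled} then makes the inclusion of any fiber a quasi-isometry with constants depending only on $M$: it is $1$-Lipschitz, its image is $\frac32$-dense, and the quasi-inverse $r$ is coarsely Lipschitz with constant $M$. Composing these, each coordinate map $\bZ\to\cC(U_i)$ is a uniform quasi-isometry, and a product of uniform quasi-isometries is a uniform quasi-isometry $\bZ^k\to\prod\cC(U_i)$ with the $\ell^1$ or $\ell^2$ product metric.

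\textbf{Arbitrary basepoints.} For an arbitrary basepoint $p\in\prod\cC(U_i)$, I use that each fiber is $\frac32$-dense and is covered by the $g_i$-translates of $w_i$ up to $2\pi$. So $p$ lies within a uniform distance $c=c(k)$ of some orbit point $g_0\cdot(w_i)_i$. Since the action is isometric, the orbit map of $p$ is within $c$ of a translate of the orbit map already treated, and so it is also a uniform quasi-isometry.

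\textbf{Main obstacle.} The step needing the most care is the exact computation of the action on the fiber over $x$: that $g_j$ fixes $w_i$ for $j\neq i$, and that $g_i$ acts by a fixed nonzero translation. I would verify this directly in the local chart $\Dhat^k\times D^{n-k}$, where $g_j$ shifts the imaginary part of the $j$th coordinate by $2\pi$ and the fiber of $N^1(\Hhat_i)$ over $x$ is parametrized by the imaginary part of the $i$th coordinate.
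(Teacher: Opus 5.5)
Your proposal is correct and follows essentially the same route as the paper: the paper takes a generic point $x$ of the common intersection, observes that the fibers of the $N^1(\Hhat_i)$ over $x$ are copies of $\bR$ on whose product the stabilizer $\bZ^k$ acts with compact torus quotient, and implicitly uses \cref{L:Swaddled} to pass to $\cC(U_1)\times\cdots\times\cC(U_k)$; you carry out exactly this with the explicit action, the equivariance, and the uniformity of the constants spelled out. One small correction: your claim that the stabilizer of $x$ in $\pi_1(\Bcirc)$ is exactly $\bZ^k$ holds only if $x$ lies in no further hyperplanes, so choose $x$ generic in the intersection as the paper does (although nothing in your argument depends on this claim, since you define $\bZ^k$ as $\langle g_1,\dots,g_k\rangle$).
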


Let $x$ be a generic point of the intersection of the $\Hhat_i$, i.e. $x\in (\cap_{i=1}^k \Hhat_i)^\circ.$
The $\bZ^k$ is the stabilizer of any such $x$ for the deck group of $\Bhatcirc \to \Bcirc$ and is described in \cref{SS:Local}. 

\begin{proof}
Pick $x$ as above.
For each $1\leq i\leq k$, the fiber of $N^1(\Hhat_i)$ at $x$ is, by definition, a copy of $\bR$. The result follows from the fact that the $\bZ^k$ quotient of the product of these copies of $\bR$ is a compact torus (see the discussion in \cref{SS:Local}). 
\end{proof}

\subsection{The naive projection map.}
We start our discussion with the map $$p_{N^1(\Hhat)}:\Bhat -\Hhat\to N^1(\Hhat).$$ Given a point $x$ of $\Bhat -\Hhat$, we consider its closest point projection $p_{\Hhat}(x)$ to $\Hhat$, and we recall that by definition $p_{N^1(\Hhat)}(x)$ is the germ of the geodesic from $p_{\Hhat}(x)$ to $x$. 

This map is not coarsely Lipschitz, nor is its composition with the inclusion $N^1(\Hhat)\to N^1(\Hhat)^\sw$. In fact, any tiny neighborhood of any point of $\Hhat$, minus $\Hhat$, will map to a coarsely dense subset of $N^1(\Hhat)^\sw$. It is possible, with perhaps more effort than one would expect, to show that when one restricts to $\Bhatthick$ the map to $N^1(\Hhat)^\sw$ is coarsely Lipschitz, but for more than one reason we will choose to circumvent the required analysis for this. What is easy to prove is the following. 

\begin{lemma}\label{L:EasyCaseOfAngleLipschitz}
For every $C, \epsilon>0$ there exists a $D$ such that the following is true. Suppose $x, x'\in \Bhat$ are joined by a path in $\Bhat$ of length at most 1 that does not come within $\epsilon$ of $\Hhat$. Suppose that $p_{\Hhat}(x)$ and $p_{\Hhat}(x')$ have distance at most $C$ from the complement of all horoballs in $\Bhat$. Then $p_{N^1(\Hhat)}(x)$ and $p_{N^1(\Hhat)}(x')$ have distance at most $D$ in $N^1(\Hhat)^\sw$.

The same result applies if $x,x'$ are joined by a geodesic of any length that does not come within $\epsilon$ of $\Hhat$
\end{lemma}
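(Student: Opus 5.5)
Since $p_{N^1(\Hhat)}(x)$ and $p_{N^1(\Hhat)}(x')$ lie over the basepoints $p_{\Hhat}(x)$ and $p_{\Hhat}(x')$, I will compare them via the swaddling identification $\hat m$ between those fibers. By \cref{L:Swaddled} and the construction of $N^1(\Hhat)^\sw$, it suffices to show two things: the basepoints are at bounded distance, and transporting the first germ to the fiber of the second via $\hat m$ lands within bounded distance, in the fiber, of the second germ. Both fibers are then at distance at most $1$ in $N^1(\Hhat)^\sw$. The first point is immediate, since $p_{\Hhat}$ is $1$-Lipschitz.

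For the main case I will use a path $\alpha$ of length at most $1$ staying $\epsilon$ away from $\Hhat$. I will subdivide $\alpha$ into a bounded number of pieces of length at most some small $\eta=\eta(\epsilon)$, and I will show that consecutive points $y,y'$ on $\alpha$ have projections to $N^1(\Hhat)$ that differ by a bounded amount after swaddling transport. The argument is local. Near $p_{\Hhat}(y)$, which lies within $C$ of the thick complement of horoballs, I use a chart $\Vhat\cong\Dhat^k\times D^{n-k}$ from \cref{SS:Local} and \cref{SS:Uniformity}, in which $\Hhat$ is the first $\Dhat$-coordinate set to $-\infty$.

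Within the chart, $p_{N^1(\Hhat)}$ is essentially the argument (imaginary part in $\Dhatcirc$) of the $\Hhat$-coordinate, by \cref{R:localgeodesics} and \cref{L:DerivativeEqualsProj}. Since $y,y'$ are at distance at least $\epsilon$ from $\Hhat$ and at distance at most $\eta$ from each other, their angular coordinates differ by $O(\eta/\epsilon)$. There is one complication: the chart size could be smaller than $\epsilon$ when $y$ is far from $\Hhat$. In that case I will instead work downstairs in $B'$, where $B'-H'$ is the universal cover of $B-H$. There the map $B'-H'\to N^1(H')$ is smooth with derivative bounded in terms of $\epsilon$ by homogeneity, and the normal geodesic from $p_{\Hhat}(y)$ to $y$ lies in a single stratum except at its endpoint, as in the proof of \cref{LemNormalBundleIsPullback}. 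The germ difference is then a bounded vector in $N^1(H')$, and \cref{L:N1inclusion} converts this into a bounded distance in the swaddled space.

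The uniformity over where $p_{\Hhat}(y)$ sits is the main obstacle. The hypothesis that the projection lies within $C$ of the complement of horoballs is what makes it tractable: a compactness argument over finitely many $\Gamma$-orbits of charts, together with the $\Gammahat$-stabilizer of $\Hhat$ acting by isometries on all the data, gives constants depending only on $C$ and $\epsilon$. Summing over the boundedly many subdivision pieces gives $D$.

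For the geodesic version, I will use \cref{L:CATBGI}. If the geodesic $[x,x']$ stays $\epsilon$ away from $\Hhat$, then $p_{\Hhat}(x)$ and $p_{\Hhat}(x')$ are joined by a path of length at most $\delta(\epsilon)$ in $\Hhat$. I then cut $[x,x']$ into unit-length pieces. Each piece has projection diameter at most $\delta$, and the transports compose consistently because the swaddling data is geometric. Consecutive-piece discrepancies telescope only after comparing every piece's endpoint to a fixed base fiber. This is exactly where almost-triviality (\cref{C:AngleQuasiLine}, from $L$-boundedness) bounds the accumulated error, because the relevant triangles in $\Hhat$ have comparison area bounded in terms of $\delta$.
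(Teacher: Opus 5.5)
Your proposal has a genuine gap, and the same missing idea affects both parts.

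\textbf{The first part.} The gap is in your ``far'' case, where a point $y$ of the path is far from $\Hhat$ compared with the chart size. There you claim the normal geodesic from $p_{\Hhat}(y)$ to $y$ lies in a single stratum ``as in the proof of \cref{LemNormalBundleIsPullback}''. That proof only gives this for $y$ within a small local distance of $\Hhat$. In general the $\Bhat$-geodesic from $y$ to $\Hhat$ can pass through hyperplanes disjoint from $\Hhat$, and its image in $B'$ bends there. So $p_{N^1(\Hhat)}(y)$ is not pulled back from $p_{N^1(H')}$ at the image of $y$. Consequently, derivative bounds for $B'-H'\to N^1(H')$ tell you nothing about it.

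The missing idea, which is how the paper argues, is to project onto the closed $a$-neighborhood $N_a(\Hhat)$. Choose $a<\min(\epsilon,r_\theta)$ small enough that disjoint hyperplanes are $2a$-separated within distance $C+2$ of the complement of the horoballs.
\begin{itemize}
\item $N_a(\Hhat)$ is convex, so projection onto it is $1$-Lipschitz.
\item For $d(y,\Hhat)\ge a$ the projection sends $y$ to the point at distance $a$ along $[p_{\Hhat}(y),y]$, so it does not change $p_{N^1(\Hhat)}(y)$.
\item Your path of length at most $1$ is therefore replaced by a path of length at most $1$ on the level set $\{d(\cdot,\Hhat)=a\}$. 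This lies entirely in the local regime, so no far case ever arises.
\end{itemize}
On that level set $p_{N^1(\Hhat)}$ is uniformly Lipschitz. This follows from homogeneity in $B$ lifted via \cref{LemNormalBundleIsPullback}, or from your chart computation, which is fine in this regime. Then \cref{L:N1inclusion} finishes.

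\textbf{The geodesic version.} As written, this part does not close. A long geodesic gives unboundedly many unit pieces, each contributing a bounded but nonzero discrepancy. The diameter bound on $p_{\Hhat}([x,x'])$ from \cref{L:CATBGI} controls only the base points, not the angular (fiber) coordinate. Almost-triviality bounds only the error from transporting along different routes; it does not make the genuine angular changes telescope. You would still need a decay estimate for the angle far from $\Hhat$, which you have not supplied.

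Instead, apply \cref{L:CATBGI} to the convex set $N_a(\Hhat)$ rather than to $\Hhat$. The geodesic stays at distance at least $\epsilon-a$ from it. So $p_{N_a(\Hhat)}(x)$ and $p_{N_a(\Hhat)}(x')$ are joined by a path of length at most $\delta$ on $\partial N_a(\Hhat)\subseteq\{d(\cdot,\Hhat)=a\}$. Since this projection preserves $p_{N^1(\Hhat)}$, the first part applies to boundedly many subpaths of that path, with $\epsilon$ replaced by something below $a$ and $C$ by $C+\delta$.
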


Here we assume that the projections to $\Hhat$ are not deep in a horoball, and we are implicitly using that $N^1(\Hhat)$ is naturally a subset of $N^1(\Hhat)^\sw$, and we are following our  convention that whenever we refer to a horoball of $\Bhat$ we mean a lift of one of the carefully chosen horoballs $T_c$ of $B$, which are based at cusps of $\Gamma$ and are chosen in particular to be disjoint from each other. 

\begin{proof}
Pick $0<a<\min(\epsilon, r_\theta)$ such that in the set of points at most distance $C+2$ from the complement of the horoballs, disjoint hyperplanes have distance at least $2a$ from each other. Consider the projection of $x$ and $x'$ to the $a$ neighborhood of $\Hhat$. Since this neighborhood is convex, the projection of the path has length at most 1. 

Along this path, the $p_{N^1(\Hhat)}$ image can only change at uniformly bounded speed, because the path stays exactly $a$ away from $\Hhat$. Indeed, $p_{N^1(H)}$ is uniformly Lipschitz on the set of points exactly distance $a$ from $H$, and one can lift this using \cref{LemNormalBundleIsPullback}  to obtain the corresponding fact for $p_{N^1(\Hhat)}$ as long as one restricts to points only boundedly deep in horoballs. 

It follows that $p_{N^1(\Hhat)}(x)$ and $p_{N^1(\Hhat)}(x')$ have uniformly bounded distance from each other in $N^1(\Hhat)$, and \cref{L:N1inclusion} gives the  first result. 

The second result can be derived from the first using  \cref{L:CATBGI}.
%
%
\end{proof}

\subsection{A different projection defined on the boundary of a horoball.} 
The rest of this section requires the setup from \cref{S:CuspDomains}. The naive projection above rests on the most natural path from a point $x\in \Bhat- \Hhat$ to $\Hhat$, namely the geodesic to the closest point projection. There is sometimes a different option with subtle technical advantages, and we explore this now. 

Let $\That_{\hat{c}}$ be a horoball that $\Hhat$ enters, and let $x$ be a point in $\partial \That_{\hat{c}} - \Hhat$. There is some $i$ so that $\Hhat$ corresponds to a point $q_{\Hhat}$ in $\Chat_i-\Chatcirc_i$. Inside of $\prod_j \Chat_j$ we can define a path which starts at the image of $x$, is constant in all but the $i$-th coordinate, and is the geodesic to $q_{\Hhat}$ in the $i$-th coordinate. We then consider the horizontal lift of this path starting at $x$. It is a path in $\partial \That_{\hat{c}}$ from $x$ to a point of $\Hhat \cap \partial \That_{\hat{c}}.$

\cref{L:HinTcBasic} gives that the germ of such a path at its endpoint defines a point of $N^1(\Hhat)$. This is our alternative projection.

\begin{lemma}\label{L:NaiveProjectionReasonableNearHhat}
For all $\epsilon>0$ there exists $C_\epsilon>0$ such that for any point of $\partial \That_{\hat{c}}$ whose distance to $\Hhat$ is in $(0,\epsilon)$, the naive and alternative projections to $N^1(\Hhat)^\sw$ are within $C_\epsilon$ of each other.  
\end{lemma}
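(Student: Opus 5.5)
Both projections are germs of paths from $x$ to $\Hhat$. I will show that, after a bounded amount of parallel transport, they give nearly the same unit normal. The plan is to work entirely near the point $p=p_{\Hhat}(x)$, which lies in or near $\partial\That_{\hat{c}}$, and to compare both germs with a third, canonical germ. By $\Gammahat$-equivariance and cocompactness of the stabilizer of $\That_{\hat{c}}$ acting on $\partial\That_{\hat{c}}$, the relevant local geometry comes in finitely many types up to the $\bZ$-translations. Here, near $\Hhat$, I mean within $\epsilon$ of $\Hhat$, and the horosphere is at a fixed depth. All constants can therefore be taken uniform, provided each step gives a bound depending only on $\epsilon$ and on the data fixed in \cref{R:constants}.

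\textbf{Step 1: reduce to $B'$ or $B$.} The path defining the alternative projection is the horizontal lift of a path that moves only in the $i$th coordinate toward $q_{\Hhat}$. Its length is the distance from the image of $x$ to $q_{\Hhat}$ in $\Chat_i$. I would bound this length by a constant times $\epsilon$ plus a constant, using that $d(x,\Hhat)<\epsilon$. The naive path $\overline{p\,x}$ has length $<\epsilon$. Both paths stay in a region that the closed set $b^{-1}$ of one local chart covers. Here I use \cref{LemNormalBundleIsPullback} and the local description of \cref{SS:Local}: the other hyperplanes through points of these paths contain all normal germs to $\Hhat$. Hence both paths, together with a short path in $\Hhat$ joining their endpoints, lift isometrically from $B'$, or after shrinking from a ball $\bZ$-covering of a neighborhood of $H$. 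Since $N^1(\Hhat)\to N^1(H')$ is a pullback, and the swaddling maps $\hat m$ restricted to short geodesics in $\Hhat$ agree with parallel transport in $N^1(H')$, it suffices to prove the corresponding statement about the two germs in $N^1(H')$. These germs are compared by parallel transport along the short geodesic in $H'$ joining $p$ to the endpoint $p'$ of the alternative path.

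\textbf{Step 2: compare in complex hyperbolic geometry.} In $B'$, which is locally isometric to $B$ away from $H'$, the naive germ at $p$ is the unit tangent of the geodesic to $x$. The alternative germ at $p'$ is, by \cref{L:HinTcBasic}(2), the horizontal lift of the normal vector in $\bC^{n-1}$, rotated by the angle accumulated along the horizontal path. The angle coordinate is the $\bR$-fiber coordinate of $N^1(H')$, lifted from $S^1$. I would measure the angle difference by the following loop: go from $x$ along the horizontal path to $p'$, transport the normal in $H'$ from $p'$ to $p$, and return along $\overline{p\,x}$. Near $H'$ the monodromy of the angle function around such a loop is controlled by two terms. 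The first is the integral of the bounded curvature form of $N^1(H)$ over a filling disk, whose area is bounded in terms of the loop length. The second is the deviation between parallel transport and the constant horizontal lift, which \cref{L:ConstInCn1IsAlmostParallel} bounds by $C$ times the length. The loop has length bounded in terms of $\epsilon$, though the loop is not small when the path in $\Chat_i$ is long; I treat that case in Step 3. The total is then a bound on the fiber distance in $N^1(H')$, and \cref{L:N1inclusion} converts it into a bound in $N^1(\Hhat)^\sw$.

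\textbf{Step 3: the main obstacle.} The main obstacle is that the relation between $d(x,\Hhat)$ and the horosphere-intrinsic distance is not uniform. The point $x$ could be close to $\Hhat$ in $\Bhat$ while its image in $\Chat_i$ is far from $q_{\Hhat}$, because horosphere paths are exponentially longer than paths dipping out of the horoball. Equally, the loop could wind around $\Hhat$, which the $\bR$ coordinate would detect. I would handle this first by showing that the geodesic $\overline{p\,x}$ stays outside $\mathrm{int}(\That_{\hat c})$ and within bounded distance of $\partial\That_{\hat c}$, using convexity (\cref{L:ConvexityOfThatc}). Second, I would show that the projection of $x$ to $\partial\That_{\hat{c}}$ along the product structure, relative to $\Hhat$, stays in a single sheet, so that no winding occurs. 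The reason is that both paths avoid every hyperplane other than those through $\Hhat\cap\partial\That_{\hat c}$ in the given chart, by \cref{R:constants}(3). If this homotopy argument fails in a uniform way, I would instead use the homogeneity of the Heisenberg action on $\partial T_c$ together with a compactness argument over the finitely many cusp orbits, for points at $\partial T_c$-distance $<\epsilon$ from $H$.
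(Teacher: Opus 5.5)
\textbf{There is a genuine gap: uniformity in $\epsilon$.} Your Steps 1--2 (one chart, an isometric lift from $B'$, a comparison near $H'$) are only available when $\epsilon$ is small. Small enough means the $\Chat_i$-geodesic from $x_i$ to $q_{\Hhat}$ meets no singular point other than $q_{\Hhat}$. For larger $\epsilon$ this geodesic can pass through $q_{\Hhat''}$ for a hyperplane $\Hhat''$ parallel to $\Hhat$, turning there by an angle $\theta_0+2\pi k$ with $k$ unbounded, while $d(x,\Hhat)$ stays bounded. To see this, replace $\Hhat$ by its images under powers of the meridian about $\Hhat''$.

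So for a fixed $x$ there are infinitely many $\Hhat$ with $d(x,\Hhat)<\epsilon$, distinguished by this turning angle. Consequently:
\begin{enumerate}
\item Your opening claim that the configurations come in finitely many types up to the group action fails.
\item The paths do not lift isometrically from $B'$.
\item The winding cannot be detected by a compactness argument in $B$ or on $\partial T_c$, which was your fallback.
\end{enumerate}
The Step 3 arguments meant to exclude winding are also incorrect. First, $\overline{p\,x}$ does enter $\That_{\hat c}$. Already in $B$, projecting a point of $\partial T_c - H$ to a hyperplane through the cusp moves it into the interior of $T_c$, and convexity then forces the geodesic into the horoball. Second, the alternative path need not avoid the parallel hyperplanes. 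Third, item (3) of \cref{R:constants} only concerns points off the horoballs. A minor point: the length of the alternative path is not $O(\epsilon)$, since already in $B$ the horizontal distance to $H$ grows like $e^{\epsilon}$. Only some bound in terms of $\epsilon$ is needed, so this is harmless.

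\textbf{The missing idea is to reduce to a small $\epsilon_0$ by sliding $x$ along its own alternative path.} If $y$ lies on that path, then the alternative path of $y$ is a tail of the one for $x$: a subsegment of a $\Chat_i$-geodesic is a geodesic, and horizontal lifts are unique. Hence $x$ and $y$ have literally the same alternative projection, whatever other singular points are crossed.

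Take $y$ to be the first point of the path at distance $\epsilon_0/2$ from $\Hhat$. The arc from $x$ to $y$ has three properties:
\begin{enumerate}
\item it lies on $\partial\That_{\hat c}$, and its projections to $\Hhat$ stay within a bounded distance of $\partial\That_{\hat c}$;
\item it has length bounded in terms of $\epsilon$;
\item it stays at least $\epsilon_0/2$ from $\Hhat$.
\end{enumerate}
Cutting it into unit pieces and applying \cref{L:EasyCaseOfAngleLipschitz} shows that the naive projection moves by an amount depending only on $\epsilon$.

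Choose $\epsilon_0$ below the spacing of the $\Sigma_i$. Then the alternative path is a single radial segment, everything lies in one chart, and a local comparison applies. The paper does this small case by a compactness argument in $B$ together with \cref{L:DerivativeEqualsProj} and \cref{L:N1inclusion}, rather than your loop estimate. As written, that estimate is not well posed: the loop runs along $H'$, where the angle coordinate is undefined, while the curvature of $N^1(H)$ is a form on $H$ rather than on a filling of your loop.
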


\begin{proof}
We first prove the result for some $\epsilon>0$, and then deduce from that that it holds for all $\epsilon>0$. 

The image in $\Chat_i$ of the path chosen above  consists of segments between singularities, and the final segment is the part of the path after it hits the penultimate singularity and before it gets to $q_{\Hhat}$. If $\epsilon$ is small enough, the whole path is equal to its final segment. 

The result for a sufficiently small $\epsilon$ follows by a compactness argument, since one can project these paths to $B$, where up to the group action we get a compact set of paths, and then use \cref{L:DerivativeEqualsProj}, keeping in mind \cref{L:N1inclusion}. 
%
%
%

Now, suppose the result is known for a small $\epsilon_0>0$, and let $\epsilon>\epsilon_0$ be arbitrary. \cref{L:EasyCaseOfAngleLipschitz} gives that the naive projection changes by only a bounded amount as one moves on $\partial \That_{\hat{c}}$ from distance $\epsilon$ to distance $\epsilon_0$ from $\Hhat$. The alternative projection does not change at all if one follows a horizontal lift of a geodesic in $\prod \Chat_i$. This gives the result.  
\end{proof}

\begin{remark}
It does not matter if one uses the $\tau$ path in the $i$-th coordinate instead of the geodesic, since they give the same germ at $\Hhat$. 
\end{remark}

The alternative projection has the following property.

\begin{proposition}\label{P:TreeAngleBGI}
There is a constant $F$ such that the following holds. 
Let $\That_{\hat{c}}$ be a horoball, let $\Chat_i$ be one of its factors, and let $\Hhat$ be a hyperplane corresponding to  a point $q_{\Hhat}$ in $\Chat_i$. Let $U$ be the angle domain corresponding to $\Hhat$. If $x,y\in \partial \That_{\hat{c}}- \Hhat$ and the geodesic between their images $x_i, y_i$ in $\Chat_i$ does not pass through $q_{\Hhat}$, then the alternative projections of $x$ and $y$ to $N^1(\Hhat)^\sw$ are within $F$ of each other. 
\end{proposition}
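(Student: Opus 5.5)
We reduce the alternative projection to a monodromy computation in the single factor $\Chat_i$. Let $x_i,y_i$ be the $i$-th coordinates of $x,y$, and $\gamma_x,\gamma_y$ the geodesics in $\Chat_i$ from $x_i$, $y_i$ to $q_{\Hhat}$. Since $\overline{x_iy_i}$ avoids $q_{\Hhat}$ and $\Chat_i$ is CAT($0$), the triangle with vertices $x_i,y_i,q_{\Hhat}$ is non-degenerate at $q_{\Hhat}$. The germs of $\gamma_x,\gamma_y$ at $q_{\Hhat}$ are rays in the $\R$-family of directions at the ramification point $q_{\Hhat}$, and we will show they are separated by angle less than $2\pi$. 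Otherwise the argument of \cref{LemLargeAnglesCase} forces the concatenation $\gamma_x^{-1}\gamma_y$ to be a geodesic, namely $\overline{x_iy_i}$, which would then pass through $q_{\Hhat}$, a contradiction. The angle at $q_{\Hhat}$ is the $N^1$ coordinate in the $i$-th direction, because by \cref{L:HinTcBasic}(2) the horizontal lift of a direction in $\C_i$ normal to the image of $\Hhat$ gives a normal vector to $\Hhat$. Hence the directional components of the two alternative projections differ by less than one full turn.

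Next we handle the other coordinates and the center direction. We move from $x$ to $y$ in $\partial\That_\chat$ in stages. First change coordinates $j\neq i$ along horizontal lifts of $\tau^{(j)}$ paths. This does not change the $i$-th coordinate, and by \cref{L:HinTcBasic}(1) it moves the foot point along $\Hhat\cap\partial\That_\chat$, since $\Hhat$ contains the fibers over $\C^{i-1}\times\{q\}\times\C^{n-1-i}$. Along such a path, the alternative projection is transported by the horizontal lift of a path inside $\Hhat$, which we compare with the swaddling maps $\hat m$ of $N^1(\Hhat)$. The comparison uses the $L$-boundedness of \cref{C:Lbounded} pulled back by \cref{C:PullBackQuasiLine}, together with \cref{L:ConstInCn1IsAlmostParallel}, which says horizontal lifts of constant vectors are almost parallel. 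Second, change the $i$-th coordinate through the triangle $x_i y_i q_{\Hhat}$. By the bounded-area estimate from the proof of \cref{CenterQuasiLine}, the $\tau$-triangle encloses area at most $9A$, so the resulting central discrepancy is bounded. Finally, translating along the center changes the foot point only by a central translation in $\Hhat\cap\partial\That_\chat$, which again is comparable to swaddling.

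The main obstacle is the first stage. Paths in the $j\neq i$ factors can be long, while \cref{C:Lbounded} bounds monodromy only by area, and triangles of $\Hhat\cap\partial\That_\chat$ (a horosphere in $\Hhat$) are not geodesic triangles of $\Hhat$. The approach we would try first is to compare the horospherical path with the $\Hhat$-geodesic between its endpoints, which is a geodesic in the CAT($-1$) space $\Hhat$. By \cref{L:CATBGI}-style estimates, only a bounded part of this comparison is not thin, so the loop formed by the two paths bounds a filling of bounded comparison area, and the holonomy of $N^1(\Hhat)$ around it is bounded. If that fails, fall back on the $U(n-1)$ and Heisenberg symmetry. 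That symmetry makes the $N^1$ connection restricted to $\Hhat\cap\partial\That_\chat$ a fixed invariant connection, whose curvature integrated over the bounded-area $\tau$-fillings in the remaining $\Chat_j$ is bounded, exactly as in \cref{L:Thatswlip}. Combining the three stages with \cref{L:N1inclusion} yields the uniform constant $F$.
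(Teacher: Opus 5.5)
Your first paragraph matches the paper's opening step. In fact the angle at $q_{\Hhat}$ is $<\pi$, since two rays from $q_{\Hhat}$ at angle $\geq\pi$ already concatenate to the geodesic $\overline{x_iy_i}$. So at a fixed basepoint the two directions differ by a fiber translation of less than $\pi$.

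The gap is in comparing normals over \emph{different} basepoints of $\Hhat\cap\partial\That_{\hat c}$. What is needed is exactly \cref{L:rhoAngleTree}: two elements of $N^1(\Hhat)$ over arbitrary points of $\Hhat\cap\partial\That_{\hat c}$ that define the same germ at $q_{\Hhat}$ are uniformly close in $N^1(\Hhat)^\sw$. Equivalently, the horospherical trivialization differs from the swaddling maps $\hat m$ (parallel transport along $\Hhat$-geodesics) by a rotation bounded independently of how far apart the basepoints are. None of your stages proves this.
\begin{itemize}
\item \cref{L:ConstInCn1IsAlmostParallel} bounds the discrepancy between the lifted constant vector and parallel transport only by $C$ times the length, which is useless for long paths.
\item Your first fix rests on a false claim. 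A long horospherical path and the $\Hhat$-geodesic between its endpoints do not bound a filling of bounded comparison area. Already in a totally geodesic hyperbolic plane through the cusp, a horocyclic arc of length $\ell$ and its chord bound area comparable to $\ell$. The chord also dips to depth about $\log\ell$, so there is no \cref{L:CATBGI}-type thinness. Since $L$-boundedness only bounds holonomy by comparison area, it cannot control this loop; any bound must come from finer structure, namely cancellation.
\item Your fallback compares transports along different paths \emph{inside} the horosphere, as in \cref{L:Thatswlip}, whose swaddling data is itself defined by horizontal lifts. It never relates horospherical transport to transport along $\Hhat$-geodesics, which is what defines $\hat m$ here.
\item Your third stage (``a central translation \dots\ is comparable to swaddling'') is the same unproved claim in the center direction. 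There, the holonomy around the loop and the rotation of the lifted constant normal against parallel transport both grow linearly in the length and have to cancel.
\end{itemize}

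The missing idea is to make the connecting path short by flowing toward the cusp. Parallel transport for time $s$ along the geodesics in $\Hhat$ toward $\hat c$ is itself a swaddling map, so it moves $w$ and $w'$ by at most $1$ in $N^1(\Hhat)^\sw$. The transported vectors $w_s,w'_s$ are again horizontal lifts of the same normal direction. The flowed path $\alpha^{(s)}$ joining their basepoints has length tending to $0$. For such short paths both error terms are small: the one from \cref{L:ConstInCn1IsAlmostParallel}, and the holonomy of a small-perimeter polygon, controlled by $L$-boundedness. This gives \cref{L:rhoAngleTree} for all basepoint changes at once, horizontal and central, with no staging. Combined with the angle bound at $q_{\Hhat}$, it gives the proposition.
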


The proof requires some preparation.  Start by noting that a neighborhood of $q_{\Hhat}$ is isometric to a neighborhood of the special point in the metric completion of the universal cover of $\bR^2-\{(0,0)\}$.
The space of geodesics leaving $q_{\Hhat}$ is parametrized by $\bR$, and there is a natural notion of angle between two geodesics leaving $q_{\Hhat}$ valued in $[0, \infty)$.

\begin{lemma}\label{L:rhoAngleTree}
Suppose $w$ and $w'$ are elements of $N^1(\Hhat)$ lying over possibly different points of $\partial \That_{\hat{c}}$ that both define the same outgoing germ at $q_{\Hhat} \in \Chat_i$.  Then the distance between the images of $w$ and $w'$  in $N^1(\Hhat)^\sw$ is universally bounded. 
\end{lemma}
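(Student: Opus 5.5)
The plan is to reduce to a comparison inside a single fiber of the swaddling construction. Let $w$ lie over $p\in\Hhat\cap\partial\That_{\hat{c}}$ and $w'$ over $p'\in\Hhat\cap\partial\That_{\hat{c}}$. Both germs project, in the $i$th factor, to the same outgoing direction at $q_{\Hhat}$. The images of $p,p'$ in $\prod_j\Chat_j$ have the same $i$th coordinate $q_{\Hhat}$ and may differ only in the other coordinates. Since $\Hhat\cap\partial\That_{\hat{c}}$ lies over $q_{\Hhat}\times\prod_{j\neq i}\Chat_j$, I will connect $p$ to $p'$ by a path inside $\Hhat$. Concretely, I take the horizontal lift, inside $\Hhat\cap\partial\That_{\hat{c}}$, of a path constant in the $i$th coordinate. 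This is followed by a vertical (central) segment.

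The first key step is to identify the swaddling map $\hat m_{p,p'}$ of $N^1(\Hhat)$. It comes from parallel transport along the $\Hhat$-geodesic from $p$ to $p'$, pulled back from $N^1(H')$. I would compare it with the ``obvious'' identification of normal directions. The obvious identification sends the normal germ at $p$ whose $\Chat_i$-direction at $q_{\Hhat}$ is $\theta$ to the germ at $p'$ with the same direction $\theta$. By \cref{L:HinTcBasic}(2), horizontal lifts of a fixed normal vector to the $i$th factor are normal to $\Hhat$. So the obvious identification is ``translate the vector $\theta\in\bC$ horizontally'', and the claim reduces to showing that parallel transport in $N^1$ along the chosen path differs from this horizontal-lift identification by a bounded rotation angle. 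I would handle the horizontal part using \cref{L:ConstInCn1IsAlmostParallel}, but that only gives an error proportional to length, which is not bounded. So instead I plan to exploit the swaddling itself. Because the data is $L$-bounded and hence almost trivial (\cref{C:PullBackQuasiLine}), it suffices to compare the two identifications along some path, up to the bounded error from \cref{L:Swaddled}. I expect that $Z$-translations and horizontal translations of the Heisenberg group in directions of $\bC^{n-2}$ orthogonal to the $i$th coordinate are isometries of $B$ preserving $H$. These isometries commute with the rotation about $H$ and act on $N^1(H)$ compatibly with the ``same direction $\theta$'' identification.

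The second step is to use this equivariance. Let $g$ be the element of the stabilizer of $H$ in $H^{2n-1}$ carrying $b(p)$ to $b(p')$, lifted to the branched cover. Then $g$ carries the germ $w$ to a germ $gw$ over $p'$ with the same $\Chat_i$-direction, hence $gw=w'$ up to the $\bZ$-deck action. That deck action is excluded because the direction in the universal cover is the same. Since $g$ preserves the geometric swaddling data, the distance in $N^1(\Hhat)^\sw$ from $w$ to $gw$ equals the displacement of $g$ on the quasi-line. I then need the displacement to be bounded. The element $g$ fixes the direction field, so the translation of $\hat m_{p,gp}(w)$ relative to $gw$ is the rotation part of parallel transport along a geodesic compared with $g$. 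By homogeneity this is controlled by the monodromy of a triangle $p, gp, \text{(a basepoint)}$. I therefore expect the main obstacle to be exactly here: showing that this rotation defect is uniformly bounded rather than growing with $d(p,p')$.

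To attack that obstacle, I would first try to show that the function $\Phi(p')=$ (signed fiber distance from $\hat m_{p,p'}(w)$ to $w'$) is a quasimorphism-type quantity that is additive along geodesics. Almost triviality would then make it coarsely a homomorphism on the abelian translation group. Invariance under the $U(n-2)$ symmetry (including $-1$) would then force it to be bounded. If that fails, I would fall back on an explicit curvature computation: the curvature 2-form on $H$ restricted to the horosphere section should have zero flux through the relevant triangles because they lie in a horizontal, Lagrangian-like family.
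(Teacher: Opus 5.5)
There is a genuine gap. The step you call the main obstacle is the whole content of the lemma: the fiber discrepancy between $\hat m_{p,p'}(w)$ and $w'$ must be bounded independently of $d(p,p')$. The tools you propose for it do not apply to $N^1(\Hhat)$.

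\emph{The symmetries do not act.} The Heisenberg translations and $U(n-2)$ rotations are isometries of $B$ preserving $H$. In general they do not preserve $\cH$, since they move the hyperplanes in the other parallelism classes at $c$. So they do not act on $\Bhat$, on $\Hhat$, or on $N^1(\Hhat)$, and ``the displacement of $g$ on the quasi-line'' is undefined.

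\emph{A bound on $H'$ does not transfer.} The data on $N^1(\Hhat)$ is the pullback from \cref{L:DefinePullback} along the piecewise geodesic map $\Hhat\to H'$. Hence $\hat m_{p,p'}$ is a composite of $H'$-transports along an unbounded number of segments. So even a bound proved on $H'$ does not carry over; your homogenization-plus-$(-1)$ idea is plausible there for $n\geq 3$. On $\Hhat$ only a discrete stabilizer in $\Gammahat$ acts. It is essentially a product of $\bZ$'s and free groups (compare \cref{L:Stabc}), which carry many unbounded quasimorphisms, so symmetry cannot force vanishing.

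\emph{The curvature fallback and your first reduction.} The normal curvature of $H$ is a multiple of its K\"ahler form, which pairs the radial and central directions nontrivially, so zero flux should not be expected. For the same reason your first reduction is false as stated: along a long central segment, parallel transport drifts from the same-direction identification linearly in its length. Almost triviality also controls only boundedly many triangles, so you cannot simply ``compare along some path.'' What is bounded is the comparison with the geodesic transport $\hat m_{p,p'}$, where two unbounded errors cancel, and your proposal has no mechanism exhibiting that cancellation.

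The missing idea, used in the paper, is to flow toward the cusp. The geodesic ray from $p$ toward $\hat c$ lies in $\Hhat$ and maps isometrically to $H'$. So transporting $w$ along it for time $s$ gives exactly $w_s=\hat m_{p,p_s}(w)$, which is within distance $1$ of $w$ in $N^1(\Hhat)^\sw$ by the definition of swaddling, however large $s$ is. The same holds for $w'$, and the common outgoing germ at $q_{\Hhat}$ is preserved. Meanwhile a path $\alpha$ in $\Hhat\cap\partial\That_{\hat c}$ joining the basepoints flows to a path $\alpha^{(s)}$ whose length tends to $0$.

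Two things follow once the basepoints are this close:
\begin{enumerate}
\item The length-proportional error of (a variant of) \cref{L:ConstInCn1IsAlmostParallel}, which you discarded as unbounded, becomes negligible, so $w'_s$ is close to the parallel translate of $w_s$ along $\alpha^{(s)}$.
\item The polygon formed by $\alpha^{(s)}$ and the geodesic between its endpoints has small perimeter, hence subdivides into triangles of small total comparison area. By $L$-boundedness (\cref{C:PullBackQuasiLine}) its monodromy is small, so transport along $\alpha^{(s)}$ is close to $\hat m_{p_s,p'_s}$.
\end{enumerate}
All the unbounded geometry is absorbed by the two radial moves, each costing at most $1$ in the swaddled space.
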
 

\begin{proof}
By an approximation argument, it suffices to assume $w$ and $w'$ lie over points of the generic stratum of $\Hhat$. Join the basepoints of $w$ and $w'$ in $\Hhat\cap \partial \That_{\hat{c}} $ with a path $\alpha$ in the generic stratum of $\Hhat \cap \partial \That_{\hat{c}}$. 

For $s\geq 0$, let $\alpha^{(s)}$ denote the image of $\alpha$ under flow for time $s$ towards $\hat{c}$. Thus, the length of $\alpha^{(s)}$ goes to zero as $s$ goes to infinity. Let $w_s$ and $w_s'$ be the parallel translates of $w, w'$ for time $s$ along the geodesics towards $\hat{c}$. 

By definition, in $N^1(\Hhat)^\sw$, we know that $w_s$ is distance at most 1 from $w$ and that $w_s'$ is distance at most 1 from $w'$. So it suffices to show that $w_s$ and $w_s'$ are bounded distance apart in $N^1(\Hhat)^\sw$.

A slight variant of \cref{L:ConstInCn1IsAlmostParallel} gives that $w_s'$ and the parallel translate of $w_s$ along $\alpha^{(s)}$ are very close in $N^1(\Hhat)$ when $s$ is large.

One can consider the loop formed by $\alpha^{(s)}$ and the geodesic between its endpoints. One can approximate  $\alpha^{(s)}$ with a piecewise geodesic path with a fine mesh and close to the same length, and consider the geodesic polygon formed by this piecewise geodesic path and the geodesic between the endpoints of $\alpha^{(s)}$. Since this polygon has small perimeter, it can be subdivided into triangles whose total comparison area is small; compare to \cite[page 426]{BridsonHaefliger}. Because the swaddling data defined on $N^1(\Hhat)$ is $L$-bounded for some $L$ (by \cref{C:PullBackQuasiLine}), the total monodromy of the geodesic polygon is small, so by approximation we see that the parallel translate along $\alpha^{(s)}$ is close to the parallel translate along the geodesic between its endpoints. This gives the result. 
%
%
%
\end{proof}

\begin{proof}[Proof of \cref{P:TreeAngleBGI}]
Consider the geodesics from $q_{\Hhat}$ to $x_i$ and $y_i$. If the angle at $q_{\Hhat}$ between these two geodesics was at least $\pi$, their union would be a geodesic, contradicting the assumption. Thus we get that the angle at $q_{\Hhat}$ is less than $\pi$. A short argument using \cref{L:rhoAngleTree} then shows that the alternative projections are close. 
\end{proof}

\begin{corollary}\label{C:AltLip}
For all $\delta>0$ there exists $Q>0$ such that
this alternative projection is a $Q$-coarsely Lipschitz map from the set of points of $\partial \That_{\hat{c}}$ at least $\delta$ away from $\Hhat$, with its intrinsic path metric, to $N^1(\Hhat)^\sw$.
\end{corollary}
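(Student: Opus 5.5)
The plan is to reduce \cref{C:AltLip} to a local estimate plus \cref{P:TreeAngleBGI}. Since the intrinsic path metric on the set $Y_\delta$ of points of $\partial \That_{\hat{c}}$ at distance at least $\delta$ from $\Hhat$ is a path metric, it suffices to find $Q$ such that any two points $x,y\in Y_\delta$ joined by a path in $Y_\delta$ of length at most $1$ have alternative projections within $Q$ of each other in $N^1(\Hhat)^\sw$. Concatenating such short pieces then gives the coarse Lipschitz bound, with multiplicative constant $Q$ and an additive constant coming from the last partial piece.

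\textbf{Case 1: the geodesic avoids $q_{\Hhat}$.} Let $x_i,y_i$ be the images of $x,y$ in $\Chat_i$. If the $\Chat_i$-geodesic $\overline{x_iy_i}$ does not pass through $q_{\Hhat}$, then \cref{P:TreeAngleBGI} directly gives distance at most $F$. Note that this case imposes no restriction on $d(x,y)$.

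\textbf{Case 2: the geodesic passes through $q_{\Hhat}$.} Here the map $\partial\That_{\hat{c}}\to\prod\Chat_j$ is $1$-Lipschitz, so $d(x_i,y_i)\le 1$ and $q_{\Hhat}$ lies within $1$ of $x_i$. This bounds the distance of $x$ to $\Hhat$ along $\partial \That_{\hat{c}}$: horizontally lift the radial path in the $i$th coordinate, whose length is bounded by a constant times $d(x_i,q_{\Hhat})$. So $x$ and $y$ are both at distance at most some constant $\epsilon$ from $\Hhat$, and at least $\delta$ from it.

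We then use \cref{L:NaiveProjectionReasonableNearHhat} with this $\epsilon$ to replace the alternative projections of $x$ and $y$ by the naive ones, at cost $2C_\epsilon$. Next we apply \cref{L:EasyCaseOfAngleLipschitz}. The path from $x$ to $y$ has length at most $1$ and stays at least $\delta$ away from $\Hhat$, provided we measure distance to $\Hhat$ in $\Bhat$. The projections $p_{\Hhat}(x)$ and $p_{\Hhat}(y)$ lie on $\partial\That_{\hat{c}}$, or boundedly close to it: the projection to $\Hhat$ commutes with the geodesic flow toward $\hat c$, since $\Hhat$ is a union of such geodesics. So they are within a uniform distance $C$ of the complement of horoballs, and \cref{L:EasyCaseOfAngleLipschitz} bounds the naive projections by some $D$.

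\textbf{Main obstacle.} The step I expect to be delicate is the comparison between intrinsic distance on $\partial\That_{\hat c}$ and $\Bhat$-distance to $\Hhat$. The hypothesis only gives distance at least $\delta$ from $\Hhat$ in whichever metric is meant, and \cref{L:EasyCaseOfAngleLipschitz} needs a lower bound in $\Bhat$. If $\delta$ is intended intrinsically, I would first show that a point of $\partial\That_{\hat c}$ intrinsically $\delta$-far from $\Hhat$ is $\Bhat$-far by some $\delta'(\delta)>0$. For this I would use the local models near $\partial\That_{\hat c}$ away from cusps, together with a compactness argument modulo the stabilizer of $\That_{\hat c}$ and the bounded distortion between the two metrics at bounded scales. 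Taking $Q=\max(F,\,2C_\epsilon+D)$, adjusted for the additive term, completes the argument.
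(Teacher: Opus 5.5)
Your argument is correct, but it takes a longer road than the paper. The paper records the statement as an immediate consequence of \cref{P:TreeAngleBGI}, and the only real difference is the scale at which you subdivide.

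If you cut paths into pieces of length less than $\delta$ rather than $1$, your Case~2 never occurs. Horizontally lifting the $\Chat_i$-geodesic from $x_i$ to $q_{\Hhat}$ (other coordinates fixed) gives a path in $\partial\That_{\hat c}$ from $x$ to $\Hhat$ of length $d_{\Chat_i}(x_i,q_{\Hhat})$. Hence $d_{\Bhat}(x,\Hhat)\le d_{\partial\That_{\hat c}}(x,\Hhat\cap\partial\That_{\hat c})\le d_{\Chat_i}(x_i,q_{\Hhat})$, so $d_{\Chat_i}(x_i,q_{\Hhat})\ge\delta$ whichever metric $\delta$ refers to. Since $\partial\That_{\hat c}\to\Chat_i$ is $1$-Lipschitz, two points at intrinsic distance less than $\delta$ have their $\Chat_i$-geodesic inside the open $\delta$-ball about $x_i$, which misses $q_{\Hhat}$. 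So \cref{P:TreeAngleBGI} bounds each piece by $F$, and chaining gives $Q$ of order $F/\delta$.

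This also removes your ``main obstacle''. The route above uses only that easy inequality. Your Case~2 needs the reverse comparison (intrinsically far from $\Hhat$ implies $\Bhat$-far) in order to feed into \cref{L:EasyCaseOfAngleLipschitz}. Your detour through \cref{L:NaiveProjectionReasonableNearHhat} and \cref{L:EasyCaseOfAngleLipschitz} is legitimate, since neither relies on the corollary. It costs length rather than correctness, but it buys nothing, because those constants also depend on $\delta$.

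One justification in Case~2 is wrong as stated: nearest-point projection to $\Hhat$ does not commute with the unit-speed geodesic flow toward $\hat c$. It moves points of $\partial\That_{\hat c}$ strictly into the horoball, by an amount that grows with their distance to $\Hhat$. You do not need that claim. Since $d(x,p_{\Hhat}(x))=d(x,\Hhat)\le\epsilon$ and $x\in\partial\That_{\hat c}$, the point $p_{\Hhat}(x)$ is already within $\epsilon$ of the complement of the horoballs.
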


\subsection{A modification of the naive projection map.} 
We wish to modify the naive projection to $N^1(\Hhat)^\sw$ when the closest point  projection to $\Hhat$ lands in a horoball $\That_{\hat{c}}$. We will do this by using the alternative projection applied to the entry point to the horoball. The following helps us to understand the entry point. 

\begin{lemma}\label{L:EntryPoint}
There exists a universal constant $C$ such that the following holds. 
Let $x, y$ be points of $\Bhat$, and let $\That_{\hat{c}}$ be a horoball. Suppose $x\notin\That_{\hat{c}}$ and $y\in \That_{\hat{c}}$. Let $z$ be the point where the geodesic from $x$ to $y$ enters $\That_{\hat{c}}$. Then the distance from $z$ to the closest point projection of $x$ to $\That_{\hat{c}}$ is at most $C$. 
\end{lemma}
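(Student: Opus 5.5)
We use only that $\Bhat$ is CAT($-1$) and that $\That_{\hat{c}}$ is closed and convex (\cref{L:ConvexityOfThatc}), so $p=p_{\That_{\hat{c}}}$ is well defined and distance non-increasing. Write $x'=p(x)$. Since $y$ and $z$ lie in $\That_{\hat{c}}$, convexity puts $[z,y]$ inside $\That_{\hat{c}}$; by the choice of $z$, $[x,z)$ lies outside it. We want a universal bound on $d(z,x')$.

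The plan is the standard CAT($-1$) angle argument. Since $x'$ is the nearest point of a convex set, the Alexandrov angle at $x'$ between $[x',x]$ and $[x',z]$ is at least $\pi/2$. This holds because $[x',z]\subset\That_{\hat{c}}$, and any smaller angle would let us move from $x'$ toward $z$ and get closer to $x$ (\cite[page 176]{BridsonHaefliger}). So the triangle $x,x',z$ has an angle $\geq\pi/2$ at $x'$. By CAT($-1$) comparison, the comparison triangle in $\bH^2$ also has angle at least $\pi/2$ at $\bar x'$, since Alexandrov angles are at most comparison angles.

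Next, in $\bH^2$ there is a universal $\delta_0$ such that if a triangle has an angle $\geq\pi/2$ at $\bar x'$, then the side $[\bar x',\bar z]$ lies in the $\delta_0$-neighborhood of the side $[\bar x,\bar z]$ near $\bar z$. Concretely, by the hyperbolic Pythagorean inequality $\cosh d(\bar x,\bar z)\geq\cosh d(\bar x,\bar x')\cosh d(\bar x',\bar z)$, the Gromov product $(\bar x\cdot\bar z)_{\bar x'}$ is bounded by a universal constant $\kappa$ (about $\log 2$).

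Gromov products are computed from side lengths, so they agree in the comparison triangle; hence $(x\cdot z)_{x'}\leq\kappa$ in $\Bhat$. Now suppose $d(x',z)$ is large. Take the point $w$ on $[x',z]$ at distance $\kappa$ from $x'$. By thinness of the comparison triangle transported through CAT($-1$), $w$ lies within a universal distance $\delta'$ of some point $u\in[x,z]$ with $d(u,z)\approx d(x',z)-\kappa$. Then $w\in\That_{\hat{c}}$, because it lies on $[x',z]\subset\That_{\hat{c}}$. Meanwhile $u\in[x,z)$ lies outside $\That_{\hat{c}}$.

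This alone gives no contradiction, so we must also use that $z$ is the entry point. The cleaner route is this: apply $p$ to $[x,z]$. The whole segment $[x,z)$ lies outside the horoball, and we compare $d(x,z)$ with $d(x,x')+d(x',z)-2\kappa$. Every point $u\in[x,z]$ has $d(u,\That_{\hat{c}})\leq d(u,z)$. The key point is that $[x,z]$ reaches $\That_{\hat{c}}$ only at its endpoint, while the fellow-travelling above forces points $u$ on $[x,z]$ far from $z$ to lie within $\delta'$ of $\That_{\hat{c}}$.

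To finish we need a horoball-specific input, and this is the step I expect to be the main obstacle. A geodesic that stays within $\delta'$ of $\That_{\hat{c}}$ for a long time, without entering it, must have its projection move only a bounded amount, since projections contract. For this I would use \cref{L:UhatcProjection}, applied after passing to the $\delta'$-neighborhood of the horoball, which is again a horoball of the same type up to rescaling constants (or alternatively use \cref{L:CATBGI} with $\epsilon$ small). Along the portion of $[x,z]$ at distance between roughly $r$ and $1$ from $\That_{\hat{c}}$, the length of $p\circ[x,z]$ is at most $k$ times the length. Meanwhile, near $z$, $p\circ[x,z]$ must travel from near $x'$ to $z$, a distance about $d(x',z)$. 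Comparing these bounds $d(x',z)$ by a universal constant $C$.
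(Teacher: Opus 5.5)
There is a genuine gap in your final step, and it is exactly where the content of the lemma lies. Everything before it holds for an arbitrary closed convex set: the angle $\geq\pi/2$ at $x'$, the bound $(x\cdot z)_{x'}\leq\kappa$, and the fellow-travelling of $[x',z]$ with the end of $[x,z]$. The contraction at distance $\geq r$ that you then invoke also holds for a geodesic line $C\subset\bH^2$, with factor $1/\cosh r$. But for such $C$ the statement is false. If $d(x,C)=1$ and $y\in C$ is far from $x'=p_C(x)$, then $z=y$ and $d(z,x')$ is unbounded. In that example $[x,z]$ hugs $C$ within distance $r$ for a length of roughly $d(x',z)-\log(1/r)$, while its projection moves at nearly unit speed.

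Your comparison only constrains the stretch of $[x,z]$ at distance between $r$ and $1$ from $\That_{\hat c}$; at best it bounds that stretch's length by about $1/(1-k)$. The terminal stretch within distance $r$ can a priori carry almost all of $d(x',z)$, and it is never controlled. Since the constant $k=k(r)$ of \cref{L:UhatcProjection} tends to $1$ as $r\to 0$, shrinking $r$ does not close the argument. Enlarging to the $\delta'$-neighbourhood only extends the range where contraction is available. Likewise \cref{L:CATBGI} with small $\epsilon$ only sees the part of $[x,z]$ outside the $\epsilon$-neighbourhood. Neither says anything near $\partial\That_{\hat c}$.

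What is missing is an input that uses the curvature of the horosphere in this hugging regime. The paper obtains it by referring to Lemma 2.3 of Parkkonen--Paulin on horoballs in CAT($-1$) spaces. Within your framework, a simple repair is to shrink the horoball rather than enlarge it.

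Let $\That'\subset\That_{\hat c}$ be the part lying over the concentric horoball of $B$ at depth $1/2$ inside $T_c$. The proofs of \cref{L:UhatcProjection,L:ConvexityOfThatc} apply verbatim to $\That'$. Lifting radial geodesics gives $d(u,\That')=d(u,\That_{\hat c})+\tfrac12$ for $u\notin\That_{\hat c}$.

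Let $u$ be the first point of $[x,z]$ within $1/4$ of $\That_{\hat c}$. Distance to a convex set is convex along geodesics and vanishes at $z$, so all of $[u,z]$ stays at distance between $1/2$ and $3/4$ from $\That'$. With $p'$ the projection to $\That'$ and $k=k(1/2)$,
\[ d(u,z)\leq d(u,p'(u))+\ell(p'\circ[u,z])+d(p'(z),z)\leq \tfrac34+k\,d(u,z)+\tfrac12, \]
whence $d(u,z)\leq 5/(4(1-k))$.

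Finally, \cref{L:CATBGI} applied to $[x,u]$ bounds $d(x',p_{\That_{\hat c}}(u))$, and $d(u,p_{\That_{\hat c}}(u))\leq 1/4$. Together these give a universal $C$, and the fellow-travelling is not even needed.
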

%
%

For a proof, compare to \cite[Lemma 2.3]{ParkkonenPaulin}, and note also that more precise bounds are possible.
%
%

\begin{lemma}\label{L:BoundaryHorosphere}
For all $R>0$ there exists an $R'>0$ such that for any $\That_{\hat{c}}$, if two points of $\partial \That_{\hat{c}}$ are within distance $R$ using the $\Bhat$ metric, then they are within distance $R'$ using the intrinsic path metric on $\partial \That_{\hat{c}}$.
\end{lemma}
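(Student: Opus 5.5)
The plan is to construct, for two points $u,v\in\partial\That_{\hat{c}}$ with $d_{\Bhat}(u,v)\le R$, an explicit path in $\partial\That_{\hat{c}}$ of controlled length. The natural candidate is radial projection of the $\Bhat$-geodesic onto the horosphere. Let $\gamma=\overline{uv}$. Since $\That_{\hat{c}}$ is convex (\cref{L:ConvexityOfThatc}), $\gamma$ lies in $\That_{\hat{c}}$. We push $\gamma$ outward to $\partial\That_{\hat{c}}$ along the geodesic flow away from $\hat{c}$, using the lifted flow from \cref{SS:PullBackToHoroball}. This flow exists on $\That_{\hat{c}}$ because every hyperplane meeting $T_c$ passes through $c$, so it preserves strata and lifts from $B$.

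The key step is the depth estimate. Because $\gamma$ has length $\le R$ and its endpoints lie on the horosphere, every point of $\gamma$ has depth at most $R/2$ in $\That_{\hat{c}}$. Projecting a point at depth $s$ back to the horosphere multiplies the lengths of horizontal displacements by at most $e^{2s}$, in both the center and the $\C^{n-1}$ directions of the Heisenberg geometry. This is the standard complex hyperbolic estimate, and it is the exact analogue of the unhatted estimate behind \cref{L:UhatcProjection}. It holds in $T_c\subset B$ and lifts to $\That_{\hat{c}}$ as follows: apply it on pieces of paths missing $\HHAT$ (where $b$ is a local isometry commuting with the flow), then concatenate, then take limits of paths avoiding $\HHAT$, exactly as in the proof of \cref{L:UhatcProjection}.

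Given the depth estimate, the radial projection of $\gamma$ is a path in $\partial\That_{\hat{c}}$ joining $u$ to $v$ of length at most $e^{R}R$. Taking $R'=e^{R}R+1$ then suffices.

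The main obstacle is justifying the flow and its Lipschitz bound on $\Bhat$ rather than on $B$. Two things need care. First, $\Bhat$ is not a manifold, so "radial projection" must be defined via the lifted flow, checking that the flow extends continuously to the completion. Second, we need the factor $e^{2s}$ to be uniform. I would handle both by noting that the flow on $\Thatcirc_{\hat{c}}$ is the lift of the $\Gamma$-independent flow on $T_c$, which has a Lipschitz constant depending only on depth. That bound then passes to the metric completion, since path lengths are defined by infima over paths in $\Bhatcirc$.

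If this turns out to be delicate, the fallback is to argue via $\prod\Chat_i$ and the fiber. The images of $u,v$ in each $\Chat_i$ are close, because a projection that extends coarsely Lipschitz to $\That_{\hat{c}}$ can be obtained by flowing outward. One would then control the central coordinate via \cref{L:Swaddled}, as in \cref{L:HoroballProductMetric}. However, this route needs the same flow estimate, so the direct argument above is preferable.
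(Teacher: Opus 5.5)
Your proposal is correct and takes the same route as the paper, which only says to push the $\Bhat$-geodesic radially outward from $\hat{c}$ until it lies on $\partial\That_{\hat{c}}$. Your argument supplies the details the paper leaves to the reader: convexity of $\That_{\hat{c}}$, the depth bound of $R/2$ along the geodesic, the $e^{2s}$ expansion factor for outward radial projection, and the lift to $\Bhat$ by approximation in the style of \cref{L:UhatcProjection}; the fallback route is not needed.
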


We leave it to the reader to prove \cref{L:BoundaryHorosphere} by starting with a geodesic in $\Bhat$ and pushing it radially outwards from $\hat{c}$ until it gives a path on $\partial \That_{\hat{c}}$. 

We can now give our definition of $$\pi_{N^1(\Hhat)} : \Bhatthick \to  N^1(\Hhat)^\sw.$$
Fix an $\epsilon$ large depending on \cref{L:EntryPoint} and \cref{L:BoundaryHorosphere} in ways that will become clear shortly.  
Consider $x\in \Bhatthick$. If the closest point projection of $x$ to $\Hhat$ is not in a horoball, we define $\pi_{N^1(\Hhat)}(x)$ to be the naive projection. Similarly, if the projection is in a horoball, and the entry-point is within $\epsilon$ of $\Hhat$, we define $\pi_{N^1(\Hhat)}(x)$ to be the naive projection. Otherwise, we look at the entry point $z$ where the geodesic from $x$ to $p_{\Hhat}(x)$ enters $\That_{\hat{c}}$. We then define $\pi_{N^1(\Hhat)}(x)$ to be the alternative projection of $z$. 

\begin{remark}\label{R:extensionangle}
Continuing \cref{R:extension}, we note that the true domain $\Bhat^{N^1(\Hhat)}$ of this map is the complement in $\Bhat$ of the open $r_\theta$ neighborhood of $\Hhat$ union all the interiors of all horoballs that $\Hhat$ enters. (Here $r_\theta$ could be replaced with anything smaller, but some positive constant is required for \cref{L:AngleLipschitz}. As a warning, we emphasize that $\Bhat^{N^1(\Hhat)}$ is not convex, and as a rule arguments with angle domains require extra care.) 
\end{remark}

It remains to show the following. 

\begin{lemma}\label{L:AngleLipschitz}
$\pi_{N^1(\Hhat)}:\Bhat^{N^1(\Hhat)} \to \cC(N^1(\Hhat)) $ is coarsely Lipschitz.
\end{lemma}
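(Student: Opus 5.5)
Since $\Bhat^{N^1(\Hhat)}$ carries its intrinsic path metric, it suffices to show there is a constant $D$ such that any two points $x,x'\in\Bhat^{N^1(\Hhat)}$ joined by a path in $\Bhat^{N^1(\Hhat)}$ of length at most $1$ have $\pi_{N^1(\Hhat)}(x)$ and $\pi_{N^1(\Hhat)}(x')$ within $D$ in $N^1(\Hhat)^\sw$. Such a path stays at distance at least $r_\theta$ from $\Hhat$ and outside the interiors of horoballs that $\Hhat$ enters. Since $p_{\Hhat}$ is $1$-Lipschitz, $d(p_{\Hhat}(x),p_{\Hhat}(x'))\leq 1$. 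I split into cases according to which definition of $\pi_{N^1(\Hhat)}$ applies at $x$ and at $x'$.

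\emph{Case 1: both use the naive projection.} If both projections $p_{\Hhat}(x),p_{\Hhat}(x')$ lie within a bounded distance $C$ of the complement of the horoballs, then \cref{L:EasyCaseOfAngleLipschitz} with $\epsilon=r_\theta$ applies directly. The remaining subcase is when the projections are deep in a horoball $\That_{\hat{c}}$ but the entry points are within $\epsilon$ of $\Hhat$. By \cref{L:EntryPoint}, the entry point lies within a universal constant of $p_{\That_{\hat{c}}}(x)$. Since $x\notin\mathrm{int}(\That_{\hat{c}})$, $p_{\Hhat}(x)$ is near $\partial\That_{\hat{c}}$ up to a constant, because $\Hhat\cap\That_{\hat{c}}$ is the projection of $\That_{\hat{c}}$ to $\Hhat$ (\cref{R:HoroballCapStratProj}) and nearest-point projections in CAT($-1$) spaces contract. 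So the depth is bounded, and \cref{L:EasyCaseOfAngleLipschitz} again applies with a larger $C$.

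\emph{Case 2: both use the alternative projection via entry points $z,z'$ on the same horosphere $\partial\That_{\hat{c}}$.} By \cref{L:EntryPoint}, $z$ and $z'$ are within a universal constant of $p_{\That_{\hat{c}}}(x)$ and $p_{\That_{\hat{c}}}(x')$, which are within $1$ of each other. So $d(z,z')$ is bounded in $\Bhat$, and hence in the intrinsic metric on $\partial\That_{\hat{c}}$ by \cref{L:BoundaryHorosphere}. Both $z,z'$ are at distance at least $\epsilon$ from $\Hhat$. I then use \cref{C:AltLip}, applied along a bounded intrinsic path that stays away from $\Hhat$. Such a path should exist if $\epsilon$ is large compared to the constants, which is why $\epsilon$ was chosen large. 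Alternatively, if the images in $\Chat_i$ have a geodesic avoiding $q_{\Hhat}$, I apply \cref{P:TreeAngleBGI} directly.

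\emph{Case 3: mixed or transitional cases.} One point uses the naive projection and the other the alternative, or the projections lie in different horoballs. Different horoballs are ruled out, since they are $s_\theta$ apart and the projections are within distance $1$. For the mixed case, I move both points to a configuration where \cref{L:NaiveProjectionReasonableNearHhat} compares the two definitions. The entry point $z$ is at distance between roughly $\epsilon$ and $\epsilon+O(1)$ from $\Hhat$, so that lemma bounds the gap between the naive and alternative projections at $z$. Then the naive projection of $x$ is compared with that of $z$ using the geodesic version of \cref{L:EasyCaseOfAngleLipschitz}. The geodesic $\overline{xz}$ is a subsegment of $\overline{x\,p_{\Hhat}(x)}$, so its points all project to $p_{\Hhat}(x)$ and the naive projection is constant along it, apart from a bounded correction from the $r_\theta$ cutoff. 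This comparison is what justifies switching definitions.

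I expect the main obstacle to be Case 2: ensuring $z$ and $z'$ can be joined within $\partial\That_{\hat{c}}$ by a bounded path avoiding a neighborhood of $\Hhat$, or otherwise controlling the geodesic in $\Chat_i$ when it passes through $q_{\Hhat}$. I would first try to show that if the $\Chat_i$ geodesic passes through $q_{\Hhat}$, then one of $z,z'$ is within a bounded distance of $\Hhat$. That would put that point in the naive regime, contradicting the choice of $\epsilon$ large, so Case 2 would reduce to \cref{P:TreeAngleBGI}.
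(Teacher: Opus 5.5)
Your proposal is correct and is essentially the paper's proof. It uses the same split according to which definition of $\pi_{N^1(\Hhat)}$ applies, with \cref{L:EasyCaseOfAngleLipschitz} in the naive regime, \cref{L:EntryPoint}, \cref{L:BoundaryHorosphere} and \cref{C:AltLip} in the alternative regime, and \cref{L:NaiveProjectionReasonableNearHhat} to switch between the two.

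There are two small repairs.

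\textbf{Case 1.} Your stated reason for bounded depth is false in general. A point outside $\That_{\hat{c}}$ can project to $\Hhat$ arbitrarily deep inside $\That_{\hat{c}}$; that is exactly the situation the alternative projection was designed for. In your subcase, however, the bound is immediate. The entry point $z$ lies on $\overline{x\,p_{\Hhat}(x)}$, so the depth of $p_{\Hhat}(x)$ is at most $d(z,p_{\Hhat}(x))=d(z,\Hhat)\leq\epsilon$.

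\textbf{Case 2.} The ``main obstacle'' is not one. The path in $\partial\That_{\hat{c}}$ from $z$ to $z'$ supplied by \cref{L:BoundaryHorosphere} has length at most some $R'$, depending only on the constant of \cref{L:EntryPoint}. Since $d(\cdot,\Hhat)$ is $1$-Lipschitz, this path stays at distance at least $\epsilon-R'$ from $\Hhat$. Taking $\epsilon$ larger than $R'$ plus the $\delta$ of \cref{C:AltLip} is precisely what the paper means by choosing $\epsilon$ ``large depending on \cref{L:EntryPoint} and \cref{L:BoundaryHorosphere}''.
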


\begin{proof}
Consider points $x,y$ that can be joined by a path in $\Bhat^{N^1(\Hhat)}$ of length at most 1; it suffices to give a uniform bound on the distance between their images. 

If the closest point projections $p_{\Hhat}(x)$ and $p_{\Hhat}(y)$ are not in a horoball, this follows from \cref{L:EasyCaseOfAngleLipschitz}. If  one of the closest point projections is not in a horoball and the other is in a horoball, it again follows from \cref{L:EasyCaseOfAngleLipschitz}, given that the relevant entry point lies close to $\Hhat$.

So assume that both of $p_{\Hhat}(x)$ and $p_{\Hhat}(y)$ are in a horoball $\That_{\hat{c}}$. Note that $p_{\That_{\hat{c}}}(x)$ and $p_{\That_{\hat{c}}}(y)$ are bounded distance from each other, and \cref{L:EntryPoint} gives that these points are bounded distance from associated entry points. 

If these points are reasonably close to $\Hhat$, the result follows from \cref{L:NaiveProjectionReasonableNearHhat} and \cref{L:EasyCaseOfAngleLipschitz}. If these points are not close to $\Hhat$ it follows from \cref{C:AltLip} and \cref{L:BoundaryHorosphere}.
\end{proof}

\subsection{Results for later use} 

\begin{lemma}\label{L:PreUniqueness}
For any $\Vhat$ as in \cref{SS:Uniformity} and any $C>0$ there exists a $D>0$ such that the following holds. 
Suppose $y,z\in \Bhatthick \cap \Vhat$. Suppose also that $d_U(\pi_U(y), \pi_U(z))<C$ for all angle domains $U$ associated to hyperplanes passing through $\Vhat$. Then the distance between $y$ and $z$ in the intrinsic $\Bhatthick$ metric is at most $D$. 
\end{lemma}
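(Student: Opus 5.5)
The plan is to use the local model $\Vhat\cong\Dhat^k\times D^{n-k}$ from \cref{SS:Local}, with bi-Lipschitz constant at most $2$ by \cref{SS:Uniformity}, and to reduce the statement to a bound in the universal cover of a punctured polydisc, controlled by the $\bZ^k$ deck group. Let $\Hhat_1,\dots,\Hhat_k$ be the hyperplanes through the center $\xhat$ of $\Vhat$; these are the only hyperplanes meeting $\Vhat$. Write $U_1,\dots,U_k$ for the corresponding angle domains. We may assume $\Vhat$ lies over one of the finitely many $\Gamma$-orbits of neighborhoods $V$ fixed in \cref{SS:Uniformity}. This matters because $D$ is allowed to depend on $\Vhat$, and everything will be uniform over a $\Gammahat$-orbit.

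\textbf{Step 1: reduction to a compact piece mod $\bZ^k$.} The set $\Bhatthick\cap\Vhat$ corresponds to the set of points of $\Dhat^k\times D^{n-k}$ whose $\Dhat$-coordinates have real part bounded below (the points stay at least $r_\theta$ from each $\Hhat_i$). We may intersect this with the complement of the horoball interiors, but for a generic $\Vhat$ chosen small this is not needed. The deck group $\bZ^k$, generated by the $2\pi i$ translations in the $\Dhat$ factors, acts cocompactly on $\Bhatthick\cap\Vhat$, because its quotient is a closed subset of the compact $V$. Fix a basepoint $y_0$ and a compact fundamental piece $K\ni y_0$. The intrinsic $\Bhatthick$ distance from $y_0$ to any point of $K$ is bounded by some $D_0$. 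This is where we need that the thick part of $V$ is path connected inside the thick part; it is, since it is a product of annuli and discs minus small neighborhoods.

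\textbf{Step 2: angle coordinates detect the $\bZ^k$-translate.} Write $y\in gK$ and $z\in hK$ with $g,h\in\bZ^k$. The $U_i$-coordinate of a point $w\in\Vhat$ is essentially $\arg$ of its $i$th $\Dhat$ coordinate, namely the imaginary part in the left half-plane model. Indeed, within $\Vhat$ the closest point projection to $\Hhat_i$ and the radial geodesic are described by \cref{R:localgeodesics}. The horoball modifications of the naive projection do not occur for the small generic $\Vhat$, and in any case they are a bounded change by \cref{L:NaiveProjectionReasonableNearHhat}. So $\pi_{U_1}\times\dots\times\pi_{U_k}$ restricted to $K$ has bounded image in $\prod\cC(U_i)$. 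It is also $\bZ^k$-equivariant, where $\bZ^k$ acts on each $\cC(U_i)$ through the $i$th coordinate by fiber translation. By \cref{L:StabCobounded} the orbit map $\bZ^k\to\prod\cC(U_i)$ is a quasi-isometry. Hence the hypothesis $d_{U_i}(\pi_{U_i}(y),\pi_{U_i}(z))<C$ for all $i$ forces $|g-h|$ to be bounded in terms of $C$.

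\textbf{Step 3: conclude.} Each generator of $\bZ^k$ moves $y_0$ a bounded intrinsic $\Bhatthick$ distance, for example along a loop around $\Hhat_i$ at radius of order $r_\theta$ inside $V$. Therefore
$d(y,z)\le 2D_0+d(gy_0,hy_0)\le 2D_0+c\,|g^{-1}h|$,
which is bounded by a constant $D$ depending on $C$ and $\Vhat$.

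The main obstacle is Step 2. We need to check that the actual projection $\pi_{U_i}$, including the closest point projection to the whole $\Hhat_i$, agrees up to bounded error with the local angle coordinate in $\Vhat$. The closest point in $\Hhat_i$ lies in $\Vhat$ by the local geodesic structure and convexity, and the compactness of $K$ together with the continuity of the naive projection away from $\Hhat_i$ gives the boundedness. Equivariance under the stabilizer then propagates this bound to all translates.
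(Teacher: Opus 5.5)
Your proposal is correct and follows essentially the same route as the paper: take a compact fundamental domain $K$ for the $\bZ^k$ action on the thick part of $\Vhat$, observe that the angle projections are coarsely constant on $K$, then use $\bZ^k$-equivariance together with \cref{L:StabCobounded} to bound the relative deck translation, and finish by compactness. Your explicit identification of $\pi_{U_i}$ with the local angle coordinate is fine but not needed, since boundedness of $\pi_{U_i}(K)$ already follows from \cref{L:AngleLipschitz} and the bounded intrinsic diameter of $K$.
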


\begin{proof}
 On an appropriate pre-compact fundamental domain for the $\bZ^k$ action on the set of points in $\Vhat$ at least $r_\theta$ away from each hyperplane, the $\pi_U$ are uniformly coarsely constant by  \cref{L:AngleLipschitz}. The result now follows from  \cref{L:StabCobounded} and a compactness argument.
\end{proof}

The following is an immediate consequence of \cref{L:NaiveProjectionReasonableNearHhat} and the definition of $\pi_U$. 

\begin{corollary}\label{C:AngleProjOnHoroBoundary}
Let $U$ be an angle domain, let $\Hhat$ be the corresponding hyperplane, and suppose $\Hhat$ enters a horoball $\That_{\chat}$. For any $x\in \partial \That_{\chat}\cap \Bhat^U$, the alternative projection of $x$ to $\cC(U)$ is uniformly close to $\pi_U(x)$. 
\end{corollary}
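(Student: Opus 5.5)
The plan is to compare the two maps through the entry point. Write $p=p_{\Hhat}(x)$ and consider the geodesic from $x$ to $p$. Since $x\in\partial\That_{\chat}$, this geodesic starts on the horosphere. The case split below follows the cases in the definition of $\pi_U$.

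\textbf{Case 1: $p$ is not in $\That_{\chat}$.} Here we argue that $x$ is at bounded distance from $\Hhat$. Indeed $\Hhat$ enters $\That_{\chat}$, and by \cref{R:HoroballCapStratProj} we have $p_{\Hhat}(\That_{\chat})=\Hhat\cap\That_{\chat}$. So $p_{\Hhat}(x)\in\That_{\chat}$, and this case does not occur.

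\textbf{Case 2: $p\in\That_{\chat}$ and the entry point is within $\epsilon$ of $\Hhat$.} The geodesic $\overline{xp}$ starts at $x\in\partial\That_{\chat}$ and ends inside. By convexity (\cref{L:ConvexityOfThatc}) it stays in $\That_{\chat}$, so its entry point is $x$ itself. Then $\pi_U(x)$ is the naive projection, and $d(x,\Hhat)\le\epsilon$. Since $x\in\Bhat^U$, we also have $d(x,\Hhat)\ge r_\theta>0$. \cref{L:NaiveProjectionReasonableNearHhat}, applied with a slightly larger $\epsilon$, shows that the naive and alternative projections of $x$ are within $C_{2\epsilon}$ of each other.

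\textbf{Case 3: the entry point is farther than $\epsilon$ from $\Hhat$.} The entry point is again $x$ by the same convexity argument. By definition $\pi_U(x)$ is then the alternative projection of $x$, and the claim is immediate with constant $0$.

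Thus the constant is $\max(C_{2\epsilon},0)$, where $\epsilon$ is the fixed constant from the definition of $\pi_U$. The only point needing care is Case 2. There we must check that $x\in\Bhat^U$ lies on $\partial\That_{\chat}$ at positive distance from $\Hhat$, which is exactly the hypothesis of \cref{L:NaiveProjectionReasonableNearHhat}. We must also confirm that the entry point is $x$ rather than some other point of $\partial\That_{\chat}$, which is what convexity of horoballs gives. I expect no real obstacle; the main subtlety is the degenerate behaviour of the entry point when the geodesic starts on the horosphere.
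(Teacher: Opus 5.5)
Your proposal is correct and is essentially the paper's argument. The paper records the corollary as an immediate consequence of \cref{L:NaiveProjectionReasonableNearHhat} and the definition of $\pi_U$. Your case analysis makes that explicit: the first case is vacuous because $p_{\Hhat}(\That_{\chat})=\Hhat\cap\That_{\chat}$, and convexity makes the entry point $x$ itself. The opening sentence of your Case~1 promises a distance bound that you never use, since you then show the case cannot occur; you should delete it.
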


For later use we also record the following consequence of \cref{P:TreeAngleBGI} and the definition of $\pi_{N^1(\Hhat)}$.

\begin{corollary}\label{C:TreeAngleBGI}
There exists $C, D_0>0$ such that for any horoball $\That_{\hat{c}}$ and any $x,y\in \partial \That_{\hat{c}}$, the number of hyperplanes $\Hhat$ intersecting $\That_{\hat{c}}$ whose angle domain $U$ does not have $d_U(\pi_U(x), \pi_U(y))< D_0$ is at most $C d_{\partial \That_{\hat{c}}}(x,y)+C$, where $d_{\partial \That_{\hat{c}}}$ is the intrinsic path metric. 
\end{corollary}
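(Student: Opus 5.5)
We prove \cref{C:TreeAngleBGI} by reducing to \cref{P:TreeAngleBGI}, one factor $\Chat_i$ at a time, and then counting the hyperplanes whose points $q_{\Hhat}$ the relevant geodesics can pass through.

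First we use \cref{C:AngleProjOnHoroBoundary}. For $x\in\partial\That_{\hat{c}}\cap\Bhat^U$, the alternative projection of $x$ to $\cC(U)$ is within a uniform constant $C_1$ of $\pi_U(x)$. Choose $D_0$ larger than $F+2C_1$, where $F$ is the constant of \cref{P:TreeAngleBGI}.

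Every hyperplane $\Hhat$ entering $\That_{\hat{c}}$ passes through $\hat{c}$, by the choice of $T_c$ in \cref{R:constants}. So it belongs to a parallelism class $i$ and corresponds to a singular point $q_{\Hhat}\in\Chat_i-\Chatcirc_i$. Suppose the $\Chat_i$-geodesic $\overline{x_iy_i}$ misses $q_{\Hhat}$. Then \cref{P:TreeAngleBGI} and the choice of $D_0$ give $d_U(\pi_U(x),\pi_U(y))<D_0$. (If $x$ or $y$ lies outside $\Bhat^U$, i.e.\ within $r_\theta$ of $\Hhat$, we first move it a bounded distance along $\partial\That_{\hat{c}}$ and use \cref{L:AngleLipschitz}. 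This costs only a constant, which we absorb into $D_0$ and $C$.) Hence the hyperplanes with $d_U\geq D_0$ are among those with $q_{\Hhat}\in\overline{x_iy_i}$ for some $i$.

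It remains to count the singular points on the $n-1$ geodesics $\overline{x_iy_i}$. The map $\partial\That_{\hat{c}}\to\prod\Chat_i$ is $1$-Lipschitz for the intrinsic metric, because it comes from the Riemannian submersion $\partial T_c\to\C^{n-1}$, up to a scale constant. So each $\overline{x_iy_i}$ has length at most a constant times $d_{\partial\That_{\hat{c}}}(x,y)$.

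The singular points of $\Chat_i$ lie over the discrete, $\Lambda_i$-invariant set $\Sigma_i$, so distinct singular points are at least some $s_0>0$ apart. A geodesic of length $\ell$ therefore contains at most $\ell/s_0+1$ of them. Summing over the $n-1$ factors gives at most $C\,d_{\partial\That_{\hat{c}}}(x,y)+C$ hyperplanes, as claimed.

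The main obstacle is the boundary case, where $x$ or $y$ lies very near or on some $\Hhat$. There the alternative projection is undefined or not controlled by \cref{P:TreeAngleBGI}. We handle it through the nudging step above, using that the $\pi_U$ are only ever evaluated on $\Bhat^U$. Alternatively, one can count separately the hyperplanes that come within $r_\theta$ of $x$ or $y$; by local finiteness there are uniformly boundedly many of them.
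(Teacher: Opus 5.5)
Your argument is correct and is essentially the paper's. You pass from $\pi_U$ to the alternative projection via \cref{C:AngleProjOnHoroBoundary}, apply \cref{P:TreeAngleBGI} to every $\Hhat$ with $q_{\Hhat}\notin\overline{x_iy_i}$, and count the remaining hyperplanes using the uniform separation of singular points in the $\Chat_i$.

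One correction: the nudging step via \cref{L:AngleLipschitz} does not work as stated. That lemma says nothing at points outside $\Bhat^U$, and the alternative projection is not uniformly Lipschitz near $\Hhat$. The step is also unnecessary. Once $\pi_U$ is extended to $\partial\That_{\hat{c}}-\Hhat$ by the alternative projection, \cref{P:TreeAngleBGI} applies directly, since it has no distance-from-$\Hhat$ hypothesis. If $x$ or $y$ lies on $\Hhat$, then $q_{\Hhat}$ is an endpoint of $\overline{x_iy_i}$, so that hyperplane is already counted. Your fallback of counting such hyperplanes separately also suffices.
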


We define ``does not have $d_U(\pi_U(x), \pi_U(y))< D_0$'' to include the possibility that $\pi_U(x)$ or $\pi_U(y)$ are not defined, even when we extend the definition of $\pi_U$ to all of $\partial \That_{\hat{c}} - \Hhat$ in the natural way using the alternative projection.

\begin{proof}
We claim that $d_U(\pi_U(x), \pi_U(y))< D_0$ for all angle domains $U$ except those corresponding to singular points on the geodesic joining the images of $x$ and $y$ in one of the $\Chat_i$. Indeed, suppose $U$ is such an angle domain, corresponding to a hyperplane $\Hhat$. By \cref{C:AngleProjOnHoroBoundary}, it suffices to prove the result for the alternative projection instead of $\pi_U$, so \cref{P:TreeAngleBGI} gives the claim. 

The result then follows by uniform separation of the singular points in $\Chat_i$. 
\end{proof}

\section{Nesting and orthogonality }\label{S:NTO}

We now have a set of domains $\IndexSmall$ consisting of stratum domains, angle domains, and (center and tree)  cusp domains, as well as associated maps $\pi_U : \Bhatthick \to \cC U$ to Gromov hyperbolic spaces. Much of the work so far can be summarized  using \cref{D:preHHS} by noting that we have proven the following. 

\begin{proposition}\label{P:pre}
$(\Bhatthick, \IndexSmall, \pi_\bullet)$ is a pre-HHS.
\end{proposition}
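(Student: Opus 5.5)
The plan is to verify the three clauses of \cref{D:preHHS} in turn, gathering results already established for each of the four kinds of domain.

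\textbf{Quasi-geodesic.} Equip $\Bhatthick$ with its lifted intrinsic path metric. Since $\Gammahat$ acts properly and cocompactly on it (as noted in \cref{SecModelSpace}, $\Gamma\back\Bthick$ is compact), $\Bhatthick$ is a proper length space. By Hopf--Rinow it is therefore geodesic, and in particular quasi-geodesic with $q=1$.

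\textbf{Uniform hyperbolicity.} We need a single $\delta$ that works for all domains.
\begin{itemize}
\item \emph{Stratum domains.} Hyperbolicity is \cref{L:StratDomainsHyp}. Uniformity comes from \cref{P:ElectHyp}: every $\Shat$ is CAT($-1$), hence $\delta_0$-hyperbolic for a universal $\delta_0$, so the resulting $\delta'$ is universal.
\item \emph{Tree domains.} Hyperbolicity is \cref{LemTreeDomainsGromovHyperbolic}.
\item \emph{Center domains.} \cref{CenterQuasiLine} shows they are quasi-lines.
\item \emph{Angle domains.} \cref{C:AngleQuasiLine} shows they are quasi-lines.
\end{itemize}
For the last three types, uniformity follows from finiteness. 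There are finitely many $\Gamma$-orbits of cusps and hyperplanes, and $\Gammahat$ acts isometrically, carrying each such space isometrically onto the corresponding space for the translated domain. So only finitely many isometry types occur. The quasi-lines are hyperbolic with constants controlled by the almost-triviality bounds: $9A$ for center domains, and the constant $L$ from \cref{C:Lbounded} for angle domains. For non-geodesic spaces such as electrifications and swaddlings, we interpret hyperbolicity via \cref{R:NotGeodesic} and \cref{R:EnotG}.

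\textbf{Uniformly coarsely Lipschitz projections.}
\begin{itemize}
\item \emph{Stratum domains.} $\pi_{\Shat}$ is a composition of 1-Lipschitz maps: the inclusion $\Bhatthick\to\Bhat$ (the intrinsic metric dominates the restricted one), nearest-point projection, and inclusion into the electrification.
\item \emph{Tree domains.} We compose nearest-point projection to $\That_\chat$, which lands in $\partial\That_\chat$ for points outside, with the quotient map $\partial\That_\chat\to\prod\Chat_i$ and the coordinate projection. The quotient step needs \cref{L:BoundaryHorosphere} to pass from the $\Bhat$ metric to the intrinsic metric on the horosphere. After that the map is Lipschitz, since the horizontal directions project contractingly onto the base.
\item \emph{Center domains.} We use the same horosphere projection followed by \cref{L:Thatswlip}, again with \cref{L:BoundaryHorosphere}.
\item \emph{Angle domains.} This is \cref{L:AngleLipschitz}. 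It applies because $\Bhatthick\subset\Bhat^{N^1(\Hhat)}$ and the intrinsic metric on $\Bhatthick$ dominates that on $\Bhat^{N^1(\Hhat)}$.
\end{itemize}
Uniformity of the constants $K$ again follows from $\Gammahat$-equivariance together with the finitely many orbits of domains.

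The main obstacle is not any single estimate but checking that the constants are genuinely uniform across infinitely many domains. This matters especially for angle domains, whose projection involves case distinctions (naive versus alternative projection) and non-convex domains $\Bhat^U$. The plan is to reduce everything to finitely many orbit representatives through the equivariance of every construction (nearest-point projection, swaddling paths, parallel transport), and to confirm that the constants chosen in \cref{L:AngleLipschitz} depend only on $n,\cH,\Gamma$.
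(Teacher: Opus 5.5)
Your proposal is correct and follows the paper's proof, which likewise collects \cref{L:StratDomainsHyp}, \cref{LemTreeDomainsGromovHyperbolic}, \cref{CenterQuasiLine} and \cref{C:AngleQuasiLine} for hyperbolicity, and the corresponding Lipschitz statements (including \cref{L:Thatswlip} and \cref{L:AngleLipschitz}) for the projections. The paper leaves several points implicit that you make explicit, and your treatment of them is sound: geodesicity of $\Bhatthick$ via Hopf--Rinow, uniformity of $\delta$ and $K$ via $\Gammahat$-equivariance and finitely many orbits of domains, and the use of \cref{L:BoundaryHorosphere} to pass from the $\Bhat$ metric to the intrinsic horosphere metric.
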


\begin{proof}
For stratum domains, hyperbolicity was checked in  \cref{L:StratDomainsHyp} and coarse Lipschitzness  in the discussion immediately before. For center domains, hyperbolicity was checked in \cref{CenterQuasiLine}, and coarse Lipschitzness  at the start of \cref{SS:CenterDomains}. For tree domains, hyperbolicity was checked in \cref{LemTreeDomainsGromovHyperbolic}, and coarse Lipschitzness  in the discussion immediately before. For angle domains, hyperbolicity was checked in \cref{C:AngleQuasiLine}, and coarse Lipschitzness in \cref{L:AngleLipschitz}.
\end{proof}

In this section we define the nesting and orthogonality relations and prove that $\IndexSmall$ is a nearly valid index set. Before proceeding, it is helpful to recall the notation for the different types of domains in $\IndexSmall$, which we have set up to allow more concise descriptions of these relations. 

\bold{Stratum domains} are associated to a stratum $\Shatcirc$ of $\Bhat$. These strata are typically not closed subsets of $\Bhat$. We do not define stratum domains for 0-dimensional strata. 

\bold{Angle domains} are associated to the unit normal bundle $N^1(\Hhat)$ of a hyperplane $\Hhat$. 

\bold{Center domains} are associated to horoballs $\That_{\hat{c}}$.

\bold{Tree domains} are associated to factors of the boundary of the horoball modulo the center direction. For each such factor, we formally define the associated domain to be the set of all hyperplanes that enter the horoball and are constant in that factor.

\begin{warning}\label{W:warning1}
It is crucial to keep in mind that the terms ``nested'' and ``orthogonal'' describe the map $\pi_U \times \pi_V$ and not  geometric objects in $\Bhat$ associated to $U$ and $V$. See  \cref{W:warning2} for examples of how otherwise these terms can cause confusion.
\end{warning}

\subsection{The definition of nesting and orthogonality}

The nesting relation is defined as follows. 
\begin{enumerate}
\item A stratum domain is nested in a second stratum domain if the first stratum is contained in the closure of the second. 
\item An angle domain is nested in a stratum domain if the closure of the stratum intersects but is not contained in the hyperplane associated to the angle domain. 
\item A center domain is nested in a stratum domain if the stratum enters the associated horoball. 
\item A tree domain is nested in a stratum domain if the stratum enters the associated horoball and is not constant in the associated factor.
\item An angle domain is nested in a tree domain if the hyperplane associated to the angle domain enters the horoball associated to the tree domain and is constant in the factor associated to the tree domain. 
\end{enumerate}

\begin{remark}\label{R:limitednesting}
Angle domains and center domains are always nest-minimal, meaning they have nothing properly nested in them. Only angle domains can be nested in tree domains. Tree domains are never nest minimal.
\end{remark}

 \begin{remark}
 Strata are in general not closed and are in general not cobounded. But strata can in exceptional cases be cocompact, and a stratum domain is nest minimal if and only if  the stratum is cocompact. (Keep in mind that $0$-dimensional strata do not give domains, so in particular do not give nest-minimal domains.)
 %
 %
 %
 \end{remark}

The orthogonality relation is defined as follows. 
\begin{enumerate}
\item A stratum domain is orthogonal to an angle domain if the stratum is contained in the associated hyperplane. 
\item Distinct angle domains are orthogonal if their associated hyperplanes intersect. 
\item An angle domain is orthogonal to a center domain if the associated hyperplane enters the associated horoball. 
\item An angle domain is orthogonal to a tree domain if the associated hyperplane enters the associated horoball and is not constant in the associated factor. 
\item A center domain is orthogonal to a tree domain if they are both associated to the same horoball.
\item Distinct tree domains are orthogonal if they are both associated to the same horoball.
\end{enumerate}

As always, two domains are asynchronous if neither is nested in the other and they are not orthogonal. The relations are summarized in \cref{fig:rels}, where a blank entry indicates the domains cannot satisfy the relation. 

\begin{figure}[h]
\begin{center}
\setlength{\extrarowheight}{20pt}
\resizebox{\textwidth}{!}{\begin{tabular}{c|c|c|c|c}
    \spc{top $\perp$  side}{top $\sqsubsetneq$ side}{top $\pitchfork$ side} & 
    \makecell{stratum domain \\ $\Shatcirc'$} &  
    \makecell{angle domain \\ $N^1(\Hhat')$} & 
    \makecell{center domain\\ $\That_{\hat{c}'}$} & 
    \makecell{tree domain $V'$ \\ $\{\text{hyperplanes constant}$ \\ $\text{in fixed factor of } \That_{\hat{c}'}\}$ } \\[20pt]\cline{1-5}
    %
    %
    %
    \makecell{stratum domain \\ $\Shatcirc$} & 
    \spc{}{$\Shat'\subsetneq \Shat$}{$\Shat'\not\subset \Shat \,\,\text{\&}\,\, \Shat\not\subset \Shat' $} & 
    \spc{$\Shat\subset \Hhat'$}{$\Shat \not\subset \Hhat' \,\,\text{\&}\,\, \Shat \cap\Hhat'\neq \emptyset$}{$\Shat \cap\Hhat'= \emptyset$} & 
    \spc{}{$\Shat\cap \That_{\hat{c}'} \neq \emptyset$}{$\Shat\cap \That_{\hat{c}'} = \emptyset$} & 
    \spc{}{$\Shat\cap \That_{\hat{c}'}\neq \emptyset\,\,\text{\&}\,\,\Shat \not\subset \bigcup V'$}{$\Shat\cap \That_{\hat{c}'}= \emptyset$\,\,\text{or}\,\, $\Shat\subset \bigcup V'$}  \\[20pt]\cline{1-5}
    %
    %
    %
    \makecell{angle domain \\ $N^1(\Hhat)$} & 
    \spc{$\Shat'\subset \Hhat$}{}{$\Hhat \cap\Shat'= \emptyset$} & 
    \spc{$\Hhat\cap \Hhat' \neq \emptyset \,\,\text{\&}\,\, \Hhat\neq \Hhat'$}{}{$\Hhat\cap \Hhat' = \emptyset$} & 
    \spc{$\Hhat \cap \That_{\hat{c}'}\neq \emptyset$}{}{$\Hhat \cap \That_{\hat{c}'}= \emptyset$} & 
    \spc{$ \Hhat \cap \That_{\hat{c}'}\neq\emptyset\,\,\text{\&}\,\, \Hhat \notin V'$}{}{$\Hhat \cap \That_{\hat{c}'}= \emptyset$} \\[20pt]\cline{1-5}
    %
    %
    %
    \makecell{center domain\\ $\That_{\hat{c}}$} &
    \spc{}{}{$\Shat'\cap \That_{\hat{c}} = \emptyset$} & 
    \spc{$\Hhat'\cap \That_{\hat{c}}\neq \emptyset$}{}{$\Hhat'\cap \That_{\hat{c}}= \emptyset$} & 
    \spc{}{}{$\hat{c}\neq \hat{c}'$}  & 
    \spc{$\hat{c}=\hat{c}'$}{}{$\hat{c}\neq \hat{c}'$} \\[20pt]\cline{1-5}
    %
    %
    %
    \makecell{tree domain $V$ \\ $\{\text{hyperplanes constant}$\\$\text{in fixed factor of } \That_{\hat{c}}\}$ }& 
    \spc{}{}{$\Shat'\cap \That_{\hat{c}}= \emptyset$\,\,\text{or}\,\, $\Shat'\subset \bigcup V$} & 
    \spc{$ \Hhat' \cap \That_{\hat{c}}\neq \emptyset \,\,\text{\&}\,\, \Hhat' \notin V$}{$\Hhat'\in V$}{$\Hhat' \cap \That_{\hat{c}}= \emptyset$} & 
    \spc{$\hat{c}=\hat{c}'$}{}{$\hat{c}\neq \hat{c}'$} & 
    \spc{$\hat{c}=\hat{c}' \,\,\text{\&}\,\, V\neq V'$}{}{$\hat{c}\neq \hat{c}'$}  \\[20pt]
\end{tabular}}
\end{center}
\caption{The definition of orthogonal, properly nested, and asynchronous.}
     \label[figure]{fig:rels}
\end{figure}

\begin{warning}\label{W:warning2}
We now repeat \cref{W:warning1} in more detail to emphasize that one may not think of the words ``nesting'' and ``orthogonality'' as naively describing the geometric objects associated to the domains. For example, nesting doesn't always correspond to containment and orthogonality doesn't always correspond to perpendicular intersections. Note in particular that an angle domain defined by a hyperplane is orthogonal to a stratum domain when the hyperplane contains the stratum. 
\end{warning}

\subsection{First observations on the relations} 

\begin{lemma}\label{L:BoundedChains}
Every chain $U_1\sqsubsetneq U_2 \sqsubsetneq \cdots \sqsubsetneq U_\ell$ that isn't properly contained in a larger chain has the following properties. 
\begin{enumerate}
\item $U_\ell=\Bhatcirc$.
\item The $U_i, i\geq 3$ are stratum domains.
\item $U_2$ is a stratum domain or a tree domain. 
\item If $U_2$ is a tree domain then $U_1$ is an angle domain. 
\end{enumerate}
\end{lemma}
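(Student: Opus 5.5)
The proof is a bookkeeping argument driven by the nesting relation and its summary in \cref{fig:rels}. First I will record what can be nested in each type of domain. By \cref{R:limitednesting}, angle and center domains have nothing properly nested in them. Only angle domains are properly nested in tree domains. Every type of domain can be properly nested in a stratum domain. Finally, nothing is properly nested in an angle, center or tree domain except as just described, so no domain lies above a tree domain except a stratum domain. Hence in any chain $U_1\sqsubsetneq\cdots\sqsubsetneq U_\ell$:
\begin{enumerate}
\item Every $U_i$ with $i\geq 2$ is a stratum domain or a tree domain, since it has something properly nested in it.
\item If some $U_i$ is a tree domain, then $U_{i-1}$ is an angle domain, which is nest-minimal, so $i=2$.
\end{enumerate}
This already gives (2), (3) and (4); for (4) we also use that $U_1$, lying below a tree domain, must be an angle domain.

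For (1), I will show that $\Bhatcirc$, the top-dimensional stratum whose closure is $\Bhat$, is the unique nest-maximal element, and then use maximality of the chain. Every stratum domain $\Shatcirc$ has $\Shatcirc\subseteq\Bhat=\overline{\Bhatcirc}$, so either $\Shatcirc=\Bhatcirc$ or $\Shatcirc\sqsubsetneq\Bhatcirc$. Every angle domain $N^1(\Hhat)$ satisfies $\Bhat\cap\Hhat\neq\emptyset$ and $\Bhat\not\subset\Hhat$, so it is nested in $\Bhatcirc$. Every horoball meets $\Bhat$, so center domains are nested in $\Bhatcirc$. Tree domains are nested in $\Bhatcirc$ because $\Bhat$ is not contained in the union of the hyperplanes constant in a given factor. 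Consequently, if $U_\ell\neq\Bhatcirc$, we could append $\Bhatcirc$ to the chain, contradicting maximality; this needs only that $\Bhatcirc$ is not already in the chain below $U_\ell$, which holds because nothing properly contains $\Bhatcirc$. So $U_\ell=\Bhatcirc$.

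There is a subtlety in (2). If $U_2$ is a tree domain, then $U_3$ must be a stratum domain, since only stratum domains sit above tree domains. If $U_2$ is a stratum domain, then $U_3$ sits above a stratum domain, and only stratum domains can do that, because stratum domains are nested only in stratum domains. Inducting upward, every $U_i$ with $i\geq 3$ is a stratum domain.

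I expect the main obstacle to be the case-by-case verification, straight from the definitions, that each type of domain is nested only in the types claimed. For example, I must check that a stratum domain is never nested in an angle, center or tree domain. This holds because the nesting definitions list only those five cases, and none places a stratum domain below a non-stratum domain. It is routine but must be stated carefully.
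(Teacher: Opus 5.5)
Your proposal is correct and follows essentially the same route as the paper: it proves (1) by observing that every domain is nested in $\Bhatcirc$, and derives (2)--(4) from \cref{R:limitednesting}. Your version spells out both ingredients in more detail. It also uses only consecutive nestings, so it does not presuppose the transitivity established afterwards in \cref{C:PO}.
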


\begin{proof}
The first claim is true because every domain is nested in $\Bhatcirc$. The other claims follow from \cref{R:limitednesting}.
\end{proof}

\begin{corollary}\label{C:PO}
Nesting is a partial order. 
\end{corollary}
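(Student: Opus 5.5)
To show nesting is a partial order, we need reflexivity, antisymmetry and transitivity; reflexivity is by convention ($U\sqsubseteq U$). The plan is to use the type structure recorded in \cref{R:limitednesting} and \cref{L:BoundedChains}. Proper nesting only ever goes from an angle, center, or tree domain up to a stratum domain, from an angle domain up to a tree domain, or from a stratum domain up to a stratum domain. Assign ranks: angle and center domains get rank 0, tree domains rank 1, and a stratum domain with $d$-dimensional stratum gets rank $d+1$. Since stratum domains are only defined for $d\geq1$, their rank is at least 2.

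\textbf{Antisymmetry.} We check that proper nesting strictly increases rank. If $\Shatcirc'\sqsubsetneq\Shatcirc$, then $\Shat'\subsetneq\Shat$ are closures of strata. Since $\Shatcirc'$ lies in the boundary of $\Shatcirc$, it has strictly smaller dimension. The other types of proper nesting go from rank 0 or 1 to a higher rank by inspection. Hence $U\sqsubsetneq V\sqsubsetneq U$ is impossible.

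\textbf{Transitivity.} Suppose $U\sqsubsetneq V\sqsubsetneq W$; then $V$ is a tree or stratum domain, and we check each case.

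\begin{enumerate}
\item \emph{All three are stratum domains.} This follows from transitivity of inclusion of closures.
\item \emph{$U$ is an angle domain and $V$ a tree domain.} Then $W=\Shatcirc$ is a stratum domain nested over the tree factor $i$ of $\That_{\hat{c}}$. So $\Shat$ enters $\That_{\hat{c}}$ and is non-constant in factor $i$. The hyperplane $\Hhat$ of $U$ enters $\That_{\hat{c}}$ and is constant in factor $i$. Inside $\partial\That_{\hat{c}}$, $\Shat$ is a union of fibers over a product of (factors or points), and $\Hhat$ is the preimage of a point $q_{\Hhat}\in\Chat_i$. Since $\Shat$ projects onto all of $\Chat_i$ (it is non-constant in that factor), it meets $\Hhat$ but is not contained in it. This uses the product description from \cref{LemAffineArrangement} and \cref{SS:PullBackToHoroball}. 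Therefore $U\sqsubsetneq W$.
\item \emph{$U$ is a non-stratum domain and $V\sqsubsetneq W$ are stratum domains, so $\Shat_V\subsetneq\Shat_W$.}
 \begin{enumerate}
 \item \emph{$U$ a center domain.} $\Shat_V$ enters the horoball, hence so does $\Shat_W$.
 \item \emph{$U$ a tree domain.} $\Shat_W$ enters the horoball, and since $\Shat_V\subset\Shat_W$ is non-constant in the factor, so is $\Shat_W$.
 \item \emph{$U$ an angle domain with hyperplane $\Hhat$.} $\Shat_W\cap\Hhat\supset\Shat_V\cap\Hhat\neq\emptyset$, and $\Shat_W\not\subset\Hhat$ because $\Shat_V\not\subset\Hhat$.
 \end{enumerate}
\end{enumerate}

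The only step needing care is case 2, which requires the horosphere product picture. Even there, the point is just that a stratum non-constant in factor $i$ surjects onto that factor within the horosphere. I expect this to be the main (mild) obstacle, since it relies on the stratum being a product of whole factors and singular points over $\partial\That_{\hat{c}}$.
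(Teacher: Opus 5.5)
Your proposal is correct and follows the paper's proof: the paper also observes that $W$ must be a stratum domain and $V$ a stratum or tree domain, and it handles the tree-domain case exactly as you do, using the fact that inside a horoball hyperplanes and stratum closures are cut out by fixing some $\Chat_i$-coordinates. Your rank argument for antisymmetry and your case analysis when $V$ is a stratum domain supply details that the paper either leaves implicit or leaves to the reader.
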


\begin{proof}
Suppose $U\sqsubsetneq V$ and $V \sqsubsetneq W$. We need to show $U\sqsubsetneq W$. 

Note that $W$ must be a stratum domain, and $V$ must be a stratum domain or a tree domain. We leave the case where $V$ is a stratum domain to the reader and do the case when $V$ is a tree domain. In this case $U$ is an angle domain. We know 
\begin{enumerate}
\item the hyperplane associated to $U$ enters the horoball associated to $V$ and is constant in the factor associated to $V$, and 
\item the stratum associated to $W$ enters the horoball associated to $V$ and is not constant in the factor associated to $V$. 
\end{enumerate}
Inside of a horoball, hyperplanes are defined by setting a single coordinate equal to a constant, and the closures of strata are defined by setting some number of coordinates equal to a constant.  Thus we can conclude that the stratum closure $\overline{W}$ and the hyperplane associated to $U$ intersect and the former is not contained in the latter. Hence $U\sqsubsetneq W$ as desired. 
\end{proof}

\begin{corollary}\label{C:BoundedChains}
The maximal size $\ell$ of a chain $U_1\sqsubsetneq U_2 \sqsubsetneq \cdots \sqsubsetneq U_\ell$ is  at most $n+1$, and is equal to $n+1$ if and only if $\Gamma$ is non-uniform or there is a 0-dimensional stratum.
\end{corollary}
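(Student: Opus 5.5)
By \cref{L:BoundedChains}, a maximal chain runs from a nest-minimal domain $U_1$, possibly through one tree domain $U_2$, and then through stratum domains up to $U_\ell=\Bhatcirc$. I would bound the length of each type of chain separately, and then decide when the bound $n+1$ is attained.

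\emph{Chains of stratum domains.} These correspond to strictly increasing stratum closures of positive dimension. Since the dimension strictly increases along such a chain and ranges from $1$ to $n$, it contains at most $n$ domains.

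\emph{Chains ending below in an angle domain.} If $U_1=N^1(\Hhat)\sqsubsetneq \Shatcirc$, then $\Shat$ meets $\Hhat$ but is not contained in it. Then $\Shat\cap\Hhat$ is a stratum closure of dimension one less than $\Shat$. If $\Shat$ is $1$-dimensional, this intersection is a $0$-dimensional stratum. So the strata above have dimensions $\geq 1$, giving at most $n$ of them. Adding $U_1$ gives at most $n+1$ domains, with equality forcing the existence of a $0$-dimensional stratum.

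\emph{Chains through a tree domain.} Suppose $U_1$ is an angle domain, $U_2$ is a tree domain of $\That_{\hat c}$, and the rest are strata entering $\That_{\hat c}$ and non-constant in the $i$-th factor. Inside the horoball, stratum closures are cut out by fixing some of the $n-1$ coordinates (\cref{LemAffineArrangement}). A stratum non-constant in factor $i$ fixes at most $n-2$ coordinates, so its dimension is at least $1$. Hence there are at most $n-1$ strata, and the chain has at most $(n-1)+2=n+1$ domains.

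\emph{Chains with $U_1$ a center domain or a nest-minimal stratum domain.} Here there are at most $n$ strata, and at most $n$ domains in total, unless $U_1$ is a center domain. In that case the strata above it all enter $\That_{\hat c}$ and have dimension at least $1$, giving at most $n+1$ domains.

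\emph{Attaining the bound.} If $\Gamma$ is non-uniform, \ref{CuspLimit} gives a $1$-dimensional stratum $\Shat_1$ entering some $\That_{\hat c}$. It sits in a full flag $\Shat_1\subsetneq\Shat_2\subsetneq\cdots\subsetneq\Shat_n=\Bhat$, obtained by dropping the defining hyperplanes one at a time. All of these strata enter the horoball, so the center domain together with these $n$ strata gives a chain of length $n+1$. If there is a $0$-dimensional stratum $p$, choose a hyperplane $\Hhat\ni p$ and a flag of strata whose $1$-dimensional member contains $p$ but is not contained in $\Hhat$. Using orthogonality, such a flag exists by dropping the hyperplanes through $p$ one at a time, with $\Hhat$ dropped first. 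The $1$-dimensional member meets $\Hhat$ at $p$, so $N^1(\Hhat)$ below the flag gives length $n+1$.

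Conversely, suppose $\Gamma$ is cocompact and there is no $0$-dimensional stratum. Then there are no horoballs, so no center or tree domains. By the angle-domain case, a chain with $U_1$ an angle domain has length at most $n$, and the other cases give at most $n$ as well.

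I expect the main obstacle to be the equality case, specifically producing flags whose strata all meet the horoball or contain the point $p$. This requires the local orthogonal structure near $p$ (\cref{SS:Local}) and the product structure at cusps (\cref{LemAffineArrangement}).
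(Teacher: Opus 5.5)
Your proposal is correct and follows the paper's proof. The lower bound uses the same two flags: the center domain of a horoball entered by the $1$-dimensional stratum from \ref{CuspLimit}, placed below the flag of strata containing it; and $N^1(\Hhat_n)$ placed below the generic strata of $\bigcap_{j=1}^{n-i}\Hhat_j$. The upper bound and the converse come from the chain shapes in \cref{L:BoundedChains} together with the dimension count, which the paper leaves implicit and you spell out.

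One small slip is in the tree-domain case. A stratum closure that is non-constant in the $i$th factor has complex dimension at least $2$, since its trace in $\C^{n-1}$ has at least one free coordinate. It is this bound, not ``dimension at least $1$'', that gives at most $n-1$ strata above the tree domain.
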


\begin{proof}
In the non-uniform case,
 \cref{CuspLimit} gives that there exists a 1-dimensional stratum $U_1$ that enters a horoball. We can thus find a center domain $U_0$ and stratum domains $U_1, \ldots, U_n$ so 
$$U_0\sqsubsetneq U_1 \sqsubsetneq \cdots \sqsubsetneq U_n,$$
proving $\ell\geq n+1$. 

If there is a 0-dimensional stratum, it is defined as the intersection of $n$ distinct hyperplanes, say $\Hhat_1, \Hhat_2, \ldots, \Hhat_n$. For $i=1, \ldots, n$ we can let $U_i$ be the generic stratum of $\cap_{j=1}^{n-i} \Hhat_j$, and we can let $U_0$ be the angle domain $N^1(\Hhat_n)$, again proving $\ell\geq n+1$. 

The remaining claims follow from \cref{L:BoundedChains}.
\end{proof}

The following statement is slightly complicated by our choice not to define stratum domains for $0$-dimensional strata. 

\begin{lemma}\label{L:MaxOrthogonalSets}
A set of domains is an orthogonal set  not contained in a larger orthogonal set if and only if it is one of the following. 
\begin{enumerate}
\item A collection of $n$ angle domains corresponding to $n$ distinct hyperplanes through a point. 
\item A stratum domain and the collection of angle domains corresponding to hyperplanes containing it. 
\item The $n$ cusp domains associated to a horoball, as well as the same with any subset of the tree domains each replaced by an angle domain nested in it. 
\end{enumerate}
\end{lemma}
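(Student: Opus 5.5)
The plan is a case analysis on the domain types present in an orthogonal set $\cO$, read off from the orthogonality table in \cref{fig:rels}. The starting observation is which pairs can be orthogonal at all:
\begin{itemize}
\item stratum domains are orthogonal only to angle domains, and never to each other, to center domains, or to tree domains;
\item center and tree domains are orthogonal only to each other (same horoball) and to angle domains;
\item angle domains can be orthogonal to every type.
\end{itemize}
So $\cO$ contains at most one stratum domain. If it contains one, it contains no cusp domain. Otherwise it consists of angle domains, possibly together with cusp domains, and all of those cusp domains belong to a single horoball $\That_{\hat{c}}$, since cusp domains of distinct horoballs are asynchronous.

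\textbf{Case 1: $\cO$ contains a stratum domain $\Shatcirc$.} Every angle domain in $\cO$ is then $N^1(\Hhat)$ for a hyperplane $\Hhat\supseteq \Shat$. All such hyperplanes pairwise intersect (along $\Shat$), so adding all of them keeps $\cO$ orthogonal; maximality forces $\cO$ to be exactly type (2). Conversely, type (2) is maximal. A further angle domain would need its hyperplane to contain $\Shat$, and a cusp domain is excluded. For a second stratum domain, note that $\Shatcirc$ is not $0$-dimensional by definition; if it is codimension $k$, it is contained in exactly the $k$ hyperplanes through it.

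\textbf{Case 2: no stratum domain, no cusp domain.} Then $\cO$ is a set of angle domains whose hyperplanes pairwise intersect. By \cref{Orth}, \cref{L:TripleIntersection} applies via \cref{R:OrthIntersection}, and induction shows that pairwise intersecting hyperplanes have a common point. Let $\Shatcirc$ be the generic stratum of the intersection. If that intersection has positive dimension, then $\Shatcirc$ gives a stratum domain orthogonal to all of them, so $\cO$ is not maximal. If it is $0$-dimensional, it is a point lying on exactly $n$ hyperplanes; maximality forces all $n$ to be present, giving type (1). Type (1) is maximal: no stratum domain is contained in all $n$ hyperplanes, and no hyperplane enters a horoball while passing through the point. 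For the second claim we need orthogonal hyperplanes meeting in $B$ not to share a cusp intersection; I would handle this with the local model in $\partial T_c$ from \cref{LemAffineArrangement}, where hyperplanes through $c$ that meet are non-parallel and their intersection is a stratum entering $\That_{\hat{c}}$, which is incompatible with a point stratum.

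\textbf{Case 3: $\cO$ contains cusp domains of $\That_{\hat{c}}$.} Each angle domain in $\cO$ is orthogonal to some cusp domain, so its hyperplane $\Hhat$ enters $\That_{\hat{c}}$. By \cref{LemAffineArrangement} and the parallelism classes of \cref{SS:TreeDomains}, $\Hhat$ is constant in exactly one factor $i$. It is therefore nested in the tree domain $V_i$ and orthogonal to all the other cusp domains of $\That_{\hat{c}}$. Two such angle domains in the same factor $i$ are parallel and disjoint, hence not orthogonal. Two in different factors are non-parallel and intersect, hence orthogonal. Thus for each factor $i$, $\cO$ contains at most one of: $V_i$, or a single angle domain nested in $V_i$. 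It also always can include the center domain. Maximality then yields exactly type (3): for each $i$ one choice, plus the center domain. Conversely, type (3) sets are orthogonal by the table, and they are maximal by the same counting.

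\textbf{Main obstacle.} The only delicate point is Case 2 near cusps. Pairwise intersecting hyperplanes that all enter a common horoball meet in a stratum entering that horoball. One must check this stratum has positive dimension unless $n$ hyperplanes meet at a point, and that a set of angle domains through a cusp is never maximal without cusp domains. I expect this to follow from the product description of $\partial\That_{\hat{c}}$, but it is where the care goes.
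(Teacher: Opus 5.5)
Your proposal is correct and follows essentially the same case analysis as the paper. A stratum domain forces type (2); cusp domains force a single horoball and then, one factor at a time, type (3); and a set of angle domains has a common intersection by \cref{L:TripleIntersection} and \cref{R:OrthIntersection}, to which the generic stratum can be added unless that intersection is a point.

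The one slip is in the converse for type (1). It is false that no hyperplane through the point enters a horoball. What you actually need is that the $n$ hyperplanes cannot all enter one horoball, since that is what any cusp domain orthogonal to all of them would require. This is immediate: there are only $n-1$ parallelism classes at a cusp, and two hyperplanes in the same class are disjoint. So your ``main obstacle'' is not really an obstacle.
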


\begin{proof}
A stratum domain is only orthogonal to angle domains corresponding to hyperplanes containing the stratum, and all these angle domains happen to be orthogonal to each other. 

Similarly a center domain is only orthogonal to the tree domains associated to the same horoball and the angle domains nested in these tree domains. Again these tree domains happen to be orthogonal to each other, and the same is true if some of them are replaced with an angle domain nested in them. 

This proves the claim if the tuple contains a stratum domain or a center domain. Anything orthogonal to a tree domain is orthogonal to the associated center domain, so this also proves it unless the tuple consists entirely of angle domains. Any orthogonal set of angle domains must correspond to hyperplanes with a common intersection, by \cref{L:TripleIntersection,R:OrthIntersection}, and one can add the generic stratum of this intersection to the orthogonal set as long as this intersection is not a point. 
\end{proof}

\begin{corollary}\label{C:MaxSizeOrth}
The maximum size of an orthogonal set of domains is at most $n$. It is equal to $n$ if and only if $\Gamma$ is non-uniform or there is a 0-dimensional stratum or a 1-dimensional stratum or $n=1$.  
\end{corollary}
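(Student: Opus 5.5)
The plan is to read off the sizes of the maximal orthogonal sets classified in \cref{L:MaxOrthogonalSets}. Every orthogonal set lies in a maximal one, so the maximum size of an orthogonal set is the largest size occurring among the three types listed there. We bound each type by $n$, and then decide when a set of size exactly $n$ actually occurs.

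\textbf{Type (1).} This is $n$ angle domains for $n$ distinct hyperplanes through a point. It has size exactly $n$, and it occurs exactly when $n$ distinct hyperplanes share a point. Since $n$ hyperplanes meeting orthogonally intersect in a point, this is exactly the case that a $0$-dimensional stratum exists. One degenerate case needs checking: if $n=1$, each hyperplane is itself a point, so a single angle domain is already such a set.

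\textbf{Type (2).} This is a stratum domain $\Shatcirc$ of dimension $d\geq 1$ together with the angle domains of the hyperplanes containing it. The local model of \cref{SS:Local} shows that a codimension-$k$ stratum lies in exactly $k=n-d$ hyperplanes. The set therefore has size $1+(n-d)\leq n$, with equality exactly when $d=1$, i.e.\ when a $1$-dimensional stratum exists. The case $d=n$ gives the top stratum alone, of size $1$; this equals $n$ when $n=1$, matching the $n=1$ clause. (If $n=1$ there may be no hyperplanes at all, but the top stratum still gives a singleton.)

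\textbf{Type (3).} This is the $1$ center domain and $n-1$ tree domains of a horoball, possibly with some tree domains replaced by angle domains nested in them. Replacement does not change the count, so the size is exactly $n$. Such a set exists exactly when some horoball exists, i.e.\ when $\Gamma$ is non-uniform. Here I would verify that the tree domains are distinct, i.e.\ that there really are $n-1$ parallelism classes; this is what \cref{CuspLimit} and \cref{LemAffineArrangement} provide.

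Combining the three cases gives the upper bound $n$, and shows that equality holds if and only if one of the listed conditions holds. The main point needing care is the count in type (2): one must justify that a $d$-dimensional stratum closure lies in exactly $n-d$ hyperplanes of $\Bhat$. This follows from the local chart $\Dhat^k\times D^{n-k}$ of \cref{SS:Local}. A second check is that no orthogonal set mixes types in a way that exceeds these counts; this is guaranteed by maximality together with \cref{L:MaxOrthogonalSets}.
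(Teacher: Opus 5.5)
Your proposal is correct and follows the paper's argument. The paper states this as an immediate corollary of \cref{L:MaxOrthogonalSets} with no separate proof, and the intended reasoning is what you give: the three types of maximal orthogonal sets have sizes $n$, $1+(n-d)$, and $n$, and they occur exactly when there is a $0$-dimensional stratum, a $d$-dimensional stratum, or a cusp, respectively.
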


\begin{remark}\label{R:ForABD}
Suppose $n>1$. Then the only domain not orthogonal to any other is the nest maximal domain. 
\end{remark}
%
%
%

\subsection{Nearly valid} 

We require two more lemmas before we can conclude that $\IndexSmall$ is a nearly valid index set.

\begin{lemma}\label{L:Axiom3a}
If $V \sqsubseteq W$ and $W\perp U$ then $V \perp U$.  
\end{lemma}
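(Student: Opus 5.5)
The plan is a direct case check against \cref{fig:rels}, organized by the type of $W$. If $V=W$ there is nothing to prove, so I will assume $V\sqsubsetneq W$. By \cref{R:limitednesting}, $W$ must then be a stratum domain or a tree domain, since angle and center domains are nest-minimal. I will handle these two cases separately, and within each, split by the type of $U$ using the orthogonality table.

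\textbf{Case 1: $W$ is a tree domain.} Let $\That_{\chat}$ be the horoball of $W$ and $i$ its factor. Then $V$ is an angle domain $N^1(\Hhat)$, where $\Hhat$ enters $\That_{\chat}$ and is constant in factor $i$. The domains orthogonal to $W$ are the following.
\begin{enumerate}
\item The center domain of $\chat$. Here $V\perp U$ because $\Hhat$ enters $\That_{\chat}$.
\item A tree domain $U$ for another factor $j\neq i$ of $\chat$. In $\partial\That_{\chat}$ a hyperplane is cut out by fixing one coordinate, so $\Hhat$ is not constant in factor $j$, hence $V\perp U$.
\item An angle domain $N^1(\Hhat')$, where $\Hhat'$ enters $\That_{\chat}$ and is not constant in factor $i$. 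Then $\Hhat'$ is constant in some factor $j\neq i$. Using \cref{LemAffineArrangement} together with the pulled-back product structure $\prod\Chat_k$, the loci $\{x_i=q\}$ and $\{x_j=q'\}$ meet inside $\partial\That_{\chat}$. So $\Hhat\cap\Hhat'\neq\emptyset$ and $\Hhat\neq\Hhat'$, giving $V\perp U$.
\end{enumerate}

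\textbf{Case 2: $W=\Shatcirc$ is a stratum domain.} The only domains orthogonal to a stratum domain are angle domains $U=N^1(\Hhat')$ with $\Shat\subset\Hhat'$. I will split by the type of $V$.
\begin{enumerate}
\item $V=\Shatcirc''$ is a stratum domain. Then $\Shat''\subset\Shat\subset\Hhat'$, so $V\perp U$.
\item $V=N^1(\Hhat)$ is an angle domain. Then $\Hhat$ meets $\Shat$ and does not contain it, so $\Hhat\neq\Hhat'$. Since $\Hhat\cap\Shat\subset\Hhat\cap\Hhat'$, these hyperplanes intersect, so $V\perp U$.
\item $V$ is the center domain of $\chat$. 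Then $\Shat$ enters $\That_{\chat}$, so $\Hhat'\supset\Shat$ enters it too, giving $V\perp U$.
\item $V$ is the tree domain for factor $i$ of $\chat$. Then $\Shat$ enters $\That_{\chat}$ and is not constant in factor $i$. Since $\Shat\subset\Hhat'$, the hyperplane $\Hhat'$ also enters $\That_{\chat}$ and cannot be constant in factor $i$, since otherwise its subset $\Shat$ would be. This is exactly the orthogonality condition, so $V\perp U$.
\end{enumerate}

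The main obstacle is the intersection claim in Case 1, item 3: that hyperplanes through a common cusp which are constant in different factors actually meet in $\Bhat$, not just in $B$. I will argue this in $\partial\That_{\chat}$. By \cref{SS:PullBackToHoroball}, $\partial\That_{\chat}$ is an $\R$-bundle over $\prod\Chat_k$, and the two hyperplanes are preimages of $\{x_i=q_{\Hhat}\}$ and $\{x_j=q_{\Hhat'}\}$, which clearly intersect in the product. This is the same reasoning already used in \cref{C:PO}. Everything else is unwinding definitions, using $\Shat\subset\Hhat'$ to transport the relevant properties from $\Shat$ to $\Hhat'$.
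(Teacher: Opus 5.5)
Your proof is correct and follows the paper's argument. After reducing to $V\sqsubsetneq W$, the paper uses the same two cases — $W$ a stratum domain, where $U$ must be an angle domain and one checks the four types of $V$; and $W$ a tree domain, where $V$ is an angle domain and $U$ is the center domain, another tree domain of the same horoball, or an angle domain nested in one of those — and leaves the verifications to the reader. Your case checks supply exactly those omitted details, and your intersection claim in the tree case rests on the same product-structure reasoning as in \cref{C:PO}.
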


\begin{proof}
It suffices to assume $V\neq W$. Since $W$ has something properly nested in it, it must be a stratum domain or a tree domain. 

\bold{Case 1: $W$ is a stratum domain.} Then $U$ is an angle domain, and we leave it to the reader to check the result for each of the four possible types of $V$. 

\bold{Case 2: $W$ is a tree domain.} In this case $U$ is a center domain, or a tree domain corresponding to one of the other factors of the horoball, or an angle domain nested in one of those other tree domains, and $V$ is an angle domain. Again we leave the details to the reader. 
\end{proof}

\begin{lemma}\label{L:WeakContainers}
If $V \sqsubseteq W$ then the set $\cP_{V,W}$ of domains orthogonal to $V$ and nested in $W$  has at most finitely many nest maximal elements, all of which are orthogonal to each other and to $V$. 
\end{lemma}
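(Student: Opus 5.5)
The plan is a case analysis on the type of $V$, using the tables in \cref{fig:rels} together with \cref{L:MaxOrthogonalSets}. First, if $V=W$ then $\cP_{V,W}$ is empty, since nothing is orthogonal to itself and nested pairs are never orthogonal. So assume $V\sqsubsetneq W$. By \cref{R:limitednesting}, $W$ is then a stratum domain or a tree domain.

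\textbf{$W$ a tree domain.} Then $V$ is an angle domain nested in $W$. The domains nested in $W$ are angle domains whose hyperplanes lie in the tree domain's class. Two such hyperplanes are parallel, hence disjoint, hence not orthogonal. So $\cP_{V,W}=\emptyset$.

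\textbf{$W$ a stratum domain.} Use \cref{L:MaxOrthogonalSets} to list, for each type of $V$, all domains orthogonal to $V$, and then intersect with those nested in $W$.
\begin{enumerate}
\item If $V$ is a stratum domain $\Shatcirc$, the domains orthogonal to it are the angle domains of hyperplanes containing $\Shat$. Those nested in $W$ are the hyperplanes meeting but not containing $\overline W$. There are finitely many, by local finiteness along the stratum, since each contains $\Shat$. They pairwise intersect, so they are mutually orthogonal and all nest-minimal.
\item If $V$ is a center domain, the orthogonal domains are the $n-1$ tree domains of the same horoball and the angle domains nested in them. The maximal elements among those nested in $W$ are the tree domains nested in $W$, together with angle domains nested in $W$ whose tree domain is not nested in $W$ (i.e. $\overline W$ is constant in that factor). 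This is the delicate subcase. If $\overline W$ is constant in factor $i$ and an angle domain of class $i$ is nested in $W$, then $\overline W$ meets that hyperplane without lying in it. But a stratum closure constant in a coordinate lies in the hyperplane at that coordinate value only if the constant is that value; otherwise it is disjoint. So such angle domains never occur, and the maximal elements are just the tree domains nested in $W$. These are finitely many and orthogonal to each other and to $V$.
\item If $V$ is a tree domain, argue as in the center case, replacing the center domain by the center domain plus the other tree domains. The center domain is nested in $W$ whenever $\overline W$ enters the horoball, and it is orthogonal to all the tree domains.
\item If $V$ is an angle domain $N^1(\Hhat)$, the orthogonal domains are: strata inside $\Hhat$, angle domains of hyperplanes meeting $\Hhat$, and, if $\Hhat$ enters a horoball, that horoball's center domain and tree domains not containing $\Hhat$. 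Those nested in $W$ are handled using the local product chart of \cref{SS:Local} and the horoball coordinates of \cref{LemAffineArrangement}.
\end{enumerate}

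\textbf{The angle-domain case (the main obstacle).} When $\Hhat$ does not enter a horoball, I take $\Shat=\overline W\cap\Hhat$. It is nonempty because $V\sqsubsetneq W$, and it is a stratum closure by \cref{R:OrthIntersection}; by \cref{L:TripleIntersection} it is the generic intersection. I claim its stratum domain is the unique maximal element, provided it has positive dimension. Any stratum domain orthogonal to $V$ and nested in $W$ is a stratum inside both, hence inside $\Shat$, hence nested in it. Any angle domain $N^1(\Hhat')$ in $\cP_{V,W}$ has $\Hhat'$ meeting $\Hhat$ and $\overline W$ but not containing $\overline W$. By \cref{L:TripleIntersection} it meets $\Shat$ and does not contain it, so it is nested in $\Shat$'s domain.

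If instead $\Shat$ is a point, the angle domains are finitely many hyperplanes through that point and are mutually orthogonal. The candidates I would check are those hyperplanes through the point that contain neither $\overline W$ nor $\Hhat$.

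If $\Hhat$ enters a horoball and $\overline W$ enters it too, I must also account for the center domain and the tree domains nested in $W$ but not containing $\Hhat$. These are orthogonal to each other, and I expect them to be orthogonal to the stratum domain of $\Shat$ only when $\Shat$ is absent; otherwise they are nested in it. The hard part is confirming in horoball coordinates that all these candidates end up either nested in the single stratum domain $\overline W\cap\Hhat$ or pairwise orthogonal. This reduces to coordinate bookkeeping: strata are defined by fixing coordinates in $\prod\Chat_i$.
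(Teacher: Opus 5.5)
Your route is the paper's: a case split on the type of $V$, with $\cP_{V,W}=\emptyset$ when $W$ is a tree domain. In the angle case, the stratum domain of $\overline W\cap\Hhat$ is the only maximal element; for $W=\Bhatcirc$ this is the paper's item~3, where $\overline W\cap\Hhat=\Hhat$. The paper writes out only $W=\Bhatcirc$ and leaves a general stratum domain $W$ to the reader, so your cases 1--3 and the horoball-free part of case 4 go beyond it and are sound. One small repair in case 2: disjointness of $\overline W$ and a class-$i$ hyperplane $\Hhat'$ inside $\That_{\hat c}$ yields global disjointness only via \cref{L:TripleIntersection} applied to $\overline W,\Hhat',\That_{\hat c}$. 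The projection hypotheses there are supplied by \cref{R:OrthIntersection,R:HoroballCapStratProj}.

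The gap is the angle case when $\Hhat$ and $\overline W$ both enter a horoball $\That_{\hat c}$. You defer this to ``coordinate bookkeeping'' and predict that the cusp domains might be orthogonal to the domain of $\Shat=\overline W\cap\Hhat$ when $\Shat$ is absent. That cannot be the mechanism. First, $\Shat$ is never empty: as you noted, $V\sqsubsetneq W$ forces $\overline W\cap\Hhat\neq\emptyset$. Second, by \cref{fig:rels} no center or tree domain is orthogonal to any stratum domain. So the lemma holds only if every such cusp domain is nested in the domain of $\Shat$.

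The missing step is as follows. Applying \cref{L:TripleIntersection} to $\overline W$, $\Hhat$, $\That_{\hat c}$ shows that $\Shat$ enters $\That_{\hat c}$, so the center domain is nested in the domain of $\Shat$. In horoball coordinates, $\Shat$ fixes exactly the factors fixed by $\overline W$ together with the factor of $\Hhat$. Hence a tree domain in which neither $\overline W$ nor $\Hhat$ is constant is also nested in the domain of $\Shat$.

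The point subcase needs the same input, since your list of angle domains through the point does not exclude cusp domains. A $0$-dimensional stratum cannot lie in a horoball: its $n$ pairwise intersecting hyperplanes would all pass through the cusp and lie in distinct parallelism classes, of which there are only $n-1$. So when $\Shat$ is a point, no horoball is entered by both $\overline W$ and $\Hhat$. In fact $\cP_{V,W}=\emptyset$ in that subcase. The $n$ hyperplanes through the point are $\Hhat$ and the $n-1$ hyperplanes containing $\overline W$, so the ``candidates'' you propose to check do not exist.
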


\begin{proof}
We first prove this when $W=\Bhatcirc$ is the nest maximal element. In this case: 
\begin{enumerate}
\item If $V$ is a stratum domain, $\cP_{V,W}$ will be an orthogonal tuple of angle domains. 
\item\label{CASE2} If $V$ is a cusp domain, the elements of $\cP_{V,W}$ are the $n-1$ other cusp domains and, for those that are tree domains, the angle domains nested in them. So the maximal elements of $\cP_{V,W}$ together with $V$ itself will be the collection of $n$ orthogonal cusp domains associated to a single horoball. 
\item If $V$ is an angle domain, then $V$ is orthogonal only to the generic stratum of the hyperplane $V$ is associated to and the domains nested in that stratum. So $\cP_{V,W}$ has only a single nest maximal element, and it is orthogonal to $V$. 
\end{enumerate}

More generally, we must consider the case when $W$ is an arbitrary stratum domain and when it is a tree domain. We leave the stratum case to the reader.
The case when $W$ is a tree domain doesn't really occur, since only angle domains can be nested in a tree domain, and two angle domains nested in the same tree domain are never orthogonal: so $\cP_{V,W}$ is always empty in this case. 
\end{proof}

We now have near validity of the index set. 

\begin{proposition}\label{P:av}
$\IndexSmall$ with the relations defined above is a nearly valid index set. 
\end{proposition}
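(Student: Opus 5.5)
We check the items of \cref{D:NearlyValid} one at a time. Most of the work is already done in the preceding lemmas, so the proof is mainly assembly.

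\emph{Basic structure.} Orthogonality is symmetric by definition; \cref{fig:rels} is symmetric in the $\perp$ entries. It is anti-reflexive: each clause either requires the two domains to be distinct, or relates domains of different types, since a stratum domain is never of the form $N^1(\Hhat)$. Nesting is a partial order by \cref{C:PO}. Its unique maximal element is $\Bhatcirc$, because every domain is nested in it, as noted in the proof of \cref{L:BoundedChains}.

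\emph{Finite height and coherence.} Finite height is \cref{C:BoundedChains}: chains have length at most $n+1$. Coherence is exactly \cref{L:Axiom3a}.

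\emph{Weak containers.} Let $V\sqsubseteq W$ with $\cP_{V,W}\neq\emptyset$. By \cref{L:WeakContainers}, $\cP_{V,W}$ has finitely many nest-maximal elements, and these are pairwise orthogonal. Take $\{U_i\}$ to be this set of maximal elements. Each $U_i$ is nested in $W$, and it is properly nested, since $U_i\perp V$ while $V\sqsubseteq W$: if $U_i=W$, coherence would force $V\perp V$, contradicting anti-reflexivity. Every element of $\cP_{V,W}$ lies below some maximal element, because chains are bounded. Finally, orthogonal sets have size at most $n$ by \cref{C:MaxSizeOrth}.

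\emph{Main obstacle.} The real content is the cases of \cref{L:Axiom3a} and \cref{L:WeakContainers} that were left to the reader, chiefly the one where $W$ is an arbitrary stratum domain. I would handle that case as follows. Elements orthogonal to $V$ and nested in $W$ are angle domains, or cusp domains whose horoball meets $\Shat_W$. In each case I would use \cref{L:MaxOrthogonalSets}, together with the coordinate description inside horoballs (hyperplanes are given by one coordinate being constant, as in \cref{LemAffineArrangement}), to show that the maximal elements are a single stratum domain, namely the generic stratum of $\Shat_W\cap\Hhat$, or the cusp domains of one horoball. In either case they are pairwise orthogonal.
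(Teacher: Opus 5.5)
Your proposal matches the paper's proof, which is also pure assembly: symmetry and anti-reflexivity from the definitions, \cref{C:PO} and \cref{L:BoundedChains} for the partial order and its maximal element, \cref{C:BoundedChains} for finite height, \cref{L:Axiom3a} for coherence, and \cref{L:WeakContainers} with \cref{C:MaxSizeOrth} for weak containers; your coherence argument that the maximal elements of $\cP_{V,W}$ are properly nested in $W$ is a detail the paper leaves implicit. One caution about your optional sketch of the stratum case of \cref{L:WeakContainers}: it misses the case where $V$ is itself a stratum domain, in which $\cP_{V,W}$ is the set of angle domains of hyperplanes containing $\Shat_V$ but not $\Shat_W$ (pairwise orthogonal, but neither a single stratum domain nor cusp domains), and its first sentence leaves out the stratum domains inside $\Shat_W\cap\Hhat$ that occur when $V=N^1(\Hhat)$, so the case list needs correcting, although the pairwise orthogonality you need still holds.
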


\begin{proof}
Orthogonality is symmetric and anti-reflexive by definition; nesting is a partial order by \cref{C:PO}, with a unique maximal element by \cref{L:BoundedChains}. The finite height axiom is true by \cref{C:BoundedChains}; and the coherence axiom by \cref{L:Axiom3a}. The weak containers axiom is true by \cref{L:WeakContainers} and \cref{C:MaxSizeOrth}. 
\end{proof}

\section{Active regions }\label{S:Boundedness}

In this section we establish the density, bounded projections, and structured redundancy assumptions of \cref{P:Criterion}. 

We define the active regions $\cA_U$ for \cref{P:Criterion} as follows. To start, define closed convex subsets $K_U$ of $\Bhat$ as follows. 
\begin{enumerate}
\item If $U$ is a stratum domain,  $K_U$ is the associated stratum closure. 
\item If $U$ is an angle domain,  $K_U$ is the associated hyperplane. 
\item If $U$ is a cusp domain,  $K_U$ is the associated horoball. 
\end{enumerate}
We then define $\cB_U$ to be the $10 n r_\theta$-neighborhood of $K_U$, and we define $\cA_U=\cB_U\cap \Bhatthick$. We also recall from \cref{R:extension,R:extensionstrata,R:extensiontree,R:extensioncenter,R:extensionangle} that $\Bhat^U$ satisfies $\Bhatthick \subset \Bhat^U \subset \Bhat$ and is the domain of $\pi_U$. 

\subsection{Density} 

We start by observing the following. 

\begin{lemma}\label{L:Density} 
The density assumption of \cref{P:Criterion} holds. 
\end{lemma}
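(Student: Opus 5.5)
The plan is to reduce to the maximal orthogonal sets of \cref{L:MaxOrthogonalSets}. Passing to a larger orthogonal tuple only shrinks $\cA_{U_1}\cap\cdots\cap\cA_{U_k}$ and adds factors, and density of the image in the larger product implies density of its projection in the smaller one. So it suffices to treat the three types listed in \cref{L:MaxOrthogonalSets}; I also handle single domains directly, as a warm-up and because the later cases reuse it.

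\textbf{Single domains.} For a stratum domain $\Shatcirc$, every point of $\Shat$ lies within $8\sqrt n r_\theta$ of a point of $\Bhatthick$ by \cref{R:NudgeToThick}, once we first exit horoballs by going to their boundary. Those boundary points are electrified to the generic point anyway, so this is harmless in $\Shatzap$. Such a nudged point lies in $\cB_U$ and projects near its starting point, since $p_{\Shat}$ is $1$-Lipschitz. For an angle domain, fibers of $N^1(\Hhat)$ over generic points of $\Hhat$ outside horoballs already surject onto $N^1(\Hhat)^\sw$ up to distance $3/2$. Moving along a normal geodesic distance about $2r_\theta$ from $\Hhat$ realizes any germ, and the naive projection returns that germ.

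\textbf{Case (1) and case (2).} In case (1), $n$ hyperplanes meet at a point $x$. Take $\Vhat$ around $x$ and use the local model $\Dhat^n$. Points with all coordinates of modulus about $2r_\theta$ lie in $\Bhatthick\cap\bigcap\cB_{U_i}$. Their angle projections are the arguments in the $\Dhat$ factors, so they cover an $\R^n$ that is coarsely the full product by \cref{L:StabCobounded}. In case (2), a stratum $\Shatcirc$ lies in hyperplanes $\Hhat_1,\dots,\Hhat_k$. Given a point $s\in\Shat$, which we may move off horoballs and lower strata coarsely, together with target angles, use the local chart at $s$ to displace $s$ normally by about $2r_\theta$ in the first $k$ coordinates with the prescribed arguments. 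Then the nearest-point projection to $\Shat$ stays $s$ by \cref{R:localgeodesics}, and each angle coordinate is as prescribed. Using the $\bZ^k$-action of \cref{L:StabCobounded}, which fixes $\Shat$ pointwise and acts coboundedly on the angle product, only bounded adjustments are needed.

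\textbf{Case (3).} For the $n$ cusp domains of a horoball $\That_{\hat c}$, work on $\partial\That_{\hat c}$. Given targets in each $\Chat_i$ and in the center quasi-line, pick a point of $\prod\Chat_i$ near the targets, chosen at Voronoi-cell boundaries (on the trees $\Ghat_i$) so the corresponding point is in $\Bhatthick$ after a bounded outward push. Then translate along the fiber to reach the target center coordinate, using that fibers are quasi-isometric to the swaddled space by \cref{L:Swaddled} and that the inclusion is coarsely Lipschitz by \cref{L:Thatswlip}. Replacing a tree domain by an angle domain nested in it amounts to fixing the $i$th coordinate near the singular point $q_{\Hhat}$. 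The angle there is then controlled through the alternative projection (\cref{C:AngleProjOnHoroBoundary}), by choosing the direction of approach to $q_{\Hhat}$ in $\Chat_i$, while the other coordinates are chosen freely as before.

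\textbf{Main obstacle.} I expect case (3) with angle domains substituted to be the hardest step. The center coordinate, the angle coordinates and the remaining tree coordinates must be set independently, even though horizontal transport couples them through the monodromy. The plan for this step is to first fix the tree and angle coordinates, and then adjust the center coordinate last by a pure fiber translation. That translation changes neither the base point nor the alternative angle projection, since central translation commutes with the construction.
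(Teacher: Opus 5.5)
Your proposal is correct and follows the paper's route: reduce to the maximal orthogonal sets of \cref{L:MaxOrthogonalSets}, treat the $n$-hyperplane case by taking a nearby thick point and applying \cref{L:StabCobounded}, and read off the stratum and horoball cases from the local models of \cref{S:StratumDomains,S:CuspDomains}. You fill in details the paper leaves implicit, with one small correction: in case (3) the central fiber translation does move the alternative angle projection, to a germ over a different base point of $\Hhat$ with the same outgoing direction at $q_{\Hhat}$, so it is unchanged only coarsely, and that bound comes from \cref{L:rhoAngleTree} rather than being exact.
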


\begin{proof}
It suffices to check this for maximal orthogonal sets, which \cref{L:MaxOrthogonalSets} gives are one of the following.
\begin{enumerate}
\item A collection of $n$ angle domains corresponding to $n$ distinct hyperplanes through a point. 
\item A stratum domain and the collection of angle domains of codimension 1 strata containing it. 
\item The $n$ cusp domains associated to a horoball, as well as the same with any subset of the tree domains each replaced by an angle domain nested in it. 
\end{enumerate}
In the first case, use \cref{R:NudgeToThick} to find a thick point near the point of intersection of the $n$-hyperplanes and then apply \cref{L:StabCobounded}. The other cases follow immediately by additionally applying the discussion in \cref{S:StratumDomains,S:CuspDomains}.
\end{proof}

\subsection{Bounded projections}
We now address the bounded projection assumption. We actually prove something slightly stronger: Rather than using $\cA_U = \cB_U \cap \Bhatthick$, we use the larger set $\cB_U \cap \Bhat^V$. 

\begin{proposition}\label{P:BoundedProjections}
 If $U \pitchfork V$ or $U\sqsubsetneq V$,  $\diam \pi_V(\cB_U \cap \Bhat^V)$ is uniformly bounded.  
\end{proposition}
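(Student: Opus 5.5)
The plan is a case analysis over the pairs $(U,V)$ with $U\pitchfork V$ or $U\sqsubsetneq V$, read off from \cref{fig:rels}. In each case I will show that $K_U$ (or the thickened set $\cB_U\cap\Bhat^V$) is sent by the geometric part of $\pi_V$ either into a single electrified subset, or onto a set whose image in the swaddled quasi-line is bounded. Two mechanisms do nearly all the work. The first is the CAT($-1$) contraction \cref{L:CATBGI}: a convex set disjoint from (and coarsely separated from) another convex set projects to it with bounded diameter. The second is \cref{R:OrthIntersection} and \cref{R:HoroballCapStratProj}: intersecting stratum closures, or a stratum closure meeting a horoball, project onto their intersection. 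Since $\pi_V$ is coarsely Lipschitz (\cref{P:pre}) and $\cB_U$ is a $10nr_\theta$-neighborhood of $K_U$, it suffices in every case to bound the image of $K_U$, or of points near it, up to an additive constant.

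\textbf{$V$ a stratum domain.} Suppose first that $U$ is nested in $V$. If $U$ is a stratum domain, an angle domain, or a cusp domain, then $p_{\Shat}(K_U)$ is respectively a lower-dimensional stratum closure in $\Shat$, the intersection $\Hhat\cap\Shat$ (a codimension-one stratum closure of $\Shat$), or $\Shat\cap\That_{\hat c}$. Each of these is one of the sets along which $\Shatzap$ is electrified, so its image has diameter at most $2$.

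Suppose instead that $U\pitchfork V$. If $K_U\cap\Shat=\emptyset$, I use the constants from \cref{R:constants}: disjoint hyperplanes and strata outside horoballs are $1000nr_\theta$ apart, and horoballs project to stratum closures they miss with diameter at most $1$. Together with \cref{L:CATBGI}, this shows that $p_{\Shat}(\cB_U)$ has bounded diameter in $\Shat$. Two cases need separate treatment.
\begin{itemize}
\item Parallel hyperplanes meeting at a cusp are not uniformly separated. Here I would argue that their projection lands coarsely in $\Shat\cap\That_{\hat c}$, which is electrified.
\item Two strata may intersect with neither contained in the other. Then the projection is their intersection, which is contained in a smaller stratum closure of $\Shat$ and is therefore electrified.
\end{itemize}

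\textbf{$V$ an angle domain.} Here $U$ is asynchronous to $V$, so $K_U\cap\Hhat=\emptyset$. The pair $(U,V)$ is then a stratum or hyperplane disjoint from $\Hhat$, or a horoball that $\Hhat$ misses. The second part of \cref{L:EasyCaseOfAngleLipschitz}, applied along geodesics staying away from $\Hhat$, together with \cref{L:CATBGI}, bounds the naive projection of $\cB_U$. I still have to check that the horoball modification in the definition of $\pi_{N^1(\Hhat)}$ costs only a bounded amount; \cref{C:AngleProjOnHoroBoundary} and \cref{P:TreeAngleBGI} should handle this.

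\textbf{$V$ a cusp domain.} When $U$ is asynchronous to $V$, $K_U$ misses $\That_{\hat c}$, so \cref{R:constants} says $K_U$ projects to $\That_{\hat c}$ with diameter at most $1$. Then \cref{L:BoundaryHorosphere}, together with the coarse Lipschitz property of the maps to $\Chat_i$ and to $\partial\Thatsw_{\hat c}$ (\cref{L:Thatswlip}), gives the bound. When $U$ is an angle domain nested in a tree domain $V$, $\Hhat\cap\partial\That_{\hat c}$ maps to the single point $q_{\Hhat}\in\Chat_i$. Since $p_{\That_{\hat c}}$ is $1$-Lipschitz, $\cB_U$ maps into a bounded neighborhood of that point.

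I expect the angle-domain target to be the main obstacle. The projection $\pi_{N^1(\Hhat)}$ is not defined by a convex projection, and its domain is not convex (\cref{R:extensionangle}). Disjoint hyperplanes can also approach $\Hhat$ inside a horoball, so the separation hypothesis of \cref{L:EasyCaseOfAngleLipschitz} fails exactly there. There I would switch to the alternative projection and use \cref{P:TreeAngleBGI}: a parallel hyperplane in the same factor corresponds to a different singular point $q'\neq q_{\Hhat}$. Geodesics in $\Chat_i$ from points over $q'$ pass through $q_{\Hhat}$ for at most boundedly many configurations, so the image should be bounded.
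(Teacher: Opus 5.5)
Your overall plan matches the paper's. You split by the type of $V$ and use the contraction of nearest-point projection onto convex sets. You use the fact that intersecting stratum closures and horoballs project onto their intersection. For an angle target you switch to the alternative projection near a horoball shared with $\Hhat$. As written, though, one step is false and the hardest case is left open.

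First, for a cusp target you assert that $U\pitchfork V$ forces $K_U\cap\That_{\hat c}=\emptyset$. This fails for tree domains. By \cref{fig:rels}, a stratum domain whose closure $\Shat'$ enters $\That_{\hat c}$ and lies in a hyperplane of the parallelism class $V$ is asynchronous to $V$. Symmetrically, your stratum-target paragraph omits the asynchronous pair where $V$ is a stratum domain whose closure $\Shat$ enters $\That_{\hat c}$ and lies in a hyperplane of the class of a tree domain $U$; there $K_U\cap\Shat\neq\emptyset$. Both cases are fixed with your own tools, via \cref{R:HoroballCapStratProj}:
\begin{itemize}
\item $p_{\That_{\hat c}}(\Shat'-\mathrm{int}(\That_{\hat c}))=\Shat'\cap\partial\That_{\hat c}$, which is constant in the relevant factor;
\item $p_{\Shat}(\That_{\hat c})=\Shat\cap\That_{\hat c}$, which is electrified.
\end{itemize}
Also, for an angle target your reduction from $\cB_U$ to $K_U$ ``since $\pi_V$ is coarsely Lipschitz'' is not automatic. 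There $\pi_V$ is coarsely Lipschitz only for the intrinsic metric on the non-convex set $\Bhat^V$, and $K_U$ need not lie in $\Bhat^V$. The paper first shows that every point of $\cB_U\cap\Bhat^V$ is within bounded intrinsic $\Bhat^V$-distance of $K_U\cap\Bhat^V$, pushing radially off horoballs where needed.

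Second, the case $V=N^1(\Hhat)$ is only half done when $K_U$ is a stratum closure disjoint from $\Hhat$ that enters a horoball $\That_{\hat c}$ which $\Hhat$ also enters. Your \cref{P:TreeAngleBGI} step does control $K_U\cap\partial\That_{\hat c}$, and more simply than you state. Any such $K_U$ is constant in the factor $\Chat_i$ of $\Hhat$, at a singular point $q'\neq q_{\Hhat}$; otherwise it would meet $\Hhat$ on the horosphere. So all its points on $\partial\That_{\hat c}$ have the same $i$-th coordinate, and the geodesic in \cref{P:TreeAngleBGI} is constant. This covers general strata, not just parallel hyperplanes.

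What is missing is any link from the rest of $K_U$ to this set. \cref{L:EasyCaseOfAngleLipschitz} cannot be run along geodesics of $K_U$, for two reasons:
\begin{itemize}
\item those geodesics may dip deep into $\That_{\hat c}$, where $K_U$ and $\Hhat$ converge;
\item nearest-point projections to $\Hhat$ of points near $\partial\That_{\hat c}$ can lie deep in the horoball, where $\pi_V$ is the modified map.
\end{itemize}
The paper joins each $x\in K_U-\That_{\hat c}$ to $p_{\That_{\hat c}}(x)$, which lies in $K_U\cap\partial\That_{\hat c}$ by \cref{R:HoroballCapStratProj}. This geodesic lies in $K_U$ and enters no horoball that $\Hhat$ enters. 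It stops on $\partial\That_{\hat c}$, and $K_U$ and $\Hhat$ share no other horoball, since two shared cusps would give them a common geodesic. Hence it stays uniformly away from $\Hhat$. Then \cref{L:AnglePreBGI} gives $\pi_V(x)\approx\pi_V(p_{\That_{\hat c}}(x))$; that lemma is the modified-projection version of \cref{L:EasyCaseOfAngleLipschitz}, which is exactly the check you defer. \cref{C:AngleProjOnHoroBoundary} and \cref{L:rhoAngleTree} then finish.
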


To prove \cref{P:BoundedProjections}, we need the following, which combines a restatement of \cref{L:CATBGI} and an immediate consequence of \cref{L:CATBGI}. 

\begin{corollary}\label{C:CATBGI}
For all $\epsilon>0$ there exists a $\delta>0$ such that the following holds: Suppose $C$ is a closed convex subset of a CAT($-1$) space $X$ and $p_C$ is projection to $C$. Suppose $A$ is a convex set. 
\begin{enumerate}
\item\label{CATBGI1} If $A$ is disjoint from  the $\epsilon$-neighborhood of $C$, then $p_C(A)$ has diameter at most $\delta$, both in the ambient metric and in the induced path metric on the boundary of $C$. 
\item\label{CATBGI2} If $A$ is not disjoint from the $\epsilon$-neighborhood of $C$, then $p_C(A)$ is contained in the $\delta$-neighborhood of the intersection of $A$ and the $\epsilon$-neighborhood of $C$. 
\end{enumerate}
\end{corollary}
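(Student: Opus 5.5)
Part \eqref{CATBGI1} is \cref{L:CATBGI} applied to pairs of points. Given $a,a'\in A$, convexity of $A$ puts the geodesic $[a,a']$ inside $A$. So $[a,a']$ misses the $\epsilon$-neighborhood of $C$, and \cref{L:CATBGI} gives a path of length at most $\delta$ on $\partial C$ from $p_C(a)$ to $p_C(a')$. That bounds the distance in the induced path metric on $\partial C$, and hence also in the ambient metric, since ambient distance is at most intrinsic distance. Since $a,a'$ are arbitrary, $p_C(A)$ has diameter at most $\delta$ in both metrics. I would keep the $\delta$ from \cref{L:CATBGI} unchanged here, or double it to absorb the diameter-versus-distance convention.

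For part \eqref{CATBGI2}, let $N_\epsilon$ be the closed $\epsilon$-neighborhood of $C$. In a CAT($-1$) space, $x\mapsto d(x,C)$ is convex, so $N_\epsilon$ is convex and $A':=A\cap N_\epsilon$ is a nonempty convex set. Fix $a'\in A'$ and let $a\in A$ be arbitrary. If $a\in N_\epsilon$, then $d(p_C(a),a)\le\epsilon$, which suffices once $\delta\ge\epsilon$. Otherwise, follow the geodesic $[a',a]\subset A$ from $a$ backwards and let $b$ be the last point, counted from $a$, that lies in $N_\epsilon$. Such a point exists because $N_\epsilon$ is closed and $a'\in N_\epsilon$. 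Then $b\in A'$.

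The open segment from $b$ to $a$ avoids the open $\epsilon$-neighborhood but may come arbitrarily close to it. To handle this, apply \cref{L:CATBGI} with $\epsilon/2$ in place of $\epsilon$, getting a constant $\delta_0$. The segment $(b,a]$ stays at distance at least $\epsilon>\epsilon/2$ from $C$. By continuity of $p_C$ (it is $1$-Lipschitz), $d(p_C(a),p_C(b))\le\delta_0$. Also $d(p_C(b),b)\le\epsilon$. Hence $p_C(a)$ lies within $\delta_0+\epsilon$ of $b\in A'$, and $\delta:=\max(\delta_0,\delta_1)+\epsilon$ works for both parts, where $\delta_1$ is the constant from part \eqref{CATBGI1}.

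The main obstacle is only this boundary issue: matching the strict and non-strict $\epsilon$-neighborhood conventions so that \cref{L:CATBGI} applies to the segment. Halving $\epsilon$, or equivalently approximating $b$ from the side of $a$ and passing to a limit, resolves it.
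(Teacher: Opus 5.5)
Your proof is correct and follows the paper's argument. The paper gives no separate proof: it calls part (1) a restatement of \cref{L:CATBGI}, which you apply to geodesics between points of the convex set $A$, and part (2) an immediate consequence of that lemma, which your last-exit point $b$ on $[a',a]$ together with \cref{L:CATBGI} at scale $\epsilon/2$ spells out, including the open-versus-closed neighborhood bookkeeping.
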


Throughout this section, angle domains need to be handled with special care, due to the modified nature of their projections. So we start with the following. 

\begin{lemma}\label{L:AnglePreBGI}
For all $\epsilon>0$ there exists $C>0$ such that the following holds. 
Suppose $x,y\in \Bhat^U$, where $U$ is an angle domain with corresponding hyperplane $\Hhat$, and the geodesic from $x$ to $y$ does not come within distance $\epsilon$ of $\Hhat$ and does not enter any horoball that $\Hhat$ enters. Then $d_U(\pi_U(x),\pi_U(y))\leq C$. 
\end{lemma}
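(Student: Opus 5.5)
The plan is to reduce to \cref{L:EasyCaseOfAngleLipschitz}, whose second statement already treats geodesics of any length avoiding the $\epsilon$-neighborhood of $\Hhat$. The work is to check that, under our hypotheses, $\pi_U$ agrees with the naive projection up to bounded error at $x$ and $y$, and that the horoball-depth hypothesis of \cref{L:EasyCaseOfAngleLipschitz} holds for $p_{\Hhat}(x)$ and $p_{\Hhat}(y)$.

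First, apply \cref{C:CATBGI}\eqref{CATBGI1} with $C=\Hhat$ and $A=[x,y]$. This gives that $p_{\Hhat}([x,y])$ has diameter at most some $\delta=\delta(\epsilon)$. Next, I bound how deep $p_{\Hhat}(x)$ and $p_{\Hhat}(y)$ can lie in a horoball. Suppose $p_{\Hhat}(x)$ lies in a horoball $\That_{\chat}$; it must be one that $\Hhat$ enters. Since $x\in\Bhat^U$, $x$ is outside the interior of $\That_{\chat}$. The geodesic $[x,p_{\Hhat}(x)]$ enters $\That_{\chat}$ at an entry point $z$. I will show $z$ is boundedly close to $\Hhat$, so that $p_{\Hhat}(x)$ is boundedly deep.

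To do this I plan to use that $[x,y]$ does not enter $\That_{\chat}$. Projecting the convex set $[x,y]$ to the convex set $\That_{\chat}$, \cref{C:CATBGI} (applied to $\That_{\chat}$) shows the projection is uniformly small unless $[x,y]$ comes close to the horoball. Combined with \cref{L:EntryPoint}, this gives $z$ within bounded distance of $p_{\That_{\chat}}(x)$. Convexity of the $\epsilon$-thin geometry, or a direct thin-triangle argument in the CAT($-1$) triangle $x,p_{\Hhat}(x),z$, should then show that $z$ is within a bounded distance $\epsilon_1$ of $\Hhat$.

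If $z$ is within the fixed threshold $\epsilon$ (the one used in defining $\pi_U$) of $\Hhat$, then $\pi_U(x)$ is by definition the naive projection. Otherwise $\pi_U(x)$ is the alternative projection of $z$. By \cref{L:NaiveProjectionReasonableNearHhat}, using $d(z,\Hhat)\le\epsilon_1$, this is within $C_{\epsilon_1}$ of the naive projection of $z$. The naive projections of $z$ and $x$ coincide, since $z\in[x,p_{\Hhat}(x)]$ and so $p_{\Hhat}(z)=p_{\Hhat}(x)$ with the same germ. In either case $\pi_U(x)$ is uniformly close to $p_{N^1(\Hhat)}(x)$, and likewise for $y$. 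Also $p_{\Hhat}(x)$ lies within bounded distance of $z$, hence of the horosphere, so it is at bounded distance from the complement of horoballs.

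Finally, apply the second statement of \cref{L:EasyCaseOfAngleLipschitz} to $[x,y]$ to bound the distance between $p_{N^1(\Hhat)}(x)$ and $p_{N^1(\Hhat)}(y)$, and conclude by the triangle inequality.

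The main obstacle is the middle step: bounding the depth of $p_{\Hhat}(x)$ in a horoball, equivalently bounding $d(z,\Hhat)$. The hypothesis that $[x,y]$ avoids the horoballs is what should prevent the degenerate case. I expect that case to need a genuine argument with the thin triangle $x$, $p_{\Hhat}(x)$, $p_{\That_{\chat}}(x)$ and convexity of both $\Hhat$ and $\That_{\chat}$. If this proves awkward, I would instead subdivide $[x,y]$ into unit pieces, handle pieces staying outside horoball-projection regions by the first statement of \cref{L:EasyCaseOfAngleLipschitz}, and handle the rest by \cref{C:AltLip}.
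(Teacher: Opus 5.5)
Your reduction depends on the claim that the entry point $z$ of $[x,p_{\Hhat}(x)]$ into $\That_{\chat}$ is always within a bounded distance $\epsilon_1$ of $\Hhat$. That claim is false, and the lemma's hypotheses cannot force it. They constrain only the segment $[x,y]$, and they say nothing about $d(x,\Hhat)$ beyond $d(x,\Hhat)\geq\epsilon$.

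Here is a counterexample configuration. Take $y$ equal or very close to $x$, with $x$ just outside $\That_{\chat}$ but far from $\Hhat$ along the horosphere, say over a point of $\Chat_i$ far from $q_{\Hhat}$. Then $[x,y]$ neither enters the horoball nor comes near $\Hhat$. Yet $p_{\Hhat}(x)$ lies deep in $\That_{\chat}$: already in $B$, the perpendicular from a point of $\partial T_c$ at horizontal distance $R$ from $H$ reaches $H$ at depth roughly $\log R$. By \cref{L:EntryPoint}, the entry point $z$ is then near $x$, hence far from $\Hhat$. This is exactly the situation the modified projection was designed for.

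In this case \cref{L:NaiveProjectionReasonableNearHhat} gives you nothing, since $d(z,\Hhat)$ is unbounded. \cref{L:EasyCaseOfAngleLipschitz} does not apply either, because its hypothesis that $p_{\Hhat}(x)$ be boundedly close to the complement of the horoballs fails. Your fallback of subdividing $[x,y]$ into unit pieces does not help: it gives a bound linear in $d(x,y)$, whereas $C$ must depend only on $\epsilon$.

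The missing idea is a separate treatment of the deep case that never passes through the naive projection.
\begin{enumerate}
\item By \cref{C:CATBGI}, $p_{\Hhat}(x)$ and $p_{\Hhat}(y)$ are close. A small elaboration, using that $[x,y]$ stays outside $\That_{\chat}$, shows that $p_{\That_{\chat}}(x)$ and $p_{\That_{\chat}}(y)$ are also close.
\item By \cref{L:EntryPoint}, the two entry points $z_x,z_y$ are then close in $\Bhat$. By \cref{L:BoundaryHorosphere}, they are close in the intrinsic metric on $\partial\That_{\chat}$.
\item When $z_x,z_y$ are far from $\Hhat$, $\pi_U(x)$ and $\pi_U(y)$ are by definition the alternative projections of $z_x$ and $z_y$. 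Coarse Lipschitzness (\cref{C:AltLip}, equivalently \cref{L:AngleLipschitz}) then bounds $d_U(\pi_U(x),\pi_U(y))$ directly.
\item Only when the entry points are boundedly close to $\Hhat$ should you replace $\pi_U$ by the naive projection via \cref{L:NaiveProjectionReasonableNearHhat} and finish with \cref{L:EasyCaseOfAngleLipschitz}. That part of your argument is correct.
\end{enumerate}
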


\begin{proof}
First suppose $\pi_U(x)$ and $\pi_U(y)$ are equal to the naive projections; so if the nearest point projections of $x$ and $y$ lie in a horoball that $\Hhat$ enters, the entry point is reasonably close to $\Hhat$.  Then
\cref{L:EasyCaseOfAngleLipschitz} gives the result. 

Suppose the nearest point projection of $x$ to $\Hhat$ lies in a horoball $\That_{\hat{c}}$ that $\Hhat$ enters. \cref{C:CATBGI} gives that the nearest point projections of $x$ and $y$ to $\Hhat$ are close to each other, and a small elaboration of \cref{C:CATBGI} gives the same for the projections to $\That_{\hat{c}}$. 
If these closest point projections to $\That_{\hat{c}}$ are far from $\Hhat$, the result follows from the fact that $\pi_U$ is coarsely Lipschitz (\cref{L:AngleLipschitz}), because \cref{L:EntryPoint} gives that the entry points are close to each other.

\cref{L:NaiveProjectionReasonableNearHhat} gives that if  $z\in \partial\That_{\hat{c}}$ has bounded distance from $\Hhat$ then the naive and modified projections  of $z$ are bounded distance apart. Thus, in the remaining case it suffices to use the naive projection, and again  \cref{L:EasyCaseOfAngleLipschitz} gives the result. 
\end{proof}

\begin{proof}[Proof of \cref{P:BoundedProjections}]
For convenience, we reproduce a modified form of \cref{fig:rels} here as \cref{fig:relsbounded}, which indicates which cases must be handled.
\begin{figure}[h]
\begin{center}
\setlength{\extrarowheight}{15pt}
\resizebox{\textwidth}{!}{\begin{tabular}{c|c|c|c|c}
    \spcbounded{top $\perp$  side}{top $\sqsubsetneq$ side}{top $\pitchfork$ side} & 
    \makecell{stratum domain \\ $\Shatcirc'$} &  
    \makecell{angle domain \\ $N^1(\Hhat')$} & 
    \makecell{center domain\\ $\That_{\hat{c}'}$} & 
    \makecell{tree domain $W'$ \\ $\{\text{hyperplanes constant}$ \\ $\text{in fixed factor of } \That_{\hat{c}'}\}$ } \\[10pt]\cline{1-5}
    %
    %
    %
    \makecell{stratum domain \\ $\Shatcirc$} & 
    \spcbounded{}{\color{DarkOrchid}$\Shat'\subsetneq \Shat$}{\color{DarkOrchid}$\Shat'\not\subset \Shat \,\,\text{\&}\,\, \Shat\not\subset \Shat' $} & 
    \spcbounded{$\Shat\subset \Hhat'$}{\color{DarkOrchid}$\Shat \not\subset \Hhat' \,\,\text{\&}\,\, \Shat \cap\Hhat'\neq \emptyset$}{\color{ForestGreen}$\Shat \cap\Hhat'= \emptyset$} & 
    \spcbounded{}{\color{DarkOrchid}$\Shat\cap \That_{\hat{c}'} \neq \emptyset$}{\color{ForestGreen}$\Shat\cap \That_{\hat{c}'} = \emptyset$} & 
    \spcbounded{}{\color{DarkOrchid}$\Shat\cap \That_{\hat{c}'}\neq \emptyset\,\,\text{\&}\,\,\Shat \not\subset \bigcup W'$}{\color{ForestGreen}$\Shat\cap \That_{\hat{c}'}= \emptyset$\color{black}\,\,\text{or}\,\, \color{DarkOrchid}$\Shat\subset \bigcup W'$}  \\[10pt]\cline{1-5}
    %
    %
    %
    \makecell{angle domain \\ $N^1(\Hhat)$} & 
    \spcbounded{$\Shat'\subset \Hhat$}{}{\color{ForestGreen}$\Hhat \cap\Shat'= \emptyset$\color{black}} & 
    \spcbounded{$\Hhat\cap \Hhat' \neq \emptyset \,\,\text{\&}\,\, \Hhat\neq \Hhat'$}{}{\color{ForestGreen}$\Hhat\cap \Hhat' = \emptyset$} & 
    \spcbounded{$\Hhat \cap \That_{\hat{c}'}\neq \emptyset$}{}{\color{ForestGreen}$\Hhat \cap \That_{\hat{c}'}= \emptyset$} & 
    \spcbounded{$ \Hhat \cap \That_{\hat{c}'}\neq\emptyset\,\,\text{\&}\,\, \Hhat \notin W'$}{}{\color{ForestGreen}$\Hhat \cap \That_{\hat{c}'}= \emptyset$} \\[10pt]\cline{1-5}
    %
    %
    %
    \makecell{center domain\\ $\That_{\hat{c}}$} &
    \spcbounded{}{}{\color{ForestGreen}$\Shat'\cap \That_{\hat{c}} = \emptyset$} & 
    \spcbounded{\color{ForestGreen}$\Hhat'\cap \That_{\hat{c}}\neq \emptyset$}{}{\color{ForestGreen}$\Hhat'\cap \That_{\hat{c}}= \emptyset$} & 
    \spcbounded{}{}{\color{ForestGreen}$\hat{c}\neq \hat{c}'$}  & 
    \spcbounded{\color{ForestGreen}$\hat{c}=\hat{c}'$}{}{\color{ForestGreen}$\hat{c}\neq \hat{c}'$} \\[10pt]\cline{1-5}
    %
    %
    %
    \makecell{tree domain $W$ \\ $\{\text{hyperplanes constant}$\\$\text{in fixed factor of } \That_{\hat{c}}\}$ }& 
    \spcbounded{}{}{\color{ForestGreen}$\Shat'\cap \That_{\hat{c}}= \emptyset$\color{black}\,\,\text{or}\,\, \color{DarkOrchid} $\Shat'\subset \bigcup W$} & 
    \spcbounded{$ \Hhat' \cap \That_{\hat{c}}\neq \emptyset \,\,\text{\&}\,\, \Hhat' \notin W$}{\color{DarkOrchid}$\Hhat'\in W$}{\color{ForestGreen}$\Hhat' \cap \That_{\hat{c}}= \emptyset$} & 
    \spcbounded{$\hat{c}=\hat{c}'$}{}{\color{ForestGreen}$\hat{c}\neq \hat{c}'$} & 
    \spcbounded{$\hat{c}=\hat{c}' \,\,\text{\&}\,\, W\neq W'$}{}{\color{ForestGreen}$\hat{c}\neq \hat{c}'$}  \\[10pt]
\end{tabular}}
\end{center}
\caption{The definition of properly nested and asynchronous.}
     \label[figure]{fig:relsbounded}
\end{figure}
We consider cases where $V$ is a domain on the side and $U$ is a domain on the top and the conditions given in \cref{fig:relsbounded} hold, and we must show $\pi_V(\cB_U \cap \Bhat^V)$ is bounded. We do this in four cases, each of which starts off by establishing this claim. 

\bold{Claim:} Every point of $\cB_U \cap \Bhat^V$ has uniformly bounded distance in the intrinsic $\Bhat^V$ path metric to a point of $K_U \cap \Bhat^V$.

\bold{Case 1: $V$ is a stratum domain.} Let $\Shat$ be the associated stratum closure.  In this case $\Bhat^V=\Bhat$, and $K_U$ is one of: a horoball; or a stratum closure not containing $\Shat$. (Hyperplanes are stratum closures.) Since $\Bhat^V=\Bhat$ the claim is immediate in this case. 

By the claim, it suffices to show that $\pi_V(K_U)$ is uniformly bounded. Here $\pi_V$ is the composition of closest point projection to $\Shat$ followed by inclusion into the electrification. \cref{C:CATBGI} and \cref{R:HoroballCapStratProj,R:OrthIntersection} give that the closest point projection of $K_U$ to $\Shat$ is either uniformly bounded or uniformly close to being contained in a smaller stratum closure or a horoball intersected with $\Shat$. 
%
%
%
%
Since these regions of $\Shat$ are contained in regions that are electrified in $\cC(V)$,  it follows that $\pi_V(K_U)$ is uniformly bounded. 

\bold{Case 2: $V$ is a center domain.} Let $\That_{\chat}$ be the associated horoball.  In this case $\Bhat^V = \Bhat - \int(\That_{\chat})$, and $K_U$ is one of: a stratum closure disjoint from $\That_{\chat}$;  or a horoball disjoint from $\That_{\chat}$. The claim is immediate, since \cref{R:constants} provides more than enough separation from $\That_{\chat}$.

Keeping in mind \cref{R:constants}, \cref{C:CATBGI} gives that the closest point projection of $K_U$ to $\That_{\chat}$ is bounded. This gives the result because $\pi_V$ factors through that projection and $\pi_V$ is coarsely Lipschitz. 

\bold{Case 3: $V$ is a tree domain.} Let $\That_{\chat}$ be the associated horoball. In this case $\pi_V$ is defined on the complement of the interior of $\That_{\chat}$, and $K_U$ is one of: a  horoball disjoint from $\That_{\chat}$; a stratum closure disjoint from $\That_{\chat}$; or a stratum closure which is constant in the factor $\Chat_i$ associated to $V$.

For both the claim and the result, the first two of the possibilities for $K_U$ follow as in Case 2, so let us  consider the case of a stratum closure $\Shat'$ which is constant in $\Chat_i$. For the claim, this last case is not much harder, except that for points of $\cB_U \cap \Bhat^V$ very close to $\That_{\hat{c}}$ the geodesic to $K_U$, which is very short, may require a slight radial push outwards if it enters $\That_{\hat{c}}$. 

By \cref{R:HoroballCapStratProj}, the closest point projection to $\That_{\hat{c}}$ of $\Shat'-\int(\That_{\hat{c}})$ is equal to $\Shat' \cap \partial \That_{\hat{c}}$. Since $\Shat' \cap \partial \That_{\hat{c}}$ is constant in the relevant factor $\Chat_i$ the result follows. 

\bold{Case 4: $V$ is an angle domain.} Let $\Hhat$ be the associated hyperplane. In this case $\pi_V$ is defined on the set of points at least $r_\theta$ from $\Hhat$ and not in the interior of a horoball that $\Hhat$ enters, and $K_U$ is one of: a stratum closure disjoint from $\Hhat$; or a horoball that is disjoint from $\Hhat$. 

For the claim, the key subcase here is when $K_U$ is a stratum closure that enters a horoball that $\Hhat$ enters, and this subcase follows as in Case 3 since \cref{R:constants} gives a separation of at least $1000n r_\theta$ between $\Hhat$ and the stratum closure outside of horoballs. The other subcases follow as in Case 2. This proves the claim in this case. 

To prove the result, we divide into two subcases. 

\bold{Case 4a: No horoball is entered by both $K_U$ and $\Hhat$.} In this case we get the result by \cref{L:AnglePreBGI} and \cref{R:constants}.

\bold{Case 4b: Some horoball is entered by both $K_U$ and $\Hhat$.} In this case, $K_U$ is a stratum closure $\Shat$ that is disjoint from $\Hhat$ but enters a horoball $\That_{\hat{c}}$ that $\Hhat$ enters. Note that $\Shat$ cannot go into any other horoball that $\Hhat$ enters, since otherwise $\Shat$ and $\Hhat$ would have a geodesic in common \cite[Proposition 4.4.4]{DasSimmonsUrbanski}. 

Every point of $K_U - \That_{\hat{c}}$  can be joined to its closest point projection to $\That_{\hat{c}}$ by a geodesic. This closest point projection lies in $\Shat \cap \partial \That_{\hat{c}}$ by \cref{R:HoroballCapStratProj}, so the geodesic lies in the region where \cref{L:AnglePreBGI} applies. 
%
%
The intersection $\Shat \cap \partial \That_{\hat{c}}$ maps to a coarse point of $\cC(V)$ via the alternative projection, by \cref{L:rhoAngleTree}. This gives the result by \cref{C:AngleProjOnHoroBoundary}.
\end{proof}

We conclude with the auxiliary statement of the boundedness assumption. 

\begin{lemma}\label{L:NestIntersect}
If $U\sqsubsetneq V$ then $\cA_U \cap \cA_V\neq\emptyset.$
\end{lemma}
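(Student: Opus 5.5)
The plan is to go through the possible proper nestings $U\sqsubsetneq V$ listed in \cref{fig:rels}. For each, we exhibit a point of $\Bhat$ lying in $K_U\cap K_V$, or within a few multiples of $r_\theta$ of both $K_U$ and $K_V$. We then move it into $\Bhatthick$ by a short displacement using \cref{R:NudgeToThick}. Since $\cA_U$ and $\cA_V$ are the intersections of $\Bhatthick$ with the $10nr_\theta$-neighborhoods of $K_U,K_V$, and \cref{R:NudgeToThick} moves points by at most $8\sqrt n\,r_\theta<10nr_\theta$, it suffices to find a point $x\in K_U\cap K_V$ that is not in any horoball. Alternatively, we find one on a horosphere boundary, where an outward radial push handles the issue.

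The cases split by the type of $V$, which is a stratum domain or a tree domain.

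\emph{$V$ a stratum domain, $U$ a stratum or angle domain.} Then $K_U\cap K_V\neq\emptyset$: either $K_U=\Shat'\subset\Shat=K_V$, or $K_U=\Hhat'$ meets $\Shat$. The intersection $K_U\cap K_V$ is a stratum closure of positive dimension or a point. In either case it contains points outside every horoball. A point is outside by choice of horoballs in \cref{R:constants}, since no 0-dimensional stratum lies in a horoball. A positive-dimensional stratum closure is not contained in any horoball. Apply \cref{R:NudgeToThick} to such a point.

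\emph{$V$ a stratum domain, $U$ a cusp domain.} Here $\Shat$ enters $\That_{\hat c}$, so $\Shat\cap\partial\That_{\hat c}$ is nonempty. Take $x$ on it and push it slightly radially outward along $\Shat$, away from $\hat c$, staying within $\Shat$. The pushed point is outside all horoballs, since horoballs are far apart, and it is within tiny distance of $\That_{\hat c}$. Then nudge to $\Bhatthick$ by \cref{R:NudgeToThick}.

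\emph{$V$ a tree domain, $U$ an angle domain with $\Hhat$ constant in that factor.} $\Hhat$ enters $\That_{\hat c}$, and the same boundary point argument applies with $\Hhat$ in place of $\Shat$.

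The only delicate point, and the one I expect to be the main obstacle, is controlling constants in the outward push. The push must stay in the relevant stratum closure. The final nudge from \cref{R:NudgeToThick} must not carry the point back into the horoball, or farther than $10nr_\theta$ from either set. This uses the constant conventions of \cref{R:constants} and the bound $8\sqrt n\,r_\theta\le 10nr_\theta$. The details mirror the first step of \cref{R:NudgeToThick}.
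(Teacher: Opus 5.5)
Your proposal is correct and is essentially the paper's argument. The paper just reads off from \cref{fig:relsbounded} that $K_U\cap K_V\neq\emptyset$ for every proper nesting and leaves implicit the passage to a point of $\Bhatthick$; your case check plus \cref{R:NudgeToThick} fills that in.

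The step you flag as delicate is not delicate. The outward push can be of arbitrarily small length $\eta$ and need not stay in the stratum. Since \cref{R:NudgeToThick} already lands in $\Bhatthick$, re-entry into the horoball is irrelevant, and $8\sqrt n\,r_\theta+\eta<10nr_\theta$ finishes the argument.
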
 

\begin{proof}
This follows immediately from the definitions; see \cref{fig:relsbounded}.
\end{proof}

\subsection{Structured redundancy}\label{SS:SR}
We prove the two halves of the structured redundancy assumption separately.

\begin{proposition}\label{P:SR1} 
There exists a $B>0$ such that
if $U\pitchfork V$, for any $x\in \Bhatthick$ we have at least one of
$$d_U(\pi_U(x), \pi_U(\cA_V)) \leq B \quad\quad\text{or}\quad\quad d_V(\pi_V(x), \pi_V(\cA_U)) \leq B.$$
\end{proposition}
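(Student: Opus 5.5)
This is a Behrstock-type inequality, and I will prove it by a uniform CAT($-1$) argument reduced to the convex sets $K_U, K_V$. The asynchronous pairs are listed in \cref{fig:relsbounded}. In every case either $K_U\cap K_V=\emptyset$, or the pair is stratum/stratum with neither closure containing the other, or the pair is stratum/tree with $\Shat\subset\bigcup W$ (a stratum lying in a hyperplane that is constant in the factor). Fix $x\in\Bhatthick$ and write $p_U=p_{K_U}$, $p_V=p_{K_V}$ for nearest-point projections in $\Bhat$. The strategy is the standard one for pairs of convex sets in a hyperbolic space. Let $\epsilon$ be a fixed multiple of $r_\theta$, and let $N$ be the $\epsilon$-neighborhood. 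Then at least one of the two geodesics $[x,p_U(x)]$ and $[x,p_V(x)]$ stays out of the $\epsilon$-neighborhood of the other convex set, except within a bounded distance of $p_V(K_U)$ (respectively $p_U(K_V)$). This follows from thin triangles applied to the triangle with vertices $x,p_U(x),p_V(x)$ together with \cref{C:CATBGI}.

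\textbf{Step 1: the disjoint case.} Suppose $K_U\cap K_V=\emptyset$. If $[x,p_V(x)]$ avoids $N_\epsilon(K_U)$, then the geodesic from $x$ to any point $a\in\cA_V$ near $p_V(x)$ can be chosen to avoid $N_\epsilon(K_U)$. \cref{C:CATBGI} then gives that $p_U(x)$ is uniformly close to $p_U(a)$. Conversely, if that geodesic does meet $N_\epsilon(K_U)$, then $x$ lies near a point whose projection to $K_V$ is close to $p_V(K_U)$. Since $\pi$ is coarsely Lipschitz, the second alternative, $d_V(\pi_V(x),\pi_V(\cA_U))\le B$, follows from \cref{P:BoundedProjections}. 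To turn closeness of projections in $\Bhat$ into closeness in $\cC(\cdot)$, I use that stratum and center/tree projections factor through $p_U$ and are coarsely Lipschitz. I also use \cref{L:Density}/\cref{R:NudgeToThick} to move points of $K_U$ into $\cA_U$.

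\textbf{Step 2: the intersecting stratum cases.} Suppose $\Shat\cap\Shat'\ne\emptyset$ with neither containing the other. Then $p_{\Shat}(\Shat')=\Shat\cap\Shat'$ by \cref{R:OrthIntersection}, and this set is contained in a lower-dimensional stratum closure that is electrified in $\cC(\Shatcirc)$; the symmetric statement holds as well. So $\pi_{\Shat}(\cA_{\Shat'})$ is close to the generic point of $[\Shat\cap\Shat']$. The key claim is then that either $p_{\Shat}(x)$ or $p_{\Shat'}(x)$ is uniformly close in the electrified metric to $\Shat\cap\Shat'$. I expect to get this from thin quadrilaterals $x,p_{\Shat}(x),p_{\Shat'}(x),$ and a point of $\Shat\cap\Shat'$, combined with the orthogonality of the stratum closures at their intersection. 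Indeed, the angle of $[x,p_{\Shat}(x)]$ with $\Shat$ is at least $\pi/2$, and orthogonal meeting forces one of the two projections to lie near the intersection. The stratum/tree case with $\Shat'\subset\bigcup W$ is handled the same way inside $\partial\That_{\hat{c}}$: there $\Shat'$ is constant in the $i$th factor, and the product structure of $\prod\Chat_i$ reduces it to the observation that the projection to $\Chat_i$ is then a single point.

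\textbf{Step 3: angle domains, the main obstacle.} Angle domains are the hardest case, because $\pi_U$ is not simply a projection and $\Bhat^U$ is not convex. Here I would replace \cref{C:CATBGI} by \cref{L:AnglePreBGI}. If $[x,p_{K_V}(x)]$ stays $\epsilon$-far from $\Hhat$ and does not enter horoballs that $\Hhat$ enters, then $\pi_U(x)$ is close to $\pi_U$ of a point of $\cA_V$. In the horoball subcase I would argue as in Case~4b of \cref{P:BoundedProjections}, using \cref{C:AngleProjOnHoroBoundary} and \cref{L:rhoAngleTree}. The main difficulty I anticipate is the bookkeeping that pushes geodesics into $\Bhat^U$ near horoballs, via \cref{L:EntryPoint} and \cref{L:BoundaryHorosphere}.
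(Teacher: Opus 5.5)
Your Steps 1--2 are a legitimate route that differs from the paper's for stratum and cusp domains. Your dichotomy is whether $[x,p_{K_V}(x)]$ meets $N_\epsilon(K_U)$. If it does, $p_{K_V}(x)$ is $\epsilon$-close to $p_{K_V}(K_U)$. If it does not, \cref{C:CATBGI} gives $p_{K_U}(x)\approx p_{K_U}(p_{K_V}(x))$. The paper instead uses a first-entry point $y$, along a geodesic from $x$ toward $\cA_U\cup\cA_V$, into $\cB_U^+\cup\cB_V^+$. Two small repairs are needed. First, you cannot always find $a\in\cA_V$ near $p_V(x)$, since $p_V(x)$ may lie deep in a horoball. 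Use instead that $p_{K_V}(x)\in K_V\cap\Bhat^U\subset\cB_V\cap\Bhat^U$, together with the strengthened form of \cref{P:BoundedProjections}. Second, the thin-quadrilateral and orthogonality heuristics in Step 2 are unnecessary: the same dichotomy plus \cref{R:OrthIntersection} gives the intersecting-strata case directly.

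The genuine gap is in Step 3. It concerns the case $U,V$ both angle domains whose hyperplanes $\Hhat_U,\Hhat_V$ are disjoint but enter a common horoball $\That_{\hat c}$, constant in a factor $\Chat_i$ at points $q_U\neq q_V$. This is the only hard case.

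Here $\pi_V$ is not a nearest-point projection, so the ``otherwise'' branch of your dichotomy does not yield the second alternative. Suppose $[x,p_{\Hhat_V}(x)]$ enters $\That_{\hat c}$ at $z$ far from $\Hhat_V$. Then $\pi_V(x)$ is the alternative projection of $z$, coarsely the germ at $q_V$ of the $\Chat_i$-geodesic $[q_V,z_i]$. But $\pi_V(\cA_U)$ is coarsely the germ of $[q_V,q_U]$, and these are far apart whenever $[z_i,q_U]$ passes through $q_V$. Which alternative holds is therefore decided by the tree geometry of $\Chat_i$, not by whether a $\Bhat$-geodesic enters a neighbourhood. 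Case 4b of \cref{P:BoundedProjections} and \cref{L:rhoAngleTree} only show that $\pi_V(\cA_U)$ is a coarse point; they do not supply this dichotomy. Your plan also loses control of $[x,z]$, which may pass near $\Hhat_U$ or through another horoball that $\Hhat_U$ enters before reaching $z$.

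The missing idea has two parts. First, use a single point $y$: the first point of the geodesic in $\cB_U^+\cup\cB_V^+$, where $\cB^+$ adds the horoballs the hyperplane enters. Then \cref{L:AnglePreBGI} gives $\pi_U(x)\approx\pi_U(y)$ and $\pi_V(x)\approx\pi_V(y)$ simultaneously. Second, in $\Chat_i$ determine which of $q_U,q_V$ the geodesic from $y_i$ reaches first, say $q_U$. Since that geodesic avoids $q_V$, \cref{P:TreeAngleBGI} together with \cref{C:AngleProjOnHoroBoundary} gives $\pi_V(y)\approx\pi_V(\cB_U\cap\Bhat^V)$.
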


Roughly speaking, the proof follows a common strategy that considers a sort of first entry point. 

\begin{proof}
To prove this, start by defining $\cB_U^+$ to be $\cB_U$ if $U$ is a stratum or cusp domain, and to be $\cB_U$  union all the horoballs it intersects if $U$ is an angle domain.  

Let $z$ be any point of $\cA_U\cup \cA_V$. Let $y$ be the first point along the geodesic from $x$ to $z$ that is contained in $\cB_U^+\cup \cB_V^+$. Without loss of generality, assume $y\in \cB_U^+$. If $y$ is in \emph{both} $\cB_U^+$ and $\cB_V^+$, then without loss of generality assume that if $y$ is in $\cB_U \cup \cB_V$ then $y\in \cB_U$.

Note that the interior of the geodesic from $x$ to $y$ is disjoint from $\cB_U^+\cup \cB_V^+$, and so in particular the geodesic is in  $\Bhat^U\cap \Bhat^V$. 
By \cref{P:BoundedProjections}, it suffices to show $\pi_V(x)$ is close to $\pi_V(\cB_U \cap \Bhat^V)$ or $\pi_U(x)$ is close to $\pi_U(\cB_V \cap \Bhat^U)$.

\bold{Case 1: $y \in \cB_U.$} In this case $\pi_V(\cB_U \cap \Bhat^V)\approx \pi_V(x)$ follows from \cref{C:CATBGI,L:AnglePreBGI}.

\bold{Note:} If Case 1 does not apply, then since $y\notin \cB_U\cup \cB_V$ and $y\in \cB_U^+$ we have
\begin{enumerate}
\item $U$ is an angle domain and the corresponding  hyperplane enters a horoball $\That_{\hat{c}}$ such that $y\in \partial \That_{\hat{c}}$, and 
\item $V$ cannot be a cusp domain associated to $\That_{\hat{c}}$.
\end{enumerate}
We assume this in all the remaining cases. 

\bold{Case 2: $V$ is a cusp domain, or $V$ is a stratum or angle domain with $K_V$ disjoint from $\That_{\hat{c}}$.}
In this case \cref{R:constants} gives that the set $K_V$ is far from $\That_{\hat{c}}$. Since there are points of $\cB_U \cap \Bhat^V$ arbitrarily close to $\That_{\hat{c}}$, $\pi_V(\cB_U \cap \Bhat^V)\approx \pi_V(x)$ again follows from  \cref{C:CATBGI,L:AnglePreBGI,R:constants}.

\bold{Case 3: $V$ is a stratum domain whose associated stratum intersects $\That_{\hat{c}}$.} Since there are points of $\cB_U \cap \Bhat^V$ arbitrarily close to $\That_{\hat{c}}$, and since  $\pi_V(\That_{\hat{c}})$ is a coarse point, we get $\pi_V(\cB_U \cap \Bhat^V)\approx \pi_V(x)$ from  \cref{C:CATBGI}.

\bold{Case 4: $V$ is an angle domain whose hyperplane intersects $\That_{\hat{c}}$.} So the two hyperplanes, which must be disjoint, both enter the given horoball. We continue to make use of the entry point $y$ defined above.

We do a second version of the rough idea used above, but now in the factor $\Chat_i$ where the hyperplanes associated to $U$ and $V$ are constant. Let $q_U, q_V\in \Chat_i$ be those constant values, and let $y_i$ be the image of $y$ in $\Chat_i$. Pick $z'\in \{q_U, q_V\}$, and consider the geodesic in $\Chat_i$ from $y_i$ to $z'$. Let $y_i'$ be the first point of this geodesic in $\{q_U, q_V\}$, and without loss of generality assume $y_i
'=q_U$. 

\cref{L:AnglePreBGI}, \cref{P:TreeAngleBGI}, and \cref{C:AngleProjOnHoroBoundary}  then give that $\pi_V(\cB_U \cap \Bhat^V)\approx \pi_V(y)\approx \pi_V(x)$.
\end{proof}

\begin{proposition}\label{P:SR2}
There exists a $B>0$ such that the following holds. 
 If $U\sqsubsetneq V$, for any $x,y\in \Bhatthick$, if there is a geodesic from $\pi_V(x)$ to $ \pi_V(y)$ that has distance at least $B$ from $\pi_V(\cA_U)$, then $d_U(\pi_U(x), \pi_U(y))\leq B$.
\end{proposition}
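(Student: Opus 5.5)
We argue by cases on the pair $(U,V)$ with $U\sqsubsetneq V$, following \cref{fig:relsbounded}. By \cref{L:BoundedChains} and \cref{R:limitednesting}, $V$ is a stratum domain or a tree domain, and if $V$ is a tree domain then $U$ is an angle domain. In every case the plan is the same: find a geodesic $\gamma$ in $\Bhat$ from $x$ to $y$ that stays far from $K_U$ (and, for angle domains $U$, out of the horoballs that the hyperplane of $U$ enters), and then use the bounded geodesic image results already proved. For stratum, center and tree domains $U$ these are \cref{C:CATBGI} together with the coarse Lipschitz property of $\pi_U$. For angle domains $U$ they are \cref{L:AnglePreBGI}, and inside a horoball \cref{P:TreeAngleBGI} with \cref{C:AngleProjOnHoroBoundary}.

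\textbf{Case $V$ a stratum domain with closure $\Shat$.} Here $\pi_V=\iota\circ p_{\Shat}$. Let $\gamma=[x,y]$. Since $p_{\Shat}$ is $1$-Lipschitz and $\Shat$ is convex, $p_{\Shat}(\gamma)$ is a path in $\Shat$, and the geodesic $[p_{\Shat}(x),p_{\Shat}(y)]\subset\Shat$ is a $D$-unparametrized quasi-geodesic in $\Shatzap$ by \cref{P:ElectHyp}. In the hyperbolic space $\Shatzap$, any geodesic from $\pi_V(x)$ to $\pi_V(y)$ fellow-travels it. So if the $\Shatzap$-geodesic stays $B$-far from $\pi_V(\cA_U)$, then the $\Shat$-geodesic $[p_{\Shat}(x),p_{\Shat}(y)]$ avoids a large $\Shat$-neighbourhood of $K_U\cap\Shat$, or of the relevant electrified piece containing it. By \cref{R:HoroballCapStratProj,R:OrthIntersection}, $p_{\Shat}(K_U)$ is close to $K_U\cap\Shat$.

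The key step is to transfer this avoidance back up to $[x,y]$. Consider the quadrilateral formed by $x$, $p_{\Shat}(x)$, $p_{\Shat}(y)$, $y$. If $[x,y]$ passed near $K_U$, then either $[x,p_{\Shat}(x)]$ or $[y,p_{\Shat}(y)]$ passes near $K_U$, or $[p_{\Shat}(x),p_{\Shat}(y)]$ does. The last is excluded. The first two would force $p_{\Shat}(x)$, respectively $p_{\Shat}(y)$, to lie near $p_{\Shat}(K_U)$, which is again excluded once $B$ is large. With $[x,y]$ far from $K_U$, the bounded geodesic image results listed above give $d_U(\pi_U(x),\pi_U(y))\le B$.

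For an angle domain $U$ we must also keep $[x,y]$ out of the horoballs that $\Hhat$ enters. Those horoballs meet $\Shat$ inside $K_U$'s electrified neighbourhood only if $\Shat$ enters them. In that case their projection to $\Shat$ is electrified and lies near $\pi_V(\cA_U)$. Otherwise their projection is bounded by \cref{C:CATBGI}, and we can enlarge $B$ and replace $\cA_U$ by $\cA_U$ union those horoballs. If instead $[x,y]$ enters such a horoball without coming near $\Hhat$, we split $[x,y]$ at its entry and exit points, using \cref{L:EntryPoint}. On the part inside the horoball we apply \cref{P:TreeAngleBGI}: the tree-factor geodesic avoids $q_{\Hhat}$, because otherwise the path would pass close to $\Hhat$.

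\textbf{Case $V$ a tree domain.} Here $U$ is an angle domain whose hyperplane $\Hhat$ is constant equal to $q_{\Hhat}$ in $\Chat_i$, and $\pi_V(\cA_U)$ is near $q_{\Hhat}$. First project $x,y$ to $\partial\That_{\hat c}$, and join them there by a path whose $\Chat_i$-image is the $\Chat_i$ geodesic, taking a horizontal lift in the other factors as needed. Since $\Chat_i$ is quasi-isometric to an electrified tree (\cref{LemTreeDomainsGromovHyperbolic}), a geodesic in $\Chat_i$ staying far from $q_{\Hhat}$ does not pass through $q_{\Hhat}$. \cref{P:TreeAngleBGI} plus \cref{C:AngleProjOnHoroBoundary} then bound the alternative projections of the two points. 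It remains to compare $\pi_U(x)$ with $\pi_U$ of its horoball projection. The geodesic from $x$ to $p_{\That_{\hat c}}(x)$ lies outside the horoball and away from $\Hhat$ (its endpoint projects far from $q_{\Hhat}$), so \cref{L:AnglePreBGI}, combined with the definition of $\pi_U$ via entry points, handles it.

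I expect the main obstacle to be the stratum case with $U$ an angle domain. There the non-convex domain $\Bhat^U$ and the modified projection near horoballs force the entry/exit bookkeeping. The lifting step from ``$[p_{\Shat}(x),p_{\Shat}(y)]$ avoids $K_U$'' to ``$[x,y]$ avoids $K_U$'' also needs care, and I would do it via the quadrilateral argument together with \cref{C:CATBGI}\eqref{CATBGI2}.
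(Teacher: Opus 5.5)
Your argument is correct. For $V$ a stratum domain it is the paper's proof run directly rather than contrapositively. Your thin-quadrilateral transfer is the paper's Observation~1, that $\pi_V([x,y])$ is Hausdorff close to a $\cC(V)$-geodesic (via \cref{C:CATBGI} and \cref{P:ElectHyp}), and horoballs entered by $\Hhat$ are handled the same way. Two small fixes there:
\begin{itemize}
\item When $\Shat$ misses such a horoball $\That$, you need $p_{\Shat}(\That)$ to be near $\pi_V(\cA_U)$, not merely bounded. This holds because it contains $p_{\Shat}(\Hhat\cap\That)\subset\Hhat\cap\Shat$ by \cref{R:OrthIntersection}.
\item Once the transfer keeps $[x,y]$ out of all these horoballs, your ``if instead $[x,y]$ enters such a horoball'' branch never arises. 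Drop it, together with its unsupported claim that the tree-factor geodesic must avoid $q_{\Hhat}$.
\end{itemize}

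For $V$ a tree domain you take a genuinely different route. The paper splits on whether $[x,p_{\Hhat}(x)]$ and $[x,y]$ enter $\That_{\hat c}$ (Cases 4a--4c), using \cref{L:EntryPoint} and the entry-point clause in the definition of $\pi_U$. You never examine $[x,y]$. Instead you compare $\pi_U(x)$ with $\pi_U(p_{\That_{\hat c}}(x))$ by one application of \cref{L:AnglePreBGI}, then finish on the horosphere with \cref{C:AngleProjOnHoroBoundary} and \cref{P:TreeAngleBGI}. This mirrors how the paper treats tree domains in \cref{P:LargeLinks} through \cref{L:GettingToHoroball}. It is shorter because it exploits that $\pi_V$ literally factors through $p_{\That_{\hat c}}$; the paper's version keeps the same shape as its stratum cases.

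The step you under-justify is that $[x,p_{\That_{\hat c}}(x)]$ avoids $\Hhat$ and also every other horoball $\That'$ entered by $\Hhat$. The latter is a hypothesis of \cref{L:AnglePreBGI} you did not mention, and knowing only that the endpoint is far from $q_{\Hhat}$ does not give either. The fix:
\begin{itemize}
\item Every $w$ on this geodesic has $p_{\That_{\hat c}}(w)=p_{\That_{\hat c}}(x)$.
\item $p_{\That_{\hat c}}(\Hhat)=\Hhat\cap\That_{\hat c}$ by \cref{R:HoroballCapStratProj}.
\item $p_{\That_{\hat c}}(\That')$ has diameter at most $1$ by \cref{R:constants}, and it meets $\Hhat\cap\That_{\hat c}$.
\end{itemize}
So if the geodesic came near $\Hhat$ or entered such a $\That'$, then by \cref{L:BoundaryHorosphere} $\pi_V(x)$ would lie boundedly close to $q_{\Hhat}$, hence to $\pi_V(\cA_U)$. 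That contradicts the hypothesis once $B$ is large.
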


\begin{proof}
We divide the proof into cases.

\bold{Observation 1:} If $W$ is a stratum domain with corresponding stratum $\Shatcirc$, $\pi_W$ is defined on $\Bhat$. For any $x,y$, the geodesic from $\pi_W(x)$ to $\pi_W(y)$ is Hausdorff close to $\pi_W$ of the geodesic in $\Bhat$ from $x$ to $y$, by \cref{C:CATBGI} and \cref{P:ElectHyp}.

\bold{Case 1: $V$ is a stratum domain and $U$ is a stratum domain.} In this case we have a stratum $\Shatcirc$ associated to $V$ and another stratum contained in the closure of $\Shatcirc$, and it suffices to apply observation 1 and \cref{C:CATBGI}.

\bold{Case 2: $V$ is a stratum domain and $U$ is a cusp domain.} In this case we have a stratum $\Shatcirc$ associated to $V$, and a horoball $\That_{\hat{c}}$ that $\Shatcirc$ enters (with an extra condition if $U$ is a tree domain). Suppose that $d_U(\pi_U(x), \pi_U(y))$ is large. It follows that the closest point projections of $x$ and $y$ to $\That_{\hat{c}}$ are far apart, and hence that the geodesic in $\Bhat$ from $x$ to $y$ enters $\That_{\hat{c}}$. Observation 1 now gives that  there is a geodesic from $\pi_V(x)$ to $ \pi_V(y)$ that passes close to $\pi_V(\cA_U)$.

\bold{Case 3: $V$ is a stratum domain and $U$ is an angle domain.} In this case we have a stratum $\Shatcirc$ associated to $V$ and a hyperplane $\Hhat$ associated to $U$, and $\Shat$ intersects but is not contained in $\Hhat$. Consider $x$ and $y$ whose images in $\cC(U) =N^1(\Hhat)^\sw$ are far apart. 

First suppose that the $\Bhat$ geodesic from $x$ to $y$ doesn't go into any horoball that $\Hhat$ goes into. In this case \cref{L:AnglePreBGI} shows that the geodesic from $x$ to $y$ must come close to $\Hhat$. This gives that the geodesic from $\pi_V(x)$ to $\pi_V(y)$ must come close to $\pi_V(\cA_U)$. 

Next suppose that the geodesic from $x$ to $y$ does go into a horoball $\That_{\hat{c}}$ that $\Hhat$ goes into. If $\Shat$ also goes into $\That_{\hat{c}}$, this directly gives that the geodesic from $\pi_V(x)$ to $\pi_V(y)$ must come close to $\pi_V(\cA_U)$, since  the horoball is electrified in $\cC(V)$. If $\Shat$ does not go into $\That_{\hat{c}}$, it suffices to note that the projection of $\That_{\hat{c}}$ to $\Shat$ is bounded and is uniformly bounded distance from $\pi_V(\cA_U)$. 

\bold{Case 4: $V$ is a tree domain and $U$ is an angle domain.} In this case the hyperplane $\Hhat$ associated to $U$ intersects the horoball $\That_{\hat{c}}$ associated to $V$, and $\Hhat$ is constant in the factor $\Chat_i$ associated to $V$. Assume that $\pi_U(x)$ and $\pi_U(y)$ are far apart. 

\bold{Case 4a:} Suppose there is a geodesic from either $x$ or $y$ to $\Hhat$ that doesn't go into $\That_{\hat{c}}$. Without loss of generality, say there is a geodesic from $x$ to a point $x'$ of $\Hhat$ that doesn't go into $\That_{\hat{c}}$. We get that $\pi_V(x)$ is about $\pi_V(x')$, which is uniformly close to $\pi_V(\cA_U)$. 

Thus, in the remaining subcases,  we can restrict to the cases where the closest point projections of $x$ and $y$ to $\Hhat$ lie in $\That_{\hat{c}}$, and a small elaboration of the argument above also allows us to restrict to the case where these closest point projections of $x$ and $y$ to $\Hhat$ are not close to the complement of $\That_{\hat{c}}$.

\bold{Case 4b:}
Suppose the $\Bhat$ geodesic from $x$ to $y$ doesn't go  into $\That_{\hat{c}}$. In this case, this geodesic must go close to $\Hhat$ or a horoball that $\Hhat$ enters other than $\That_{\hat{c}}$, by \cref{L:AnglePreBGI}. The projection of $\Hhat$ or any such horoball is coarsely equal to $\pi_V(\cA_U)$, so this gives that $\pi_V$ of this geodesic must come close to $\pi_V(\cA_U)$. Since we are assuming the geodesic doesn't go  into  $\That_{\hat{c}}$, this image under $\pi_V$ is coarsely constant, thus giving in a degenerate form the desired conclusion that the geodesic from $\pi_V(x)$ to $\pi_V(y)$ must come close to $\pi_V(\cA_U)$.

\bold{Case 4c:} Suppose the $\Bhat$ geodesic from $x$ to $y$ does go into $\That_{\hat{c}}$. If it enters or exits near $\Hhat$, the result follows from \cref{L:EntryPoint}, so we can assume otherwise. By supposition, $\pi_U$ is computed using the modified part of the definition, so \cref{P:TreeAngleBGI} gives the result.
\end{proof}

\section{Proof of  hierarchical hyperbolicity }\label{S:Final}

In this section we prove the enough projections  and progress localization assumptions  of \cref{P:Criterion}, and then we put it all together to prove the main results of this paper. 

\subsection{Enough projections}\label{SS:EP}
We start with the enough projections assumption.

\begin{proposition}\label{P:Uniqueness} 
For each $\kappa\geq 0$ there exists $\omega\geq0$ such that if $x, y\in \Bhatthick$ satisfy $d_{\Bhatthick}(x,y)\geq\omega$ then there exists $U$ such that 
$$d_U(\pi_U(x), \pi_U(y))\geq \kappa.$$
\end{proposition}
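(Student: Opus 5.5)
We prove the contrapositive by induction on the nesting height of stratum domains. Suppose $d_U(\pi_U(x),\pi_U(y))<\kappa$ for every domain $U$; we want a bound on $d_{\Bhatthick}(x,y)$ depending only on $\kappa$. The strategy is to follow the $\Bhat$ geodesic $\gamma$ from $x$ to $y$. Using the top stratum domain $\Bhatcirc$ together with \cref{P:ElectHyp} (geodesics are unparametrized quasi-geodesics in the electrification), $\gamma$ makes bounded progress in $\cC(\Bhatcirc)$. Hence $\gamma$ is covered by a bounded number of segments. Each segment either has bounded length outside the electrified sets, or lies close to a single hyperplane $\Hhat$ or a single horoball $\That_{\hat{c}}$ (one of the subsets electrified in $\Bhatcirc^!$).

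For pieces of the first kind, where the segment stays in a region of bounded length away from hyperplanes and horoballs, we use compactness together with \cref{R:NudgeToThick}. This lets us replace the segment by a path in $\Bhatthick$ of comparable length, since $\Gamma\back\Bthick$ is compact and thick points are dense at scale $8\sqrt n r_\theta$.

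For a long stretch near a hyperplane $\Hhat$, we use the entry and exit points. We project onto $\Hhat$, which is a stratum closure and hence a lower stratum domain. By structured redundancy, proved in \cref{P:SR2}, and the induction hypothesis applied to the stratum domain of $\Hhat$ and the domains nested in it, the projections of the endpoints to $\Hhat$ are boundedly close in $\Hhat$ up to the angular coordinate. The angle domain $N^1(\Hhat)$ then controls the remaining twist. Concretely, we will cover the $r_\theta$-neighborhood of the relevant piece of $\Hhat$ by charts $\Vhat$ as in \cref{SS:Uniformity}, and apply \cref{L:PreUniqueness} chart by chart. This requires the projections to the orthogonal angle domains to change boundedly across each chart, which follows from \cref{L:AngleLipschitz}. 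The induction runs on the dimension of the stratum: for a $d$-dimensional stratum closure $\Shat$, we treat $\Shat$ as the ambient space with its own electrification $\cC(\Shatcirc)$, and the base case is the cocompact or nest-minimal strata, handled by compactness and \cref{L:PreUniqueness}.

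For a long stretch inside a horoball $\That_{\hat{c}}$, we use \cref{L:EntryPoint} to replace the entry and exit points by points of $\partial\That_{\hat{c}}$ within bounded distance. We then apply \cref{L:HoroballProductMetric}: bounded cusp-domain projections give a bound on the intrinsic horospherical distance. That path along $\partial\That_{\hat{c}}$ may cross many hyperplanes, so we must push it into $\Bhatthick$ without increasing length much. Here \cref{C:TreeAngleBGI} says that only boundedly many angle domains see the path nontrivially, and for those the hypothesis $d_U<\kappa$ bounds the twisting, so the pushed-off path has bounded length.

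The main obstacle is making the induction uniform. The decomposition of $\gamma$ must have a number of pieces bounded only in terms of $\kappa$, which comes from the quasi-geodesic control in \cref{P:ElectHyp}. And replacing paths in $\Bhat$ (non-locally-compact, passing through hyperplanes) by paths in $\Bhatthick$ must cost only bounded length, which needs a uniform version of \cref{L:PreUniqueness} along a whole hyperplane rather than a single chart. I would first try to get this by using $\Gammahat$-cocompactness on $\Hhat\cap\Bhatthick$-neighborhoods modulo horoballs, treating horoball parts separately as above.
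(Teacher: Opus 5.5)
\textbf{Verdict: there is a genuine gap, and it is in the control of twisting.}

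To chain \cref{L:PreUniqueness} from chart to chart, you need consecutive thick points whose projections to \emph{every} angle domain of a hyperplane through the chart differ boundedly. You say this follows from \cref{L:AngleLipschitz}. It does not. $\pi_U$ is only coarsely Lipschitz on $\Bhat^U$, which excludes the $r_\theta$-neighborhood of the hyperplane, and the path you follow ($\gamma$, or a path along $\Hhat$) can pass through that neighborhood. A $\Bhat$-short path through a hyperplane can move arbitrarily far in the angle quasi-line; the $\bZ$ pointwise stabilizer of a hyperplane, together with \cref{L:StabCobounded}, shows this. So nudging points of $\gamma$ into $\Bhatthick$ produces thick points with uncontrolled angle coordinates.

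The same objection applies to your bounded-length pieces. \cref{P:ElectHyp} gives bounded length, not avoidance of hyperplanes, so compactness and \cref{R:NudgeToThick} do not bound their thick length. You also have no mechanism for transferring the hypothesis $d_U(\pi_U(x),\pi_U(y))<\kappa$ to the entry and exit points of your pieces. \cref{P:SR2} says nothing about the domains that matter there, because their active regions project near those very points.

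The missing idea, which is how the paper argues, is to separate the two phenomena by splitting on $d_{\Bhat}(x,y)$.
\begin{itemize}
\item \emph{If $d_{\Bhat}(x,y)$ is large,} no thick paths are built at all. Either $\cC(\Bhatcirc)$ already sees the distance, or some inclusion-minimal electrified region has large projection to $\gamma$. One then recurses into that stratum closure or applies \cref{L:HoroballProductMetric}, so a stratum or cusp domain sees distance at least $\kappa$. Your induction on strata is fine when its conclusion is only a bound on $\Bhat$-distance; that is essentially this case.
\item \emph{If $d_{\Bhat}(x,y)$ is bounded} (\cref{L:HardPartOfUniqueness}), $\gamma$ meets only boundedly many charts. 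In each chart one does not nudge a point of $\gamma$. Instead one picks a thick point near the chart center and moves it by the local $\bZ^k$ (\cref{L:StabCobounded}), so that its projection to each angle domain of a hyperplane through the chart is coarsely $\pi_U(x)$.
\item For hyperplanes missing the chart, coarse Lipschitzness along $[x,x_i]$ or $[x_i,y]$ does the job. This is exactly where the bounded total $\Bhat$-length is used.
\item \cref{L:PreUniqueness} then applies to consecutive chosen points.
\end{itemize}
In particular, the ``uniform version along a whole hyperplane'' that you identify as the main obstacle is never needed.
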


We isolate the largest part of the remaining required analysis as a lemma. 

\begin{lemma}\label{L:HardPartOfUniqueness}
For each $\kappa'\geq 0$ there exists $\omega'\geq0$ such that if $x, y\in \Bhatthick$ satisfy $d_{\Bhat}(x,y)\leq \kappa'$ and  $d_{U}(\pi_U(x),\pi_U(y))\leq \kappa'$ for all angle domains $U$, then $d_{\Bhatthick}(x,y)\leq\omega'$.
\end{lemma}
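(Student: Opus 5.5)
The idea is to localize: a bounded $\Bhat$-distance segment sees only finitely many hyperplanes (up to the group action), and near each of them the angle domains control the $\Bhatthick$-distance. Fix $\kappa'$ and take $x,y\in\Bhatthick$ with $d_{\Bhat}(x,y)\le\kappa'$. Let $\gamma$ be the $\Bhat$ geodesic from $x$ to $y$.

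First I would show that $\gamma$ cannot penetrate far into a horoball. Since $x,y\in\Bhatthick$ lie outside all horoball interiors and $\gamma$ has length at most $\kappa'$, the depth of $\gamma$ inside any $\That_{\hat c}$ is at most $\kappa'/2$. Likewise, only boundedly many hyperplanes come within a fixed small distance of $\gamma$, with a bound depending on $\kappa'$. This follows from the uniform local finiteness coming from \cref{SS:Uniformity}, applied after pushing $\gamma$ radially out of horoballs, a push that costs a bounded amount by \cref{L:BoundaryHorosphere}.

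Next I would subdivide $\gamma$ into boundedly many pieces, each contained in one of the chosen neighborhoods $\Vhat_0$ (or near a horosphere). Each piece's endpoints get nudged to thick points $x_j$ using \cref{R:NudgeToThick}, moving them at most $8\sqrt n r_\theta$.

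The key step is that consecutive $x_j,x_{j+1}$ lie in a common $\Vhat$, so \cref{L:PreUniqueness} would bound their $\Bhatthick$-distance, provided their angle projections are close for the hyperplanes through $\Vhat$. We only know $d_U(\pi_U(x),\pi_U(y))\le\kappa'$, not the corresponding bound for the intermediate points. To transfer it, I would use a winding-number bookkeeping. The intermediate nudged points can be chosen so that, for each hyperplane $\Hhat$ near $\gamma$, the angle coordinates along the chain $x=x_0,\dots,x_m=y$ move monotonically up to bounded error. In the local $\Dhat^k\times D^{n-k}$ model one can move radially within the angle sector spanned by the projections. Then the total $\sum_j d_U(\pi_U(x_j),\pi_U(x_{j+1}))$ is comparable to $d_U(\pi_U(x),\pi_U(y))+O(1)$. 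For hyperplanes far from $\gamma$, \cref{L:EasyCaseOfAngleLipschitz} and \cref{L:AnglePreBGI} say that $\pi_U$ barely changes along the chain anyway.

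Near horoballs I would instead work on $\partial\That_{\hat c}$ with its product structure. Here \cref{C:AngleProjOnHoroBoundary} and \cref{P:TreeAngleBGI} control the angles, and the bounded depth means only boundedly many Voronoi cells are involved.

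I expect the main obstacle to be this winding-control step: ensuring the thick detour around each hyperplane winds the correct amount, rather than an arbitrary multiple of $2\pi$ that would be invisible to $d_{\Bhat}$. I would handle it with \cref{L:StabCobounded}. Within each $\Vhat$, the $\bZ^k$ element relating the chosen detour to the actual position is detected by the product of angle quasi-lines, so bounded angle differences force a bounded $\bZ^k$ element. Finiteness of the number of pieces then gives $\omega'$.
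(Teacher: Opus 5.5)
Your proposal is essentially the paper's argument. The paper likewise subdivides the $\Bhat$-geodesic into $O(\kappa')$ steps inside the charts $\Vhat_0$, replaces each subdivision point by a nearby thick point in the same chart whose winding around the chart's hyperplanes is fixed using the $\bZ^k$-action and \cref{L:StabCobounded}, applies \cref{L:PreUniqueness} to consecutive points, and handles horoball portions on $\partial\That_{\hat c}$. There the paper uses \cref{L:rhoAngleTree} rather than \cref{P:TreeAngleBGI}, which goes the wrong direction for this purpose: it bounds the visual angle at each singular point, so a factor-wise geodesic path can be pushed off the singular points at bounded cost.

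The paper's winding control is simpler than your monotone bookkeeping. It uses coarse surjectivity of the orbit map to make every intermediate point's angle coordinates close to $\pi_U(x)$ itself, which is compatible with $y$ because $d_U(\pi_U(x),\pi_U(y))\le\kappa'$. Hyperplanes not meeting the current chart are handled by coarse Lipschitzness along whichever of $[x,x_i]$ or $[x_i,y]$ stays away from them, by convexity of neighborhoods of the hyperplane. This is the case your ``far from $\gamma$'' remark leaves implicit.
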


\begin{proof}
For expositional clarity we start with the case when $\Gamma$ is cocompact, or equivalently when there are no cusp domains.

Throughout this proof, we will refer to $\Vhat$ as in \cref{SS:Uniformity}, and we will always assume this $\Vhat$ belongs to a finite union of $\Gammahat$ orbits such that the $\Vhat_0$ associated to this finite union of $\Gammahat$ orbits covers $\Bhat$. (More generally they should cover the complement of the horoballs, but for the moment we assume there aren't any horoballs.) 

Consider the geodesic in $\Bhat$ from $x$ to $y$. Pick a sequence $$x=x_0, x_1, \ldots, x_\ell=y$$ of $O(d_{\Bhat}(x,y))$ points along this geodesic, in order, each distance at most $\xi/100$ from the next.  
For each $i$, find one of the $\Vhat_0$ from \cref{SS:Uniformity} that contains $x_i$, call it $\Vhat_0^{(i)}$. Recall these $\Vhat_0$ are balls of radius $r_i\geq\xi/100$ about some center point $p_i$ contained in every hyperplane that intersects $\Vhat$, and that the ball of radius $100 r_i$ about $p_i$ is contained in $\Vhat$. 

Using \cref{R:NudgeToThick} on the point $p_i$, find a point $x_i'$ whose distance to $p_i$ is much less than $\xi/100$. Using \cref{L:StabCobounded}, adjust it to a point $x''_i\in \Bhatthick$ with $d(p_i, x''_i)=d(p_i, x_i')$ and such that $\pi_U(x''_i)$ is uniformly close to $\pi_U(x)$ for all angle domains $U$ associated to  hyperplanes passing through $\Vhat$. The fact that $\pi_U$ is coarsely Lipschitz applied either to the  geodesic $[x,x_i]$ or the geodesic $[x_i, y]$, implies that same for all angle domains $U$ associated to  hyperplanes not passing through $\Vhat$.

We can assume $x_0'' = x$ and $x_\ell''=y$.  
The triangle inequality gives 
$$
d_{\Bhat}(p_i, x_{i+1}'') \leq  
d(p_i, x_i)+ d(x_i, x_{i+1}) + d(x_{i+1}, x_{i+1}'') \leq 2r_i+\xi/100+2r_{i+1}.
$$
We also get the same for $d_{\Bhat}(p_{i+1}, x_{i}'')$, so depending on which of $r_i, r_{i+1}$ is larger we get that either both $x_i''$ and $x_{i+1}''$ are in $\Vhat^{(i)}$ or both are in $\Vhat^{(i+1)}$. Thus the result follows from \cref{L:PreUniqueness} by the triangle inequality. 

We now sketch one of several ways of adapting this to the case when there are horoballs, leaving the details to the reader. We start with the $\Bhat$ geodesic joining $x$ to $y$. Consider the points $x=x_0,x_1, x_2, \ldots, x_k=y$ along this geodesic such that the $x_i, 0<i<k$ are exactly the entry and exit points of the geodesic into the 1-neighborhoods of horoballs. As above, for $0<i<k$, we can find $x_i''\in \Bhatthick$ very close to $x_i$, and such that $\pi_U(x''_i)$ is uniformly close to $\pi_U(x)$ for all angle domains $U$. (It is helpful to first note that a bounded length entry of a geodesic into a horoball cannot substantially change the angle projection associated to a hyperplane that that segment does not come close to. One can prove this by radially pushing to the boundary of the horoball and using that the angle projection is coarsely Lipschitz.)

It suffices to show that $d_{\Bhatthick}(x_i'', x_{i+1}'')$ is bounded for all $i$. 

In the first case, the geodesic from $x_i$ to $x_{i+1}$ stays in the thick part, and we argue exactly as in the cocompact case. 

In the second case, the geodesic goes into (or at least close to) a horoball. First, we replace the geodesic with a path on the boundary of the 1-neighborhood of the horoball that is a geodesic  in each factor $\Chat_i$. \cref{L:rhoAngleTree} gives that the visual angle at each singular point in $\Chat_i$ bounded in terms of $\kappa'$, so factor by factor we can push slightly away from the singular points to get a thick path of bounded length. 
\end{proof}

\begin{proof}[Proof of \cref{P:Uniqueness}]
Consider the geodesic $\gamma=[x,y]$ in $\Bhat$.

\bold{Case 1:} The distance between $x$ and $y$ is large in $\Bhat$.

If all the regions of $\Bhat$ that are electrified when defining $\cC(\Bhatcirc)$ have small projection onto $\gamma$, the closest point projection from $\Bhat$ to $\gamma$ extends to a coarsely Lipschitz map from the electrification of $\Bhat$ to the interval parameterizing $\gamma$, and thus the distance between $x$ and $y$ in $\Bhat$ is comparable to the distance in the electrification. Hence we can take $U=\Bhatcirc$.

Otherwise, consider an electrified region that is minimal under inclusion which has large projection to $\gamma$. This implies that $\gamma$ stays close to the electrified region for a long time, and hence the projection of $x$ and $y$ to the electrified region are far apart. If that region is a stratum, then repeating this argument in this stratum gives the result. If the region is a horoball the result follows from \cref{L:HoroballProductMetric}.

\bold{Case 2:} The distance between $x$ and $y$ is small in $\Bhat$. This case was handled in \cref{L:HardPartOfUniqueness}.
\end{proof}

\subsection{Progress localization}\label{SS:PL}
We now address progress localization, building on the analysis showing enough projections and the analysis in \cref{S:Boundedness}. 

\begin{proposition}\label{P:LargeLinks} 
For any $P\geq 0$ there exists  $N\geq 0$ and $R\geq 0$ such that for any $V$ and any $x,y\in \Bhatthick$ with $$d_V(\pi_V(x),\pi_V(y))\leq P$$ there is a set of at most $N$ domains $U_i\sqsubsetneq V$ such that if $T\sqsubsetneq V$ is not nested in or orthogonal to one of the $U_i$ then $d_T(\pi_T(x),\pi_T(y))\leq R$. 
\end{proposition}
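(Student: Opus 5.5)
The plan is to split by the type of $V$. By \cref{R:limitednesting}, only stratum domains and tree domains have anything properly nested in them, so otherwise the statement is vacuous.

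\textbf{Tree domains.} Suppose $V$ is the tree domain for a factor $\Chat_i$ of a horoball $\That_{\hat{c}}$. The only $T\sqsubsetneq V$ are angle domains of hyperplanes constant in $\Chat_i$, that is, singular points $q\in\Chat_i$. Project $x$ and $y$ to $\partial\That_{\hat{c}}$; their images $x_i,y_i$ are at distance at most $P$ in $\cC(V)=\Chat_i$.

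I would show that only boundedly many singular points lie on the geodesic $\overline{x_iy_i}$. This needs care, because $\Chat_i$ distance is not the $\Ghat_i^!$ distance: the electrification collapses cells. Cells are uniformly bounded, though, and passing through a singular point costs a definite length between consecutive ones. So I would instead use the quasi-isometry $\Chat_i\simeq\Ghat_i^!$ from \cref{LemTreeDomainsGromovHyperbolic} to bound the number of cells the geodesic crosses, hence the number of singular points on it, by some $N(P)$.

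Take the $U_j$ to be the angle domains of these singular points. For any other singular $q$, \cref{P:TreeAngleBGI} together with \cref{C:AngleProjOnHoroBoundary} bounds $d_T$ at the horosphere. To transfer from $x,y$ to their projections onto the horosphere, I would use the entry-point argument of \cref{P:SR2} Case 4, via \cref{L:EntryPoint} and \cref{L:AnglePreBGI}.

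\textbf{Stratum domains.} Suppose $V$ is a stratum domain with closure $\Shat$. Let $\gamma$ be the $\Bhat$ geodesic from $x$ to $y$, and $\gamma'=p_{\Shat}(\gamma)$. By Observation 1 in the proof of \cref{P:SR2}, $\gamma'$ is an unparametrized quasi-geodesic in $\Shatzap$ of electrified length $O(P)$. Hence $\gamma'$ decomposes into $O(P)$ pieces, each either of bounded $\Bhat$-length or travelling close to a single electrified region. Each such region is either a codimension-one substratum closure $\Shat'$ or a horoball slice $\Shat\cap\That_{\hat{c}}$.

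For the $U_j$ I take the stratum domains of the substrata $\Shat'$ met and the center domains of the horoballs met. In addition, for each bounded-length piece I add the boundedly many domains attached to a $\Vhat$-chart covering it: local strata and angle domains of hyperplanes through it, finitely many by the uniformity of \cref{SS:Uniformity}. This gives $N=O(P)$ domains.

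Now let $T\sqsubsetneq V$ be neither nested in nor orthogonal to any $U_j$. Then $K_T$ is disjoint from the relevant neighbourhoods of the regions $\gamma'$ passes through. By the contrapositive of \cref{P:SR2} (structured redundancy), applied inside $V$, the geodesic from $\pi_V(x)$ to $\pi_V(y)$ stays $B$-far from $\pi_V(\cA_T)$, so $d_T\le B$. The catch is that $T$ nested in a substratum region visited by $\gamma$ is excluded by construction. For angle domains $T$ whose hyperplane crosses $\Shat$ transversally at a region that $\gamma'$ passes near, the hyperplane lies in a local chart, so $T$ is orthogonal to one of the chart stratum domains or is among the chosen $U_j$.

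\textbf{Main obstacle.} The main difficulty is this last bookkeeping step: showing that every $T$ with $\pi_V(\cA_T)$ near the $\pi_V$-geodesic is nested in or orthogonal to a chosen $U_j$. This is especially delicate for cusp domains and angle domains near horoballs, where projections are the modified ones. I would first try to reduce to the horoball case via \cref{C:TreeAngleBGI}. That bounds the number of angle domains with large projection inside a horoball by the intrinsic horosphere distance. That distance, in turn, is controlled by the cusp-domain projections through \cref{L:HoroballProductMetric}-type reasoning and the inclusion of the center and tree domains among the $U_j$.
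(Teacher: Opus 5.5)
\textbf{Tree domains.} This case is essentially the paper's argument: \cref{L:GettingToHoroball} supplies the bounded set of exceptions for passing from $x,y$ to their projections onto $\That_{\hat c}$, and then \cref{P:TreeAngleBGI} finishes. The detour through $\Ghat_i^!$ is unnecessary, since $\cC(V)=\Chat_i$ and its singular points are uniformly separated.

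\textbf{Stratum domains: the gap.} The stratum case has a genuine gap at the decisive step. You bound $d_T$ by claiming that the $\cC(V)$-geodesic from $\pi_V(x)$ to $\pi_V(y)$ stays $B$-far from $\pi_V(\cA_T)$, and then invoking \cref{P:SR2}. You reduce this to showing that every $T$ with $\pi_V(\cA_T)$ near the geodesic is nested in or orthogonal to some $U_j$. That claim is false.

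Suppose the geodesic passes through the generic point $[\Shat']$ of a codimension-one substratum $\Shat'$ and follows it for a long stretch; this costs only $2$ in $\cC(V)$. Take any hyperplane $\Hhat$ meeting $\Shat$ in a substratum that is disjoint from $\Shat'$ but at bounded distance from it. Then $\pi_V(\cA_{N^1(\Hhat)})$ lies within bounded distance of $[\Shat']$. Yet $N^1(\Hhat)$ is asynchronous to the stratum domain of $\Shat'$, and unboundedly many such $\Hhat$ occur along a long stretch. No family of $N(P)$ domains makes all of them nested or orthogonal. So a $P$-independent bound of BGI type is not what controls these $T$.

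\textbf{The missing idea.} What is missing is an additive argument, and this is exactly where $R$ is allowed to depend on $P$.
\begin{itemize}
\item The paper takes $x',y'$ from \cref{L:GettingToStratum} and an honest $\cC(V)$-geodesic $\alpha$ between their images, rather than $p_{\Shat}(\gamma)$. Then $\alpha$ consists of $O(P)$ subpaths in $\Shat$, of total length $O(P)$ and pushed off horoballs. It also makes $O(P)$ passages through generic points, whose entry and exit points lie in the regions $K_{U_j}$ themselves.
\item If $T$ is not nested in or orthogonal to any $U_j$, then $T\pitchfork U_j$ or $U_j\sqsubsetneq T$. So \cref{P:BoundedProjections} bounds the change of $\pi_T$ across each passage.
\item The angle domains of all hyperplanes within $r_\theta$ of the short subpaths, and the center domains of all visited horoballs, are among the $U_j$. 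Hence the short subpaths lie in $\Bhat^T$, where $\pi_T$ is coarsely Lipschitz.
\item Summing over the pieces gives $d_T(\pi_T(x'),\pi_T(y'))=O(P)$.
\end{itemize}

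\textbf{Further omissions.} You also need the exceptional set from \cref{L:GettingToStratum} to pass from $x$ to $x'$ and from $y'$ to $y$. For angle domains, $\pi_T(x)$ and $\pi_T(p_{\Shat}(x))$ can be far apart, or the latter can be undefined. Your proposal never treats this, because it is hidden inside the inapplicable structured-redundancy step.

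Finally, when $\dim\Shat=1$ the codimension-one substrata are points and have no stratum domains. You must instead add the angle domains of the hyperplanes cutting them out, as the paper does.
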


\begin{lemma}\label{L:GettingToStratum}
There is a $D>0$ such that the following holds. 
Let $\Shat$ be a stratum closure, and let $x\in \Bhatthick$. There exists a set $\cE$ of at most $D$ center or angle domains such that the following holds. 

Consider the geodesic from $x$ to the closest point projection $p_{\Shat}(x)$ of $x$ to $\Shat$. If the geodesic $[x,p_{\Shat}(x)]$ goes into a horoball $\That_{\hat{c}}$ that $\Shat$ goes into, define $x'$ to be the first entry into such a horoball. Otherwise set $x'=p_{\Shat}(x)$. If $U$ is nested  in the stratum domain of $\Shat^\circ$, and either $\pi_U(x')$ is undefined or $$d_U(\pi_U(x), \pi_U(x'))\geq D,$$ then $U$ is an angle domain, and either $U$ is equal to one of the angle domains in $\cE$ or $U$ is orthogonal to one of the center domains in $\cE$. 
\end{lemma}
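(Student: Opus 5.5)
The plan is to treat the geodesic $\gamma=[x,x']$ (the initial segment of $[x,p_{\Shat}(x)]$ up to $x'$) and show that every domain $U\sqsubsetneq \Shatcirc$ other than boundedly many angle domains sees almost no progress along $\gamma$. First I would observe that the interior of $\gamma$ avoids every horoball that $\Shat$ enters (by the choice of $x'$), and that, since $p_{\Shat}$ is constant along $[x,p_{\Shat}(x)]$, $p_{\Shat}(\gamma)$ is a single point. I would then go through the four types of domains nested in $\Shatcirc$ (\cref{fig:rels}): stratum domains $\Shatcirc'$ with $\Shat'\subsetneq\Shat$, angle domains of hyperplanes meeting but not containing $\Shat$, and center or tree domains of horoballs that $\Shat$ enters.

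For stratum domains $W$ nested in $\Shatcirc$, $\pi_W$ is closest-point projection to $\Shat'\subset\Shat$. I would use that $[x,p_{\Shat}(x)]$ meets $\Shat$ only at its endpoint and lies in a region projecting to a point of $\Shat$; by \cref{C:CATBGI}\eqref{CATBGI1} applied to the portion of $\gamma$ at distance $\geq\epsilon$ from $\Shat'$, and \eqref{CATBGI2} for the short remaining portion near $\Shat'$ (which projects near $p_{\Shat'}(p_{\Shat}(x))$ by \cref{R:OrthIntersection}-type reasoning), one gets that $\pi_W(\gamma)$ is uniformly bounded, so no stratum domain appears. For center and tree domains associated to a horoball $\That_{\hat c}$ that $\Shat$ enters: $\gamma$ does not enter $\That_{\hat c}$ except possibly at $x'$, and the constants of \cref{R:constants} together with \cref{C:CATBGI} and \cref{L:EntryPoint} bound the projection of $\gamma$ to $\partial\That_{\hat c}$, hence (by coarse Lipschitzness, \cref{L:Thatswlip}, and \cref{L:BoundaryHorosphere}) the cusp-domain distances between $x$ and $x'$. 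Note $\pi_U(x')$ is always defined for cusp domains since $x'\notin\mathrm{int}\That_{\hat c}$. So only angle domains can violate the bound, as claimed.

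For angle domains $U=N^1(\Hhat)$: if $\gamma$ stays $\epsilon$-far from $\Hhat$ and avoids horoballs $\Hhat$ enters, \cref{L:AnglePreBGI} bounds $d_U$. The bad angle domains are thus those whose hyperplane comes within $\epsilon$ of $\gamma$, or such that $\gamma$ enters (or $x'$ lies on) a horoball $\Hhat$ enters, or for which $x'$ is too close to $\Hhat$ so $\pi_U(x')$ is undefined. The main obstacle is bounding the number of such hyperplanes, since $\gamma$ can be arbitrarily long. I would argue that $\gamma$ is short in the relevant sense: because $p_{\Shat}$ is constant along it and $\Shat$ is electrified-relevant, any hyperplane $\Hhat$ meeting $\Shat$ but coming near $\gamma$ far from $\Shat$ would give a nearby configuration contradicting that $[x,p_{\Shat}(x)]$ is orthogonal to $\Shat$ — concretely, using \cref{L:TripleIntersection}/\cref{C:CATBGI}, $p_{\Shat}(\Hhat)=\Shat\cap\Hhat$ and a point of $\gamma$ near $\Hhat$ at distance $\geq\epsilon$ from $\Shat$ forces $p_{\Shat}(x)$ to be within $\delta$ of $\Hhat$. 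Hence all such hyperplanes pass within $\delta$ of $p_{\Shat}(x)$ (or of $x'$), and local finiteness with the uniform $\Vhat$-charts of \cref{SS:Uniformity} bounds their number away from cusps; those are put into $\cE$. Near a cusp, the hyperplanes through the horoball $\That_{\hat c}$ containing $x'$ (or near $p_{\Shat}(x)$) are infinitely many, so instead I put the center domain of that horoball in $\cE$: every such angle domain is orthogonal to it. Hyperplanes entering horoballs that $\gamma$ passes through but $\Shat$ does not enter are handled by the same orthogonality-to-center trick, using \cref{R:constants} to see only boundedly many such horoballs are relevant (a horoball whose hyperplanes meet $\Shat$ must be entered by $\Shat$ when close, by the first item of \cref{R:constants}). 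This last counting is the step I expect to need the most care.
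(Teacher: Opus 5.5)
\textbf{There is a genuine gap in the final step of the angle-domain case.} Your treatment of stratum and cusp domains matches the paper, and so does using the center domain of the horoball containing $p_{\Shat}(x)$ when $x'\neq p_{\Shat}(x)$. The problem is the angle domains $N^1(\Hhat)$ whose hyperplane enters a horoball $\That_{\hat c'}$ that $\gamma$ passes through but $\Shat$ does not enter.

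You claim that only boundedly many such horoballs are ``relevant'', and then put their center domains into $\cE$. That claim is false. Suppose $\Hhat$ meets $\Shat$ orthogonally and $w\in\Hhat$ is far from $\Shat$. Then $[w,p_{\Shat}(w)]\subset\Hhat$, because $p_{\Shat}(\Hhat)=\Hhat\cap\Shat$ by \cref{R:OrthIntersection} and $\Hhat$ is convex. This segment can cross arbitrarily many horoballs that $\Hhat$ enters and $\Shat$ does not. Choosing $x\in\Bhatthick$ near $w$ makes $\gamma$ follow it. Every one of these horoballs is relevant in your sense, so the center-domain trick gives no uniform bound on $|\cE|$, even though the single angle domain $N^1(\Hhat)$ would suffice. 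Your appeal to the first item of \cref{R:constants} only shows such horoballs are far from $\Shat$, not that there are few of them.

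\textbf{The missing idea is to localize the hyperplane, not the horoball.} Suppose $\Hhat$ meets $\Shat$ and enters $\That_{\hat c'}$. Then $\Hhat\cap\Shat$ contains $p_{\Shat}(\Hhat\cap\That_{\hat c'})\subset p_{\Shat}(\That_{\hat c'})$; equivalently, $\Hhat$ passes through $p_{\Shat}(\hat c')$. The set $p_{\Shat}(\That_{\hat c'})$ contains $x'=p_{\Shat}(x)$, because $\gamma$ meets $\That_{\hat c'}$ and $p_{\Shat}$ is constant along $\gamma$. Its diameter is at most $1$, and at most $r_\theta$ once the horoball is far from $\Shat$. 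So every bad hyperplane is pinned near $x'$, no matter how many horoballs $\gamma$ crosses, and these should go into $\cE$ as \emph{angle} domains.

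Counting still needs care, because $\Bhat$ is not locally compact. If $x'$ also lies on a hyperplane meeting $\Shat$, rotating about that hyperplane produces infinitely many hyperplanes meeting $\Shat$ within distance $1$ of $x'$. Uniform local finiteness therefore holds only at the chart scale $\xi\gg r_\theta$. Your count of hyperplanes coming near $\gamma$ is fine at radius $r_\theta$, which the $1$-Lipschitz property of $p_{\Shat}$ gives directly, but not at radius $1$.

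This is why the paper cuts $\gamma$ at the last point $y$ that lies outside horoballs and satisfies $d(y,x')\ge L$:
\begin{itemize}
\item Horoballs met by $[x,y]$ contribute only hyperplanes within $r_\theta$ of $x'$, and there are boundedly many of these.
\item The segment $[y,x']$ enters boundedly many horoballs, and each contributes at most $n$ hyperplanes through $p_{\Shat}(\hat c')$.
\end{itemize}
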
 

Recall from \cref{R:extension} that $\pi_U(x')$ is undefined when $x' \notin \Bhat^U$.

\begin{proof}
It is helpful to keep \cref{fig:relsbounded} in mind. 

\bold{Case 1:} If $U$ is a stratum domain, then the corresponding stratum is contained in $\Shat$, so \cref{C:CATBGI} gives that $d_U(\pi_U(x), \pi_U(x'))$ is uniformly small.

\bold{Case 2:} If $U$ is a cusp domain, then $\Shat$ goes into the corresponding horoball, so \cref{C:CATBGI} gives that $d_U(\pi_U(x), \pi_U(x'))$ is uniformly small.

\bold{Case 3:} If $U$ is an angle domain corresponding to a hyperplane $\Hhat$, then $\Hhat$ must intersect $\Shat$ but must not contain $\Shat$. Assume $d_U(\pi_U(x), \pi_U(x'))$ is large, and apply \cref{L:AnglePreBGI} to get a point $x''$ on $[x,x']$ that is either within $r_\theta$ of $\Hhat$ or in a horoball that $\Hhat$ enters but $\Shat$ does not enter. By definition, $x$ and $x''$ have the same projection to $\Shat$. 

We now claim that $\Hhat\cap \Shat$  contains a point near $p_{\Shat}(x)$. To see this, first suppose that $x''$ is within $r_\theta$ of $\Hhat$. In this case the claim follows from the fact that  the projection of $\Hhat$ to $\Shat$ is $\Shat\cap \Hhat$ (\cref{R:OrthIntersection}) and the fact that projection to $\Shat$ is distance non-increasing, and the definition of ``near'' we get in this case can be taken to be ``within $r_\theta$''. It remains to suppose that $x''$ is in a horoball that $\Hhat$ enters but $\Shat$ does not. In this case the claim follows by additionally applying \cref{R:constants}, and the definition of ``near'' we get in this case is ``within 1''.
%
%

\bold{Case 3a:} If $x'\neq p_{\Shat}(x)$, then $p_{\Shat}(x)$ is in some horoball $\That_{\hat{c}}$. Let $\cE$ consist of only the center domain of $\That_{\hat{c}}$. Because $p_{\Shat}(x)$ is at most 1 away from $\Hhat\cap {\Shat}$, and we can assume (via $s_\theta \gg_{n,\cH, \Gamma} 1$ in \cref{R:constants}) that any hyperplane that goes close to a horoball must enter it, we get that $\Hhat$ intersects ${\Shat}\cap \That_{\hat{c}}$. This means that $U$ is orthogonal to the center domain of the horoball $\That_{\hat{c}}$ containing $p_{\Shat}(x)$. 

\bold{Case 3b:} We now assume that $x'=p_{\Shat}(x)$, so $p_{\Shat}(x)$ is not in a horoball. For this case, the following is helpful. 

\begin{remark}
Suppose $\Hhat$ is a hyperplane that enters the horoball of a cusp point $\hat{c}'$, and also that $\Hhat$ intersects a stratum closure $\Shat$, and $\Shat$ does not enter the horoball of $\hat{c}'$. Then, by \cref{R:OrthIntersection}, $\Hhat$ contains the projection $p_{\Shat}(\hat{c}')$. (The fact that $p_{\Shat}$ extends continuously to $\hat{c}'$ follows from a short comparison geometry argument and the fact that $p_{\Shat}$ is distance non-increasing.)
\end{remark}

Fix $L>0$ large enough so that the projection of any horoball onto another distance at least $L$ away has diameter at most $r_\theta$.  Let $y$ be the last point on $[x,x']$ that is not in a horoball and has distance at least $L$ from $x'$; or let $y=x$ if such a point does not exist. 

Let $\cE$ consist of 
 all angle domains of hyperplanes that $x'$ is within $r_\theta$ of but don't contain $\Shat$, and
 all angle domains associated to hyperplanes containing one of the points $p_{\Shat}(\hat{c}')$ but not containing $\Shat$, where the $\hat{c}'$ correspond to horoballs that $[y,x']$ enters.

The result now follows from \cref{L:AnglePreBGI}.
Indeed, suppose $\Hhat$ is a hyperplane that $[x,x']$ comes within $r_\theta$ of, or that enters a horoball that $[x,x']$ enters. If $[x,x']$ comes within $r_\theta$ of a hyperplane $\Hhat$ that intersects $\Shat$, then $\Hhat\cap {\Shat}$ is within $r_\theta$ of $x'$. Similarly if $[x,y]$ enters a horoball containing a hyperplane $\Hhat$ that intersects $\Shat$, then $\Hhat\cap {\Shat}$ must be extremely close to $x'$. Finally, if $[y,x']$ enters a horoball that $\Hhat$ enters, then $\Hhat$ contains one of the $p_{\Shat}(\hat{c}')$. 
\end{proof}

We also have a similar statement with the stratum replaced by a horoball. 

\begin{lemma}\label{L:GettingToHoroball}
There is a $D>0$ such that the following holds. 
Let $\That_{\hat{c}}$ be a horoball, and let $x\in \Bhatthick$. There exists a set $\cE$ of at most $D$  angle domains such that the following holds.  Consider the geodesic from $x$ to the closest point projection $x'=p_{\That_{\hat{c}}}(x)$ of $x$ to $\That_{\hat{c}}$.  If $U$ is properly nested in a cusp domain associated to $\That_{\hat{c}}$ and either $\pi_U(x')$ is not defined or $$d_U(\pi_U(x), \pi_U(x'))\geq D,$$ then $U$ is  equal to one of the angle domains in $\cE$.
\end{lemma}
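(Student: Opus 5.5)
Only angle domains can be properly nested in a cusp domain, and only in a tree domain (\cref{R:limitednesting}). So let $U=N^1(\Hhat)$, where $\Hhat$ enters $\That_{\hat{c}}$ and is constant in the factor $\Chat_i$ of some tree domain $V$ at $\hat{c}$. The plan is to adapt Case~3b of \cref{L:GettingToStratum}. Here the role of the stratum closure $\Shat$ is played by the horoball, and projection to the horoball replaces projection to the stratum. Note that $x'=p_{\That_{\hat{c}}}(x)\in\partial\That_{\hat{c}}$. Since $\Hhat$ meets $\That_{\hat{c}}$, no horoball entered by $\Hhat$ other than $\That_{\hat{c}}$ needs special care near $x'$. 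The point $x'$ lies in the domain $\Bhat^U$ unless it is within $r_\theta$ of $\Hhat$.

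First I fix $L$ as in \cref{L:GettingToStratum}. Let $y$ be the last point of $[x,x']$ that is outside all horoballs and at distance at least $L$ from $x'$, or $y=x$ if there is no such point. Then I let $\cE$ consist of two kinds of angle domains:
\begin{enumerate}
\item those of hyperplanes meeting $\That_{\hat{c}}$ that come within a fixed constant (say $2r_\theta$, plus the constant of \cref{L:EntryPoint}) of $x'$;
\item those of hyperplanes meeting $\That_{\hat{c}}$ that also enter one of the horoballs $\That_{\hat{c}'}\neq\That_{\hat{c}}$ entered by $[y,x']$.
\end{enumerate}
Hyperplanes are uniformly separated away from horoball interiors, so there are boundedly many of the first kind. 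The segment $[y,x']$ has bounded length, so it meets boundedly many horoballs. For each such $\That_{\hat{c}'}$, a hyperplane entering both $\That_{\hat{c}}$ and $\That_{\hat{c}'}$ must pass through the geodesic joining $\hat{c}$ and $\hat{c}'$, or near the coarse point $p_{\That_{\hat{c}}}(\That_{\hat{c}'})$, which has diameter at most $1$ by \cref{R:constants}. Therefore only boundedly many such hyperplanes occur, again by local finiteness. This gives the bound on $|\cE|$.

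Now take $U\notin\cE$, and suppose $d_U(\pi_U(x),\pi_U(x'))$ is large or $\pi_U(x')$ is undefined. The geodesic $[x,x']$ meets $\That_{\hat{c}}$ only at $x'$. Apply \cref{L:AnglePreBGI}, after stopping just short of $x'$ and using \cref{C:AngleProjOnHoroBoundary} together with \cref{L:AngleLipschitz} at the endpoint. This gives a point $x''\in[x,x']$ that is either within $r_\theta$ of $\Hhat$ or lies in a horoball $\That_{\hat{c}'}\neq\That_{\hat{c}}$ entered by $\Hhat$. Every point of $[x,x']$ projects to $x'$ under $p_{\That_{\hat{c}}}$. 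Projection to $\That_{\hat{c}}$ is distance non-increasing, and by convexity and \cref{R:HoroballCapStratProj} it maps $\Hhat$ into $\Hhat\cap\That_{\hat{c}}$. In the first alternative, $\Hhat\cap\That_{\hat{c}}$ therefore comes within $r_\theta$ of $x'$, so $U$ is of the first kind in $\cE$. In the second alternative, there are two subcases. If $x''\in[y,x']$, then $U$ is of the second kind in $\cE$. If $x''\in[x,y]$, then $\That_{\hat{c}'}$ is at distance at least roughly $L$ from $\That_{\hat{c}}$, so its projection to $\That_{\hat{c}}$ has diameter at most $r_\theta$ and contains a point of $\Hhat$. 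This again puts $\Hhat$ within $r_\theta$ of $x'$, so $U$ is of the first kind.

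The main obstacle is the endpoint $x'$ itself. There $\pi_U$ is computed by the modified or alternative projection, and points of $[x,x']$ very close to $\partial\That_{\hat{c}}$ may also be close to $\Hhat$. I would handle this using \cref{L:NaiveProjectionReasonableNearHhat} and \cref{C:AngleProjOnHoroBoundary} to compare the projections near $x'$, and \cref{L:EntryPoint} to see that the naive projection is used until $x'$.
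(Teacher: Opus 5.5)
Your strategy is the one the paper intends: its proof only says to rerun Case~3b of \cref{L:GettingToStratum} with the horoball in place of $\Shat$. However, the bound $|\cE|\le D$ fails for the set you actually define.

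\textbf{The first kind of $\cE$ is infinite.} You use radius $2r_\theta$ plus the constant of \cref{L:EntryPoint}, and justify finiteness by uniform separation. In $\Bhat$ that only works at scale $r_\theta$. The hyperplanes entering $\That_{\hat c}$ in class $i$ correspond to branch points of $\Chat_i$, which have infinite cone angle. Every point of $\Chat_i$ is within a cell diameter of such a branch point $q$. The $\bZ$ of rotations about the hyperplane of $q$ preserves $\That_{\hat c}$, and it carries any other branch point to infinitely many distinct ones at the same distance from $q$. So a ball about $x'$ whose radius exceeds a few times the spacing of $\Sigma_i$, as seen on $\partial\That_{\hat c}$, meets infinitely many hyperplanes. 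That spacing is small because the horoballs are deep, and nothing makes the universal constant of \cref{L:EntryPoint} smaller than it.

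What separation does give is the following. The separation in \cref{R:constants} extends to a neighborhood of $\partial\That_{\hat c}$, since $r_\theta$ is chosen after the horoballs. Hence hyperplanes within $r_\theta$ of $x'$ pairwise meet, so at most one lies in each parallelism class and there are at most $n-1$ of them.

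So take radius $r_\theta$. That is all your case analysis ever produces: when $\pi_U(x')$ is undefined, in the first alternative, and in the subcase $x''\in[x,y]$. The endpoint needs nothing more. If $[x,x']$ stays $r_\theta$ away from $\Hhat$, then \cref{L:AnglePreBGI} applies to it directly, and its proof already handles projections to $\Hhat$ that land in $\That_{\hat c}$.

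\textbf{Three smaller repairs.}
\begin{itemize}
\item \emph{Second kind.} The count does not come from local finiteness, which is not available uniformly in $\Bhat$ for the reason above. It comes from the fact that every such hyperplane contains the geodesic from $\hat c$ to $\hat c'$, equivalently the point $p_{\That_{\hat c}}(\hat c')$ as in the paper. So these hyperplanes pairwise meet and number at most $n$.
\item \emph{Length of $[y,x']$.} This segment need not have bounded length: just after $y$ the geodesic may make a long excursion through a single horoball. It still enters boundedly many horoballs. Its part at distance at least $L$ from $x'$, minus $y$, is connected and covered by pairwise disjoint closed horoballs, hence lies in one of them. The rest has length at most $L$.
\item \emph{Distance between horoballs.} In the subcase $x''\in[x,y]$, the claim that $\That_{\hat c'}$ is roughly $L$ away from $\That_{\hat c}$ needs an argument. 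The exit point $e$ of $[x,x']$ from $\That_{\hat c'}$ precedes $y$, so $d(e,x')\ge L$, and $e$ projects to $x'$. Apply \cref{L:EntryPoint} to the geodesic from $x'$ to $x''$, and use the diameter-$1$ bound on projections between horoballs from \cref{R:constants}. This gives $d(\That_{\hat c},\That_{\hat c'})\ge L-C-2$, after which your projection argument goes through.
\end{itemize}
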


\begin{proof}
The proof is similar to the proof of \cref{L:GettingToStratum} and is left to the reader. The only kind of domain that can be properly nested in a cusp domain is an angle domain, so one only needs to adapt Case 3. Since $x'$ in this case is always $p_{\That_{\hat{c}}}(x)$, one does not have an analogue of Case 3a; one only has 3b. 
\end{proof}

Finally we can prove the progress localization assumption. 

\begin{proof}[Proof of \cref{P:LargeLinks}]
There are only two types of $V$ that are not nest-minimal, namely stratum domains and tree domains. Our goal is to build a set of domains $\{U_i\}$ with the desired properties. 

\bold{Case 1: $V$ is a stratum domain.} Let $\Shat$ be the associated stratum closure.
\cref{L:GettingToStratum} gives us points $x'$, $y'$ that are either in $\Shat$ or in a horoball that $\Shat$ enters. Either way, these points define points $x'', y''\in \cC(V)$; if $x'\in \Shat$ then we have $x''=x'$ viewed as a point of $\cC(V)$, and otherwise $x''$ is the generic point of the horoball in whose boundary it lies. 

We now consider a geodesic (or path of approximately minimal length) $\alpha$ from $x''$ to $y''$ in $\cC(V)$. 
By definition this can contain at most $d_V(x'', y'')/2+2$ generic points, and all the subpaths in $\Shat$ of course have length bounded by $d_V(x'', y'')$. None of the subpaths can go more than depth 1 into a horoball, so there is no harm to assuming they are disjoint from horoballs, and we will do this. (The paths could be pushed out of horoballs.)

We construct the following set of domains $\{U_i\}$ consisting of the following.
\begin{enumerate}
\item The domains given by the use of \cref{L:GettingToStratum} above.
\item For each generic point of a horoball (intersected by $\Shat$) that $\alpha$ passes through, add the cusp domains associated to the horoball that are nested in $V$. 
\item For each generic point of a codimension 1 hyperplane (in $\Shat$) that $\alpha$ contains, add the corresponding stratum domain (when the corresponding stratum has positive dimension), and the angle domain corresponding to the hyperplane containing that codimension 1 locus but not containing all of $\Shat$. 
\item For each hyperplane not containing $\Shat$ that comes within $r_\theta$ of a bounded length subpath of $\alpha$ in $\Shat$, add the corresponding angle domain.
\end{enumerate}
This set $\{U_i\}$ of domains is of bounded size (depending on $P$), so it suffices now to show that all domains $T\sqsubsetneq V$ which are properly nested in $V$ but not nested in or orthogonal to one of the $\{U_i\}$ have that $d_T(\pi_T(x), \pi_T(y))$ is bounded. Note that if $T\sqsubsetneq V$ is not  nested in or orthogonal to $U_i$, then $T\pitchfork U_i$ or $U_i\sqsubsetneq T$. 

By construction of $\{U_i\}$, we have that $T$ cannot be a cusp domain associated to a horoball that $\alpha$ passes through, nor can it be an angle domain  associated to a hyperplane that enters such a horoball.

\bold{Case 1a: $T$ is a stratum domain or a cusp domain.} In the stratum domain case, $\Bhat^T = \Bhat$, and in the cusp domain $\Bhat^T$ is the complement of a horoball. We have that $\alpha$ consists of bounded subpaths in  $\Shat\cap\Bhat^T$ and visits to generic points. The bounded subpaths have uniformly bounded image in $\cC(T)$ because $\pi_T$ is coarsely Lipschitz on $\Bhat^T$. The start and end points of an excursion to a generic point map to nearby points of $\cC(T)$ by \cref{P:BoundedProjections}.

\bold{Case 1b: $T$ is an angle domain.} Let $\Hhat$ be the corresponding hyperplane. As mentioned above, $\Hhat$ does not enter any horoball that $\alpha$ enters. Additionally, we have that $\alpha$ does not come within $r_\theta$ of $\Hhat$. So $\alpha$ consists of subpaths in $\Shat \cap \Bhat^T$ and generic points of regions contained in $\Bhat^T$. The conclusion is the same as in Case 1a.

\bold{Case 2: $V$ is a tree domain.} Only angle domains can be nested in tree domains, and no angle domain can be nested in another. 

Let $x'$ and $y'$ be the projections to the horoball associated to the tree domain. \cref{L:GettingToHoroball} controls the passage from $x$ to $x'$ and from $y'$ to $y$, so it suffices to think about how $x'$ and $y'$ differ. The result follows from \cref{P:TreeAngleBGI} and is a variant of \cref{C:TreeAngleBGI}. 
\end{proof}

\subsection{Proof of  hierarchical hyperbolicity}
Putting everything together, we get hierarchical hyperbolicity for our geometric model of the group.

\begin{theorem}
$(\Bhatthick, \IndexSmall, \pi_\bullet)$ satisfies all the assumptions of \cref{P:Criterion}. In particular, $\Bhatthick$ is an HHS. 
\end{theorem}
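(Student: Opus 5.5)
The theorem is an assembly step: I will check the hypotheses of \cref{P:Criterion} one at a time against what has already been proved, and then invoke that proposition. I take $\cX=\Bhatthick$, $\IndexSmall$ the set of stratum, angle, center and tree domains, $\pi_\bullet$ the projections constructed in \cref{S:StratumDomains,S:CuspDomains,S:AngleDomains}, and $\cA_U=\cB_U\cap\Bhatthick$ as defined in \cref{S:Boundedness}.

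The first step is the ambient structure. $\Bhatthick$ is a proper geodesic space with its intrinsic path metric, so it is quasi-geodesic. \cref{P:pre} shows that $(\Bhatthick,\IndexSmall,\pi_\bullet)$ is a pre-HHS. For this I must check that the hyperbolicity constants and the coarse Lipschitz constants are uniform over all domains. This holds because there are finitely many $\Gammahat$-orbits of domains: there are finitely many $\Gamma$-orbits of hyperplanes, strata and cusps, and the constructions are $\Gammahat$-equivariant. \cref{P:av} then shows that $\IndexSmall$ is a nearly valid index set.

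Next come the five assumptions, with one uniform constant $B$ obtained as the maximum of the constants produced by each result.
\begin{itemize}
\item Enough projections is \cref{P:Uniqueness}.
\item Density is \cref{L:Density}.
\item Bounded projections follows from \cref{P:BoundedProjections}. Since $\Bhatthick\subset\Bhat^V$, we have $\cA_U=\cB_U\cap\Bhatthick\subset\cB_U\cap\Bhat^V$, so the bound on $\diam\pi_V(\cB_U\cap\Bhat^V)$ applies to $\cA_U$. The nonempty-intersection clause is \cref{L:NestIntersect}.
\item Structured redundancy is \cref{P:SR1} for the asynchronous case and \cref{P:SR2} for the nested case.
\item Progress localization is \cref{P:LargeLinks}.
\end{itemize}
With all hypotheses of \cref{P:Criterion} verified, the conclusion that $\Bhatthick$ is an HHS follows immediately.

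The only real obstacle is uniformity of constants. Each cited result is stated with constants that depend only on $n,\cH,\Gamma,s_\theta,r_\theta$. In the arguments, as opposed to the statements, this holds by equivariance and the finitely-many-orbits observation. Where a result was proved in a per-case fashion (for example \cref{P:BoundedProjections}, split by the type of $V$), I take the maximum over the finitely many cases and orbit types.
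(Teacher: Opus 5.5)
Your proposal is correct and follows the paper's proof essentially verbatim. The paper cites \cref{P:pre} and \cref{P:av} and the active regions of \cref{S:Boundedness}, then the same five results (\cref{P:Uniqueness}, \cref{L:Density}, \cref{P:BoundedProjections} with \cref{L:NestIntersect}, \cref{P:SR1} with \cref{P:SR2}, and \cref{P:LargeLinks}) before invoking \cref{P:Criterion}; your extra remarks on $\cA_U\subset\cB_U\cap\Bhat^V$ and on uniform constants via finitely many $\Gammahat$-orbits match what the paper uses implicitly.
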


\begin{proof}
In \cref{P:pre} we checked that $(\Bhatthick, \IndexSmall, \pi_\bullet)$ is a pre-HHS, in \cref{P:av} we checked that $\IndexSmall$ is a nearly valid index set, and in \cref{S:Boundedness} we defined the active regions, so it suffices to check the numbered  assumptions of \cref{P:Criterion}.
\begin{enumerate}
\item The enough projections assumption was checked in \cref{P:Uniqueness}. 
\item The density assumption was checked in \cref{L:Density}.
\item The bounded projections assumption was checked in \cref{P:BoundedProjections} and \cref{L:NestIntersect}.
\item The structured redundancy assumption was checked in \cref{P:SR1} and \cref{P:SR2}. 
\item The progress localization assumption was checked in \cref{P:LargeLinks}. 
\end{enumerate} 
Thus \cref{P:Criterion} gives the result. 
\end{proof}

We can now conclude. 

\begin{proof}[Proof of \cref{T:main}]
This now follows from \cref{P:CriterionGroups}, since all of our constructions are natural and canonical enough to get the required equivariance under isometries. The condition on finitely many orbits is satisfied thanks to \cref{L:MaxOrthogonalSets} and the fact that there are only finitely many orbits of horoballs and strata. 
\end{proof}

\subsection{The motivating application}\label{SS:VerificationForMcs} 
For completeness, we now briefly reference the specific results  that show that \cref{T:main} applies to give \cref{T:Cubic}.

Let $\cE$ be the ring of Eisenstein integers, and let $\Lambda = \cE^{4,1}$ be the free module $\cE^5$ equipped with the hermitian form 
$$h(x,y) = -x_0 \ol{y}_0 + x_1 \ol{y_1} + \cdots + x_4 \ol{y_4}.$$
Let $\Gamma\subset PU(4,1)$ be the projectivized automorphism group of $\Lambda$, 
%
%
and let $\cH$ be the union of the hyperplanes in  $B^4$ defined using the perps of the norm 1 elements of $\Lambda$. 
%
%

\cite[Theorem 2.20]{AllcockCarlsonToledoSurfaces} shows that $\cM_{cs}$ is isomorphic to $\Gamma \back (B^4-\cH)$.
\ref{LocFin}  is checked in \cite[Proof of Corollary 3.4]{AllcockAsphericity}, \ref{Orth} in \cite[Lemma 7.28]{AllcockCarlsonToledoSurfaces}, and \ref{Inv} is by definition. 
\cite[Theorem 7.21, Theorem 8.2]{AllcockCarlsonToledoSurfaces} shows that $\Gamma$ has only one orbit of cusps. 

One can verify \ref{CuspLimit} by noting that the null vector $(1,1,0,0,0)$ defines a cusp that is a limit of the 1-dimensional stratum defined by norm 1 elements $$(0,0,1,0,0), (0,0,0,1,0), \text{ and } (0,0,0,0,1).$$

\subsection{The Farrell-Jones Conjecture}\label{SS:FJ}

What \cite{DurhamMinskySistoCAT0} calls the Farrell-Jones Conjecture is what \cite{BartelsBestvina} calls the Farrell-Jones Conjecture with wreath products; 
%
%
it means that the wreath product with every finite group satisfies the K- and L-theoretic Farrell-Jones Conjectures with coefficients in any additive category. 
%
%
This version of the conjecture has good inheritance properties, many of which are summarized in \cite[Proposition 1.1]{GandiniRuping}. What is important here is that the class of groups that satisfy the Farrell-Jones Conjecture is stable under taking subgroups, finite direct products, central extensions with finitely generated kernel, 
%
%
%
and passing to finite index overgroups, and that hyperbolic groups satisfy the  Farrell-Jones Conjecture. We make use of these properties below without further comment. 

We recall from \cite[Theorem C]{DurhamMinskySistoCAT0} that if $\Gammahat$ is a colorable HHG, and if $\Stab(U)$ satisfies the Farrell-Jones Conjecture for every non-nest maximal domain $U$, then $\Gammahat$ satisfies the Farrell-Jones Conjecture. 
%
%
%
Colorability is a mild technical condition that is checked under the assumptions of \cref{T:main} in \cref{L:colorable}. 

The core of the proof is the following two observations.

\begin{lemma}\label{L:StabS}
For any stratum closure $\Shat\subset \Bhat$ of codimension $k$, $\Stab_{\Gammahat}(\Shat)$ has a finite index subgroup $\Gammahat_0$ that is a central $\bZ^k$-extension of a group as in \cref{T:main} for a ball of dimension $n-k$. 
\end{lemma}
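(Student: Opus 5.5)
The plan is to realize $\Stab_{\Gammahat}(\Shat)$ as acting on $\Shat$, which is itself the metric completion of the universal cover of a stratum closure $S$ of $B$ minus an induced arrangement, and to identify the kernel of this action with the $\bZ^k$ fixing $\Shat$ pointwise. Concretely, let $S=b(\Shat)$, the closure of a codimension $k$ stratum of $B$; it is the intersection of $k$ mutually orthogonal hyperplanes, so $S\cong B^{n-k}$. By \cref{SS:Stratification}, $\Shat$ is the full fixed point set of a subgroup $Z\cong\bZ^k\subset\pi_1(\Bcirc)\subset\Gammahat$, generated by the local meridians around the $k$ hyperplanes of $\Bhat$ containing $\Shat$ (see \cref{SS:Local}). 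Since $Z$ is the pointwise stabilizer of $\Shat$, it is normal in $\Stab_{\Gammahat}(\Shat)$. Take $\Gammahat_0$ to be the finite index subgroup that preserves each of these $k$ hyperplanes and acts on each normal fiber orientation-preservingly; then conjugation by $\Gammahat_0$ fixes each meridian generator, so $Z$ is central in $\Gammahat_0$.

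Next I would identify the quotient $\Gammahat_0/Z$. The arrangement $\cH_S$ on $S$ consists of the intersections $S\cap H$ for hyperplanes $H\in\cH$ meeting $S$ without containing it. These are orthogonal to each other by \cref{Orth}, locally finite by \cref{LocFin}, and invariant under $\Gamma_S:=\Stab_\Gamma(S)$. Using the local models of \cref{SS:Local}, the generic stratum $\Shatcirc$ is a covering of $\Scirc=S-\cH_S$, and near each point of $\Shat$ the branching of $\Shat\to S$ is exactly the universal branching along $\cH_S$. Hence $\Shat$ is the metric completion of the universal cover of $S-\cH_S$, with $\Shatcirc$ equal to that universal cover; in particular $\pi_1(\Shatcirc)=1$. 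I would deduce this simple connectivity from asphericity of $\Bhatcirc$ together with the tubular neighborhood structure, or directly: a loop in $\Shatcirc$ pushed off into $\Bhatcirc$ bounds a disk there, and that disk can be projected back to $\Shatcirc$. Consequently $\Gammahat_0/Z$ acts on $\Shatcirc$ by deck transformations covering a finite index subgroup of $\Gamma_S$, which identifies it with a finite index subgroup of the orbifold fundamental group of $\Gamma_S\back(S-\cH_S)$.

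It remains to verify the hypotheses of \cref{T:main} for $(S,\cH_S,\Gamma_S)$. First, $\Gamma_S$ is a lattice in $\Isom(S)$, because $S$ is a closed totally geodesic subvariety cut out by rational data (the hyperplanes form finitely many $\Gamma$-orbits). More concretely, $\Gamma\back S$ has finite volume since the image of $S$ in $\Gamma\back B$ is a closed suborbifold. Second, for \cref{CuspLimit}: a cusp of $\Gamma_S$ is a cusp $c$ of $\Gamma$ lying in $\partial S$. By \cref{LemAffineArrangement}, the hyperplanes through $c$ fall into $n-1$ parallelism classes, and the $k$ hyperplanes defining $S$ belong to $k$ distinct classes. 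Intersecting $S$ with one hyperplane through $c$ from each of the remaining $n-1-k$ classes then yields a 1-dimensional stratum of $S$ limiting to $c$.

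I expect the main obstacle to be the identification of the group. Specifically, one must show that the sequence $1\to Z\to\Gammahat_0\to\pi_1^{orb}$ is exact with the correct image, and that $\Gammahat_0/Z$ is exactly a finite index subgroup of the relevant orbifold fundamental group. This in turn requires the simple connectivity of $\Shatcirc$ and the lattice property of $\Gamma_S$. If needed, I would pass further to a finite index subgroup, which is harmless for the Farrell-Jones application.
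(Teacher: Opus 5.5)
Your outline follows the paper's proof: restrict to the finite-index subgroup preserving each of the $k$ hyperplanes through $\Shat$, quotient by the meridian group, and recognize the quotient as a group of the type in \cref{T:main} for $S\cong B^{n-k}$. However, the identification of the kernel is wrong.

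\cref{SS:Stratification} says that $\Shat$ is the fixed set of $Z\cong\bZ^k$. It does not say that $Z$ is the pointwise stabilizer of $\Shat$ in $\Gammahat$; that is true only inside $\pi_1(\Bcirc)$. The full pointwise stabilizer $K$ of $\Shat$ in $\Gammahat$ also contains lifts of the finite group of elements of $\Gamma$ that fix $S$ pointwise, and this group is often nontrivial. In the cubic surface example, $\Gamma$ contains complex reflections in norm-$1$ vectors $r$, e.g.\ $x\mapsto x-(1-\omega)h(x,r)r$ with $\omega$ a primitive cube root of unity, and their mirrors are exactly the hyperplanes of $\cH$. Let $R$ be such a reflection in a hyperplane containing $S$. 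A lift of $R$, composed with a suitable element of $\pi_1(\Bcirc)$, fixes a generic point of $\Shat$, hence fixes $\Shat$ pointwise. It preserves each hyperplane of $\Bhat$ through $\Shat$, so it lies in your $\Gammahat_0$, but it is not in $Z$. Consequently $\Gammahat_0/Z$ contains a nontrivial finite normal subgroup $(K\cap\Gammahat_0)/Z$ that acts trivially on $\Shat$. It therefore does not act faithfully on $\Shatcirc$, and your identification of it with a finite-index subgroup of an orbifold fundamental group (a deck group) fails. In fact, groups as in \cref{T:main} have no nontrivial finite normal subgroups. For the same reason, $\Stab_\Gamma(S)$ itself is not a lattice in $\Isom(S)$; only its quotient by the pointwise stabilizer of $S$ is.

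The repair is the step with which the paper's proof begins. First intersect with the preimage $\Gammahat'$ of a torsion-free finite-index subgroup $\Gamma'\subset\Gamma$. An element of $\Gammahat'$ fixing $\Shat$ pointwise covers an elliptic, hence trivial, element of $\Gamma'$. So it lies in $\pi_1(\Bcirc)$, where the pointwise stabilizer of $\Shat$ really is $Z$. Alternatively, keep your $\Gammahat_0$ and use $K\cap\Gammahat_0$ as the central subgroup. It acts freely by translations on the $k$ normal angular coordinates and contains $Z$ with finite index, so it is again $\cong\bZ^k$ and central. With either change, your centrality argument and your checks of the hypotheses of \cref{T:main} for $(S,\cH_S)$ go through.

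A smaller point concerns your argument that $\Shatcirc$ is simply connected. Projecting a bounding disk back via nearest-point projection does not work as stated, because the projection lands in $\Shat$ and can meet the lower strata. Instead, use the local models of \cref{SS:Local}. They show that $\Shat$ is locally the completed universal branched cover of $S$ along $\cH_S$. Hence the completion of the universal cover of $S-\cH_S$ covers $\Shat$ without branching. This covering is trivial because $\Shat$ is CAT($-1$), hence simply connected. The paper's proof uses this identification tacitly.
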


In the degenerate case $k=n$ the stabilizer is virtually $\bZ^n$.

\begin{proof}
$\Gamma$ has a finite index torsion free subgroup $\Gamma'$, whose preimage $\Gammahat'$ is a finite index subgroup of $\Gammahat$. We set $\Gammahat_0$ to be the finite index subgroup of $\Gammahat' \cap \Stab_{\Gammahat}(\Shat)$ which does not permute the hyperplanes containing $\Shat$. This $\Gammahat_0$ acts on $\Shat$ with central kernel $\bZ^k$, where $k$ is the codimension of $\Shat$. The image of $\Gammahat_0$ in the isometry group of $\Shat$ is a group as in \cref{T:main} for a ball of dimension $n-k$. 
%
%
\end{proof}

\begin{lemma}\label{L:Stabc}
For any horoball $\That_{\hat{c}}$, the stabilizer $\Stab_{\Gammahat}(\That_{\hat{c}})$ has a finite index subgroup that is a  product of $\bZ$ and $n-1$ finitely generated non-abelian free groups. 
\end{lemma}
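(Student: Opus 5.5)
We want: $\Stab_{\Gammahat}(\That_{\hat{c}})$ is virtually $\bZ\times F_1\times\cdots\times F_{n-1}$ with each $F_i$ finitely generated non-abelian free. The plan is to identify the stabilizer with a lift of the cusp stabilizer acting on $\partial\That_{\hat c}$, then use the description of $\partial\That_{\hat c}$ from \cref{SS:PullBackToHoroball} as an $\R$-bundle over $\prod\Chat_i$.

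First, pass to a finite index subgroup. By the proof of \cref{LemAffineArrangement}, $\Stab_\Gamma(T_c)$ contains a finite index subgroup $L$ which is a lattice in $H^{2n-1}$; shrink it further so that it preserves each parallelism class $i$. Its image in $\C^{n-1}$ is a lattice $\Lambda$, and after shrinking further we may take $\Lambda\supseteq\Lambda_1\times\cdots\times\Lambda_{n-1}$ with finite index. Let $\Gammahat_0$ be the preimage of $L$ in $\Stab_{\Gammahat}(\That_{\hat c})$; the latter surjects onto $\Stab_\Gamma(T_c)$ with kernel the deck group of $\Thatcirc_{\hat c}\to\Tcirc_c$, so $\Gammahat_0$ has finite index. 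Since $\partial\Thatcirc_{\hat c}$ is the universal cover of $\partial\Tcirc_c$ (\cref{SS:PullBackToHoroball}), we get
$$\Gammahat_0\cong\pi_1\bigl(L\back\partial\Tcirc_c\bigr).$$
Here $L$ acts freely on $\partial T_c$ because it is a torsion-free lattice in $H^{2n-1}$, which acts simply transitively; if necessary, pass to a torsion-free finite index subgroup.

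Next, compute this fundamental group using the fibration. The space $L\back\partial\Tcirc_c$ is a circle bundle (fiber $(L\cap Z)\back Z$) over
$$\Lambda\back\prod_i\Ccirc_i,$$
which is finitely covered by $\prod_i\Lambda_i\back(\C-\Sigma_i)$. Each factor is a torus minus finitely many points, and at least one point is removed since $\Sigma_i\neq\emptyset$ by \cref{CuspLimit}. So each factor has fundamental group a finitely generated non-abelian free group $F_i$. The long exact sequence of the fibration then gives a central extension
$$1\to\bZ\to\Gammahat_0'\to F_1\times\cdots\times F_{n-1}\to1$$
for a further finite index subgroup $\Gammahat_0'$. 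Injectivity of $\bZ=\pi_1(\text{fiber})$ follows because the base is aspherical with vanishing $\pi_2$. Centrality holds because the action of $Z$ on $\partial T_c$ commutes with $H^{2n-1}$ and with the lifted action.

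The main obstacle is showing this central extension virtually splits. Products of free groups have $H^2(F_1\times\cdots\times F_{n-1};\bZ)$ given by the Künneth sum of $H^1(F_i)\otimes H^1(F_j)$ terms, so the extension class need not vanish a priori. I would argue geometrically. The Euler class of the circle bundle is represented by the curvature form $\mathrm{Im}\langle\cdot,\cdot\rangle$, a sum of area forms $\sum_i dA_i$ in which each term lives in a single factor. Cross terms $H^1\otimes H^1$ therefore do not appear, and the class restricted to each punctured torus factor lies in $H^2$ of a non-closed surface, which is $0$. Hence the rational Euler class vanishes, so the integral class is torsion. Passing to a finite cover of the base (a finite index subgroup of $\prod F_i$) kills the torsion class, giving a finite index subgroup isomorphic to $\bZ\times\prod F_i'$, where each $F_i'$ is again finitely generated non-abelian free. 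As a cross-check, \cref{CenterQuasiLine} shows the swaddling is a quasi-line, which is consistent with boundedness of this class.

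The delicate point is justifying that the Euler class is computed factorwise by the curvature form on the noncompact base. I would handle this via de Rham cohomology of $\prod_i(\Lambda_i\back\Ccirc_i)$, using the fact that each $dA_i$ is exact on a punctured torus.
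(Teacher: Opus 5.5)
Your argument is correct, but it reaches the splitting by a different route from the paper.

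The paper works upstairs on the spine $\prod_i\hat G_i$ of \cref{R:tauMotivation}. The curvature $\sum_i\omega_i$ has no cross terms, so it vanishes on this product of trees, and the $\R$-bundle over the simply connected spine is therefore trivial. The finite-index subgroup then acts on $\R\times\prod_i\hat G_i$ and embeds in $\R\times Q$, with $Q=\prod_i F_i$, compatibly with the maps to $Q$. The paper concludes from this that the central $\bZ$-extension is trivial.

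You instead stay downstairs. You identify a finite-index subgroup with $\pi_1$ of a principal circle bundle over $\prod_i\Lambda_i\backslash\Ccirc_i$ and show that its Euler class vanishes.

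The geometric input is the same in both proofs. Orthogonality of the parallelism classes rules out cross terms, and removing $\Sigma_i$ trivializes each $\omega_i$: it is exact on the punctured torus in your version and identically zero on the graph $G_i$ in the paper's. Both arguments also rest on the torsion-freeness of $H^2(F_1\times\cdots\times F_{n-1};\bZ)\cong\bigoplus_{i<j}H^1(F_i)\otimes H^1(F_j)$. The paper's last sentence uses this implicitly to pass from a splitting over $\R$ to one over $\bZ$.

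What each route buys:
\begin{itemize}
\item Yours gives a clean topological statement. After a finite cover, the cusp cross-section with the arrangement removed is a trivial circle bundle over a product of punctured tori, which also gives the ranks of the $F_i$.
\item The paper's directly produces the action on $\R$ times a product of trees, which mirrors the center and tree domains.
\end{itemize}

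Three small remarks.
\begin{itemize}
\item Your ``delicate point'' disappears by naturality. The bundle over $\prod_i\Lambda_i\backslash\Ccirc_i$ is the restriction of the Heisenberg nilmanifold bundle over the compact torus $\prod_i\Lambda_i\backslash\C$. That bundle's Euler class is a sum of classes pulled back from single factors, and each restricts to zero on a punctured torus, so no Chern--Weil argument on a noncompact base is needed.
\item The final passage to a finite cover to kill a torsion class is unnecessary, and it would need justification in general. By the K\"unneth computation above there is no torsion, so vanishing of the real class already makes the circle bundle, and hence the extension, trivial.
\item The paper's $\Lambda_i$ are the projections of $\Lambda$, so $\Lambda\subseteq\prod_i\Lambda_i$. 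To get the inclusion you want, replace $\Lambda_i$ by the finite-index sublattice $\Lambda\cap\C_i$.
\end{itemize}
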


\begin{proof}
Although it is not required, we prefer to use the discussion in \cref{R:tauMotivation}. This remark shows that $\That_{\hat{c}}$ deformation retracts onto an $\bR$-bundle over a product of $n-1$ finite valence trees. After passing to a finite index subgroup, one can assume that the image acts on the product of the $n-1$ trees as a product of $n-1$ cocompact free group actions. 
%
%
The kernel of this action is a central copy of $\bZ$.
%
%

Because \cref{R:tauMotivation} gives that the monodromy of the $\bR$-bundle is trivial, one gets that the finite index subgroup is abstractly a subgroup of $\bR$ times the $n-1$ finitely generated free groups. When $Q$ is a product of finitely generated free groups any central extension $1\to \bZ \to H \to Q \to 1$ that embeds in $\bR\times Q$ in such a way that the maps to $Q$ agree must be the trivial extension $\bZ\times Q$. 
%
%
\end{proof}

\begin{proof}[Proof of \cref{T:FJ}]
We prove this by induction on the dimension $n$. For $n=1$, it is true because $\Gammahat$  has a finite index subgroup which is a free group or a surface group. Assume the result is known for all dimensions less than some $n>1$, and consider a $\Gammahat$ in dimension $n$. 

We now wish to apply \cite[Theorem C]{DurhamMinskySistoCAT0}. 
A minor subtlety is that $\IndexSmall$ is not the index set of the HHG structure on $\Gammahat$. The actual index set $\IndexBig$ consists of tuples of orthogonal domains in $\IndexSmall$. This means that the stabilizer of a domain in $\IndexBig$ is, up to finite index, an intersection of the stabilizers of a finite set of domains in $\IndexSmall$. 

It follows that the stabilizers of domains in $\IndexBig$, up to finite index, are all of the form $\Stab_{\Gammahat}(\Shat)$ or $\Stab_{\Gammahat}(\That_{\hat{c}})$ or $\Stab_{\Gammahat}(\Shat) \cap \Stab_{\Gammahat}(\That_{\hat{c}})$, where in this final case the stratum closure $\Shat$ must enter the horoball $\That_{\hat{c}}$. 

We see that
$\Stab_{\Gammahat}(\Shat)$ satisfies the Farrell-Jones Conjecture by \cref{L:StabS} and the induction hypothesis, and $\Stab_{\Gammahat}(\That_{\hat{c}})$ satisfies the Farrell-Jones Conjecture by \cref{L:Stabc}. That $\Stab_{\Gammahat}(\Shat) \cap \Stab_{\Gammahat}(\That_{\hat{c}})$ satisfies the Farrell-Jones Conjecture is then automatic because it is a subgroup of a group that satisfies the Farrell-Jones Conjecture. The result now follows from \cite[Theorem C]{DurhamMinskySistoCAT0}.
\end{proof}

\begin{remark}\label{R:CPR}
A related analysis shows that the HHG structures provided by \cref{T:main} have cobounded product regions in the sense of \cite[Definition 1.13]{Mangioni}.
%
%
%
\end{remark}

\appendix

\section{Proof of criteria for hierarchical hyperbolicity }\label{S:CritProof}

\subsection{The definition of an HHS} 

Here we follow the definition in \cite[Section 1.3]{BehrstockHagenSistoHHSII}. We have made minor changes to the definition, which do not change which spaces the definition applies to. These changes are  noted after the definition. We continue to use ``asynchronous'' as a replacement for the standard term ``transverse''.

\begin{definition}\label{D:HHS}
Let $(\cX, \IndexBig, \pi_\bullet)$  be a pre-HHS with  $\IndexBig$ a valid index set. We say that $(\cX, \IndexBig, \pi_\bullet)$ is a hierarchically hyperbolic space if the following hold for some constants $\kappa_0, \alpha\geq 0$, $E\geq \kappa_0$. 
\begin{enumerate}
\item\label[Haxiom]{O:N} \textbf{Nesting:} For each properly nested pair of domains $U\sqsubsetneq V$ there is a specified point $\rho^U_V\in \cC(V)$. 
\item\label[Haxiom]{O:T} \textbf{Transversality:} 
\begin{enumerate}
\item\label[HaxiomS]{O:TC:Behrstock}  If $V\pitchfork W$, then there are specified points $\rho^V_W \in \cC(W)$ and $\rho^W_V \in \cC(V)$ such that for all $x\in \cX$ 
$$\min\{\,d_W(\pi_W(x), \rho^V_W),\, d_V(\pi_V(x), \rho^W_V)\,\} \leq \kappa_0.$$
\item\label[HaxiomS]{O:TC:Rhos} If $U \sqsubsetneq V$ and both $\rho^U_W$ and $\rho^V_W$ are defined, then  
$d_W(\rho^U_W, \rho^V_W) \leq \kappa_0.$ 
\end{enumerate}
\item\label[Haxiom]{O:LL} \textbf{Large links:} For any $P\geq 0$ there exists  $N\geq 0$ such that for any $V$ and any $x,y\in \cX$  with $$d_V(\pi_V(x),\pi_V(y))\leq P$$  there is a set of at most $N$ domains $U_i\sqsubsetneq V$ such that if $T\sqsubsetneq V$ is not nested in  one of the $U_i$ then $d_T(\pi_T(x),\pi_T(y))\leq E$. 
\item\label[Haxiom]{O:BGI} \textbf{Bounded geodesic image:} For all $x, y \in \cX$ and $U\sqsubsetneq V$, if there is a geodesic in $\cC(V)$ from $\pi_V(x)$ to $\pi_V(y)$ that stays $E+2\delta$ away from $\rho^U_V$, then $$d_U(\pi_U(x), \pi_U(y))\leq E.$$
Here $\delta$ is such that all $\cC(U)$ are $\delta$-hyperbolic. 
\item\label[Haxiom]{O:R}  \textbf{Partial realization:}  For any  orthogonal family $\{V_j\}$ and points $p_j \in \cC(V_j)$, there exists a point $x\in \cX$ such that
$$d_{V_j}(\pi_{V_j}(x), p_j)\leq \alpha$$
for all $j$, and such that for each $j$ and each $V$ with  $\rho^{V_j}_V$ defined, 
$$d_V(\pi_V(x), \rho^{V_j}_V)\leq \alpha.$$ 
\item\label[Haxiom]{O:U} \textbf{Uniqueness:} For all $\kappa\geq 0$ there exists $\omega\geq 0$ such that if $d(x,y)\geq \omega$ then $$d_U(\pi_U(x), \pi_U(y))\geq \kappa$$ for some $U\in \IndexBig$. 
\end{enumerate}
\end{definition}

The minor changes we have made are the following: we have chosen to use points everywhere instead of sets of uniformly bounded diameter following the discussion in \cite[Section 3]{SistoWhatIs}; we have incorporated the coarse surjectivity assumption of \cite[Section 1.3]{BehrstockHagenSistoHHSII} into the partial realization axiom; and we have modified the large links axiom following \cite[Remark 1.5]{BehrstockHagenSistoHHSII} and \cite[Remark 2.10]{Russell}. 

Note that ``$\rho^U_V$ is defined'' means that either $U\sqsubsetneq V$ or $U\pitchfork V$.

\subsection{The large links axiom} 
Consider the following less uniform version of the large links axiom, where the uniform constant $E$ has been replaced with a constant $R$ that can depend on $P$. 

\begin{enumerate}
\item[(3')]\label[Haxiom]{O:LL2} \textbf{Less uniform large links:} For any $P\geq 0$ there exists  $N\geq 0$ and $R\geq 0$ such that for any $V$ and any $x,y\in \cX$  with $$d_V(\pi_V(x),\pi_V(y))\leq P$$  there is a set of at most $N$ domains $U_i\sqsubsetneq V$ such that if $T\sqsubsetneq V$ is not nested in  one of the $U_i$ then $d_T(\pi_T(x),\pi_T(y))\leq R$. 
\end{enumerate}

We remark that this change in uniformity is not significant. 

\begin{lemma}\label{L:LLuniformity}
In the presence of the other axioms, the less uniform large links axiom implies the usual large links axiom, possibly after increasing the constant $E$.  
\end{lemma}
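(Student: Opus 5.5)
We want to show that if (3') holds with constants $N(P),R(P)$, together with the other HHS axioms (nesting, transversality, bounded geodesic image, partial realization, uniqueness) for some $E$, then after enlarging $E$ the uniform version holds. The idea is to bootstrap: apply (3') with $P$ replaced by something depending only on $E$, and then show that domains $T$ with $d_T(\pi_T(x),\pi_T(y))$ between $E$ and $R$ are already forced to lie in a bounded family by bounded geodesic image inside $V$. Note that $E$ may only be enlarged to a constant independent of $P$. Enlarging it also enlarges the threshold in bounded geodesic image, which only weakens that axiom's hypothesis, so this is harmless.

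Concretely, I would set $E' = \max(E, R(P_0))$ for a fixed $P_0$ depending only on $E,\delta,\kappa_0$, say $P_0 = 10E+10\delta$. Fix $x,y$ and $V$ with $d_V(\pi_V(x),\pi_V(y))\le P$. Choose a geodesic $\gamma$ in $\cC(V)$ from $\pi_V(x)$ to $\pi_V(y)$, of length at most $P$. Subdivide it into at most $P/P_0+1$ subsegments of length at most $P_0$. Using partial realization with the singleton family $\{V\}$, realize each subdivision point $\gamma(t_j)$ by a point $z_j\in\cX$ with $d_V(\pi_V(z_j),\gamma(t_j))\le\alpha$, taking $z_0=x$ and $z_m=y$. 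Consecutive points then satisfy $d_V(\pi_V(z_j),\pi_V(z_{j+1}))\le P_0+2\alpha$. Applying (3') to each consecutive pair gives sets of at most $N(P_0+2\alpha)$ domains, and I take the union of all of these as the candidate $\{U_i\}$. Its size is bounded in terms of $P$, which is all that large links requires.

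Now let $T\sqsubsetneq V$ be nested in none of the $U_i$. The triangle inequality gives
$$d_T(\pi_T(x),\pi_T(y))\le \sum_j d_T(\pi_T(z_j),\pi_T(z_{j+1}))\le m R(P_0+2\alpha),$$
which still grows with $P$, so this alone is not uniform. This step is the main obstacle. To fix it, I would use bounded geodesic image. The point $\rho^T_V$ lies within $E+2\delta$ of $\gamma$ only near boundedly many (one or two adjacent) subsegments, up to $\delta$-thinness of the quadrilaterals formed by $\gamma$ and geodesics between the $\pi_V(z_j)$. For every pair $(z_j,z_{j+1})$ whose connecting geodesic in $\cC(V)$ stays $E+2\delta$ away from $\rho^T_V$, bounded geodesic image gives $d_T\le E$. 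For these pairs I would instead compare $z_0$ directly with the last such $z_j$: the geodesic from $\pi_V(z_0)$ to $\pi_V(z_j)$ is fellow-travelled by an initial subsegment of $\gamma$ and so avoids $\rho^T_V$. This leaves only $O(1)$ consecutive steps that contribute $R(P_0+2\alpha)$. Hence $d_T(\pi_T(x),\pi_T(y))\le 2E+C\,R(P_0+2\alpha)$, a constant independent of $P$, and the new $E$ is taken to be this constant.

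The bookkeeping of fellow-travelling constants is the delicate part. I would handle it by choosing $P_0$ large relative to $\alpha$ and $\delta$, so that each realized point is within $\alpha+\delta$ of $\gamma$ and the geodesics between nonadjacent realized points are within $2\delta+2\alpha$ of $\gamma$. Finally, the usual large links axiom uses a single constant $E$ for all the other axioms, so I would replace $E$ everywhere by the maximum of the old $E$ and the new constant, noting again that each axiom remains true when $E$ is increased.
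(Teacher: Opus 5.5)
Your argument is correct and takes essentially the same route as the paper. Both proofs use partial realization to place points along a $\cC(V)$-geodesic, apply the less uniform axiom to boundedly separated pairs and take the union, and then use bounded geodesic image to compare $x$ and $y$ with realized points far from $\rho^T_V$. The only difference is minor: the paper uses overlapping windows $x_i, x_{i+L}$ at unit spacing so that a single window straddles $\rho^T_V$ (giving $E'=2E+R_0$), whereas you use consecutive blocks and pay a bounded number of copies of $R(P_0+2\alpha)$, which is equally valid.
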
 

The proof does not use hyperbolicity of $\cC(U)$ when $U$ is nest minimal. 

\begin{proof}
Fix even constants $P_0\gg L \gg \max(E, \delta, \alpha)$. 
Apply the less uniform large links with $P_0$ to obtain an $N_0$ and an $R_0$ as above. We will now show that the usual large links axiom holds if $E$ is increased to $E'= 2E + R_0.$ 

Let $P>0$ be arbitrary, and suppose $d_V(\pi_V(x), \pi_V(y))\leq P$. If $d_V(\pi_V(x), \pi_V(y))\leq P_0$ there is nothing new to prove, so assume $d_V(\pi_V(x), \pi_V(y))> P_0$.

Fix a geodesic from $\pi_V(x)$ to $\pi_V(y)$. Pick points $p_0, \ldots, p_d$ on this geodesic, so $d_V(\pi_V(x), p_i)=i$ and $d_V(p_d, \pi_V(y))< 1$. 
Using partial realization, pick points $$x=x_0, x_1, \ldots, x_d = y$$ in $\cX$ so that $\pi_V(x_i)$ lies within $\alpha$ of $p_i$.

For each valid pair $i, i+L$ of indices, apply  the less uniform large links  to the points $x_i, x_{i+L}$ to get a set $\cU_i$ of at most $N_0$ domains properly nested in $V$. We claim that we can use the set $\cU = \bigcup_i \cU_i$, which has size at most $d N_0$, to prove the usual version of large links. We use $N= (P+1) N_0$, which is greater than $d N_0$ and does not depend on the choice of $x,y$. 

Indeed, suppose $T \sqsubsetneq V$ and $T$ is not nested in any of the domains of $\cU$. If the chosen geodesic from $\pi_V(x)$ to $\pi_V(y)$ does not come within $E+2\delta$ of $\rho^T_V$, the bounded geodesic image axiom shows that $d_T(\pi_T(x), \pi_T(y)) \leq E < E'$. So suppose that some $p_j$ is within $E+2\delta+1$ of $\rho^T_V$. 

We leave the case when $p_j$ is close to the beginning or end of the geodesic to the reader. Otherwise, let $i=j-L/2$, and consider $x_{i}$ and $x_{i+L}$. If $k\leq i$ or $k\geq i+L$, then the triangle inequality gives
$d_V(\pi_V(x_k), \rho^T_V)\gg E+2\delta.$
Hence the bounded geodesic image axiom gives that $\pi_T(x_i)$ is within $E$ of  $\pi_T(x)$  and $\pi_T(x_{i+L})$ is within $E$ of $\pi_T(y)$. This uses hyperbolicity of $\cC(V)$, to ensure that a geodesic from $\pi_V(x_i)$ to $\pi_V(x)$, and a geodesic from $\pi_V(x_{i+L})$ to $\pi_V(y)$, stays close to the appropriate segments of the chosen geodesic from $\pi_V(x)$ to $\pi_V(y)$. Since $T$ is not nested in any domain in $\cU_i$, we get that $d_T(\pi_T(x_i), \pi_T(x_{i+L}))\leq R_0$. So the triangle inequality gives 
$$d_T(\pi_T(x), \pi_T(y))\leq E + R_0 + E = E'$$
as desired. 
\end{proof}

\subsection{The proof of \cref{P:Criterion}.} 

We start by explaining how to add domains to a nearly valid index set to make it into a valid index set. This can be compared to, but is easier and less general than, the appendix of \cite{AbbottBehrstockDurham}. (We cannot use the appendix of \cite{AbbottBehrstockDurham} because of \cite[Remark 3.4]{AbbottBehrstockRussellStructure}.)

\begin{definition}\label{D:IndexBig}
If $\IndexSmall$ is a nearly valid index set, we define $\IndexBig$ to be the set of non-empty  orthogonal subsets of $\IndexSmall$. We say $\mbU, \mbV\in \IndexBig$ are orthogonal if every domain in $\mbU$ is orthogonal to every domain in $\mbV$, and we say $\mbU$ is nested in $\mbV$ if every domain in $\mbU$ is nested in some domain of $\mbV$. As always, two domains in $\IndexBig$ are asynchronous if they are not orthogonal and neither is nested in the other. 
\end{definition}

\begin{example}
Say $U_1\sqsubsetneq V$ and $V \perp U_2$ are domains in $\IndexSmall$. Then $\mbU=\{U_1, U_2\}$ is asynchronous to $\mbV=\{V\}$ (even though no domain in $\mbU$ is asynchronous to any domain in $\mbV$).  
\end{example}

We view $\IndexSmall$ as a subset of $\IndexBig$ via the inclusion $U\mapsto \{U\}$, and note that the orthogonality and nesting relations on $\IndexBig$ extend those on $\IndexSmall$. We will typically use regular letters like $U$ for elements of $\IndexSmall$ and bold letters like $\mbU$ for elements of $\IndexBig$, except that sometimes, keeping the inclusion in mind, we may write $U$ for $\{U\}$.

The point of this construction is the following. 

\begin{lemma}\label{L:Valid}
If $\IndexSmall$ is a nearly valid index set then $\IndexBig$ is a valid index set. 
\end{lemma}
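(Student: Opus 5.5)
We verify the three axioms of \cref{D:Valid} for $\IndexBig$ with the relations of \cref{D:IndexBig}, together with the basic structural requirements: $\perp$ symmetric and anti-reflexive, $\sqsubseteq$ a partial order with a unique maximal element.

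\textbf{Orthogonality.} Symmetry is immediate from the definition. For anti-reflexivity, note that $\mbU\perp\mbU$ would force each $U\in\mbU$ to satisfy $U\perp U$, which is impossible since $\mbU$ is non-empty and $\perp$ is anti-reflexive on $\IndexSmall$.

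\textbf{Nesting as a partial order.} Reflexivity and transitivity follow directly from the corresponding properties in $\IndexSmall$. Antisymmetry is the first point needing care. Suppose $\mbU\sqsubseteq\mbV$ and $\mbV\sqsubseteq\mbU$. Take $U\in\mbU$; then $U\sqsubseteq V\sqsubseteq U'$ for some $V\in\mbV$ and $U'\in\mbU$. If $U'\neq U$, then $U\perp U'$, so coherence gives $U\perp U$, a contradiction. Hence $U'=U$, and antisymmetry in $\IndexSmall$ gives $U=V\in\mbV$. By symmetry $\mbU=\mbV$.

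The unique maximal element is $\{S\}$, where $S$ is the maximal element of $\IndexSmall$. Every $\mbU$ is nested in $\{S\}$. Conversely, $\{S\}$ is nested in $\mbU$ only if $S\sqsubseteq U$ for some $U\in\mbU$, so $U=S$; and since $S$ is orthogonal to nothing (coherence again), $\mbU=\{S\}$.

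\textbf{Finite height.} Orthogonal sets in $\IndexSmall$ have size at most some $M$, and $\IndexSmall$-chains have length at most $h$. Along a strict chain $\mbU_1\sqsubsetneq\cdots\sqsubsetneq\mbU_\ell$, assign to $\mbU_j$ the multiset of heights of its elements, where the height of $U$ is the length of the longest $\IndexSmall$-chain ending at $U$. Since each element of $\mbU_j$ lies below an element of $\mbU_{j+1}$ and $\mbU_j\neq\mbU_{j+1}$, I expect to show that this multiset strictly increases in a suitable order, for instance the dominance order on sorted height vectors padded to length $M$. That order has bounded chain length depending only on $h$ and $M$.

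\textbf{Coherence.} Suppose $\mbV\sqsubseteq\mbW$ and $\mbU\perp\mbW$. Each $V\in\mbV$ lies under some $W\in\mbW$, and each $U\in\mbU$ is orthogonal to that $W$. Coherence in $\IndexSmall$ then gives $U\perp V$, so $\mbU\perp\mbV$.

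\textbf{Containers (the main obstacle).} Let $\mbV\sqsubseteq\mbW$ and suppose $\cP_{\mbV,\mbW}$ is non-empty. The candidate container is
\[
\mbU=\{\text{maximal elements of the union of the }\cP_{V,W}\},
\]
built as follows. Consider each $W\in\mbW$. Let $\mbV_W$ be those $V\in\mbV$ nested in $W$, choosing one such $W$ for each $V$.
\begin{itemize}
\item If $\mbV_W$ is empty, contribute $W$ itself.
\item Otherwise, the domains under $W$ orthogonal to all of $\mbV_W$ lie in $\cP_{V,W}$ for each $V\in\mbV_W$. By weak containers, each such domain lies in some element of an orthogonal family $\{U^V_i\}$ properly nested in $W$. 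Refine to the domains that lie simultaneously under some $U^{V}_{i}$ for every $V\in\mbV_W$, and contribute a suitable orthogonal collection covering these.
\end{itemize}

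The main difficulty is to ensure that the contributed collection is orthogonal, lies properly below $\mbW$, and is non-empty only when needed. I would try to use, from weak containers, that nest-maximal elements of $\cP_{V,W}$ form an orthogonal set. I would take $\mbU$ to be the set of nest-maximal elements of
\[
\bigcup_{\mbX\in\cP_{\mbV,\mbW}}\mbX.
\]
I would then prove that $\mbU$ is orthogonal by showing that two nest-maximal such domains that are not orthogonal would contradict maximality via coherence. Properness of $\mbU\sqsubsetneq\mbW$ follows since no element of $\mbU$ can equal an element of $\mbW$ that contains some $V$, because such a $W$ is not orthogonal to $V$. If $\mbU=\mbW$ in the remaining case, I would adjust by removing the relevant $W$ terms.
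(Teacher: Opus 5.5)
Your checks of orthogonality, of nesting as a partial order (antisymmetry via coherence is right), of the maximal element $\{S\}$, and of coherence are fine. The containers step, however, has a genuine gap.

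\textbf{Containers.} The set you finally propose is the nest-maximal elements of $P=\bigcup_{\mathbf{X}\in\cP_{\mbV,\mathbf{W}}}\mathbf{X}$. This set need not be orthogonal, so it need not belong to $\IndexBig$. Your claim that two non-orthogonal maximal elements would contradict maximality via coherence is false.

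Here is a counterexample. Let $\IndexSmall=\{S,C,A,B,V\}$ with $A,B\sqsubsetneq C\sqsubsetneq S$ and $V\sqsubsetneq S$. Set $V\perp A$ and $V\perp B$, with $A\pitchfork B$ and $C\pitchfork V$.
\begin{itemize}
\item Coherence holds, because nothing is properly nested in $A$, $B$ or $V$.
\item Weak containers holds; for instance $\cP_{V,S}=\{A,B\}$ is covered by $\{C\}$.
\end{itemize}
For $\mbV=\{V\}\sqsubseteq\{S\}$ one gets $\cP_{\mbV,\{S\}}=\{\{A\},\{B\}\}$. Its maximal elements satisfy $A\pitchfork B$, so they do not form an element of $\IndexBig$. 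The actual container is $\{C\}$, which is not orthogonal to $V$.

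The missing observation is that the containers axiom of \cref{D:Valid} does not require the container to be orthogonal to $\mbV$. So you should neither search inside $\cP_{\mbV,\mathbf{W}}$ nor intersect families over all of $\mbV_W$. Build the container directly from weak containers:
\begin{itemize}
\item For each $W\in\mathbf{W}$ containing some $V\in\mbV$, pick one such $V$ and take the orthogonal family that weak containers provides for the single pair $(V,W)$. Take nothing if $\cP_{V,W}=\emptyset$.
\item For each $W\in\mathbf{W}$ containing no element of $\mbV$, take $W$ itself.
\end{itemize}
This collection works for the following reasons.
\begin{itemize}
\item Every domain occurring in an element of $\cP_{\mbV,\mathbf{W}}$ lies under some $W$ and is orthogonal to the chosen $V$. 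Hence it lies under a member of the collection.
\item The collection is orthogonal, by weak containers within each $W$ and by coherence across distinct $W\perp W'$.
\item It is non-empty whenever $\cP_{\mbV,\mathbf{W}}$ is.
\item It is properly nested in $\mathbf{W}$, because any $W_0$ containing an element of $\mbV$ does not appear in it. Members coming from $W_0$ are properly nested in $W_0$. A member nested in some $W\perp W_0$ and equal to $W_0$ would give $W_0\perp W_0$ by coherence.
\end{itemize}
This is the paper's route: it reduces to a singleton $\{V\}$ and applies weak containers to the single pair $(U_1,V)$.

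\textbf{Finite height.} This is a smaller issue, but the dominance order you suggest fails. Suppose $A_1,A_2,A_3$ are pairwise orthogonal, nest-minimal, and all properly nested in $V$, where $V$ lies only under the top domain. Then $\{A_1,A_2,A_3\}\sqsubsetneq\{V\}$, with height vectors $(1,1,1)$ and $(2,0,0)$, which are incomparable in dominance order.

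The lexicographic order on descending sorted height vectors does work. Suppose $\mbU_j\sqsubsetneq\mbU_{j+1}$ have the same elements at every height greater than $t$. Each $U\in\mbU_j$ of height $t$ lies under some element of $\mbU_{j+1}$, of height at least $t$. That element cannot have height greater than $t$: it would then be an element of $\mbU_j$ strictly above $U$, hence orthogonal to $U$, and coherence would give $U\perp U$. So it has height $t$ and equals $U$. Therefore, at the highest height where the two sets differ, $\mbU_{j+1}$ has strictly more elements. Since these vectors lie in $\{0,\dots,h\}^M$, chains have length at most $(h+1)^M$.
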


\begin{proof}
It is immediate that the orthogonality relation on $\IndexBig$ is symmetric and anti-reflexive, and that the nesting relation is a partial order. If $\Sigma\in \IndexSmall$ is nest maximal, then $\{\Sigma\} \in \IndexBig$ is nest maximal. 


The finite height axiom follows from the corresponding statement for $\IndexSmall$ and the fact that orthogonal sets of domains in $\IndexSmall$ have  bounded size. The coherence axiom is left as an exercise, so it only remains to check the containers axiom. 

Suppose $\mbU=\{U_i\}$ is nested in $\mbV = \{V_j\}$. First observe that it suffices to construct containers when $\mbV=\{V\}$ has size 1, since containers for $\mbU$ in the various $V_j$ can be assembled into a container for $\mbU$ in $\mbV$. 

To build a container for $\{U_1, \ldots, U_r\}$ in $\{V\}$, start by applying the weak container axiom to the pair $U_1, V$, to find an orthogonal tuple of domains $\{C_1, \ldots, C_s\}$ properly nested in $V$. Note that each $U_i, i>1$ is nested in a unique $C_j$, so one can iterate the procedure. A stronger fact is proven in \cref{L:CleanContainers} so here we leave the details to the reader. 
\end{proof}

With this in hand, we can proceed to the proof. 
\begin{proof}[Proof of  \cref{P:Criterion}]
We let $\IndexBig$ be as in \cref{D:IndexBig}, so $\IndexBig$ is a valid index set by \cref{L:Valid}. 
For $\mbU\in \IndexBig$, we define $\cC(\mbU)$ and $\pi_\mbU$ as follows: If $\mbU=\{U\}$, then $\cC(\mbU)=\cC(U)$ and $\pi_\mbU=\pi_U$, and otherwise define $\cC(\mbU)$ to be a point and $\pi_\mbU$ to be the constant map. It now suffices to check the 6 axioms of \cref{D:HHS}. We will view $\IndexSmall$ as a subset of $\IndexBig$ in what follows. 

We will need points $\rho^\mbU_\mbV \in \cC(\mbV)$ when $\mbU \pitchfork \mbV$ or $\mbU \sqsubsetneq  \mbV$. If $\mbV\notin \IndexSmall$, then $\cC(\mbV)$ is a point, and $\rho^\mbU_\mbV$ is that point. So assume $\mbV = \{V\}$ is a singleton, and $\mbU=\{U_i\}$. To have definitions that are coarsely natural, we first note the following. 

\begin{lemma}\label{L:WellDefined}
Suppose $U_1\perp U_2$, and each $U_i$ is properly nested in or asynchronous to $V$. Then 
$\diam \pi_V(\cA_{U_1}\cup \cA_{U_2})\leq 2B.$
\end{lemma}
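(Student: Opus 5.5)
The plan is to use only the bounded projections assumption (\cref{A:BP}) and the density assumption (\cref{A:PR}) of \cref{P:Criterion}. By \cref{A:BP}, each of $\pi_V(\cA_{U_1})$ and $\pi_V(\cA_{U_2})$ has diameter at most $B$, since each $U_i$ is properly nested in or asynchronous to $V$. So it suffices to find a single point of $\cA_{U_1}\cap\cA_{U_2}$. Then the two sets $\pi_V(\cA_{U_1})$ and $\pi_V(\cA_{U_2})$ share a point, and the triangle inequality gives $\diam \pi_V(\cA_{U_1}\cup\cA_{U_2})\leq 2B$.

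To produce that common point I will apply the density assumption to the orthogonal pair $T=(U_1,U_2)$. It says that $\pi_{U_1}\times\pi_{U_2}(\cA_{U_1}\cap\cA_{U_2})$ is $B$-dense in $\cC(U_1)\times\cC(U_2)$. This product is nonempty, because each $\cC(U_i)$ is a nonempty hyperbolic space. A $B$-dense subset of a nonempty space is nonempty, so $\cA_{U_1}\cap\cA_{U_2}\neq\emptyset$. Pick $z$ in this intersection; then $\pi_V(z)$ lies in both images, and we are done.

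I expect no real obstacle here. The only point to check is that ``$B$-dense'' is read as forcing the preimage set to be nonempty, which is automatic once the target space is nonempty. One might also worry that $\pi_V$ is not defined on all of $\cA_{U_i}$. However, $\cA_{U_i}\subset\cX$ and $\pi_V$ is defined on all of $\cX$ by the pre-HHS structure, so this is not an issue.
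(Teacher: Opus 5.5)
Your proposal is correct and is the same argument as the paper's: the density assumption applied to the orthogonal pair gives $\cA_{U_1}\cap\cA_{U_2}\neq\emptyset$, and then the bounded projections assumption together with the triangle inequality yields the $2B$ bound. Your added remarks (that $\pi_V$ is defined on all of $\cX$, and that density forces nonemptiness because the target is nonempty) just make explicit what the paper's one-line proof leaves implicit.
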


\begin{proof}[Proof of \cref{L:WellDefined}]
The density assumption gives $\cA_{U_1}\cap \cA_{U_2}\neq \emptyset$ so the result follows from the bounded projections assumption. 
\end{proof}

First suppose $\mbU=\{U_i\}$ is nested in $\mbV=\{V\}$. Define $\rho^\mbU_\mbV$ to be any point in $\pi_V(\cup \cA_{U_i})$. Since this set has diameter at most $2B$ by \cref{L:WellDefined}, this is coarsely well defined. 

Next, suppose $\mbU=\{U_i\}$ is asynchronous to $\mbV=\{V\}$. We first claim that at least one $U_i$ must be properly nested in or asynchronous to $V$. Otherwise, for each $i$ either $V\sqsubseteq U_i$ or $V\perp U_i$, and this would give the contradiction that either $\mbV \sqsubseteq \mbU$ (if $V\sqsubseteq U_i$ for some $i$) or $\mbV \perp \mbU$. We let  $S$ be the set of $i$ with $U_i$ properly nested in or asynchronous to $V$ and define $\rho^\mbU_\mbV$ to be any point in $\pi_V(\cup_{i\in S} \cA_{U_i})$. Since this set has diameter at most $2B$ by \cref{L:WellDefined}, this is coarsely well defined. We can now proceed.

\begin{proof}[Proof of the transversality axiom]
For part (a), we first note that the inequality is trivial unless $\mbU=\{U\}$ and $\mbV=\{V\}$ are both singletons, so assume this is the case. The result is now given by part of the structured redundancy assumption.

Part (b) follows from the final part of the bounded projections assumption. 
\end{proof}

\begin{proof}[Proof of the bounded geodesic image axiom]
For this to fail,  we must have that $d_\mbU(\pi_\mbU(x), \pi_\mbU(y))$ is large and $\rho^\mbU_\mbV$ is far from a geodesic from $\pi_\mbV(x)$ to $\pi_\mbV(y)$. So it suffices to consider the case when both domains $\mbU=\{U\}$, $\mbV=\{V\}$ are in $\IndexSmall$. The result is now given by part of the structured redundancy assumption. 
\end{proof}

\begin{proof}[Proof of the partial realization axiom]
This follows immediately from the density assumption. 
\end{proof}

\begin{proof}[Proof of the large links axiom] We prove the weaker version, which is sufficient by \cref{L:LLuniformity}. 

Fix $P\geq 0$. The progress localization assumption gives that there is a $N_{PL}$ and an $R>0$  such that for any $V\in \IndexSmall$ and any $x,y\in \cX$ with $d_V(\pi_V(x),\pi_V(y))\leq P$  there is a set of at most $N_{PL}$ domains $U_i\sqsubsetneq V$ such that if $T\sqsubsetneq V$ is not nested in or orthogonal to one of the $U_i$ then $d_T(\pi_T(x),\pi_T(y))\leq R$. 

We claim that the statement in large links holds with $N=\max(2N_{PL}, K)$, where $K$ is the maximum size of a set of pairwise orthogonal domains. Indeed, consider $\mbV$ and  $x,y\in \cX$ with $d_{\mbV}(\pi_{\mbV}(x),\pi_{\mbV}(y))\leq P$. 

First consider the case when $\mbV=\{V\}$ is a singleton. Consider the set of domains $U_i\sqsubsetneq V$ given by progress localization. Adding the container domains for the $U_i$ in $V$, as described in \cref{A:Container}, gives the required set of at most $N$ domains.

Next consider the case when $\mbV=\{V_1, \ldots, V_k\}$ is not a singleton. Then we  simply define $\{U_i\}$ to be the set of the $V_i$, now viewed as domains in $\IndexBig$ via the usual implicit map $V_i \mapsto \{V_i\}$.  
\end{proof}

This concludes the proof, because the uniqueness axiom is the same as the enough projections assumption.
\end{proof}

\subsection{The definition of an HHG}

Suppose we have a self-map $f: \cX \to \cX$ of an HHS, where we use the notation above for the HHS. Suppose $K$ is a constant. Suppose  we have a bijection $$f^\diamond : \mfS \to \mfS$$ preserving nesting, orthogonality, and transversality, and, for each $U\in \mfS$ we have an isometry $$f^*(U): \cC(U) \to \cC( f^\diamond(U)).$$ Suppose that, for all $x\in \cX$, 
$$d_{f^\diamond(U)}(\pi_{f^\diamond(U)}(f(x)), f^*(U) ( \pi_U(x))) \leq K,$$ 
and, whenever $U$ is nested in or asynchronous to $V$, then 
$$d_{f^\diamond(V)}(f^*(V) \rho^U_V, \rho^{f^\diamond(U)}_{f^\diamond(V)}) \leq K.$$
Then we say that $f$, together with data of the associated maps, is an automorphism of the HHS. 

There is a natural notion of composing automorphisms, which involves composing the associated data as well as the actual self-maps  $f$. We say that a group acts on an HHS if there is a homomorphism from the group to the group of automorphisms of the HHS. 

Following \cite[Section 1.7]{BehrstockHagenSistoHHSII}, to prove that a group is hierarchically hyperbolic, it suffices to prove that it acts properly and cocompactly by isometries on an HHS and that the induced action on $\mfS$ has finitely many orbits.
 (The actual definition allows for quasi-actions in addition to actions, and allows for  metrically proper and cobounded actions in addition to actions that are proper and cocompact by isometries.)

We note that \cite[Section 1.7]{BehrstockHagenSistoHHSII} does not use the variant axioms of \cite[Section 1.3]{BehrstockHagenSistoHHSII} that we follow, but the proof of the equivalence in \cite[Proposition 1.11]{BehrstockHagenSistoHHSII} can easily be adapted to bridge this difference. 

\subsection{Proof of \cref{P:CriterionGroups}} 

The proof is almost immediate. The group $G$ acts on the enlarged index set $\IndexBig$ via $$g^\diamond\{U_1, \ldots, U_k\} = \{g^\diamond U_1, \ldots, g^\diamond U_k\}.$$ For a domain $V$ in $\IndexBig- \IndexSmall$, for any $g\in G$ we have that both $\cC(V)$ and $\cC(g^\diamond V)$ are points, and the required isometry $g(V):\cC(V)\to \cC(g^\diamond V)$ is the constant map. 

The requirement that 
$$d_{f^\diamond(V)}(f^*(V) \rho^U_V, \rho^{f^\diamond(U)}_{f^\diamond(V)}) \leq K$$
follows from the definition of the $\rho$ points and \cref{L:WellDefined}.

\begin{remark}\label{R:characterization}
Although the criteria given are only sufficient conditions for hierarchical hyperbolicity, it seems straightforward to modify them to actually characterize hierarchical hyperbolicity. To show an appropriately modified criterion can apply to an arbitrary HHS, one might define the active regions to be the product regions of \cite{BehrstockHagenSistoHHSII}, and apply \cite[Proposition 4.24]{RussellSprianoTran}, which is a corrected form of a result in \cite{BehrstockHagenSistoHHSII}. If $U\sqsubsetneq V$, to obtain a coarser form of $\cA_U \cap \cA_V\neq\emptyset$, one might want to note that the product region of $U$ contains a point not far from a point of the product region of $V$, which (following a suggestion of Sisto) might be done by showing that for any point of the HHS the result of projecting (``gating'') it  to the product region of $U$ and then to the product region of $V$ is close to the result of applying the projections in the opposite order.
\end{remark}

\section{The combinatorics of the index set (not required) }\label{A:IndexSet}

In this appendix, which is not used in  the paper, we explore the combinatorial properties of the  sets $\IndexSmall$ and $\IndexBig$ together with the relations of nesting and orthogonality. For example, as noted in \cref{R:lattice}, the set $\IndexBig$ with the nesting partial order is, after adding a smallest element, a lattice (a  poset with greatest lower bounds and least upper bounds). This appendix shows that the HHG structure we produce is extremely similar to the usual HHG structure on mapping class groups, and allows us to upgrade \cref{T:main} as follows. 

\begin{theorem}\label{T:Upgrade}
The HHG structure produced in the proof of \cref{T:main} is colorable and has wedges, clean containers, and the orthogonals for non-split domains property. 
\end{theorem}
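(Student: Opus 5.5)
The plan is to verify the four properties one at a time, each directly from the explicit description of $\IndexSmall$, its relations (\cref{fig:rels}) and the orthogonal-set bookkeeping of $\IndexBig$ (\cref{D:IndexBig}). The standard definitions we need are as follows. \emph{Clean containers} asks that the container of $U$ in $V$ is itself orthogonal to $U$. \emph{Wedges} asks that any two domains nested in a common domain have a greatest lower bound (their ``wedge''). \emph{Orthogonals for non-split domains} asks that a domain $U$ with something orthogonal to it has a canonical orthogonal complement domain in the ambient domain. \emph{Colorability} asks for a finite coloring of $\IndexBig$, invariant under a finite index subgroup of $\Gammahat$, in which same-colored domains are asynchronous. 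Throughout, the key tool is the geometric dictionary: stratum closures, hyperplanes and horoballs in $\Bhat$, with intersections computed by \cref{L:TripleIntersection}, \cref{R:OrthIntersection} and \cref{R:HoroballCapStratProj}.

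First I would classify, for each $V\sqsubseteq W$ in $\IndexSmall$, the nest-maximal elements of $\cP_{V,W}$, refining \cref{L:WeakContainers} to include the stratum case left to the reader there. If $W$ is the stratum $\Shatcirc$ and $V$ is a substratum, the elements are the angle domains of the hyperplanes containing $V$ but not $\Shat$. If $V$ is a cusp domain, they are the remaining cusp domains of that horoball that are nested in $W$. If $V$ is an angle domain for $\Hhat$, there is a single element: the generic stratum of $\Hhat\cap\Shat$, or the angle domains orthogonal to it inside $\Shat$ when that stratum is $0$-dimensional. In $\IndexBig$ the container of $\mbU$ in $\mbV$ is then the orthogonal set of these maximal elements, assembled as in \cref{L:Valid}. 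Cleanness is immediate because each listed element is orthogonal to $V$. I expect this to be the content of the promised \cref{L:CleanContainers}.

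Next, wedges. For $\mbU,\mbV\in\IndexBig$ I would define $\mbU\wedge\mbV$ as the set of pairwise meets of elements. A stratum meets a stratum in the generic stratum of the intersection of their closures, which is empty if the closures are disjoint. A stratum meets a cusp domain in the cusp domains it nests, and a tree domain meets a tree domain in the same tree domain only. I would then check that the nest-maximal such meets form an orthogonal set; orthogonality of meets of orthogonal pieces follows from coherence (\cref{L:Axiom3a}). Adding a bottom element then makes $(\IndexBig,\sqsubseteq)$ a lattice, as mentioned in \cref{R:lattice}. Orthogonals for non-split domains follows from the same classification: the orthogonal complement of $\mbU$ in $\mbV$ is the clean container. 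The nonexistence case is exactly the $\cP=\emptyset$ case of weak containers.

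Finally, colorability. Pass to a torsion-free finite index $\Gamma'\subset\Gamma$ that is deep enough that distinct $\Gamma'$-translates of any hyperplane, stratum closure or horoball in $B$ are disjoint and not parallel. This is possible by local finiteness and separability of the relevant stabilizers in an arithmetic-like lattice. Color a domain of $\IndexSmall$ by its type together with the $\Gamma'$-orbit of its image in $B$ (and by the parallelism class for tree domains), and color elements of $\IndexBig$ by the multiset of colors. Same-colored domains then have disjoint, non-nested supports and are asynchronous.

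I expect colorability to be the main obstacle. It needs a separability input. Residual finiteness of $\Gammahat$ is unknown, but we only need it for $\Gamma$, which is linear, hence Malcev applies. We must also make sure that lifts to $\Bhat$ of a single stratum of $\Gamma'\backslash B$ are never nested or orthogonal to each other, which I would argue via \cref{R:OrthIntersection}.
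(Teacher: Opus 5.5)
Your clean-containers argument is fine. The classification of the nest-maximal elements of $\cP_{V,W}$ is correct, and iterating over the $U_i$ works; the paper instead takes the wedge over $i$ of the weak-container tuples. The genuine gap is the orthogonals for non-split domains property, which you have restated as another clean-container statement. The actual property reads: if $\{U_i\}\sqsubsetneq\{W_k\}$ and $\{U_i\}$ is \emph{not split}, then some domain nested in $\{W_k\}$ is orthogonal to $\{U_i\}$. So the case $\cP=\emptyset$, which you set aside as ``the nonexistence case'', is exactly what has to be ruled out, and nothing in your proposal relates emptiness of $\cP$ to splitness. Two ideas are missing. First, an orthogonal set containing a nest-minimal domain (angle or center) is automatically split, witnessed by that single domain. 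Second, each stratum or tree domain $U$ has a ``halo'' $\partial U$ of nest-minimal domains orthogonal to it: the angle domains of the hyperplanes containing the stratum, resp.\ the center domain of its horoball. Whenever $U\sqsubsetneq V$, some element of $\partial U$ is nested in $V$, and every domain orthogonal to $U$ is equal or orthogonal to each element of $\partial U$. With these, for non-split $\{U_i\}$ no $U_i$ is nest-minimal. Then either some $W_{k_0}$ contains no $U_i$, so $\{W_{k_0}\}$ is orthogonal to $\{U_i\}$ by coherence. Or, say, $U_1\sqsubsetneq W_1$, and a halo element $Q\in\partial U_1$ nested in $W_1$ is orthogonal to every $U_i$.

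Your wedge computation is also wrong in three cases, each time asserting there is no common lower bound when there is one. First, two stratum domains whose closures are disjoint but enter a common horoball (e.g.\ generic strata of two parallel hyperplanes at a cusp) both contain the center domain, and the tree domains of the factors where both are non-constant. These form their wedge set. Second, a stratum closure $\Shat$ not entering $\That_{\hat c}$ can still share an angle domain with a tree domain of $\That_{\hat c}$, namely that of a hyperplane in that class meeting $\Shat$ orthogonally. You then need an argument that such common lower bounds are pairwise orthogonal; the paper uses that projection to $\Shat$ is distance non-increasing while parallel hyperplanes at $\hat c$ come arbitrarily close. Third, tree domains at distinct cusps can share the angle domain of the unique hyperplane through the complex line joining the cusps, in the right class at each. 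Your global step (union of pairwise wedge sets, orthogonality via coherence) is fine once these are corrected.

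The coloring step has three problems. No finite-index $\Gamma'$ makes distinct translates of a hyperplane through a cusp non-parallel, since Heisenberg translations in $\Gamma'$ move it to distinct parallel hyperplanes. Making translates disjoint or equal needs separability of $\Stab_\Gamma(H)$, not just Malcev's residual finiteness. And coloring tree domains by orbits does not separate two factors at one cusp, which are orthogonal, if $\Stab_{\Gamma'}(c)$ permutes them. The paper avoids this by coloring hyperplanes, giving all center domains one color, and coloring a stratum, resp.\ a tree domain, by the set of colors of the hyperplanes containing it, resp.\ in its class. Hyperplanes in different classes at a cusp intersect, so these color sets are disjoint.
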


The definitions of these terms will be recalled below. 

\begin{proof}
Colorability is checked in \cref{L:colorable}; wedges in \cref{L:Wedges}; clean containers in \cref{L:CleanContainers}; and the orthogonals for non-split domains property in \cref{L:OFNSDP}.
\end{proof}

\begin{remark}
It follows from \cref{T:Upgrade} and \cite[Theorem 1]{HagenMangioniSisto} that our spaces are Combinatorial Hierarchically Hyperbolic Spaces (CHHS) as defined in \cite{BehrstockHagenMartinSisto}.
%
%
We believe it would require new ideas to  check this directly without reproducing much of our analysis as well as parts of the proof of \cite[Theorem 1.1]{HagenMangioniSisto}. Additionally, an approach based on \cite{BehrstockHagenMartinSisto} would not allow our extension to the relative setting in \cref{S:RHHG} (see \cite[Remark 3.17]{HagenMangioniSisto}), and would not provide as much explicit information on the HHS structure.
%
%
\end{remark}
%
%
%

Recall that $\IndexSmall$ is the set of stratum domains of positive dimensional strata, angle domains, center domains, and tree domains;  $\IndexBig$ is the set of subsets $\{U_1, \ldots, U_k\}$ of $\IndexSmall$ consisting of $k\geq 1$ pairwise orthogonal domains of $\IndexSmall$; and we can view $\IndexSmall$ as a subset of $\IndexBig$ via the inclusion $U\mapsto \{U\}$. 

This appendix assumes familiarity with previously introduced material, especially  \cref{fig:rels} and \cref{L:Valid}.

\subsection{Wedges}

Given a poset, the wedge of two elements is their greatest lower bound, if it exists. We will say that a poset has wedges if every pair of elements either has a wedge or there does not exist any element less than both of them. (This is the same as saying wedges always exist after adding a smallest element.) We will show that $\IndexBig$ has wedges, but first we need the following.

\begin{lemma}\label{L:WedgePrep}
Given $U, V \in \IndexSmall$, 
there is a unique subset $\{W_1, \ldots, W_k\}\subset \IndexSmall$ of pairwise orthogonal domains  such that
\begin{enumerate}
\item  all $W_i$ are nested in both $U$ and $V$, and
\item every domain nested in both $U$ and $V$ is nested in one of the $W_i$. 
\end{enumerate}
\end{lemma}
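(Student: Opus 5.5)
The plan is to first reduce uniqueness to a formal statement about the poset, and then establish existence by a case analysis over the types of $U$ and $V$ using \cref{fig:rels}.

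\textbf{Uniqueness.} Let $\cL$ be the set of domains nested in both $U$ and $V$. Suppose $\{W_1,\dots,W_k\}$ satisfies (1) and (2). Each $W_i$ lies in $\cL$. No $W_i$ is nested in a $W_j$ with $j\neq i$, because a nested pair is never orthogonal (anti-reflexivity together with coherence, \cref{L:Axiom3a}). It follows that the $W_i$ are exactly the nest-maximal elements of $\cL$.
\begin{itemize}
\item If $M\in\cL$ is maximal, then $M\sqsubseteq W_i$ for some $i$ by (2), and maximality forces $M=W_i$.
\item Conversely, each $W_i$ is nested in some maximal element of $\cL$, since chains have bounded length (\cref{C:BoundedChains}). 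That maximal element equals some $W_j$, so $W_i=W_j$.
\end{itemize}
Thus uniqueness is automatic. For existence it suffices to show two things: $\cL$ has finitely many maximal elements, and these are pairwise orthogonal. Property (2) then follows from finite height.

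\textbf{Easy cases.} If $U\sqsubseteq V$ (or vice versa), take $\{U\}$. Otherwise, by \cref{R:limitednesting} $\cL$ is nonempty only when each of $U,V$ is a stratum domain or a tree domain.
\begin{itemize}
\item If both are tree domains, they belong to the same horoball or to different horoballs. In either case no angle domain is constant in two factors, or constant in factors of two horoballs, because a hyperplane meets a single horoball's boundary as a product with one coordinate fixed. Hence $\cL=\emptyset$.
\item If $U$ is a tree domain for $\That_{\hat c}$ and $V=\Shatcirc$ is a stratum domain with $U\not\sqsubseteq V$, then either $\Shat$ misses $\That_{\hat c}$ or $\Shat$ is constant in that factor. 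An angle domain in $\cL$ has hyperplane $\Hhat$ constant in the factor, entering $\That_{\hat c}$, meeting $\Shat$ and not containing it. In the horoball coordinates of \cref{LemAffineArrangement}, the maximal elements are then read off from the product structure: they are angle domains whose hyperplanes all pass through a common stratum, and hence are pairwise orthogonal.
\end{itemize}

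\textbf{Main case.} Here $U=\Shatcirc$ and $V=\Shatcirc'$ are stratum domains, neither nested in the other. The key geometric input is a triple-intersection principle. Stratum closures, and horoballs entered by stratum closures, project onto one another as their intersections (\cref{R:OrthIntersection,R:HoroballCapStratProj}). So \cref{L:TripleIntersection} shows that any stratum closure, hyperplane, or horoball meeting both $\Shat$ and $\Shat'$ also meets $\Shat'':=\Shat\cap\Shat'$, which is itself a stratum closure or empty. With this, $\cL$ is described as follows.
\begin{itemize}
\item Stratum domains in $\cL$ are those whose closures lie in $\Shat''$.
\item Cusp domains in $\cL$ are those of horoballs entered by $\Shat''$, with the appropriate nonconstancy condition in the tree case.
\item Angle domains in $\cL$ come from hyperplanes $\Hhat$ meeting $\Shat''$ and containing neither $\Shat$ nor $\Shat'$.
\end{itemize}
The candidate answer is then:
\begin{itemize}
\item the stratum domain of $\Shat''$, if $\dim\Shat''>0$;
\item together with the angle domains of those hyperplanes $\Hhat\supseteq\Shat''$ that contain neither $\Shat$ nor $\Shat'$.
\end{itemize}
These are pairwise orthogonal by \cref{fig:rels}: hyperplanes through a common point intersect, and a stratum lying in a hyperplane is orthogonal to its angle domain. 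Every other element of $\cL$ is nested in the stratum domain of $\Shat''$. When $\Shat''$ is a point, the answer is just the angle domains through that point. When $\Shat''=\emptyset$, we have $\cL=\emptyset$ and $k=0$.

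\textbf{Expected obstacle.} I expect the main difficulty to be this last case: verifying that every element of $\cL$ really is dominated by one of the listed domains. This hinges on the triple-intersection argument for horoballs, which requires a variant of \cref{L:TripleIntersection} in which one of the sets is a horoball. I would try first to deduce that variant directly from \cref{R:HoroballCapStratProj}.
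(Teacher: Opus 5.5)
Your uniqueness reduction is fine: pairwise orthogonality forces the $W_i$ to be exactly the nest-maximal elements of $\mathcal{L}$. When $\Shat\cap\Shat'$ is a positive-dimensional stratum closure, your answer is the paper's Subcase 1a. (The extra angle domains you add never occur, because the hyperplanes containing $\Shat\cap\Shat'$ are exactly those containing $\Shat$ or $\Shat'$. Likewise $\mathcal{L}=\emptyset$ when $\Shat\cap\Shat'$ is a point: the angle domains through that point are orthogonal to $U$ or $V$, not nested in both.)

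The gap is in the rest of the existence analysis. Two of your claims there are false and one is unsupported, and the arguments that replace them are the real content of the lemma.

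First, $\Shat\cap\Shat'=\emptyset$ does not give $\mathcal{L}=\emptyset$. \Cref{L:TripleIntersection} needs all pairwise intersections nonempty, so it says nothing here. Two disjoint stratum closures can both enter a horoball $\That_{\chat}$, for example two hyperplanes parallel at $\chat$. Then the center domain of $\That_{\chat}$ is nested in both, as are the tree domains of factors in which neither stratum is constant. This is the paper's Subcase 1b. At most one horoball is entered by both, since otherwise the geodesic joining the two cusps would lie in both closures. The wedge set is that center domain together with those tree domains.

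Second, your reason in the tree--tree case fails. A hyperplane enters many horoballs, being constant in one factor at each, so tree domains at distinct cusps $\chat_1\neq\chat_2$ can share an angle domain. What must be shown is that at most one hyperplane lies in both prescribed parallelism classes. Two such hyperplanes would be disjoint (parallel at $\chat_1$) yet share both cusps; the paper rules this out using the unique complex line through the two cusps. This is not cosmetic: two such hyperplanes would be non-orthogonal maximal elements of $\mathcal{L}$, and no wedge set would exist.

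Third, in the stratum--tree case, the only situation with $\mathcal{L}\neq\emptyset$ is when $\Shat$ misses $\That_{\chat}$. There the product structure of the horoball does not see $\Shat$ at all, so nothing can be read off from it. Every hyperplane in question lies in one parallelism class at $\chat$, so distinct ones are disjoint. They therefore cannot be ``pairwise orthogonal through a common stratum''; the correct statement is $|\mathcal{L}|\le 1$. The paper gets this from $\Hhat\cap\Shat=p_{\Shat}(\Hhat)$ (\cref{R:OrthIntersection}). Two hyperplanes parallel at $\chat$ come arbitrarily close, and $p_{\Shat}$ is distance non-increasing, so they cannot both meet $\Shat$.

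If instead $\Shat$ enters $\That_{\chat}$ but is constant in that factor, apply \cref{L:TripleIntersection} with the horoball, using \cref{R:HoroballCapStratProj}. This forces any such $\Hhat$ to contain $\Shat$, so $\mathcal{L}=\emptyset$. Note that no new variant of \cref{L:TripleIntersection} is needed for horoballs.
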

We call $\{W_1, \ldots, W_k\}$ a wedge set for $U$ and $V$. We allow $k=0$, and note that the wedge set is  empty exactly when there are no domains nested in both $U$ and $V$.

\begin{proof}
 It suffices to consider pairs $U,V$ with neither  nested in the other, since if $U \sqsubseteq V$ then $U$ is the wedge of $U$ and $V$. We also assume that there is something nested in both $U$ and $V$. We proceed in cases according to the type of $U$ and the type of $V$. There are only three cases. 

\bold{Case 1: stratum, stratum.} The domains nested in a stratum domain are: smaller stratum domains; angle domains of orthogonally intersecting hyperplanes; center domains of horoballs the stratum enters, and some tree domains of horoballs the stratum enters (those not equal to a point in the tree domain factor). 

\bold{Subcase 1a:} The closures of the two strata intersect. In this case, let $W$ be the stratum domain associated to the generic stratum of the intersection of the closures; the analysis below, using the assumption that there is something nested in both $U$ and $V$, will show that this intersection is not a point. We claim that $W$ is the wedge of $U$ and $V$, or in other words that $\{W\}$ satisfies the requirements of the lemma.

Now suppose that $R$ is a domain nested in $U$ and $V$. We want to show that $R$ is nested in $W$. Considering $U,V$ as strata, we can consider their closures $\ol{U}, \ol{V}$ and proceed as follows. 
\begin{enumerate}
\item If $R$ is a stratum domain, then $\ol{R} \subset \ol{U}$ and $\ol{R}\subset \ol{V}$ and it suffices to note that $\ol{R} \subset \ol{U}\cap \ol{V}$. 
\item If $R$ is an angle domain, then the associated hyperplane intersects both $\ol{U}$ and $\ol{V}$ orthogonally. We know $\ol{U}$ and $\ol{V}$ intersect orthogonally along $\ol{W}$. \cref{L:TripleIntersection,R:OrthIntersection} gives that the hyperplane associated to $R$ intersects $\ol{W}$. 
\item If $R$ is a center domain, then both $U$ and $V$ enter the associated horoball.  \cref{L:TripleIntersection} gives that $W$ also enters the horoball. 
\item If $R$ is a tree domain, then both $U$ and $V$ enter the associated horoball, and again \cref{L:TripleIntersection} gives that $W$ also enters the horoball. 
\end{enumerate}
In all cases one can deduce that $R$ is nested in $W$. 

\bold{Subcase 1b:} Both strata enter a horoball, but their closures $\ol{U}$ and $\ol{V}$ do not intersect. First note that there cannot be more than one horoball that both strata enter, since otherwise the geodesic between the associated cusps would show that $\ol{U}\cap \ol{V}\neq \emptyset$, giving a contradiction. 

Here are the domains nested in both $U$ and $V$:
\begin{enumerate}
\item the center domain of the cusp,
\item each tree domain of this cusp associated to a factor $\Chat_i$ where both $U$ and $V$ are not constant,  and 
\item angle domains nested in those tree domains.  
\end{enumerate} 

The wedge set of $U$ and $V$ is the center and tree domains above. 

\bold{Case 2: stratum, tree.} Only angle domains can be properly nested in tree domains, so there is an angle domain $N^1(\Hhat)$ nested in both $U$ and $V$. The $\Hhat$ intersects $\ol{U}$ orthogonally, and corresponds to a point in the tree corresponding to the tree domain $V$. 

\bold{Subcase 2a:} The stratum enters the horoball associated to $V$. In this case the stratum is given by fixing the point coordinate in some of the tree factors -- but, since $U$ is not a subset of $\Hhat$, not the factor corresponding to the given tree domain. In this case the given tree domain is actually nested in the given stratum domain. So this case does not occur because of our assumption that neither of $U,V$ is nested in the other. 

\bold{Subcase 2b:} The stratum $U$ does not enter the horoball associated to $V$. We know $N^1(\Hhat)$ is nested in $U$ and $V$. Suppose another angle domain $N^1(\Hhat')$  is nested in $U$ and $V$. We claim that $\Hhat$ and $\Hhat'$ cannot be disjoint. Otherwise we can contradict the fact that closest point projection is distance non-increasing, because $\Hhat\cap \ol{U}$ is the closest point projection of $\Hhat$ to $\ol{U}$ (\cref{R:OrthIntersection}) and similarly for $\Hhat'$, and because $\Hhat$ and $\Hhat'$ enter the same horoball and hence get arbitrarily close. 

Thus the collection of domains nested in $U$ and $V$ is a set of pairwise orthogonal angle domains, and this set is hence the wedge set.   

\bold{Case 3: tree, tree.} To have something nested in both it has to be an angle domain. Since we have assumed $U$ and $V$ are distinct, this means that the two tree domains are associated with different cusps. There is a unique complex line joining any two cusps, and any hyperplane entering both horoballs must contain this line. There is a unique hyperplane that both contains this line and is contained in the correct parallelism class at each cusp, and the wedge set should be the associated single angle domain. 
\end{proof}

\begin{lemma}\label{L:Wedges}
$\IndexBig$ has wedges.
\end{lemma}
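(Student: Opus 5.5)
Given $\mbU=\{U_1,\dots,U_r\}$ and $\mbV=\{V_1,\dots,V_s\}$ in $\IndexBig$, the plan is to build their wedge directly from the wedge sets of \cref{L:WedgePrep}. For each pair $(i,j)$, let $\cW_{ij}$ be the wedge set of $U_i$ and $V_j$; by convention $\cW_{ij}=\{U_i\}$ if $U_i\sqsubseteq V_j$, and symmetrically. Set $\cW=\bigcup_{i,j}\cW_{ij}$ and let $\cW^{\max}$ be its set of nest-maximal elements. The candidate wedge is $\mbW=\cW^{\max}$. If $\cW$ is empty, I will check that nothing lies below both $\mbU$ and $\mbV$, which is the allowed exceptional case.

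The first step is to show $\mbW\in\IndexBig$, i.e.\ its elements are pairwise orthogonal. Take $W\in\cW_{ij}$ and $W'\in\cW_{i'j'}$ that are distinct and both maximal in $\cW$. If $i\neq i'$, then $W\sqsubseteq U_i\perp U_{i'}\sqsupseteq W'$, and two applications of coherence (\cref{L:Axiom3a}) give $W\perp W'$. The case $j\neq j'$ is the same. If $(i,j)=(i',j')$, orthogonality is part of \cref{L:WedgePrep}. Next, $\mbW\sqsubseteq\mbU$ and $\mbW\sqsubseteq\mbV$ hold because each element of $\cW_{ij}$ is nested in both $U_i$ and $V_j$.

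The second step is maximality among lower bounds. Suppose $\mbR=\{R_1,\dots,R_t\}$ is nested in both $\mbU$ and $\mbV$. Then each $R_k$ is nested in some $U_i$ and some $V_j$. By property (2) of \cref{L:WedgePrep}, $R_k$ is nested in some element of $\cW_{ij}$, hence in some element of $\cW^{\max}$. This gives $\mbR\sqsubseteq\mbW$, by the definition of nesting in $\IndexBig$ (\cref{D:IndexBig}). It also shows that if $\cW=\emptyset$, no element lies below both.

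The main subtlety is in step one. Distinct maximal elements coming from the same pair $(i,j)$ are orthogonal by \cref{L:WedgePrep}. However, an element of $\cW_{ij}$ could be nested in an element of $\cW_{i'j'}$ without being equal to it, so I must pass to maximal elements rather than take the raw union; this is why $\mbW$ is defined as $\cW^{\max}$. Under coherence, nesting and orthogonality are exclusive, so for $i\neq i'$ no such nesting can occur, and the only overlaps arise within a single pair. I expect this bookkeeping, rather than any geometry, to be the only point requiring care.
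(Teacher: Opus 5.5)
Your proposal is correct and follows the paper's proof: the paper takes the wedge to be the union, over all pairs, of the wedge sets from \cref{L:WedgePrep}. It proves orthogonality by coherence for distinct pairs and by \cref{L:WedgePrep} within a pair, and it checks the lower-bound and maximality properties as you do. The one difference, passing to the nest-maximal elements of $\mathcal{W}$, is vacuous: your Step 1 argument works for arbitrary distinct elements of $\mathcal{W}$, so these are pairwise orthogonal, hence never properly nested in one another, and $\mathcal{W}^{\max}=\mathcal{W}$. Your explicit treatment of the case $\mathcal{W}=\emptyset$ is a small addition that the paper leaves implicit.
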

\begin{proof}
Using  \cref{L:WedgePrep}, we claim the wedge of $\{V_i\}$ and $\{U_j\}$ is the union $\{W_k\}$ over $i,j$ of the wedge set of $V_i$ and $U_j$. 

First, we show that $\{W_k\}\in \IndexBig$. Suppose $W_{k_1}, W_{k_2}$ are in this $\{W_k\}$ and are distinct. We must show they are orthogonal.  Say $W_{k_1}$ is in the wedge set of $V_{i_1}$ and $U_{j_1}$ and $W_{k_2}$ is in the wedge set of $V_{i_2}$ and $U_{j_2}$. If $V_{i_1} \neq V_{i_2}$ or $U_{j_1} \neq U_{j_2}$  then the coherence condition in \cref{D:Valid} gives that $W_{k_1} \perp W_{k_2}$. In the remaining case we get $W_{k_1} \perp  W_{k_2}$ from the definition of a wedge set. 

Next, we show that $\{W_k\}$ is nested in both $\{V_i\}$ and $\{U_j\}$. Fix $k$, and say $W_k$ is part of the wedge tuple for  $V_i$ and $U_j$. So $W_k$ is nested in $V_i$ and $U_j$. Since this is true for all $k$ this gives the desired result. 

Finally, we consider a $\{Q_r\}\in \IndexBig$ that is nested in $\{V_i\}$ and $\{U_j\}$ and show $\{Q_r\}$ is nested in $\{W_k\}$. Fix $r$. We know $Q_r$ is nested in some $V_i$ and some $U_j$. So $Q_r$ is nested in an element of the wedge tuple of $V_i$ and $U_j$. Since this is true for all $r$ this gives the desired result. 
\end{proof}

\subsection{Clean containers}\label{SS:clean}

We now show the following. 

\begin{lemma}\label{L:CleanContainers}
Suppose $\{U_i\}, \{V_j\}\in \IndexBig$ and $\{U_i\}$ is nested in $\{V_j\}$. Suppose there is at least one domain nested in $\{V_j\}$ and orthogonal to $\{U_i\}$. Then there is a domain $\{W_k\}\in \IndexBig$ that is nested in $\{V_j\}$ and orthogonal to $\{U_i\}$, and such that every other such domain is nested in $\{W_k\}$. 
\end{lemma}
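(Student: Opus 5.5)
Our plan is to reduce to the case where $\{V_j\}=\{V\}$ is a singleton and then proceed by cases on the type of $V$, using \cref{L:WeakContainers}, \cref{L:MaxOrthogonalSets} and \cref{fig:rels}. For the reduction, a domain $\{Q_r\}$ nested in $\{V_j\}$ and orthogonal to $\{U_i\}$ splits as the union over $j$ of $\{Q_r : Q_r\sqsubseteq V_j\}$. Each piece is nested in $V_j$ and orthogonal to those $U_i$ nested in $V_j$. It is also automatically orthogonal to the $U_i$ nested in the other $V_{j'}$, by coherence (\cref{A:Coherent}) applied to $V_j\perp V_{j'}$. We further need to include the $V_{j'}$ into which no $U_i$ is nested: these are orthogonal to all of $\{U_i\}$ by coherence, so they may be taken whole. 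Hence the clean container for $\{U_i\}$ in $\{V_j\}$ will be the union of the clean containers in each $V_j$ of $\mbU_j=\{U_i: U_i\sqsubseteq V_j\}$, together with those $V_{j'}$ with $\mbU_{j'}$ empty. It remains to check that this union is an element of $\IndexBig$, i.e.\ pairwise orthogonal, and again coherence handles cross terms.

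Next, for $\{V\}$ a singleton in $\IndexSmall$ and $\mbU=\{U_1,\dots,U_r\}$ nested in $V$, let $\cP$ be the set of domains of $\IndexSmall$ nested in $V$ and orthogonal to every $U_i$. We would show that the set $\cM$ of nest-maximal elements of $\cP$ is pairwise orthogonal. Then $\cM\in\IndexBig$, it is nested in $\{V\}$ and orthogonal to $\mbU$, and any competitor $\{Q_r\}$ has each $Q_r\in\cP$ and hence below some element of $\cM$, which is exactly the required property. The candidate $\cM$ comes from intersecting the sets $\cP_{U_i,V}$, whose maximal elements \cref{L:WeakContainers} already shows are finite in number and orthogonal. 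The subtlety is that an intersection of downward-closed sets can have maximal elements that are not maximal in either set.

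We expect this to be the main obstacle, and would handle it by direct case analysis on the type of $V$. If $V$ is a tree domain, only angle domains lie below it and no two of them are orthogonal, so $\cP$ is empty and there is nothing to prove. If $V$ is a stratum domain with closure $\Shat$, then by \cref{L:MaxOrthogonalSets} each $U_i$ is either (a) a stratum domain together with angle domains of hyperplanes containing it, or (b) part of a cusp family at one horoball $\That_{\chat}$. In case (a), with a stratum among the $U_i$, $\cP$ consists of angle domains of hyperplanes containing that stratum but not $\Shat$. These hyperplanes pairwise intersect, so they are pairwise orthogonal, and $\cM=\cP$ up to removing those equal to some $U_i$. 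If all the $U_i$ are angle domains of hyperplanes $\Hhat_i$ through a common point, \cref{L:TripleIntersection} and \cref{R:OrthIntersection} give the generic stratum $W$ of $\Shat\cap\bigcap\Hhat_i$. We then expect $\cM$ to be $\{W\}$ (when $W$ has positive dimension) together with the angle domains of the remaining hyperplanes containing $W$ but not $\Shat$. Every element of $\cP$ either has its stratum inside $\ol W$, or is an angle domain of a hyperplane through $W$, or is a cusp domain of a horoball that $W$ enters, which is then nested in $W$. In case (b), $\cP$ is contained in the cusp family at $\That_{\chat}$: the remaining center/tree domains not in $\mbU$ (tree domains only if nested in $V$, i.e.\ $\Shat$ nonconstant in that factor), together with angle domains nested in those tree domains when the corresponding tree domain itself fails to be in $\cP$. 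Here orthogonality of the maximal elements follows from the last item of \cref{fig:rels}, since they live in distinct factors of the same horoball.
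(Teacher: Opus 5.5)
Your proposal is correct. Your reduction to a singleton $\{V\}$ is the same as the paper's, which takes the union over $j_0$ of the containers in the $\{V_{j_0}\}$. You are more careful than the paper in noting that a $V_{j'}$ containing no $U_i$ is orthogonal to $\{U_i\}$ by coherence and must be included whole.

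The real divergence is in the singleton case, where the paper does no new geometry:
\begin{enumerate}
\item It takes the orthogonal tuples $\cM_i$ of maximal elements of $\cP_{U_i,V}$ produced by \cref{L:WeakContainers}.
\item It forms their wedge in $\IndexBig$ using \cref{L:Wedges}.
\item Coherence makes this wedge orthogonal to every $U_i$. Every element of $\cP$ lies below every $\cM_i$, hence below the wedge.
\end{enumerate}
This is exactly the abstract resolution of the ``maximal elements of an intersection of down-sets'' issue you flag. The geometry is outsourced to the pairwise case analysis in \cref{L:WedgePrep}, which the appendix needs anyway for the wedge property.

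Your direct case analysis on the tuple $\mbU$ is self-contained and gives an explicit container. The cost is that it duplicates that geometry, and two steps need justification you have not yet written.
\begin{enumerate}
\item In the all-angle case, you need the horoball version of \cref{L:TripleIntersection} (via \cref{R:HoroballCapStratProj}) and the coordinate description inside $\That_{\chat}$. These show that cusp domains in $\cP$ are nested in $W$, and that each angle domain in $\cP$ either contains $\ol{W}$ or is nested in $W$.
\item In case (b), ``distinct factors'' does not by itself handle two angle domains in the same factor. You should observe that your class of angle domains nested in tree domains outside $\cP$ is in fact empty:
\begin{enumerate}
\item if the tree domain of that factor lies in $\mbU$, these angle domains are nested in it;
\item if an angle domain of $\mbU$ lies in that factor, the others are parallel to it and hence disjoint;
\item if $\Shat$ is constant in that factor, each such hyperplane either contains $\Shat$ or misses it, so it is not nested in $V$.
\end{enumerate}
Hence $\cM$ consists only of center and tree domains at $\chat$.
\end{enumerate}
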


One thinks of $\{W_k\}$ as the orthogonal complement of $\{U_i\}$ in $\{V_j\}$. The statement of  \cref{L:CleanContainers} is what is known in the HHG literature as $\mfS$ having clean containers \cite[Definition 7.1]{AbbottBehrstockDurham}. 

\begin{proof}
First we prove this when $\{V_j\} = \{V\}$ is a singleton. The clean container in this case is the wedge (over $i$) of the orthogonal tuples produced by \cref{L:WeakContainers} for each $U_i\sqsubsetneq V$.  

With this having been proven, it suffices to note in general that the desired clean container is the orthogonal set obtained as the union over $j_0$ of the clean container for $\{U_i\}$ in the singleton $\{V_{j_0}\}$. 
\end{proof}

\begin{remark}\label{R:lattice}
As pointed out in \cite[Remark 10.21]{HagenMangioniSisto}, the existence of wedges together with the finite height assumption in \cref{D:Valid} implies the existence of least upper bounds (joins). Moreover, $\mfS$ with one smallest element added is a complete lattice. 
\end{remark}

\subsection{Orthogonals for non-split domains}

We start in the context of $\IndexSmall$. Recall that angle and center domains are always nest-minimal. The remaining domains have attached to them a certain very special set of domains that we call a halo. 
\begin{enumerate}
\item If $U$ is a stratum domain, define its halo $\partial U$ to be the set of angle domains corresponding to hyperplanes containing $U$. If $U$ has codimension $k$, this set has size $k$. 
\item If $U$ is a tree domain, define its halo $\partial U$ to be the set consisting only of the associated center domain. 
\end{enumerate}
In either case, we note that $\partial U$ consists of nest-minimal domains that are pairwise orthogonal. Except if $U$ is the nest maximal domain, $\partial U$ is non-empty when it is defined. We do not define the halo for angle and center domains.

Halos have the following two remarkable properties. 

\begin{lemma}\label{L:Halo}
Suppose $U$ is a tree domain or a stratum domain, and suppose $U\sqsubsetneq V$. Then at least one element of $\partial U$ is nested in $V$. 
\end{lemma}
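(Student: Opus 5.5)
The plan is a case analysis over the type of $U$ and of $V$, using the nesting table in \cref{fig:rels} (equivalently \cref{L:BoundedChains} and \cref{R:limitednesting}). Since something is properly nested in $V$, the domain $V$ is a stratum domain or a tree domain. Moreover only angle domains are nested in tree domains, so if $U$ is a tree or stratum domain then $V$ must be a stratum domain. Write $\Shat_V$ for the stratum closure of $V$. This leaves two cases.

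\textbf{Case 1: $U$ is a tree domain.} Let $U$ be associated to the factor $\Chat_i$ of a horoball $\That_{\hat{c}}$. Then $\partial U$ is the center domain of $\That_{\hat{c}}$. By the definition of nesting, $U\sqsubsetneq V$ means that $\Shat_V$ enters $\That_{\hat{c}}$ and is not constant in the $i$-th factor. In particular $\Shat_V$ enters $\That_{\hat{c}}$, which is exactly the condition for the center domain to be nested in $V$. This case is immediate.

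\textbf{Case 2: $U$ is a stratum domain.} Let $\Shat_U\subsetneq \Shat_V$ be its stratum closure, of codimension $k$. Then $\partial U$ consists of the angle domains of the $k$ hyperplanes $\Hhat_1,\dots,\Hhat_k$ containing $\Shat_U$. An angle domain $N^1(\Hhat_j)$ is nested in $V$ if and only if $\Hhat_j\cap \Shat_V\neq\emptyset$ and $\Shat_V\not\subset \Hhat_j$. The first condition is automatic, because $\Shat_U\subset \Hhat_j\cap\Shat_V$. For the second, it suffices to find some $j$ with $\Shat_V\not\subset\Hhat_j$. Suppose instead that $\Shat_V\subset\Hhat_j$ for every $j$. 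Then $\Shat_V\subset \bigcap_j \Hhat_j$. Using the local model of \cref{SS:Local} at a generic point of $\Shat_U$, the intersection $\bigcap_j\Hhat_j$ has, near such a point, the same closure as the stratum $\Shat_U^\circ$. More globally, $\Shat_U$ is the full fixed set of the $\bZ^k$ generated by the rotations about the $\Hhat_j$, as established in \cref{SS:Stratification}. Since $\Shat_V$ is contained in each $\Hhat_j$, each of these rotations fixes $\Shat_V$ pointwise. Hence $\Shat_V\subset \mathrm{Fix}(\bZ^k)=\Shat_U$, which contradicts $\Shat_U\subsetneq\Shat_V$.

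The only step needing care is this identification in Case 2. One must check that the fixed-point set of the $\bZ^k$ attached to $\Shat_U$ is exactly $\Shat_U$ and not a larger union of components. The plan is to argue as follows. The fixed set is convex and therefore connected. The closure $\Shat_V$ is connected and meets $\Shat_U$. So $\Shat_V$ lies in the unique component of the fixed set containing $\Shat_U$, and \cref{SS:Stratification} identifies that component as $\Shat_U$ itself. If $U$ is the nest-maximal domain, the hypothesis $U\sqsubsetneq V$ cannot hold, so an empty halo never arises.
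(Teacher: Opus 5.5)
Your proof is correct and follows the paper's argument: it uses the same two cases, with the center domain as the witness when $U$ is a tree domain, and an angle domain of a hyperplane containing $\Shat_U$ but not $\Shat_V$ when $U$ is a stratum domain. The only difference is that you justify the existence of such a hyperplane, via the fixed-point set of the $\bZ^k$, whereas the paper simply asserts it.
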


\begin{proof}
If $U$ is a tree domain, then $V$ is a (particular kind of) stratum domain entering the horoball of $U$, so the associated center domain is nested in $V$. 

If $U$ is a stratum domain, then $V$ is a bigger stratum domain. There exists a hyperplane that contains $U$ but not $V$, and the associated angle domain is nested in $V$.  
\end{proof}

\begin{lemma}\label{L:Halo2}
Suppose $U$ is a tree domain or a stratum domain, and suppose that $H\in \partial U$ and $V\perp U$. Then $V=H$ or $V\perp H$. 
\end{lemma}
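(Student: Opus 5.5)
The plan is a case check using the orthogonality table in \cref{fig:rels}, split according to whether $U$ is a tree domain or a stratum domain. The halo elements are nest-minimal and pairwise orthogonal, so the aim in each case is to show that $V$ is either one of them or orthogonal to the relevant one.

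\textbf{Case 1: $U$ is a tree domain.} Let $U$ be associated to the horoball $\That_{\hat{c}}$ and the factor $\Chat_i$. Then $\partial U$ consists only of the center domain $H$ of $\That_{\hat{c}}$. By \cref{fig:rels}, a domain $V$ with $V\perp U$ is one of the following.
\begin{enumerate}
\item The center domain of the same horoball. Then $V=H$.
\item A different tree domain of the same horoball. This is orthogonal to $H$ by definition.
\item An angle domain whose hyperplane enters $\That_{\hat{c}}$ and is not constant in $\Chat_i$. Such a hyperplane enters the horoball, so the angle domain is orthogonal to $H$.
\end{enumerate}
So this case is immediate.

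\textbf{Case 2: $U$ is a stratum domain.} Let $\Shat$ be the closure of the stratum of $U$, and let $H=N^1(\Hhat)$ with $\Shat\subset\Hhat$. By \cref{fig:rels}, the only domains orthogonal to a stratum domain are angle domains $V=N^1(\Hhat')$ with $\Shat\subset \Hhat'$.
\begin{enumerate}
\item If $\Hhat'=\Hhat$, then $V=H$.
\item Otherwise $\Hhat$ and $\Hhat'$ both contain $\Shat\neq\emptyset$, so they intersect and are distinct. By the definition of orthogonality of angle domains, $V\perp H$.
\end{enumerate}
Recall that we only define stratum domains for positive-dimensional strata, so $\Shat$ is nonempty. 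This settles the claim.

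No step looks like a real obstacle. The only care needed is to confirm from \cref{fig:rels} that the list of domains orthogonal to $U$ is exhaustive in each case. In particular, no stratum domain or cusp domain is orthogonal to a stratum domain, and no stratum domain is orthogonal to a tree domain. Both facts can be read directly off the blank entries of the table.
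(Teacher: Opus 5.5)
Your proposal is correct and follows the paper's route. The paper's proof is the same case check: for a tree domain, $V$ is an angle, center, or tree domain and $H$ is the center domain; for a stratum domain, both $V$ and $H$ are angle domains; in each case the conclusion is read off the orthogonality relation. You simply spell out the individual entries of \cref{fig:rels} that the paper leaves implicit.
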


\begin{proof}
If $U$ is a tree domain, then $V$ is an angle, center, or tree domain, and $H$ is the center domain. If $U$ is a stratum domain, then $V$ is an angle domain and $H$ is an angle domain. Either way, the result follows from examining the orthogonality relation. 
\end{proof}

We now turn to $\IndexBig$. Recall from \cite[Definition 3.6]{HagenMangioniSisto} that a $\{U_i\}\in \IndexBig$ is split if there exists $\{V_i\}$ nested in $\{U_i\}$ such that every domain nested in $\{U_i\}$ is either orthogonal to $\{V_i\}$ or has $\{V_i\}$ nested in it. 
%
%

\begin{remark}\label{R:split}
If one of the $U_i$, say $U_1$, is nest-minimal, then $\{U_i\}$ is split, as seen by taking $\{V_i\} = \{U_1\}$. 
\end{remark}

Recall from \cite[Definition 3.9]{HagenMangioniSisto} that $\IndexBig$ has the orthogonals for non-split domains property if whenever $\{U_i\}\sqsubsetneq \{W_k\}$ and $\{U_i\}$ is not split, then there exists a domain nested in $\{W_k\}$ and orthogonal to $\{U_i\}$. 

\begin{lemma}\label{L:OFNSDP}
$\IndexBig$ has the orthogonals for non-split domains property. 
\end{lemma}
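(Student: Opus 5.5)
Suppose $\mbU=\{U_i\}\sqsubsetneq \mbW=\{W_k\}$ and $\mbU$ is not split. We must produce a domain nested in $\mbW$ and orthogonal to $\mbU$.

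First we reduce the setup. By \cref{R:split}, no $U_i$ is nest-minimal. So each $U_i$ is a stratum domain or a tree domain, and the halo $\partial U_i$ is defined. Each $U_i$ is nested in some $W_{k(i)}$.

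\textbf{Case A.} Suppose some $U_i$ is properly nested in $W_{k(i)}$. By \cref{L:Halo}, some $H\in\partial U_i$ is nested in $W_{k(i)}$, hence in $\mbW$. We claim $H\perp \mbU$. Halo elements are pairwise orthogonal and lie in the halo of $U_i$, so $H\perp U_i$. This follows from the orthogonality table: an angle domain of a hyperplane containing a stratum is orthogonal to that stratum domain, and a center domain is orthogonal to the tree domains of its horoball. For $j\neq i$ we have $U_j\perp U_i$, so \cref{L:Halo2} gives $U_j=H$ or $U_j\perp H$. The first option is impossible, since $H$ is nest-minimal and $U_j$ is not. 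Hence $\{H\}$ is nested in $\mbW$ and orthogonal to $\mbU$, as required.

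\textbf{Case B.} Otherwise every $U_i$ equals some $W_{k(i)}$, so $\mbU\subsetneq\mbW$ as sets (they are distinct). Pick $W_k\notin\mbU$. Then $W_k$ is orthogonal to every $U_i$, because all members of $\mbW$ are pairwise orthogonal and each $U_i$ is a member of $\mbW$ distinct from $W_k$. So $\{W_k\}$ is nested in $\mbW$ and orthogonal to $\mbU$.

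The main point needing care is the orthogonality claim in Case A. One must check $H\perp U_i$ directly from \cref{fig:rels}, and must rule out $U_j=H$ using nest-minimality together with \cref{R:split}. Everything else is formal bookkeeping with the definitions of nesting and orthogonality in $\IndexBig$ from \cref{D:IndexBig}.
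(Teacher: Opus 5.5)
Your proof is correct and follows the paper's argument. It uses the same combination of \cref{R:split}, \cref{L:Halo}, \cref{L:Halo2} and nest-minimality of halo elements, and your case split (either some $U_i\sqsubsetneq W_{k(i)}$ properly, or else $\mathbf{U}\subsetneq\mathbf{W}$ as sets) is only a reorganization of the paper's split (either some $W_{k_0}$ contains no $U_i$, or else ``without loss of generality $U_1\sqsubsetneq W_1$''), which makes that last step slightly more transparent. One small blemish: pairwise orthogonality of halo elements does not by itself give $H\perp U_i$; it is the orthogonality-table check you give immediately afterward that does the work.
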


\begin{proof}
Suppose $\{U_i\}\sqsubsetneq \{W_k\}$ and $\{U_i\}$ is not split. If there is some $k_0$ such that $W_{k_0}$ does not have any of the $U_i$ nested in it, it suffices to note that the singleton $\{W_{k_0}\}$ is orthogonal to  $\{U_i\}$. Otherwise, without loss of generality $U_1 \sqsubsetneq W_1$. By \cref{R:split}, $U_1$ is not nest-minimal. \cref{L:Halo} gives a $Q\in \partial U_1$ that is nested in $W_1$. This $Q$ is nest-minimal so it cannot be equal to any of the $U_i$, and it suffices to note that the singleton $\{Q\}$ is orthogonal to $\{U_i\}$.
\end{proof}

\subsection{Colorability}

Recall that an HHG is called colorable if each domain can be colored by one of finitely many colors in such a way that domains of the same color are asynchronous and such that the action of the HHG on the set of domains induces an action of the HHG on the set of colors \cite{Hagen}. 

\begin{lemma}
To prove $\IndexBig$ is colorable it suffices to prove that $\IndexSmall$ is colorable. 
\end{lemma}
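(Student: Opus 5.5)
Given a coloring $c:\IndexSmall\to\{1,\dots,m\}$ in which domains of the same color are asynchronous and $\Gammahat$ acts on the colors compatibly, we will color $\IndexBig$ directly. An element $\mbU=\{U_1,\dots,U_k\}$ of $\IndexBig$ has $k\le K$, where $K\le n$ is the bound on orthogonal sets from \cref{C:MaxSizeOrth}. We give $\mbU$ the color
\[
\chi(\mbU)=\{c(U_1),\dots,c(U_k)\},
\]
a subset of $\{1,\dots,m\}$ of size at most $K$. There are only finitely many such subsets. Distinct $U_i$ in one orthogonal set are not asynchronous, so they have distinct colors, and $\chi(\mbU)$ really has size $k$. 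The action of $\Gammahat$ on colors of $\IndexSmall$ induces an action on subsets, and $\chi(g^\diamond\mbU)=g\cdot\chi(\mbU)$ because $g^\diamond$ acts elementwise on $\mbU$.

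Next we check that $\mbU\neq\mbV$ with $\chi(\mbU)=\chi(\mbV)$ forces $\mbU\pitchfork\mbV$. Since the colors within $\mbU$ are distinct, there is a color-preserving bijection $U_i\leftrightarrow V_i$ with $c(U_i)=c(V_i)$. For each $i$, either $U_i=V_i$ or $U_i\pitchfork V_i$. Because $\mbU\ne\mbV$, some index, say $i=1$, has $U_1\pitchfork V_1$.

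\emph{Orthogonality fails.} $\mbU\perp\mbV$ would require $U_1\perp V_1$, which is false.

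\emph{Nesting fails.} Suppose $\mbU\sqsubseteq\mbV$. Then $U_1\sqsubseteq V_j$ for some $j\neq1$. We have $V_j\perp V_1$, so coherence (\cref{A:Coherent}) gives $U_1\perp V_1$, a contradiction. By symmetry $\mbV\not\sqsubseteq\mbU$ as well.

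Hence $\mbU\pitchfork\mbV$, and $\chi$ is a valid coloring. The only point needing care is the coherence step just used. It matters in which direction the nesting relation on $\IndexBig$ is applied. The relevant relation is $U_1\sqsubseteq V_j\perp V_1$, which is exactly the hypothesis of \cref{A:Coherent}, so that axiom applies as stated and the argument closes. The remaining steps are routine.
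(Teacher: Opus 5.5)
Your proof is correct and takes the same approach as the paper: it colors $\{U_i\}\in\IndexBig$ by the set of colors of the $U_i$ and reduces to a pair $U_1\pitchfork V_1$ of the same color. Your explicit use of the coherence axiom to rule out $\mbU\sqsubseteq\mbV$ and $\mbV\sqsubseteq\mbU$, along with your check of equivariance, fills in the step the paper compresses into ``it follows that $\{U_i\}$ and $\{V_j\}$ are asynchronous.''
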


\begin{proof}
Suppose $\IndexSmall$ is colorable by a set of colors $\cC$. We then color $\IndexBig$ with a new set of colors equal to the power set $2^\cC$. The color of $\{U_i\}$ is the set of colors of the $U_i$. 

Suppose $\{U_i\} \neq  \{V_j\}$, and assume without loss of generality that $V_1$ is not equal to any of the $U_i$. 
If $\{U_i\}$ has the same color as $\{V_j\}$, then without loss of generality $U_1$ has the same color as $V_1$, which means that $U_1$ and $V_1$ are asynchronous. It follows that $\{U_i\}$ and $\{V_j\}$ are asynchronous.
\end{proof}

\begin{lemma}\label{L:HypColor}
The set of hyperplanes can be colored with finitely many colors such that any two hyperplanes of the same color are disjoint. 
\end{lemma}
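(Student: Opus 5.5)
The plan is to color the hyperplanes of $\Bhat$ by pulling back a coloring from a finite quotient. By \cref{L:Halo2}-style reasoning this is not needed; instead we work with the group directly. First pass to a finite index torsion-free normal subgroup $\Gamma'\subset\Gamma$ (Selberg's lemma, since $\Gamma$ is a finitely generated linear group). Choose it deep enough that for each hyperplane $H$ of $B$ and each $g\in\Gamma'-\{1\}$, $gH$ is either equal to $H$ or disjoint from $H$. Its preimage $\Gammahat'$ in $\Gammahat$ is then finite index and normal.

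Assign to a hyperplane $\Hhat$ of $\Bhat$ the color given by the $\Gammahat'$-orbit of $\Hhat$. Since $\Gamma$ has only finitely many orbits of hyperplanes in $B$, and $\Gammahat$ acts on hyperplanes of $\Bhat$ with the same orbits as $\Gamma$ acting on hyperplanes of $B$ (every hyperplane of $\Bhat$ lies over one of $B$, and $\pi_1(\Bcirc)$ acts transitively on the lifts), there are finitely many colors. Normality of $\Gammahat'$ makes $\Gammahat$ permute these colors.

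It remains to show that two distinct hyperplanes $\Hhat$ and $g\Hhat$ with $g\in\Gammahat'$ are disjoint. Project to $B$: their images are $H$ and $\bar gH$ with $\bar g\in\Gamma'$. If $\bar gH\ne H$, then by the choice of $\Gamma'$ these are disjoint, so the lifts are disjoint too. The main obstacle is the case $\bar gH=H$, where two distinct lifts of the same $H$ might meet. Here I would use the local model of \cref{SS:Local}. At a point $p\in\Bhat$ lying over $x\in H$, the $k$ hyperplanes of $\Bhat$ through $p$ lie over the $k$ distinct hyperplanes of $B$ through $x$. So two distinct hyperplanes of $\Bhat$ through a common point have distinct images in $B$. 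Hence two distinct lifts of the same $H$ are disjoint.

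For the choice of $\Gamma'$: the set of $g\in\Gamma$ with $gH\cap H\neq\emptyset$ and $gH\ne H$ gives, modulo $\Stab_\Gamma(H)$, finitely many double cosets of intersecting translates. This uses local finiteness together with the fact that intersections $gH\cap H$ are orthogonal strata, of which there are finitely many orbits by the compactness argument behind \cref{SS:Uniformity} plus the cusp structure of \cref{LemAffineArrangement}. Removing these finitely many elements from $\Gamma'$ is possible via residual finiteness of $\Gamma$, applied to the finitely many double coset representatives. This finiteness claim near cusps is the step I expect to need the most care.
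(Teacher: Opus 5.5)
Your overall plan matches the paper's: take a finite-index normal subgroup $\Gamma'$ (a finite Galois cover of $\Gamma\backslash B$) such that each element of $\Gamma'$ either fixes a hyperplane or moves it off itself, then color hyperplanes by $\Gamma'$-orbits. Your handling of lifts to $\Bhat$ via the local model is correct and more explicit than the paper's.

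The gap is in how you produce $\Gamma'$. Write $S=\Stab_\Gamma(H)$, and let $g_1,\dots,g_r$ represent the finitely many bad double cosets. The set you must avoid is $\bigcup_i Sg_iS$, which is infinite, not the finite set $\{g_i\}$. For normal $\Gamma'$ one checks that $\Gamma'\cap Sg_iS\neq\emptyset$ if and only if $g_i\in\Gamma'S$. So what you actually need is a finite-index subgroup containing $S$ (namely $\Gamma'S$) that excludes every $g_i$. That is exactly separability of $S$ in $\Gamma$, at least relative to the $g_i$. Residual finiteness applied to the $g_i$ only yields $g_i\notin\Gamma'$, which is still compatible with $g_i\in\Gamma'S$. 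That compatibility can persist for every choice of $\Gamma'$, precisely when $g_i$ lies in the profinite closure of $S$.

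This is the missing idea, and it is what the paper imports. It quotes separability of $\Stab_\Gamma(H)$ from Bergeron and from Bergeron--Haglund--Wise. It then runs Scott's argument, as in Di Cerbo--Stover, to get a Galois cover in which the image of $H$ is embedded, handling the non-uniform case via Garland--Raghunathan. Separability is not a formal consequence of residual finiteness; it needs extra structure. For example, if $\Gamma$ contained a complex reflection $R$ with mirror $H$, then $S$ would be the centralizer of $R$. Applying residual finiteness to the commutators $[g_i,R]$, rather than to the $g_i$, would then work. But nothing in the hypotheses of \cref{T:main} puts such reflections in $\Gamma$.

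The finiteness of double cosets near cusps, which you flag, is the same subtlety the paper flags. It is secondary to the separability issue.
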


For a more precise result on the moduli space of cubic surfaces, see  \cite[Lemma 7.30, Theorem 7.32]{AllcockCarlsonToledoSurfaces}.

\begin{proof}
A very general argument, given in \cite[page 113]{Bergeron} or \cite[Lemma 1.8]{BergeronHaglundWise}, gives that if $H\in \cH$, then $\Stab_\Gamma(H)$ is a separable subgroup of $\Gamma$.

As in \cite[Proof of Theorem 1.1]{DiCerboStover}, ideas of Scott \cite{Scott} can then be adapted to find a finite Galois cover of $\Gamma\back B$ where the image of $H$ does not intersect itself. (Some care is required in the non-uniform case: See \cite[Theorem 1.2]{GarlandRaghunathan}, or adapt arguments in  \cite{Bergeron} to the complex hyperbolic case.)
%
%
In the associated Galois cover the same holds for every hyperplane in $\Gamma H$. We can then take a common Galois cover of one such cover for every $\Gamma$ orbit in $\cH$. The result is a Galois cover of $\Gamma\back B$ where the image of every hyperplane in $\cH$ does not intersect itself. 
%
%

We then color each hyperplane by a set of colors equal to the images of elements of $\H$ in this cover. This gives the result. 
\end{proof}

\begin{corollary}
The set of stratum domains can be colored so any two of the same color have disjoint closures.
\end{corollary}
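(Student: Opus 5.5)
The plan is to deduce the coloring of stratum domains directly from \cref{L:HypColor}. Fix a coloring of the hyperplanes of $\Bhat$ by a finite set of colors $\cC$ such that hyperplanes of the same color are disjoint, and which is invariant under $\Gammahat$ in the sense needed later (the construction in \cref{L:HypColor} colors a hyperplane by its image in a finite Galois cover, so $\Gammahat$ permutes colors). A stratum closure $\Shat$ of codimension $k\geq 1$ is an intersection of exactly $k$ pairwise orthogonal hyperplanes $\Hhat_1,\dots,\Hhat_k$, and these are uniquely determined by $\Shat$ (they are the hyperplanes containing $\Shat$). Color $\Shat$ by the set $\{c(\Hhat_1),\dots,c(\Hhat_k)\}\subset \cC$, an element of $2^{\cC}$; note that since intersecting hyperplanes have different colors, this set has exactly $k$ elements. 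The top stratum $\Bhatcirc$ gets the empty set, and it is the only stratum with that color, so the statement is vacuous for it.

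Now suppose $\Shat\neq\Shat'$ have the same color $S\subset\cC$, and suppose for contradiction that $\Shat\cap\Shat'\neq\emptyset$. Both then have the same codimension $k=|S|$. For each color $s\in S$ let $\Hhat_s$ and $\Hhat'_s$ be the hyperplanes of color $s$ containing $\Shat$ and $\Shat'$ respectively. A point $p\in\Shat\cap\Shat'$ lies in both $\Hhat_s$ and $\Hhat'_s$, so these hyperplanes of the same color intersect, hence are equal by \cref{L:HypColor}. Thus $\Shat$ and $\Shat'$ are both the component containing $p$ of $\bigcap_{s\in S}\Hhat_s$. The remaining point is that this intersection is connected in $\Bhat$: it is the fixed point set of the $\bZ^k$ generated by the rotations about the $\Hhat_s$, which is convex since $\Bhat$ is CAT($-1$), and is the closure of a single stratum, as in \cref{SS:Stratification}. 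Hence $\Shat=\Shat'$, a contradiction.

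The only step needing care is this identification of the intersection of $k$ mutually intersecting hyperplanes with a single stratum closure, rather than a union of several. I would handle it using \cref{L:TripleIntersection} and \cref{R:OrthIntersection} to see that the pairwise-intersecting family has a common point, and the local models of \cref{SS:Local} to see that near each common point the intersection is a single codimension-$k$ stratum closure. Convexity of the intersection then gives connectedness. Equivariance of the coloring under $\Gammahat$ follows from that of the hyperplane coloring.
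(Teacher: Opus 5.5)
Your proposal is correct and is essentially the paper's proof: the paper colors each stratum by the set of colors (from \cref{L:HypColor}) of the hyperplanes containing it, and your argument (same-colored strata with intersecting closures lie in the same $k$ hyperplanes, whose intersection is a single convex stratum closure) supplies the verification the paper leaves implicit. Citing \cref{L:TripleIntersection} is unnecessary, since your common point $p\in\Shat\cap\Shat'$ already gives the hyperplanes a common point.
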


\begin{proof}
One can color a stratum by the set of colors of the hyperplanes containing it.
\end{proof}

\begin{lemma}
The set of cusp domains can be colored with finitely many colors such that any two  of the same color are asynchronous. 
\end{lemma}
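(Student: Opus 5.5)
The plan is to read off from \cref{fig:rels} exactly when two cusp domains fail to be asynchronous, and then to separate those pairs using the hyperplane coloring of \cref{L:HypColor}. By the table, two center domains are asynchronous precisely when $\hat{c}\neq\hat{c}'$. A center domain and a tree domain are orthogonal when they come from the same horoball and asynchronous otherwise. Two distinct tree domains are orthogonal when they come from the same horoball and asynchronous otherwise. So it suffices to find a $\Gammahat$-equivariant coloring with finitely many colors in which no two distinct cusp domains associated to the same horoball $\That_{\hat{c}}$ get the same color. Different horoballs then automatically give asynchronous pairs.

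I will give every center domain a single color, called ``center''. For tree domains, I fix the coloring of hyperplanes from \cref{L:HypColor}. It is produced from a finite Galois cover of $\Gamma\back B$: a hyperplane of $B$ is colored by its image in that cover, and a hyperplane of $\Bhat$ inherits the color of its image in $B$. Since the cover is Galois, $\Gamma$, and hence $\Gammahat$, acts on the finite set of colors compatibly with its action on hyperplanes. A tree domain is by definition a parallelism class $V$ of hyperplanes entering $\That_{\hat{c}}$. I color it by the set of colors of the hyperplanes in $V$, which is a nonempty subset of the finite color set. The full color set is then $\{\text{center}\}$ together with the power set of the hyperplane colors, which is finite. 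Equivariance follows from the equivariance of the hyperplane coloring, since $g$ maps the class $V$ onto the class $g^\diamond V$.

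The key step is to check that two distinct tree domains $V\neq V'$ at the same cusp $\hat{c}$ get different colors. By the discussion before \cref{LemAffineArrangement}, and the remark that non-parallel hyperplanes incident to $\hat{c}$ meet, every hyperplane in $V$ intersects every hyperplane in $V'$. Hence no hyperplane of $V$ has the same color as a hyperplane of $V'$. The two color sets are therefore disjoint, and since both are nonempty they are distinct. Tree domains also never receive the color ``center''.

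I expect the only real subtlety to be the equivariance bookkeeping. One has to make sure that the $\Gammahat$ action on hyperplane colors, which factors through the deck group of the Galois cover, is the same action that permutes the colors of tree domains. One also has to confirm that stabilizers of $\hat{c}$ which permute the parallelism classes simply permute the corresponding color sets. Both points follow directly from the naturality of the constructions.
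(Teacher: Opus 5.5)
Your proposal is correct and is essentially the paper's proof: you give all center domains a single color, and you color each tree domain by the set of colors (from \cref{L:HypColor}) of the hyperplanes in its parallelism class. You also spell out the step the paper leaves implicit, namely that distinct tree domains at the same cusp get disjoint nonempty color sets, because non-parallel hyperplanes incident to $\hat{c}$ meet and so cannot share a color.
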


\begin{proof}
All the center domains can be assigned the same new color. Color each tree domain, viewed as a parallelism class of hyperplanes, with the set of colors of the hyperplanes it contains. 
\end{proof}

We can now conclude.

\begin{lemma}\label{L:colorable}
 $\mfS$ is colorable.
\end{lemma}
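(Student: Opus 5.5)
By the preceding lemma it suffices to color $\IndexSmall$. I will use four pairwise disjoint palettes, one each for stratum, angle, center and tree domains. Then two domains of different types always receive different colors, and only domains of the same type need to be checked for asynchronicity. All colors will come from the finite Galois cover built in the proof of \cref{L:HypColor}. That is, I take a finite-index normal subgroup $\Gamma'\subset\Gamma$ such that every hyperplane of $\cH$ has embedded image in $\Gamma'\back B$. The basic color of a hyperplane $\Hhat$ of $\Bhat$ is then the image in $\Gamma'\back B$ of its image $H\in\cH$.

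\textbf{Angle domains.} Color $N^1(\Hhat)$ by the basic color of $\Hhat$. If two distinct angle domains have the same color, then \cref{L:HypColor} shows their hyperplanes in $B$ are disjoint, so the hyperplanes in $\Bhat$ are disjoint. By \cref{fig:rels} the two domains are then asynchronous.

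\textbf{Stratum domains.} Color a stratum domain by the set of basic colors of the hyperplanes containing its closure, as in the preceding corollary. The maximal domain $\Bhatcirc$ gets the empty set, and it is the only domain with that color. Two distinct strata of the same nonempty color have disjoint closures. Hence neither closure contains the other, so neither domain is nested in the other, and strata are never orthogonal. So they are asynchronous.

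\textbf{Cusp domains.} Give all center domains one common color. Distinct center domains belong to distinct cusps and are therefore asynchronous. Color a tree domain, viewed as a parallelism class of hyperplanes at $\chat$, by the set of basic colors of its hyperplanes; this set is nonempty. Consider two tree domains of equal color. If they belong to different cusps, they are asynchronous by \cref{fig:rels}. If they belong to the same cusp but different classes, then every hyperplane of one class meets every hyperplane of the other. By \cref{L:HypColor} their basic colors are then disjoint, so the two color sets differ, which is a contradiction. Equal color and same cusp therefore means they are the same domain.

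\textbf{Equivariance.} This is the step needing care, though it is formal. $\Gammahat$ acts on hyperplanes of $\Bhat$ compatibly with the action of $\Gamma$ on $\cH$, via $\Gammahat\to\Gamma$. Since $\Gamma'$ is normal, $\Gamma$ acts on $\Gamma'\back B$ through the finite group $\Gamma/\Gamma'$. This action permutes the images of hyperplanes, and the projection $B\to\Gamma'\back B$ intertwines the two actions. So basic colors are permuted $\Gammahat$-equivariantly. The same holds for colors formed as sets of basic colors, because the group actions on domains preserve the relevant sets of hyperplanes: hyperplanes containing a stratum, and parallelism classes at a cusp. The four palettes are preserved since the action preserves domain type. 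This gives an action of $\Gammahat$ on the finite set of colors compatible with the action on domains, completing colorability of $\IndexSmall$, hence of $\mfS$.
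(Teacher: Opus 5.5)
Your proposal is correct and follows essentially the paper's own route. You reduce to $\IndexSmall$, use a separate palette for each domain type, color angle domains by the hyperplane colors of \cref{L:HypColor}, color strata by the set of colors of the hyperplanes containing them, give all center domains one color, and color tree domains by the set of colors in their parallelism class; your explicit equivariance check fills in what the paper leaves implicit. One step in the angle case needs patching. Two distinct hyperplanes of $\Bhat$ can lie over the same hyperplane of $B$, and then disjointness in $B$ tells you nothing. Instead, use the local model of \cref{SS:Local}: each point of $\Bhat$ lies on exactly one hyperplane of $\Bhat$ over each hyperplane of $B$ through its image, so distinct lifts of the same hyperplane are disjoint. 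The stratum coloring implicitly relies on this same fact.
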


\begin{proof}
It suffices to handle each type of domain separately, and these are handled by the previous lemmas. 
\end{proof}

\section{Relative hierarchical hyperbolicity (not required) }\label{S:RHHG}

For the definition of a relatively hierarchically hyperbolic space (RHHS) and group (RHHG), we recommend consulting \cite[Definition 1.4, Section 1.2.3]{BehrstockHagenSistoAsDim}; the original reference \cite[Definition 6.8]{BehrstockHagenSistoHHSII} is similar but does not discuss the group version explicitly. The definitions of a RHHS and RHHG are identical to those of an HHS and HHG, except that the $\cC(U)$ for $U$ nest-minimal are now allowed to be arbitrary geodesic metric spaces. (The projection $\pi_U$ should still be coarsely onto and coarsely Lipschitz.)

Minor modifications to our proof of \cref{T:main} prove that $\Gammahat$ is an RHHG even if \cref{CuspLimit} on cusps is dropped. \cref{A:OtherEx} provides a number of examples where this is of interest. 

\begin{theorem}
\label{T:mainrel}
Suppose $\H$ and $\Gammahat$ are as in \cref{T:main} except that \cref{CuspLimit} is not assumed to hold. Then $\Gammahat$ is a relatively hierarchically hyperbolic group (RHHG). 
\end{theorem}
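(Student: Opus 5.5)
The plan is to rerun the proof of \cref{T:main} essentially verbatim. The only change is at cusps $c$ where \ref{CuspLimit} fails, and there we replace the center and tree domains by a single nest-minimal ``cusp domain'' whose space is allowed to be non-hyperbolic, as the RHHS definition permits. Concretely, \ref{CuspLimit} was used only in \cref{SS:HyperplaneArrangement}, where it splits the hyperplanes through $c$ into $n-1$ parallelism classes so that $\partial\That_\chat$ is an $\R$-bundle over $\prod\Chat_i$. At a cusp where this splitting fails, we define one domain $U_{\hat c}$ with $\cC(U_{\hat c})=\partial\That_\chat$, equipped with its intrinsic path metric. The projection $\pi_{U_{\hat c}}$ is nearest-point projection to $\That_\chat$ followed by the inclusion of $\partial\That_\chat$. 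This map is coarsely Lipschitz by \cref{L:UhatcProjection} and coarsely onto. Cusps that do satisfy \ref{CuspLimit} keep their center and tree domains.

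For nesting and orthogonality, $U_{\hat c}$ is nest-minimal and nested in every stratum domain whose stratum enters $\That_\chat$, in parallel with the center domains. It is orthogonal to every angle domain whose hyperplane enters $\That_\chat$. This makes every hyperplane through $c$ orthogonal to $U_{\hat c}$, rather than only some hyperplanes being nested in a tree domain. I would then recheck \cref{L:BoundedChains}, \cref{L:MaxOrthogonalSets} and \cref{L:WeakContainers} to obtain \cref{P:av}. A maximal orthogonal set containing $U_{\hat c}$ now consists of $U_{\hat c}$ together with the angle domains of a family of pairwise intersecting hyperplanes through $c$. These sets have bounded size, so weak containers still holds.

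The axioms of \cref{P:Criterion} are verified with $\cA_{U_{\hat c}}$ taken to be the $10nr_\theta$-neighborhood of $\That_\chat$ intersected with $\Bhatthick$.
\begin{itemize}
\item Bounded projections and structured redundancy go through as before: the center-domain arguments only use convexity of horoballs, \cref{C:CATBGI} and \cref{R:constants}.
\item Enough projections follows from \cref{P:Uniqueness}. In the horoball step, \cref{L:HoroballProductMetric} becomes trivial because $\pi_{U_{\hat c}}$ is essentially an isometric map onto $\partial\That_\chat$, using \cref{L:BoundaryHorosphere}.
\item Progress localization (\cref{P:LargeLinks}) only needs the cases $V$ a stratum domain, since $U_{\hat c}$ is nest-minimal. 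At a visit of $\alpha$ to such a horoball we add $U_{\hat c}$ to the list $\{U_i\}$. Angle domains of hyperplanes entering that horoball are then orthogonal to $U_{\hat c}$, so they need no control; this is exactly what replaces \cref{C:TreeAngleBGI}.
\item The relative version of \cref{P:Criterion} and \cref{P:CriterionGroups} must then be checked. The appendix proof never used hyperbolicity of nest-minimal $\cC(U)$, as noted after \cref{L:LLuniformity}, so the same argument gives an RHHG.
\end{itemize}

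I expect density to be the main obstacle. For the orthogonal tuple consisting of $U_{\hat c}$ and angle domains $N^1(\Hhat_1),\dots,N^1(\Hhat_k)$ with the $\Hhat_j$ meeting at points of $\partial\That_\chat$, we need $\pi_{U_{\hat c}}\times\prod\pi_{N^1(\Hhat_j)}$ to be coarsely onto. We also need points of $\partial\That_\chat$ far from the $\Hhat_j$, where the alternative projection is defined, to realize arbitrary angle coordinates while staying near a prescribed point of $\partial\That_\chat$. I would handle this by first using \cref{L:StabCobounded} at a point of $\bigcap\Hhat_j\cap\partial\That_\chat$, with \cref{R:NudgeToThick} to move into the thick part. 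I would then translate along $\partial\That_\chat$ using the cocompact action of $\Stab(\That_\chat)$. The required equivariance comes from \cref{C:AngleProjOnHoroBoundary}.
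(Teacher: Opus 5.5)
\textbf{There is a genuine gap.} It sits at cusps where \ref{CuspLimit} fails but some hyperplanes still pass through $c$. Failure of \ref{CuspLimit} does not remove the parallelism classes. It leaves $m$ of them, with $0\le m<n-1$, so the horosphere minus $\H$ modulo the center is $\bC^k\times\prod_{i=k+1}^{n-1}\Ccirc_i$ with $k=n-1-m$. Your single domain with $\cC(U_{\hat c})=\partial\That_\chat$ is correct only when $m=0$. When $m\ge1$, declaring $U_{\hat c}\perp N^1(\Hhat)$ for $\Hhat$ through $\hat c$ breaks two axioms.

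\emph{Density and partial realization fail.} $\partial\That_\chat$ still records the tree factor $\Chat_i$ in which $\Hhat$ is the point $q_{\Hhat}$. Away from $\Hhat$, the angle coordinate is coarsely a function of position in $\partial\That_\chat$ (\cref{C:AltLip}, \cref{C:AngleProjOnHoroBoundary}). So take $p\in\partial\That_\chat$ lying over a point of $\Chat_i$ far from $q_{\Hhat}$. Every $x$ with $\pi_{U_{\hat c}}(x)$ near $p$ has $\pi_{N^1(\Hhat)}(x)$ near the alternative projection of $p$. Also, $\pi_{U_{\hat c}}(\cA_{U_{\hat c}}\cap\cA_{N^1(\Hhat)})$ lies in a bounded neighbourhood of $\Hhat\cap\partial\That_\chat$. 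The step you flagged as the main obstacle is therefore false. Translating by $\Stab(\That_\chat)$ cannot decouple the two coordinates, since it replaces $\Hhat$ by $g\Hhat$.

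\emph{Weak containers fails} for $U_{\hat c}\sqsubseteq\Bhatcirc$. $\cP_{U_{\hat c},\Bhatcirc}$ contains the angle domains of the infinitely many pairwise disjoint hyperplanes in each class at $\hat c$. With the tree domains at $\hat c$ gone, only stratum domains can contain infinitely many of these, and an orthogonal set contains at most one stratum domain. Moreover, a proper stratum closure contains at most one such angle domain from some class. If it enters $\That_\chat$, it lies in a hyperplane through $\hat c$ and misses the rest of that class. Otherwise, every hyperplane through $\hat c$ that it meets contains $p_{\Shat}(\hat c)$.

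Your progress localization disposes of these angle domains through the ``orthogonal to $U_i$'' clause. However, the proof of \cref{P:Criterion} converts that clause into large links only by adding containers, so the argument does not close. A dummy container with trivial $\cC$ does not help either. Points near $\partial\That_\chat$ that differ by rotations about many hyperplanes of one class are close in $\cC(\Bhatcirc)$. Yet unboundedly many of those angle domains see large projections, so the container must record the $\Chat_i$ direction.

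\textbf{The missing idea} is what the paper does. At such a cusp it keeps one tree domain for each of the $m$ classes that do occur. The angle domains stay nested in these, which supply containers and the control of \cref{P:TreeAngleBGI} and \cref{C:TreeAngleBGI}. It then replaces the center domain by a nest-minimal Heisenberg domain: the swaddling of $\partial\That_\chat\to\prod_{i=k+1}^{n-1}\Chat_i$. Its fibres are torsors for the group $H^{2k+1}$ of \cref{L:H2k1}, and the monodromy lies in the center. This needs swaddling extended to Lie groups $G$ with central $\bR$-valued monodromy, and the result is quasi-isometric to $H^{2k+1}$, which an RHHS permits. This space forgets the $\Chat_i$ directions, and each $\Hhat$ through $\hat c$ meets $\partial\That_\chat$ in a union of whole fibres. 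Hence its orthogonality to those angle domains is compatible with density. Your construction is the $m=0$ case of this.
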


Almost all the proof proceeds almost verbatim the same. The main changes concern cusp domains. In a sense there may be fewer tree domains than before, and so the metric spaces $\cC(U)$ attached to center domains $U$ now must sometimes be larger. Their role expands enough that it becomes potentially misleading to continue to call them center domains, and so we will call them Heisenberg domains instead. For the purposes of this appendix, a center domain can thus be thought of as a special case of a Heisenberg domain that arises when \cref{CuspLimit} in fact does hold for the associated cusp.

We now comment in more detail on the sections requiring changes. 

\bold{\cref{S:Criteria} and \cref{S:CritProof}:} One can modify \cref{P:Criterion} and \cref{P:CriterionGroups} simply by dropping the requirement that $\cC(U)$ must be hyperbolic when $U$ is nest-minimal, so they yield relative hierarchical hyperbolicity instead of hierarchical hyperbolicity. The proofs do not change; the only use of hyperbolicity as a tool in \cref{S:CritProof} is in \cref{L:LLuniformity}, which only uses it for non-nest-minimal domains. 

\bold{\cref{S:Swaddling}:} It is natural to work with swaddling in a slightly more general context. Rather than working only with the Lie group $G=\bR$, we now allow other connected Lie groups $G$ with left invariant metrics. Maps that previously were $\bR$-equivariant are now required to be $G$-equivariant. We assume that, for every $x,y,z$, the $G$-equivariant map $m_{z,x} \circ m_{y,z}\circ m_{x,y}$ is multiplication by an element of a fixed subgroup of the center of $G$ that is isomorphic, via a fixed isomorphism, to $\bR$. \cref{L:Swaddled,L:MetricCompatibilityOfSwaddling} and its proofs are valid in this more general context, and we do not require any later results from \cref{S:Swaddling} in this more general context. 

\begin{remark}
The centrality assumption avoids having to work around the fact that a self-map of
a $G$-torsor cannot be canonically identified with an element of $G$; it distinguishes only 
a conjugacy class.
 \end{remark}
%
%
%

\bold{\cref{S:CuspDomains}:} This is the main section requiring changes, which begin in \cref{SS:HyperplaneArrangement}. In comparison to \cref{LemAffineArrangement}, we now have horosphere minus arrangement mod center of the form
$$\bC^k \times \prod_{i=k+1}^{n-1}\Ccirc_i.$$
Unlike before, $k$ can be positive, and even equal to $n-1$ if the hyperplane arrangement doesn't go into the given horoball. The $\bC^k$ does not factor canonically so is best thought of as an abstract vector space of dimension $k$. Our first observation is to determine the replacement for the center of the Heisenberg group. 

\begin{lemma}\label{L:H2k1}
The connected component of the identity of the stabilizer of $\H\cap\partial T_c$ in the Heisenberg group $H^{2n-1}$ is a Heisenberg group $H^{2k+1}$ of dimension $2k+1$.
\end{lemma}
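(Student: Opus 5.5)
The plan is to work entirely in the quotient $H^{2n-1}/Z\iso\C^{n-1}$ and then lift. Since every hyperplane of $B$ through $c$ meets $\partial T_c$ in a union of $Z$-orbits (as noted after \cref{L:HinTcBasic}), the center $Z$ preserves $\H\cap\partial T_c$. An element $h\in H^{2n-1}$ therefore preserves $\H\cap\partial T_c$ if and only if its image $\bar h\in\C^{n-1}$, which acts on $\partial T_c/Z$ by translation, preserves the affine arrangement $\cA$ that $\H\cap\partial T_c$ projects to. So the stabilizer in $H^{2n-1}$ is the full preimage of the group $\Lambda_\cA$ of translations of $\C^{n-1}$ preserving $\cA$.

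Next we identify the identity component of $\Lambda_\cA$. Using the orthogonal decomposition that the modified \cref{LemAffineArrangement} provides, the arrangement is $\C^k\times\bigl(\bigcup_i \C^{i-k-1}\times\Sigma_i\times\C^{n-1-i}\bigr)$, where each $\Sigma_i\subset\C$ is discrete and nonempty for $i>k$, and the $\C^k$ factor is the common direction space $W$ of all the hyperplanes. A translation $v$ preserving $\cA$ must send each parallelism class to itself, since translations preserve directions. Writing the coordinates of $v$ in the $i$th orthogonal direction as $v_i$, it must satisfy $\Sigma_i+v_i=\Sigma_i$ for each $i>k$. Because $\Sigma_i$ is discrete, each such $v_i$ lies in a discrete group. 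Hence $\Lambda_\cA=W\times D$ with $D$ discrete, and its identity component is the complex subspace $W\iso\C^k$.

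Finally we lift. The identity component of the stabilizer is then the identity component of the preimage of $W$ in $H^{2n-1}$, and this preimage is connected, being a $Z$-bundle over $W$. Concretely it is $Z\cdot\exp(\tilde W)$, where $\tilde W$ is the horizontal lift of $W$ to the Lie algebra, and it is a closed subgroup of dimension $2k+1$. Since the commutator form is a nonzero multiple of $\mathrm{Im}$ of the Hermitian form, it restricts to the complex subspace $W$ as a nondegenerate symplectic form. A central extension of $\R^{2k}$ by $\R$ with nondegenerate commutator form is the Heisenberg group $H^{2k+1}$ (for $k=0$ it is just $Z\iso\R=H^1$).

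The main obstacle is to make sure that when $k$ is smaller the arrangement really has the stated product form. In particular, every hyperplane not parallel to $W^\perp$-classes must contain the $W$ directions, so that $W$ is exactly the space of translation directions fixing every hyperplane. I would handle this by defining $W$ directly as the intersection of the direction spaces of all hyperplanes in $\cA$, and running the discreteness argument above in the quotient $\C^{n-1}/W$. That argument works without knowing the precise product structure, since each direction in $W^\perp$ moves some hyperplane off itself and local finiteness then forbids small translations.
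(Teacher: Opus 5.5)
Your argument is correct and is essentially the one the paper leaves implicit in \cref{S:CuspDomains}. You show that $Z$ preserves $\H\cap\partial T_c$, which uses that, by the choice of $T_c$, every hyperplane meeting $\partial T_c$ passes through $c$; so the stabilizer is the preimage of the translation stabilizer of the product arrangement, whose identity component is the $\C^k$ factor because the $\Sigma_i$ are discrete, and this preimage is $H^{2k+1}$ because the commutator form, a nonzero multiple of the imaginary part of the Hermitian form, is nondegenerate on the complex subspace $\C^k$. Your fallback in the last paragraph is also valid and does not even need orthogonality: it proves discreteness in $\C^{n-1}/W$ from local finiteness, using finitely many hyperplanes whose direction spaces cut out $W$.
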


Note that when $k=0$, this recovers the center of $H^{2n-1}$. The proof is left to the reader, and is implicit in \cref{S:CuspDomains}. 

This $H^{2k+1}$ will replace the center $Z$ in most of our analysis. The space $\partial T_{c}$ has the structure of a bundle 
$$H^{2k+1} \to H^{2n-1} \to \prod_{i=k+1}^{n-1}\C_i.$$
It is this bundle and its variants that we use. The fibers are no longer real lines, but instead are torsors for $H^{2k+1}$. Given a path in $\prod_{i=k+1}^{n-1}\C_i$ we can lift it to a horizontal path in $H^{2n-1}$ in two steps. First, lift it to a  path in $\bC^k \times \prod_{i=k+1}^{n-1}\C_i$, simply by keeping the $\bC^k$ coordinate constant. Then, lift to a horizontal path in $H^{2n-1}$. This analysis shows that the monodromy for the bundle $H^{2k+1} \to H^{2n-1} \to \prod_{i=k+1}^{n-1}\C_i$ in fact lives in the center of $H^{2n-1}$, which is equal to the center of $H^{2k+1}$. The monodromy of a loop is again a sum of signed areas, exactly as in \cref{SS:HeisenbergGroup}.

\cref{SS:PullBackToHoroball} requires only notational changes. We get that $\partial\Thatcirc_{\hat{c}}$ is the pullback of
$$\partial \Tcirc_c\to\prod_{i=k+1}^{n-1}\Ccirc_i$$ to the universal cover
$\prod_{i=k+1}^{n-1}\Chatcirc_i$ of its base.  The discussion of tree domains in \cref{SS:TreeDomains} does not require modification, except to say there may now be fewer tree domains and to make straightforward modifications to \cref{LemAffineArrangement}. We get one tree domain for each parallelism class of hyperplanes entering the horoball. 

\begin{remark}\label{R:NotHHG}
If $k>0$, then $\Gammahat$ is not an HHG.  This follows from the Tits alternative
for HHGs, which holds in a strong form: every finitely generated subgroup either is
virtually abelian or contains a nonabelian free group \cite[Theorem 4.1]{DurhamHagenSistoCorrection}.
As in the proof of \cref{LemAffineArrangement}, 
one can show directly that $\Gamma\cap H^{2k+1}$ is a cocompact lattice in $H^{2k+1}$, hence nilpotent but not virtually abelian.
%
%
\end{remark}

The key changes are to \cref{SS:CenterDomains}. Before we had $n-1$ tree domains and one center domain per cusp. Now we have $n-1-k$ tree domains and one  Heisenberg domain. If $U$ is the Heisenberg domain, $\cC(U)$ will still be a swaddling of $\partial \That_{\hat{c}}\to  \prod_{i=k+1}^{n-1}\Chat_i$. The main difference is that now the swaddling is quasi-isometric not to a line but instead to $H^{2k+1}$. The proof of \cref{L:Thatswlip} is essentially unchanged but must be rephrased because the connection one-form is now valued in the Lie algebra of $H^{2k+1}$; compare for example to \cite[Section 5.10]{Hamilton}.

In \cref{SS:HoroballEP}, it is important that \cref{L:HoroballProductMetric} and its proof hold essentially unchanged.


\bold{\cref{S:NTO}:} \cref{C:BoundedChains,L:MaxOrthogonalSets,C:MaxSizeOrth} contain unnecessarily precise claims that now require straightforward modifications. For example, in \cref{C:BoundedChains} it is still true that the maximal size of a chain is at most $n+1$, but the (never used) criterion for equality need not hold if cusps need not contain 1-dimensional strata.
%
%
%
%

No other significant changes are required to the proof.

\begin{remark}\label{R:RelHyp}
$\Gammahat$ is only  relatively hyperbolic in very special situations. Compare to \cite{BehrstockCorneliaMosher, Russell, AbbottBehrstockRussell, BelegradekHruska}.  See also the discussion after \cite[Corollary 8.3]{BehrstockCorneliaMosher} for prior results implying that most mapping class groups are not relatively hyperbolic. 

Suppose $(G, \mfS)$ is an HHG for which the hyperbolic space of the nest-maximal domain is infinite diameter. It follows automatically that the Gromov boundary of this hyperbolic space is non-empty; see, for example,  \cite[Theorem 4.2]{AbbottBalasubramanyaOsin}. Suppose also that there is a $G$-invariant subset $\mfS_0\subset \mfS$, of size greater than 1, such that any two domains in $\mfS_0$ can be joined by a sequence of domains in $\mfS_0$, each orthogonal to the next, and such that the Gromov boundary of $\cC(U)$ is non-empty for each $U\in \mfS_0$. Then \cite[Theorem 6.12]{AbbottBehrstockRussell} gives that $G$ is thick of order 1 and hence, by   \cite[Corollary 7.9]{BehrstockCorneliaMosher}, $G$ is not relatively hyperbolic. 
%
%
This can be applied to show that the $\Gammahat$ associated with the moduli space of cubic surfaces is not relatively hyperbolic. Here one should let $\mfS_0$ be the set of all angle domains, and use the results mentioned in \cref{R:CombinatorialModel}. 
\end{remark}

\section{Other examples (not required) }\label{A:OtherEx}

We give many examples to which \cref{T:main} or \cref{T:mainrel} applies. 
Mainly, we review a number of known constructions that produce
arrangements of mutually orthogonal hyperplanes in the $n$-ball, invariant
under lattices in $PU(n,1)$.

\subsection{The definition of orthogonality.} 

Suppose $V$ is a complex vector space and $h$ is a hermitian form on $V$ of signature $(m,1)$. If $v$ is a vector in $V$ with $h(v,v)>0$, we define the corresponding hyperplane to be 
$$H_v = \{[w] \in \bP V : h(w,v)=0, h(w,w)<0\}.$$
Note that the condition $h(v,v)>0$ is exactly the condition required to make this a codimension 1 subspace of $\bC\bH^m$ (rather than being all of $\bC\bH^m$ or empty). We say $v$ is positive when $h(v,v)>0$. The first basic observation is the following. 

\begin{lemma}\label{L:BasicOrth}
Suppose $r, s\in V$ are non-colinear positive vectors. Then $H_{r} \cap H_s\neq\emptyset$ if and only if 
$$|h(r,s)|^2 < h(r,r) h(s,s),$$
and they intersect orthogonally if and only if $h(r,s)=0$. 
\end{lemma}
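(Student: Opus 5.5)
The plan is to reduce to a two-dimensional computation in the span $W=\mathrm{span}_\C(r,s)$. Points of $H_r\cap H_s$ are classes $[w]$ with $w$ negative and $w\perp r$, $w\perp s$; that is, $w$ is a negative vector in $W^\perp$. So the first claim becomes: $W^\perp$ contains a negative vector if and only if $|h(r,s)|^2<h(r,r)h(s,s)$.

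To prove this, use that $h$ is nondegenerate of signature $(m,1)$. The Gram matrix of $h|_W$ in the basis $r,s$ is
$$\begin{pmatrix} h(r,r) & h(r,s)\\ h(s,r) & h(s,s)\end{pmatrix},$$
and its diagonal entries are positive. Its determinant is $h(r,r)h(s,s)-|h(r,s)|^2$.

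If this determinant is positive, $h|_W$ is positive definite. Then $V=W\oplus W^\perp$, and by Sylvester's law of inertia $W^\perp$ has signature $(m-2,1)$, so it contains a negative vector.

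Conversely, suppose the determinant is $\le 0$. Since $h(r,r)>0$, the restriction $h|_W$ is then either indefinite, of signature $(1,1)$, or degenerate.
\begin{itemize}
\item In the indefinite case $V=W\oplus W^\perp$ again. The single negative direction is used up by $W$, so $W^\perp$ is positive definite and $H_r\cap H_s=\emptyset$.
\item In the degenerate case $W$ contains a nonzero null vector $n$ with $n\perp W$. Take $w\in W^\perp$ negative and consider the span of $w$ and $n$. We have $h(w,n)=0$, $h(n,n)=0$ and $h(w,w)<0$. This span is two-dimensional, since a null vector cannot be a multiple of the negative vector $w$, and $h$ is negative semidefinite on it. That is impossible in signature $(m,1)$, because a maximal negative semidefinite subspace there has dimension $1$ unless it contains a null direction orthogonal to everything negative. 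More concretely, $n^\perp\supset w$, and $n^\perp$ restricted to the negative vector $w$ forces $n$ to be positive by Cauchy–Schwarz in the $(m,1)$ form, which is a contradiction.
\end{itemize}
This degenerate case is the step I expect needs the most care. I would handle it by noting that in $w^\perp$, which is positive definite because $w$ is negative, the vector $n$ has $h(n,n)=0$ and so $n=0$.

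For orthogonality, fix $p\in H_r\cap H_s$ with representative $w$. The tangent space to $\bC\bH^m$ at $p$ is identified with $w^\perp$, carrying the positive definite form $h$ up to scaling. Under this identification the tangent space to $H_r$ is $w^\perp\cap r^\perp$. Let $r'$ and $s'$ be the projections of $r$ and $s$ to $w^\perp$; since $r\perp w$ and $s\perp w$, we have $r'=r$ and $s'=s$. The normal line to $H_r$ in $T_p$ is then $\C r$, and the normal line to $H_s$ is $\C s$. Two complex hyperplanes meet orthogonally exactly when their normals are orthogonal, which here means $h(r,s)=0$. Conversely, if $h(r,s)=0$, then the determinant equals $h(r,r)h(s,s)>0$, so the hyperplanes do meet by the first part, and they meet orthogonally by the normal computation.
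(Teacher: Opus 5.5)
Your proof is correct and follows the paper's approach for the first claim: points of $H_r\cap H_s$ are negative vectors in $\mathrm{span}(r,s)^\perp$, and these exist if and only if the $2\times 2$ Gram matrix is positive definite, i.e.\ if and only if its determinant is positive. You go further than the paper in two places: you justify the converse direction (the indefinite and degenerate cases), which the paper simply asserts, and you prove the orthogonality claim, which the paper does not prove at all. Your tangent-space argument for it, with normals $\C r$ and $\C s$ in $T_p\cong w^\perp$, is sound. In the degenerate case, keep only the clean final argument ($n\in w^\perp$, which is positive definite, and $h(n,n)=0$, so $n=0$) and drop the muddled intermediate remarks.
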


For convenience we recall the proof of the first claim. Compare to the proof of \cite[Lemma 7.28]{AllcockCarlsonToledoSurfaces}.

\begin{proof}
By definition, $H_{r} \cap H_s\neq\emptyset$ if and only if there is a negative vector orthogonal to both $r$ and $s$. Such a vector exists if and only if the restriction of $h$ to the span of $r$ and $s$ is positive definite, or in other words if and only if the matrix 
$$\begin{pmatrix} h(r,r) & h(r,s) \\ h(s,r) & h(s,s) \end{pmatrix}$$
is positive definite. Since $h(r,r)>0$, this is the case if and only if the determinant $h(r,r) h(s,s) - |h(r,s)|^2$ is positive. 
\end{proof}

\subsection{A unit norm criterion for orthogonality.} 
\label{SS:UnitNorm}

An arithmetic lattice ``of the simplest type'' in $PU(m,1)$ can be defined as follows. Let $E\subset \bC$ be a CM field, let $F\subset \bR$ be its totally real subfield, and let $\cO_E$ and $\cO_F$ be the rings of integers of $E$ and $F$. Let $V=E^{m+1}$, and let $$h: V\times V \to E$$ be a hermitian form of signature $(m,1)$.  Each real embedding $\sigma: F\to \bR$ gives a pair of complex conjugate maps $E\to \bC$, and for all $\sigma$ not the identity we assume the corresponding pair of Galois conjugates of $h$ have signature $(m+1,0)$. With these assumptions,
$$PU(\cO_E,h)\subset PU(V\otimes \bC, h)$$
is a lattice, and it is exactly lattices commensurable with such $PU(\cO_E,h)$ which are called ``of the simplest type''. 
\begin{remark}\label{R:SimplestTypeCocompact}
This lattice is cocompact if $F\neq \bQ$. 
\end{remark}
Note that arithmetic lattices not of the simplest type involve non-commutative central simple division algebras with involutions of the second kind. 
See for example the brief discussion in \cite[Section 3.2]{IsenrichPy}, or the discussion in \cite{EmeryStover,StoverToledo} as well as earlier sources.

In this common setup we have the following orthogonality criterion. 

\begin{lemma}\label{L:NormGap}
Suppose non-colinear positive vectors $r,s\in V$ have that $h(r,r)$ and $h(s,s)$ are units in $\cO_F$ and $h(r,s)$ is in $\cO_E$. Then $H_r$ and $H_s$ are orthogonal or disjoint. 
\end{lemma}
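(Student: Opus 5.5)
By \cref{L:BasicOrth}, it suffices to show that if $h(r,s)\neq 0$ then $|h(r,s)|^2\geq h(r,r)h(s,s)$. Then $H_r$ and $H_s$ are disjoint whenever they fail to be orthogonal. Set $a=h(r,r)$, $b=h(s,s)$ and $c=h(r,s)$. By hypothesis $a,b\in\cO_F^\times$, both positive at the identity embedding, and $c\in\cO_E$. Consider the element
$$t = \frac{c\,\overline{c}}{ab} = \frac{|c|^2}{ab}\in F .$$
Since $c\bar c\in\cO_F$ and $ab$ is a unit, $t\in\cO_F$. Moreover $t$ is totally nonnegative, because every real embedding of $F$ sends $c\bar c$ to $|\sigma(c)|^2\geq 0$. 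I will prove that if $t\neq 0$ then $t\geq 1$ at the identity embedding, which is exactly the required inequality.

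The key step uses the positivity of the nonidentity Galois conjugates of $h$. For each real embedding $\sigma\neq \mathrm{id}$ of $F$, the conjugate form $h^\sigma$ is positive definite. Hence Cauchy--Schwarz for the conjugate vectors gives
$$|\sigma(c)|^2\leq \sigma(a)\sigma(b),$$
and so $0\leq \sigma(t)\leq 1$ for all $\sigma\neq\mathrm{id}$. Positive definiteness also gives $\sigma(a),\sigma(b)>0$, so there is no sign issue in dividing.

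Now suppose $t\neq 0$. Then $t$ is a nonzero algebraic integer of $F$, so its norm $N_{F/\bQ}(t)=\prod_\sigma \sigma(t)$ is a nonzero integer. It is nonnegative because every conjugate is nonnegative, hence $N_{F/\bQ}(t)\geq 1$. Because $t\neq 0$ and the conjugates are conjugates of a nonzero element, every $\sigma(t)$ is nonzero. The factors with $\sigma\neq\mathrm{id}$ lie in $(0,1]$, so
$$t = \frac{N_{F/\bQ}(t)}{\prod_{\sigma\neq \mathrm{id}}\sigma(t)}\geq 1 .$$
Thus $|c|^2\geq ab$, and \cref{L:BasicOrth} says $H_r\cap H_s=\emptyset$. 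If instead $t=0$, then $c=0$ and the hyperplanes are orthogonal by the same lemma.

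The main obstacle is justifying the Cauchy--Schwarz step for the conjugates. This requires setting up carefully how $\sigma$ extends to $E$ and acts on $V=E^{m+1}$. Specifically, one must check that $\sigma(h(r,s))=h^\sigma(\sigma r,\sigma s)$, where $h^\sigma$ is the form with conjugated coefficients, and that the choice between the two complex-conjugate extensions of $\sigma$ does not affect $|\sigma(c)|^2$. Once that bookkeeping is done, the rest is the standard norm argument. In the case $F=\bQ$ there are no nonidentity embeddings, and the argument reduces to the observation that $t$ is a positive integer.
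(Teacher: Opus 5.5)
Your proof is correct and is essentially the paper's argument. Both combine \cref{L:BasicOrth} at the identity embedding, Cauchy--Schwarz for the positive definite Galois conjugates of $h$, and the fact that $N_{F/\bQ}$ of the totally nonnegative algebraic integer $|h(r,s)|^2$ is a nonnegative integer. The difference is only organizational: you divide by the unit $h(r,r)h(s,s)$ and show that $h(r,s)\neq 0$ forces $|h(r,s)|^2\geq h(r,r)h(s,s)$, whereas the paper assumes $H_r\cap H_s\neq\emptyset$ and multiplies the inequalities to force $N_{F/\bQ}(|h(r,s)|^2)=0$, hence $h(r,s)=0$.
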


The main case to keep in mind is $h(r,r)=1=h(s,s)$.
%
%

\begin{proof}
Note that $h(r,s) h(s,r)= |h(r,s)|^2$ is in $F$. Suppose $H_r$ and $H_s$ intersect. Then \cref{L:BasicOrth} gives 
$$|h(r,s)|^2 < h(r,r) h(s,s).$$
For non-trivial Galois conjugates $\sigma$ as above, the fact that the Galois conjugate forms are definite, together with Cauchy-Schwarz, gives 
$$\sigma(h(r,s) h(s,r)) \leq \sigma(h(r,r)) \sigma(h(s,s)).$$
Appropriately multiplying all these together gives 
$$N_{F/\bQ}(h(r,s) h(s,r)) < N_{F/\bQ}(h(r,r)) N_{F/\bQ}(h(s,s)).$$
As a product of norms of units, the right side is~$\pm1$.
The left side is non-negative, since it is a product of non-negative terms $$\sigma(h(r,s) h(s,r))=\sigma(h(r,s)) \overline{\sigma(h(r,s))}.$$
Therefore
$N_{F/\bQ}(|h(r,s)|^2)=0$, and hence $h(r,s)=0$ as desired.  
\end{proof}

Already this can give infinitely many examples where \cref{T:main} or \cref{T:mainrel} applies. For example,  let $E$ be a cyclotomic field, let $F$ be its totally real subfield, let $a\in \cO_F$ be a negative number all of whose non-trivial conjugates are positive, and define 
$$h(z,w) = z_1 \overline{w}_1 + \cdots + z_m \overline{w}_m + a z_{m+1} \overline{w}_{m+1}.$$
Then \cref{L:NormGap} gives that the unit norm elements in $\cO_E^{m+1}\subset E^{m+1}$ define an orthogonal arrangement invariant under a lattice.  Whether \cref{T:main} applies depends on whether its hypothesis
\ref{CuspLimit} holds.  This is automatic when the lattice is cocompact, for example whenever $F\neq\bQ$. Compare to \cite[Example 2.13]{Fortman}. 

In the non-cocompact setting, it is difficult in general to get that every cusp contains a 1-dimensional stratum. Take for example the case when $\cO_E$ is the ring of Eisenstein integers, and consider the standard hermitian form 
$$h(z,w) = z_1 \overline{w}_1 + \cdots + z_m \overline{w}_m - z_{m+1} \overline{w}_{m+1}.$$
The lattice here is $PU(\cO_E,h)$ as above.
In general, cusps correspond to orbits of primitive isotropic lines,  and to each such line $\ell$ one can associate the positive definite lattice $\ell^\perp/\ell$. In this specific case this gives a bijection between cusps and positive definite hermitian self-dual lattices of rank $m-1$, and there is only one such when 
$m\leq 6$. See \cite[Section 7]{AllcockNew} for this correspondence, and \cite{Feit} for the
original classification of such lattices. Thus there is only one cusp when $m\leq 6$, and it contains a 1-dimensional stratum, but already for $m=7$ there is a second cusp that is  disjoint from the hyperplane arrangement. 

Consulting \cite[Remark 1.2, Examples 1.1, Proposition 3.2, Table 2, Corollary 5.3]{ArtebaniSarti} and using \cite[Theorem 7.1]{AllcockNew}, it is possible to see that the moduli spaces $$M_{3,0},M_{4,1},M_{5,2},M_{6,3},M_{7,4},M_{8,5}$$ of K3 surfaces with non-symplectic automorphisms of order 3 with no fixed curve of higher genus are all quotients of complex balls with hyperplane arrangements defined by the norm 1 elements with entries in the ring of Eisenstein integers.  In these cases the balls have dimensions $6, 5, 4, 3, 2,1$ respectively, and the hyperplane arrangements are orthogonal by \cref{L:NormGap}. See  \cite{MaOhashiTaki} for more on these spaces, and note in particular that $M_{5,2}$ is isomorphic to the moduli space of smooth complex cubic surfaces. The previous paragraph shows that each corresponding ball quotient has a single cusp, and that it lies in the closure of a dimension 1 stratum.

\subsection{Moduli spaces of abelian varieties with extra structure (unitary Shimura varieties of PEL type)}

Take $E$ and $F$ as in \cref{SS:UnitNorm}, and 
assume given a free $\cO_E$-module $\Lambda$ of rank $m+1$ and a hermitian form 
$$h : \Lambda \times \Lambda \to \cO_E.$$  As in \cref{SS:UnitNorm}, 
with $V=\Lambda\otimes_{\cO_E}E$, we assume
$h$ has signature $(m,1)$ and its other Galois conjugates have signature $(m+1,0)$. 
We define a short root to be any $r\in \Lambda$ with $h(r,r)=1$. \cref{L:NormGap} immediately implies that the hyperplane arrangement defined by the short roots is orthogonal, and so \cref{T:mainrel} (and sometimes \cref{T:main}) applies. 

This setup includes in particular the class of ``admissible hermitian lattices'' of \cite[Section 2.1]{Fortman}, which additionally places a restriction on the  different ideal  of  $\cO_E$.  In this setting, \cite[Section 11]{Fortman} shows that the ball quotient associated to $(\Lambda, h)$ is a moduli space of polarized $\cO_E$-linear abelian varieties, and the divisor defined by the short roots has a natural modular interpretation as the locus where an additional suitable morphism from a fixed CM torus exists. In this context the divisor is also known as the norm-one Kudla–Rapoport divisor. 
%
%
%

The case of quadratic imaginary fields $E$ is especially nice, and the condition on different ideals is automatic \cite[Example 2.12]{Fortman}. In that case the ball quotient minus the norm 1 divisor parametrizes abelian varieties with a suitable action of $\cO_E$, of the fixed signature and polarization type, but without a factor of the fixed CM elliptic curve $\bC/\cO_E$ as a polarized $\cO_E$ summand. 

\subsection{Orthogonality via root lattices} 
We now consider the following setup. Suppose $\Lambda$ is a lattice over~$\Z$ of signature $(N,2)$, and $g\in O(\Lambda)$ has  order $d>1$. Let $\zeta$ be a primitive $d$-th root of unity, and assume the eigenspace 
$$V=\ker(g-\zeta) \subset \Lambda \otimes_\bZ \bC$$
has signature $(m,1)$ for some $m$, with respect to the hermitian form $h(v,w) = (v, \overline{w})$. Then we can consider the complex hyperbolic space $\bC \bH^m$ defined by $V$, with the hyperplane arrangement defined by the ($g$-orbits of) norm $2$ elements of $\Lambda$ that define hyperplanes in $\bC\bH^m$.

\begin{proposition}\label{P:coprimeto30}
If $d$ is co-prime to 30, and $g$ does not have any non-zero fixed vectors, then this arrangement is orthogonal. 
\end{proposition}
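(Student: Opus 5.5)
We plan to reduce to \cref{L:BasicOrth}. Take norm $2$ vectors $r,s\in\Lambda$, and let $r_V,s_V$ be their orthogonal projections to $V$; the hyperplanes in question are $H_{r_V}$ and $H_{s_V}$. Since $g$ has order $d$ and no nonzero fixed vectors, write $\Lambda\otimes\bC=\bigoplus_{j} V_{\zeta^j}$ and $r=\sum_j r_j$. The projection to $V=V_\zeta$ is $r_\zeta=\frac1d\sum_{k}\zeta^{-k}g^k r$. Then
$$h(r_V,s_V)=\frac1d\sum_{k=0}^{d-1}\zeta^{-k}(g^k r,s),$$
which lies in $\frac1d\bZ[\zeta]$. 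Similarly $h(r_V,r_V)=\frac1d\sum_k \zeta^{-k}(g^kr,r)$ is real. Suppose $H_{r_V}$ and $H_{s_V}$ meet but are not orthogonal. By \cref{L:BasicOrth} this means $0<|h(r_V,s_V)|^2<h(r_V,r_V)h(s_V,s_V)$. We aim to contradict this with an integrality and Galois-norm argument, as in \cref{L:NormGap}.

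The key is to control the Galois conjugates. Applying $\sigma_a:\zeta\mapsto\zeta^a$ for $a$ coprime to $d$ sends $h(r_V,s_V)$ to the analogous quantity for the eigenspace $V_{\zeta^a}$. Consider the lattice $L$ spanned by the $g$-orbits of $r$ and $s$. Since $g$ has no fixed vectors and $(r,r)=2$, the vectors $g^kr$ have norm $2$. Restricting to $\Lambda$'s orthogonal decomposition, $\sum_a$ over all eigenspaces gives
$$\sum_{j}h_j(r_j,r_j)=(r,r)=2 .$$
Here $h_j$ is the form on $V_{\zeta^j}$, and only one conjugate pair is indefinite, because the signature is $(N,2)$ and $V\oplus\overline V$ already carries the $2$ negative directions. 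Hence all other eigenspaces are positive definite, and Cauchy--Schwarz gives $|h_j(r_j,s_j)|^2\le h_j(r_j,r_j)h_j(s_j,s_j)$ there, while the $V$ factor gives the strict reverse-type inequality. Multiplying over the Galois orbit of $\zeta$ (the primitive $d$-th roots; eigenspaces for non-primitive roots are handled by grouping according to orders dividing $d$), we get
$$N(d\,h(r_V,s_V))\overline{N(\cdot)}<N(d\,h(r_V,r_V))N(d\,h(s_V,s_V)) .$$
Here both sides are rational integers.

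The main obstacle is bounding the right side tightly enough to force the left side to vanish. Unlike \cref{L:NormGap}, $h(r_V,r_V)$ is not a unit. The sum identity $\sum_j h_j(r_j,r_j)=2$ with all terms nonnegative except on $V\oplus\overline V$ only constrains things additively. I would instead argue via the sublattice $L$ generated by the $g^kr$ and $g^ks$. It is a $\bZ[\zeta]$-like module whose Gram matrix (over $\bZ[g]$) must be of root-lattice type: norm $2$ vectors span root systems. Meeting but non-orthogonal hyperplanes mean the positive definite part of the span of $r_V,s_V$ lifts, via the definite conjugates, to a positive definite rank-$2$ hermitian $\bZ[\zeta]$-lattice generated by norm-$2$ roots. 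This is a root lattice of rank $2\varphi(d)$ carrying a fixed-point-free automorphism of order $d$ with a non-real eigenvalue. By the classification of root systems, the orders of fixed-point-free elements of Weyl/automorphism groups of ADE lattices have prime factors in $\{2,3,5\}$: Coxeter numbers, and elements of $W(E_8)$ are of orders with primes $2,3,5,7$, though $7$ only with fixed vectors in the relevant rank. So coprimality to $30$ rules this out, leaving $h(r_V,s_V)=0$, i.e. orthogonality. Making this root-system step precise is where I expect the real work to lie; a first attempt would check irreducible components $A_n,D_n,E_{6,7,8}$ and show that an order-$d$ fixed-point-free isometry forces $\varphi(d)\mid$ rank and the existence of such an element of order $p\ge7$ in the relevant finite group, which fails for rank $2\varphi(d)$.
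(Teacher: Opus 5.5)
\textbf{The classification claim your argument rests on is false.} You assert that no ADE root lattice carries a fixed-point-free isometry of order prime to $30$. But for every $k$ the cyclic shift $e_i\mapsto e_{i+1}$ of $\bZ^k$ restricts to an isometry of $A_{k-1}=\{x\in\bZ^k:\sum_i x_i=0\}$ of order $k$ with no nonzero fixed vector. The Coxeter number of $A_{k-1}$ is $k$, which need not have only the prime factors $2,3,5$.

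Such lattices are unavoidable in this setting: at any point of any hyperplane of the arrangement, the $g$-orbit of the defining root spans one (see the localization below). The rank you call impossible also occurs: for $d$ prime, $A_{d-1}\oplus A_{d-1}$ with a $d$-cycle on each summand has rank $2\varphi(d)$. So no contradiction can come from existence alone, and your conclusion ``leaving $h(r_V,s_V)=0$'' is unsupported.

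What coprimality to $30$ actually gives is a structure statement, which is the key fact the paper isolates. If an irreducible ADE lattice $Q$ has an isometry $u$ of order $k$ prime to $30$ with no nonzero fixed vector, then $Q\cong A_{k-1}$ and $u$ is a $k$-cycle. In particular every nontrivial eigenspace of $u$ on $Q\otimes\bC$ is one-dimensional. The primes $2,3,5$ must be excluded because $-1$, an element of order $3$ in $W(E_6)$, and an element of order $5$ in $W(E_8)$ (powers of Coxeter elements) are fixed-point-free with eigenspaces of dimension greater than $1$. Your Galois-norm preamble cannot close this gap, as you noticed, and can be dropped.

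\textbf{You are also missing the source of positive definiteness and the step that turns one-dimensional eigenspaces into orthogonality.} The lattice spanned by the $g$-orbits of $r$ and $s$ is not positive definite because of the definite Galois conjugates: it has nonzero components in $V\oplus\overline V$, which is indefinite. Instead, localize at an intersection point.

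Suppose the distinct hyperplanes meet at $[z]$ with $z\in V$. For $\lambda\in\Lambda$ one has $(z,\lambda)=h(z,\lambda_V)$ and $(z,g^k\lambda)=\zeta^{-k}(z,\lambda)$. Hence both $g$-orbits lie in $\Lambda_z=\{\lambda\in\Lambda:(\lambda,z)=0\}$. This lattice is $g$-invariant and positive definite, being orthogonal to the negative definite real plane spanned by $\mathrm{Re}\,z$ and $\mathrm{Im}\,z$. Its root sublattice $R$ is a $g$-invariant ADE lattice containing $r$ and $s$. The isometry $g$ has no nonzero fixed vector on $R$ and permutes its irreducible components. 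There are two cases.
\begin{itemize}
\item If $r$ and $s$ lie in components from different $g$-orbits, your own formula gives $h(r_V,s_V)=\frac1d\sum_k\zeta^{-k}(g^kr,s)=0$.
\item If they lie in the same orbit $Q,gQ,\dots,g^{j-1}Q$, then $v\mapsto\sum_{i<j}\zeta^{-i}g^iv$ identifies the $\zeta^j$-eigenspace of $g^j|_Q$ with the $\zeta$-eigenspace of $g$ on $\bigoplus_{i<j}g^iQ\otimes\bC$. Since $g^j|_Q$ is fixed-point-free of order prime to $30$, the key fact makes this space at most one-dimensional. Then $r_V$ and $s_V$ are colinear, and the two hyperplanes coincide.
\end{itemize}
Hence distinct hyperplanes that meet are orthogonal, which is the statement you need.
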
 

The assumption could be weakened, but here we prefer  a simple statement. The proof can be compared to \cite[Section 8]{DolgachevvanGeemenKondo} and is omitted. The key is to show the following: Let $Q$ be an irreducible ADE root lattice with an isometry $u$ of order $k$ co-prime to 30 and with no non-zero fixed vectors. Then $Q$ has type $A_{k-1}$, the isometry $u$ is a $k$-cycle in its Weyl group~$S_k$, and in particular every non-trivial eigenspace of $u$ on $Q\otimes \bC$ is one dimensional.

\begin{remark}
The case $d=7$ is the motivating example for this subsection. The moduli space of K3 surfaces with a non-symplectic automorphism of order 7 has two components in \cite[Theorem 9.5]{ArtebaniSartiTaki}, both of which are quotients of two-dimensional balls minus hyperplane arrangements which are orthogonal by \cref{P:coprimeto30}. The discussion at the beginning of \cite[Section 9]{ArtebaniSartiTaki} gives the setup.
%
%
%
%
In this case there are no cusps by \cref{R:SimplestTypeCocompact},
%
%
so \cref{T:main} applies. 
\end{remark}

\subsection{Orthogonality via commuting isometries} 

We start with the following well known observation, whose proof is left to the reader. Compare to \cite[Remark 2.3]{Deraux}. 

\begin{lemma}\label{L:commuting}
Suppose $\Gamma_0\back \bC\bH^m$ is a manifold and $H$ is a finite group of holomorphic isometries of $\Gamma_0\back \bC\bH^m$ such that for all $x\in \Gamma_0\back \bC\bH^m$ we have that the stabilizer of $x$ in $H$ is Abelian. Let $\cD \subset \Gamma_0\back \bC\bH^m$ be the union of the complex codimension 1 components of the fixed point sets of elements of $H$. Then the preimage $\cH$ of $\cD$ in $\bC\bH^m$ is an orthogonal arrangement. 
\end{lemma}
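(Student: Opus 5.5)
We must prove \cref{L:commuting}: the preimage $\cH$ of $\cD$ in $\bC\bH^m$ is a locally finite arrangement of hyperplanes, any two of which are orthogonal or disjoint. The plan is to reduce everything to a local linear-algebra statement at a single point. First we identify the components of $\cH$. Let $\widetilde H$ be the group of lifts of elements of $H$ to $\bC\bH^m$; it is a discrete group of holomorphic isometries normalizing $\Gamma_0$. A complex codimension 1 component of the fixed set of $h\in H$ lifts to a component of the fixed set of some lift $\tilde h\in\widetilde H$. Fixed sets of holomorphic isometries of $\bC\bH^m$ are totally geodesic complex submanifolds, and a codimension 1 such submanifold is a complex hyperplane $\bC\bH(v^\perp)$. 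So $\cH$ is a union of hyperplanes. Local finiteness follows because $\cD$ is a finite union of closed analytic subsets of the manifold $\Gamma_0\back\bC\bH^m$, and $\Gamma_0$ acts properly discontinuously.

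Next, suppose two distinct hyperplanes $H_1,H_2\subset\cH$ meet at a point $\tilde x$, lying over $x$. Choose $\tilde h_1,\tilde h_2\in\widetilde H$ whose fixed sets contain $H_1,H_2$ as components, so both fix $\tilde x$. Since $\Gamma_0\back\bC\bH^m$ is a manifold, $\Gamma_0$ acts freely. Hence the stabilizer of $\tilde x$ in $\widetilde H$ maps injectively to the stabilizer of $x$ in $H$, which is abelian. Therefore $\tilde h_1$ and $\tilde h_2$ commute. Each $\tilde h_i$ acts on $T_{\tilde x}\bC\bH^m\cong\bC^m$ by a unitary map $A_i$. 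The fixed space of $A_i$ contains $T_{\tilde x}H_i$, which is a complex hyperplane. Since $A_i\ne 1$ (otherwise $\tilde h_i$ would fix everything), $A_i$ is a complex reflection: the identity on $T_{\tilde x}H_i$ and multiplication by some $\lambda_i\neq1$ on the orthogonal line $L_i$.

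The key step is the linear algebra. The two matrices $A_1,A_2$ commute, so $A_2$ preserves the $\lambda_1$-eigenspace $L_1$ of $A_1$. Thus $L_1$ is an eigenline of $A_2$, so either $L_1=L_2$ or $L_1\subset L_2^\perp=T_{\tilde x}H_2$. If $L_1=L_2$, then $T_{\tilde x}H_1=T_{\tilde x}H_2$, and totally geodesic hyperplanes through a common point with the same tangent space coincide, contradicting distinctness. Hence $L_1\perp L_2$, which means $H_1$ and $H_2$ meet orthogonally.

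The main obstacle is the bookkeeping about lifts: we must ensure the relevant lifts $\tilde h_i$ both fix $\tilde x$ and genuinely commute. Freeness of $\Gamma_0$ handles this, since the stabilizer of $\tilde x$ in $\widetilde H$ embeds into $\Stab_H(x)$, which is abelian. A subsidiary point is that a codimension 1 fixed component at $\tilde x$ is exactly where the tangent action is a complex reflection; this follows from linearizing via the exponential map at $\tilde x$.
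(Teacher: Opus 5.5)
Your proof is correct and is essentially the argument the paper has in mind; the paper leaves the proof to the reader with a pointer to Deraux. The argument is: lift to $\bC\bH^m$, use freeness of $\Gamma_0$ to see that the stabilizer of $\tilde x$ in the lifted group injects into the abelian group $\Stab_H(x)$, and observe that commuting complex reflections have equal or orthogonal mirrors. The only loosely stated step is local finiteness, which follows most cleanly from the fact that $\widetilde H$ contains $\Gamma_0$ with finite index: it is therefore discrete and acts properly discontinuously, so only finitely many of its elements fix a point of any given compact set.
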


There are a few known line arrangements in $\bC\bP^2$ with finite (branched) Galois covers to which this applies. The complements of these line arrangements in $\bC\bP^2$ have the form  $\Gamma\back (\bC\bH^2-\cH)$ with $\cH$ orthogonal by \cref{L:commuting}. 
See \cite{Hirzebruch,BarthelHirzebruchHofer,Tretkoff}. Important examples include Klein's arrangement, studied for example in \cite{NarukiKlein}, and the Hesse arrangement, studied for example in \cite{Kaneko}. In both cases there is in fact more than one  complex hyperbolic metric known on the complement of the line arrangement, but in both cases there is (at least) one with $\Gamma_0$ cocompact where \cref{L:commuting} applies on a finite cover. See \cite[Definition 3.1, Tables 5.2 and 5.4, Theorem 6.4]{Tretkoff} as one possible starting point; and see \cite{DerauxKlein, CouwenbergHeckmanLooijenga} for some additional context. So we get that \cref{T:main} applies to the fundamental groups of complements in $\bC\bP^2$ of the Hesse and Klein arrangements. 
%
%

\subsection{Additional examples and special cases}\label{AA:Additional}

Here are some notable examples. 

\bold{Stable cubic threefolds:} A cubic threefold means a hypersurface in~$\C P^4$ defined
by a cubic equation.  Stability is a concept from Geometric Invariant Theory, that turns
out to be equivalent to every singularity having one of the types $A_1, A_2, A_3, A_4$
\cite{AllcockThreefolds, Yokoyama}.  
The moduli space of stable cubic threefolds,
or equivalently the moduli space of $K$-stable cubic threefolds \cite{LiuXu},
may be identified with the complex $10$-ball, minus a family of disjoint hyperplanes,
modulo a discrete group \cite[Theorems 3.7, 7.2]{AllcockCarlsonToledoThreefolds}, \cite{LooijengaSwierstra}.
Assumption \ref{CuspLimit} of \cref{T:main} 
fails since there are no 1-dimensional strata and the complex hyperbolic orbifold is not compact (see \cite[Theorem 4.10]{AllcockCarlsonToledoThreefolds}), but \cref{T:mainrel} applies. 
Caution: this identification with a ball quotient 
is an isomorphism of varieties, but not
one of orbifolds.  (Away from additional hyperplanes, that represent singular threefolds
and form a non-orthogonal arrangement,
it is also an orbifold isomorphism.)

\bold{Stable quartic plane curves:}  
This is similar to the previous example, with the $6$-ball in place of the $10$-ball
\cite{KondoGenus3}.
Now stability is equivalent to having only $A_1$ or $A_2$ singularities.
The
same caution applies, regarding orbifold structure along the hyperplanes representing
singular curves.
We remark that smooth quartic plane curves, up to projective equivalence, are the
same as non-hyperelliptic genus 3 curves, up to isomorphism.  The points of $B^6$,
whose removal leaves the stable locus, represent hyperelliptic genus~$3$ curves.  
So, by adding them back, the full
ball quotient 
is the moduli space of genus 3 curves with mild degenerations allowed.  

\bold{Moduli of 5 points, or smooth complex binary quintics:} Here the moduli space, also known as $\cM_{0,5}/S_5$, can be realized as the complement of an orthogonal arrangement in a compact ball quotient of dimension 2, so \cref{T:main} applies. This has been known in some form for a long time \cite{ShimuraOsaka, ShimuraAnnals, DeligneMostow}, but we recommend \cite[Section 4]{FortmanFive} as a reference that treats orthogonality explicitly. Up to finite covers, this moduli space is also the moduli space of smooth degree 4 del Pezzo surfaces \cite[Section 6]{HassettKreschTschinkel}, \cite{DolgachevRational}, and also a certain moduli space of K3 surfaces with order 5 non-symplectic automorphisms \cite{Kondo5, ArtebaniSartiTaki}.

\begin{remark}\label{R:M05both}
We also have that 
$$\cM_{0,5}\simeq (\bC \bP^1-\{0,1,\infty\})^2 - \Delta \simeq \Gamma(2)\times \Gamma(2) \back (\bH^2 \times \bH^2 - \cH),$$
where $\Delta$ is the diagonal and $\cH$ is its preimage and $\Gamma(2) \subset PSL(2,\bZ)$ is the principal level 2 congruence subgroup. Since $\bH^2 \times \bH^2$ is the $SO(2,n)$ symmetric space for $n=2$, this shows that $\cM_{0,5}$ fits both within the complex hyperbolic and type IV framework. 
\end{remark}

\bold{Moduli of 6 points, or smooth complex binary sextics:} Here the moduli space, also known as $\cM_{0,6}/S_6$, is the three-dimensional Eisenstein example presented above, so \cref{T:main} applies. Again this goes back at least to
\cite{DeligneMostow}, but see also \cite[Theorem 3]{AllcockCarlsonToledoSextics} and \cite[Section 12]{DolgachevKondoModuli} as a reference. 

\bold{Deligne-Mostow-Thurston examples and mapping class groups:} Deligne-Mostow and later Thurston gave a number of examples of ball quotients \cite{DeligneMostow, Thurston}. In the Deligne-Mostow point of view, they parameterize certain cyclic covers of the sphere, and in the Thurston point of view they parametrize certain cone metrics on the sphere. These ball quotients come equipped with a natural divisor corresponding to certain collisions between points. (Other collisions between points live ``at infinity'' on the ball quotient, or are not allowed.) 

We now refer to the table of 94 examples in \cite[Appendix]{Thurston}. In each, the divisor is orthogonal if, for every triple of numerators, their sum is at least as large as the sum of the remaining numerators.
The entries on Thurston's list with this property are: $1,2,8,9,42,43,47,48,72,73,74,75,77,78,83,84,87,89,94$. 
Each is a lattice in $PU(2,1)$, except the first entry, which is a lattice in $PU(3,1)$.
All are cocompact except $1,2,8,42,43,73$,
%
%
%
%
and Assumption \ref{CuspLimit} holds in all these cases. 
%
%
%
Thus, \cref{T:main} applies to all 19 examples listed above. 

The moduli of cubic surfaces is not a Deligne-Mostow-Thurston example \cite[Theorem 8.4]{AllcockNew} but it is commensurable with one \cite[Theorem 3]{DoranArxiv}, \cite[Section 3]{DolgachevvanGeemenKondo},  \cite[Section 3]{CasalainaMartinGrushevskyHulek}. This Deligne-Mostow-Thurston example is not one of the orthogonal ones listed above, and the commensurability result involves passing to a cover and removing only certain components of the divisor discussed above.  

We will not make any attempt to exhaustively list the coincidences and relationships among the examples we have presented, but it should be noted that examples number 1 and 9 on Thurston's list correspond to the cases of 6 and 5 points respectively on $\bC \bP^1$  discussed above, and we wish to emphasize that the full mapping class groups for the 5 and 6 times punctured sphere are valid examples of $\Gammahat$ to which \cref{T:main} applies. More generally, the other orthogonal examples on Thurston's list show that various partially labeled finite index subgroups of these two mapping class groups can be realized in different ways as a $\Gammahat$ to which \cref{T:main} applies.

\begin{remark}\label{R:NotMCG}
Our groups are similar enough to mapping class groups that we feel compelled to briefly sketch why, in the cubic surface case, $\Gammahat$ is not abstractly commensurable with a mapping class group.  By \cite[Theorem 1.15]{BehrstockHagenSistoQuasiflats} and \cref{C:MaxSizeOrth}, if this $\Gammahat$ were abstractly commensurable with a mapping class group of a genus $g$ surface with $k$ punctures, we'd have $3g-3+k=4$.  For such surfaces, the abstract commensurator is the extended mapping class group; see \cite{Ivanov,Korkmaz} and \cite[Theorem 6]{BehrstockMargalit}.  On the other hand, the Fermat cubic gives a finite subgroup $(\mathbb Z/3)^3\rtimes S_4<\Gammahat$ of order $648$ \cite[Theorem~9.5.8]{DolgachevBook}, and this subgroup injects into the abstract commensurator  of $\Gammahat$.  
%
%
But finite subgroups of the extended mapping class group have order at most $60$ in the cases $S_{0,7}$, $S_{1,4}$, and $S_{2,1}$ \cite{May,Kerckhoff}, so we can conclude that $\Gammahat$ is not abstractly commensurable with a mapping class group. This argument can be compared to arguments in \cite{ClayLeiningerMargalit, BrendleMargalit, Soroko}, and is based on a conversation with ChatGPT 5.5 Pro. 

Additionally invoking quasi-isometric rigidity of mapping class groups \cite{Hamenstaedt,BehrstockKleinerMinskyMosher,BehrstockHagenSistoQuasiflats}, it is possible to prove that the  $\Gammahat$ in the cubic surface case is not even quasi-isometric to a mapping class group. 
%
%
\end{remark}
%
%

\bibliographystyle{amsalpha}

\bibliography{bibliography}

\end{document}